\newtheoremstyle{theoremstyle}
  {10pt}      
  {5pt}       
  {\itshape}  
  {}          
  {\bfseries} 
  {:}         
  {.5em}      
  {}          
\newtheoremstyle{examplestyle}
  {10pt}      
  {5pt}       
  {}          
  {}          
  {\bfseries} 
  {:}         
  {.5em}      
  {}          
\theoremstyle{theoremstyle}
\newtheorem{theorem}{Theorem}[section]
\newtheorem*{theorem*}{Theorem}
\newtheorem{lemma}[theorem]{Lemma}
\newtheorem{proposition}[theorem]{Proposition}
\newtheorem*{proposition*}{Proposition}
\newtheorem{corollary}[theorem]{Corollary}
\newtheorem*{corollary*}{Corollary}
\newtheorem{definition}[theorem]{Definition}
\newtheorem{definition*}{Definition}
\newtheorem{remark}[theorem]{Remark}
\newtheorem{remark*}{Remark}
\newcommand{\bA}{{\mathbb A}}
\newcommand{\bP}{{\mathbf P}}
\newcommand{\caG}{{\mathcal G}}
\newcommand{\caD}{{\mathcal D}}
\newcommand{\caA}{{\mathcal A}}
\newcommand{\caM}{{\mathcal M}}
\newcommand{\caN}{{\mathcal N}}
\newcommand{\caO}{{\mathcal O}}
\newcommand{\caT}{{\mathcal T}}
\newcommand{\caK}{{\mathcal K}}
\newcommand{\caQ}{{\mathcal Q}}
\newcommand{\caF}{{\mathcal F}}
\newcommand{\caU}{{\mathcal U}}
\newcommand{\caX}{{\mathfrak X}}
\newcommand{\Fh}{{\mathfrak F}}
\newcommand{\Xh}{{\mathfrak X}}
\newcommand{\Yh}{{\mathfrak Y}}
\newcommand{\Zh}{{\mathfrak Z}}
\renewcommand{\P}{{\mathbb P}}
\newcommand{\fpqc}{\mathrm{fpqc}}
\newcommand{\FG}{\underline{\mathsf{FG}}}
\newcommand{\FGL}{\mathsf{FGL}}
\newcommand{\RHom}{\underline{\mathsf{Hom}}}
\newcommand{\Ab}{\mathsf{Ab}}
\newcommand{\op}{\mathrm{op}}
\newcommand{\dual}{\vee}
\newcommand{\one}{{\bf 1}}
\newcommand{\poauf}{[\![}
\newcommand{\pozu}{] \! ]}
\newcommand{\AAA}{\mathsf{A}}
\newcommand{\BP}{\mathsf{BP}}
\newcommand{\MMM}{\mathsf{M}}
\newcommand{\NNN}{\mathsf{N}}
\newcommand{\PPP}{\mathsf{Ph}}
\newcommand{\TTT}{\mathsf{T}}
\newcommand{\Hom}{\mathsf{Hom}}
\newcommand{\Ext}{\mathsf{Ext}}
\newcommand{\EE}{\mathsf{E}}
\newcommand{\RRRR}{\mathsf{R}}
\newcommand{\Ch}{\mathsf{Ch}}
\newcommand{\PH}{\mathsf{PH}}
\newcommand{\PL}{\mathsf{PL}}
\newcommand{\PM}{\mathsf{PM}}
\newcommand{\cc}{\mathsf{c}}
\newcommand{\x}{\mathsf{x}}
\newcommand{\yy}{\mathsf{y}}
\newcommand{\s}{\mathsf{s}}
\newcommand{\PP}{\mathbf{P}}
\newcommand{\TT}{\mathcal{T}}
\newcommand{\RR}{\mathcal{R}}
\newcommand{\cO}{\mathcal{O}}
\newcommand{\MGL}{\mathsf{MGL}}
\newcommand{\MBP}{\mathsf{MBP}}
\newcommand{\Gr}{\mathbf{Gr}}
\newcommand{\RRR}{\mathbf{R}}
\newcommand{\LLL}{\mathbf{L}}
\newcommand{\Thom}{\mathsf{Th}}
\newcommand{\A}{\mathbf{A}}
\newcommand{\Top}{\mathsf{Top}}
\newcommand{\C}{\mathbf{C}}
\newcommand{\G}{\mathbf{G}_{\mathfrak{m}}}
\newcommand{\SH}{\mathbf{SH}}
\newcommand{\LL}{\mathsf{L}}
\newcommand{\BGL}{\mathsf{B}\mathbf{GL}}
\newcommand{\KGL}{\mathsf{KGL}}
\newcommand{\KU}{\mathsf{KU}}
\newcommand{\MU}{\mathsf{MU}}
\newcommand{\MZ}{\mathsf{M}\mathbf{Z}}
\newcommand{\MQ}{\mathsf{M}\mathbf{Q}}
\newcommand{\PMQ}{\mathsf{PM}\mathbf{Q}}
\newcommand{\LQ}{\mathsf{L}\mathbf{Q}}
\newcommand{\PLQ}{\mathsf{PL}\mathbf{Q}}
\newcommand{\HQ}{\mathsf{H}\mathbf{Q}}
\newcommand{\PHQ}{\mathsf{PH}\mathbf{Q}}
\newcommand{\id}{\mathsf{id}}
\newcommand{\Spec}{\mathsf{Spec}}
\newcommand{\FF}{\mathsf{F}}
\newcommand{\GG}{\mathsf{G}}
\newcommand{\Sphere}{\mathsf{S}}
\newcommand{\unit}{\mathbf{1}}
\newcommand{\Z}{\mathbf{Z}}
\newcommand{\Q}{\mathbf{Q}}
\newcommand{\OO}{\mathcal{O}}
\newcommand{\thclass}{\mathsf{th}}
\newcommand{\Qc}{\mathsf{Qc}}
\newcommand{\y}{\mathsf{y}}
\newcommand{\f}{\mathsf{f}}
\newcommand{\pp}{{\mathsf p}}
\newcommand{\qq}{{\mathsf q}}
\newcommand{\Mod}{{\mathsf {mod}}}
\newcommand{\Alg}{{\mathsf {alg}}}
\newcommand{\im}{{\mathrm{im}}}
\renewcommand{\ker}{{\mathrm{ker}}}
\title{{\bf Motivic Landweber exactness}}
\author{Niko Naumann, Markus Spitzweck, Paul Arne {\O}stv{\ae}r}
\date{\today}
\begin{document}

\maketitle
\begin{abstract}
We prove a motivic Landweber exact functor theorem. 
The main result shows the assignment given by a Landweber-type formula involving the $\MGL$-homology 
of a motivic spectrum defines a homology theory on the motivic stable homotopy category which is 
representable by a Tate spectrum. 
Using a universal coefficient spectral sequence we deduce formulas for operations of certain motivic 
Landweber exact spectra including homotopy algebraic $K$-theory. 
Finally we employ a Chern character between motivic spectra in order to compute rational algebraic 
cobordism groups over fields in terms of rational motivic cohomology groups and the Lazard ring. 
\end{abstract}
\newpage

\tableofcontents
\newpage

\section{Introduction}
The Landweber exact functor theorem combined with Brown representability provides an almost 
unreasonably efficient toolkit for constructing homotopy types out of purely algebraic data.
Among the many examples arising this way is the presheaf of elliptic homology theories on the 
moduli stack of elliptic curves.
In this paper we incite the use of such techniques in the algebro-geometric setting of motivic 
homotopy theory. 
\vspace{0.1in}

In what follows we shall state some of the main results in the paper, 
comment on the proofs and discuss some of the background and relation to previous works.
Throughout we employ a stacky viewpoint of the subject which originates with formulations in 
stable homotopy theory pioneered by Morava and Hopkins.
Let $S$ be a regular noetherian base scheme of finite Krull dimension and $\SH(S)$ the corresponding 
motivic stable homotopy category.
A complex point $\Spec(\C)\rightarrow S$ induces a functor $\SH(S)\rightarrow\SH$ to the classical 
stable homotopy category.
Much of the work in this paper is guidelined by the popular quest of hoisting results in $\SH$ to the more 
complicated motivic category. 
\vspace{0.1in}

To set the stage, 
denote by $\MGL$ the algebraic cobordism spectrum introduced by Voevodsky \cite{voevodsky-icm}.
By computation we show $(\MGL_{*},\MGL_{*}\MGL)$ is a flat Hopf algebroid in Adams  
graded abelian groups. 
(Our standard conventions concerning graded objects are detailed in Section \ref{section:3}. 
Recall that $\MGL_{*}\equiv \MGL_{2*,*}$.) 
The useful fact that $\MGL$ gives rise to an algebraic stack $[\MGL_{*}/\MGL_{*}\MGL]$ comes to bear. 
(This apparatus is reviewed in Section \ref{section:2}.)
By comparing with the complex cobordism spectrum $\MU$ we deduce a $2$-categorical commutative diagram:
\begin{equation}
\label{label:introductioncartesiandiagram}
\xymatrix{
\Spec(\MGL_{*})\ar[d]\ar[r] & \Spec(\MU_{*})\ar[d] \\
[\MGL_{*}/\MGL_{*}\MGL]\ar[r] & [\MU_{*}/\MU_{*}\MU] }
\end{equation}
The right hand part of the diagram is well-known:
Milnor's computation of $\MU_{*}$ and Quillen's identification of the canonical formal group law over 
$\MU_{*}$ with the universal formal group law are early success stories in modern algebraic topology. 
As a $\G$-stack the lower right hand corner identifies with the moduli stack of strict graded formal groups.
Our plan from the get-go was to prove (\ref{label:introductioncartesiandiagram}) is cartesian and use that 
description of the algebraic cobordism part of the diagram to deduce motivic analogs of theorems in stable 
homotopy theory.
It turns out this strategy works for general base schemes.

Recall that an $\MU_*$-module $\MMM_*$ is Landweber exact if $v^{(p)}_0,v^{(p)}_1,\dots$ forms a regular 
sequence in $\MMM_*$ for every prime $p$.
Here $v^{(p)}_0=p$ and the $v^{(p)}_i$ for $i>0$ are indecomposable elements of degree $2p^i-2$ in $\MU_*$ with 
Chern numbers divisible by $p$.
Using the cartesian diagram (\ref{label:introductioncartesiandiagram}) we show the following result 
for Landweber exact motivic homology theories, see Theorem \ref{mot-landweber} for a more precise statement. 
\begin{theorem*}
Suppose $\AAA_{*}$ is a Landweber exact graded $\MU_{*}$-algebra. 
Then 
\begin{equation*}
\MGL_{**}(-)\otimes_{\MU_{*}}\AAA_{*}
\end{equation*}
is a bigraded ring homology theory on $\SH(S)$.
\end{theorem*}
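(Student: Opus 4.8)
The plan is to verify the three defining properties of a homology theory for the bigraded functor $F(-):=\MGL_{**}(-)\otimes_{\MU_{*}}\AAA_{*}$ on $\SH(S)$: functoriality, which is clear; additivity, i.e.\ that $F$ carries coproducts to direct sums; and exactness, i.e.\ that a cofiber sequence $X\to Y\to Z$ induces a long exact sequence. The values are automatically bigraded rings since $\MGL$ is a ring spectrum and $\AAA_{*}$ is an algebra, so it suffices to treat $F$ as a homology theory of bigraded abelian groups. Additivity I would dispatch immediately: $\MGL_{**}(-)=\pi_{**}(\MGL\smash(-))$ preserves arbitrary coproducts because smashing with $\MGL$ preserves colimits and the bigraded spheres $\Sphere^{p,q}$ are compact in $\SH(S)$, while $-\otimes_{\MU_{*}}\AAA_{*}$ preserves direct sums. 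So the entire content is exactness, and the point of difficulty is that $\AAA_{*}$ is in general not flat over $\MU_{*}$ — already for homotopy algebraic $K$-theory it is not — so that tensoring a long exact sequence of $\MU_{*}$-modules with $\AAA_{*}$ need not stay exact.

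To get around this, following Landweber's original argument in the Hopf-algebroid reformulation of Hovey--Strickland, I would use that $\MGL_{**}(X)$ is not merely an $\MU_{*}$-module but a comodule over the flat Hopf algebroid $(\MGL_{*},\MGL_{*}\MGL)$. Concretely, since $\MGL_{*}\MGL$ is flat over $\MGL_{*}$ there is a Künneth isomorphism $\MGL_{**}(\MGL\smash X)\cong\MGL_{*}\MGL\otimes_{\MGL_{*}}\MGL_{**}(X)$, and the unit of $\MGL$ then endows $\MGL_{**}(X)$ with a natural coaction, that is, with the structure of a quasi-coherent sheaf on the stack $[\MGL_{*}/\MGL_{*}\MGL]$; a cofiber sequence yields a long exact sequence of such sheaves. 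Establishing this for \emph{all} $X$ — checking that $\MGL\smash\MGL$ is sufficiently Tate/cellular for the Künneth comparison, and keeping track of the fact that the Hopf algebroid records only the Adams grading $\MGL_{*}=\MGL_{2*,*}$ while $F$ is bigraded — is the first technical point I would be careful about.

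Next I would bring in the cartesian square (\ref{label:introductioncartesiandiagram}). Cartesianness says precisely that the morphism of stacks $g\colon[\MGL_{*}/\MGL_{*}\MGL]\to[\MU_{*}/\MU_{*}\MU]$ pulls the fpqc atlas $\Spec(\MU_{*})$ back to $\Spec(\MGL_{*})$; equivalently $\MGL_{*}\MGL\cong\MGL_{*}\otimes_{\MU_{*}}\MU_{*}\MU\otimes_{\MU_{*}}\MGL_{*}$, so that base change $N_{*}\mapsto\MGL_{*}\otimes_{\MU_{*}}N_{*}$ sends $\MU_{*}\MU$-comodules to $\MGL_{*}\MGL$-comodules and realizes the pullback $g^{*}$ on quasi-coherent sheaves. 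On the other side, $\AAA_{*}$ being Landweber exact means, by the classical Landweber exact functor theorem, that $-\otimes_{\MU_{*}}\AAA_{*}$ is exact on $\MU_{*}\MU$-comodules; equivalently $\AAA_{*}$ represents a flat quasi-coherent sheaf $\mathcal{A}$ on $[\MU_{*}/\MU_{*}\MU]$, the moduli stack of strict graded formal groups — a Landweber exact module is not itself a comodule, but $\mathcal{A}$ is produced from $\AAA_{*}\otimes_{\MU_{*}}\MU_{*}\MU$ by descent. Pulling back, $g^{*}\mathcal{A}$ is then a flat quasi-coherent sheaf on $[\MGL_{*}/\MGL_{*}\MGL]$ whose restriction to the atlas $\Spec(\MGL_{*})$ is $\MGL_{*}\otimes_{\MU_{*}}\AAA_{*}$, once more by the cartesian square.

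Finally I would assemble the pieces. Since $\MGL_{**}(X)\otimes_{\MU_{*}}\AAA_{*}\cong\MGL_{**}(X)\otimes_{\MGL_{*}}\bigl(\MGL_{*}\otimes_{\MU_{*}}\AAA_{*}\bigr)$, the functor $F$ factors as $X\mapsto\widetilde{\MGL_{**}(X)}$ (exact on cofiber sequences by the previous steps), followed by $-\otimes g^{*}\mathcal{A}$ (exact because $g^{*}\mathcal{A}$ is flat), followed by restriction along the faithfully flat atlas $\Spec(\MGL_{*})\to[\MGL_{*}/\MGL_{*}\MGL]$ (exact). Hence $F$ carries cofiber sequences to long exact sequences, and together with additivity and the evident suspension isomorphisms this exhibits $F$ as a bigraded ring homology theory on $\SH(S)$. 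I expect the main obstacle to be the organization of this transport of Landweber's machinery to the motivic setting: unwinding the flatness of $g^{*}\mathcal{A}$ requires a Landweber filtration theorem for $\MGL_{*}\MGL$-comodules, which one has to deduce from the classical filtration theorem for $\MU_{*}\MU$-comodules precisely through the cartesian square (\ref{label:introductioncartesiandiagram}) — and it is exactly this step that shows the hypothesized regularity of the sequences $v^{(p)}_{0},v^{(p)}_{1},\dots$ on $\AAA_{*}$ is what makes everything run.
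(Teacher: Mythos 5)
Your proposal follows essentially the same route as the paper: the comodule structure on $\MGL_{**}(-)$ (the sheaf $\Fh(\FF)$ on $[\MGL_*/\MGL_*\MGL]$), the cartesian square over $\FG^s$, and flatness supplied by classical Landweber exactness, with the paper packaging the last steps as pullback of $\Fh(\FF)$ along the flat projection $\Spec(\AAA_*)\times_{\FG^s}[\MGL_*/\MGL_*\MGL]\cong\Spec(\MGL_*\otimes_{\MU_*}\AAA_*)\to[\MGL_*/\MGL_*\MGL]$ followed by global sections over this affine scheme. One small correction to your assembly: since $\AAA_*$ is not a comodule, the sheaf $g^*\mathcal{A}$ you construct is the \emph{pushforward} from the atlas of $\MGL_*\otimes_{\MU_*}\AAA_*$, so its restriction back to $\Spec(\MGL_*)$ is the faithfully flat extension $\MGL_*\MGL\otimes_{\MGL_*}(\MGL_*\otimes_{\MU_*}\AAA_*)$ rather than $\MGL_*\otimes_{\MU_*}\AAA_*$ itself; exactness still follows (the discrepancy is faithfully flat), but to recover the functor on the nose one should use the projection formula and take sections over the affine atlas, which is what the paper's proof does.
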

\vspace{0.1in}

Using the theorem we deduce that
\begin{equation*}
\MGL^{**}(-)\otimes_{\MU_{*}}\AAA_{*}
\end{equation*}
is a ring cohomology theory on the subcategory of strongly dualizable objects of $\SH(S)$.
In the case of the Laurent polynomial ring $\Z[\beta,\beta^{-1}]$ on the Bott element, 
this observation forms part of the proof in \cite{spitzweck-oestvaer} of the motivic Conner-Floyd 
isomorphism 
\begin{equation*}
\xymatrix{
\MGL^{**}(-)\otimes_{\MU_{*}}\Z[\beta,\beta^{-1}] \ar[r]^-{\cong} & \KGL^{**}(-) }
\end{equation*}
for the motivic spectrum $\KGL$ representing homotopy algebraic $K$-theory.
\vspace{0.1in}

Define the category of Tate objects $\SH(S)_{\TT}$ as the smallest localizing triangulated subcategory 
of the motivic stable homotopy category containing the set $\TT$ of all mixed motivic spheres 
\begin{equation*}
\Sphere^{p,q}\equiv S^{p-q}_{s}\wedge \G^{q}
\end{equation*}
of smash products of the simplicial circle $S^{1}_{s}$ and the multiplicative group scheme $\G$.
The Tate objects are precisely the cellular spectra in the terminology of \cite{dugger-isaksen}.
Our choice of wording is deeply rooted in the theory of motives.
Since the inclusion $\SH(S)_{\TT}\subseteq\SH(S)$ preserves sums and $\SH(S)$ is compactly generated, 
a general result for triangulated categories shows that it acquires a right adjoint functor 
$\pp_{\SH(S),\TT}\colon\SH(S)\rightarrow\SH(S)_{\TT}$, 
which we call the Tate projection.
When $\EE$ is a Tate object and $\FF$ a motivic spectrum there is thus an
isomorphism
\begin{equation*}
\EE_{**}(\FF)\cong 
\EE_{**}(\pp_{\SH(S),\TT}\FF).
\end{equation*}
As in topology, 
it follows that the $\EE_{**}$-homology of $\FF$ is determined by the $\EE_{**}$-homology of mixed 
motivic spheres.
This observation is a key input in showing $(\EE_{*},\EE_{*}\EE)$ is a flat Hopf algebroid in Adams graded 
abelian groups provided one - and hence both - of the canonical maps $\EE_{**}\rightarrow\EE_{**}\EE$ is 
flat and the canonical map $\EE_*\EE\otimes_{\EE_*}\EE_{**}\rightarrow\EE_{**}\EE$ is an isomorphism.
Specializing to the example of algebraic cobordism allows us to form the algebraic stack 
$[\MGL_{*}/\MGL_{*}\MGL]$ and (\ref{label:introductioncartesiandiagram}).
\vspace{0.1in}

Our motivic analog of Landweber's exact functor theorem takes the following form, see Theorem \ref{landw-thm}. 
\begin{theorem*}
Suppose $\MMM_{*}$ is an Adams graded Landweber exact $\MU_{*}$-module.
Then there exists a motivic spectrum $\EE$ in $\SH(S)_{\TT}$ and a natural isomorphism 
\begin{equation*}
\EE_{\ast\ast}(-)\cong
\MGL_{**}(-)\otimes_{\MU_{*}}\MMM_{*}
\end{equation*}
of homology theories on $\SH(S)$.
\\ \indent
In addition, 
if $\MMM_{*}$ is a graded $\MU_{*}$-algebra then $\EE$ acquires a quasi-multiplication which 
represents the ring structure on the corresponding Landweber exact theory. 
\end{theorem*}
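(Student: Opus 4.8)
The plan is to realize the functor $h_{**} := \MGL_{**}(-)\otimes_{\MU_{*}}\MMM_{*}$ by an object of $\SH(S)_{\TT}$ using Brown representability, and afterwards to upgrade this to a multiplicative statement. The first point is that $h_{**}$ is a homology theory on $\SH(S)$; this is (the homology-theory part of) Theorem \ref{mot-landweber}, and it is the step in which Landweber exactness of $\MMM_{*}$ is genuinely needed: $\MMM_{*}$ need not be a flat $\MU_{*}$-module (already for $K$-theoretic coefficients it is not), and what is used instead is that $-\otimes_{\MU_{*}}\MMM_{*}$ is exact on finitely presented $\MGL_{*}\MGL$-comodules, which follows from Landweber's filtration theorem transported along the cartesian square (\ref{label:introductioncartesiandiagram}). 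Since $\MGL$ is a Tate object one has $h_{**}(\FF)\cong h_{**}(\pp_{\SH(S),\TT}\FF)$ for all $\FF$, so $h_{**}$ is determined by its restriction to $\SH(S)_{\TT}$, on which it still carries the long exact sequences of cofibre sequences and commutes with coproducts.

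Next I would construct the representing object inside $\SH(S)_{\TT}$. This triangulated category is compactly generated by the mixed motivic spheres $\Sphere^{p,q}$, which are $\otimes$-invertible and hence strongly dualizable, so all of its compact objects are strongly dualizable. The assignment $\GG\mapsto h_{**}(\GG^{\dual})$ is therefore a cohomological functor on the compact objects of $\SH(S)_{\TT}$ taking finite coproducts to products, and Brown representability for the compactly generated category $\SH(S)_{\TT}$ yields an object $\EE\in\SH(S)_{\TT}$ representing it; unravelling the duality, this says $\EE_{**}(\GG)\cong h_{**}(\GG)$ naturally for compact $\GG$. Now $\EE_{**}(-)$ and $h_{**}(-)$ are homology theories on $\SH(S)_{\TT}$ which preserve coproducts and agree on the compact generators, so, the objects on which a natural transformation of homology theories is an isomorphism forming a localizing subcategory, the isomorphism holds on all of $\SH(S)_{\TT}$. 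Finally, for arbitrary $\FF\in\SH(S)$,
\[
\EE_{**}(\FF) \cong \EE_{**}(\pp_{\SH(S),\TT}\FF) \cong h_{**}(\pp_{\SH(S),\TT}\FF) \cong h_{**}(\FF),
\]
where the outer isomorphisms use that $\EE$ and $\MGL$ are Tate objects; this is the sought natural isomorphism of homology theories on $\SH(S)$.

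For the multiplicative refinement, let $\MMM_{*}$ be a graded $\MU_{*}$-algebra. Then $h_{**}$ is lax monoidal: the ring structure of $\MGL$ together with the multiplication and unit of $\MMM_{*}$ supply natural, associative and unital external products $h_{**}(\FF)\otimes_{h_{**}(\Sphere)}h_{**}(\GG)\to h_{**}(\FF\wedge\GG)$ together with a unit class $1\in h_{0,0}(\Sphere)$, the latter corresponding to a map $\Sphere\to\EE$. To realize the product by a morphism $\mu\colon\EE\wedge\EE\to\EE$ in $\SH(S)$ I would use the universal coefficient spectral sequence: by the cartesian square (\ref{label:introductioncartesiandiagram}) the pair $(\EE_{*},\EE_{*}\EE)$ is a flat Hopf algebroid, so the spectral sequence expresses $[\EE\wedge\EE,\EE]_{**}$ through $\Ext$-groups over $\EE_{*}\EE$ in which the external product determines a canonical class on the edge, while the obstructions to its realization — higher $\Ext$-terms, equivalently certain groups of phantom maps into $\EE$ — vanish. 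Homotopy-associativity and homotopy-unitality of the resulting $\mu$ are then forced by the corresponding identities for $\MMM_{*}$, checked through the same spectral sequence. The outcome is a quasi-multiplication, i.e.\ a multiplication in the homotopy category representing the ring structure on the Landweber exact theory; the argument provides no coherence, and no $A_{\infty}$- or $E_{\infty}$-structure is claimed.

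I expect the genuine difficulties to lie in two places. The first is internal to Theorem \ref{mot-landweber}: establishing that $h_{**}$ is a homology theory at all is the substantive point, and this is where Landweber exactness and the cartesian square (\ref{label:introductioncartesiandiagram}) — via the flatness of the Hopf algebroid $(\MGL_{*},\MGL_{*}\MGL)$ and its comparison with $(\MU_{*},\MU_{*}\MU)$ — carry the load. The second is the vanishing of the phantom and higher-$\Ext$ obstructions governing $[\EE\wedge\EE,\EE]$ in the multiplicative case, which again rests on the flatness of $(\EE_{*},\EE_{*}\EE)$ furnished by (\ref{label:introductioncartesiandiagram}) and on the good behaviour of the universal coefficient spectral sequence. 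By contrast, the passage to $\SH(S)_{\TT}$, the application of Brown representability, and the return to $\SH(S)$ via the Tate projection are formal once one knows that $\SH(S)$ is compactly generated, that $\MGL$ is cellular, and that the Tate projection behaves as recalled in the introduction.
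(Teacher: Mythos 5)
The central step of your argument does not go through as stated. You apply ``Brown representability for the compactly generated category $\SH(S)_{\TT}$'' to the functor $\GG\mapsto h_{**}(\GG^{\dual})$, which is defined only on the \emph{compact} objects of $\SH(S)_{\TT}$. Neeman's representability theorem for compactly generated categories represents cohomological functors defined on the \emph{whole} category that send arbitrary coproducts to products; representing a cohomological functor given only on compact objects (equivalently, representing a homology theory) is the assertion that $\SH(S)_{\TT}$ is a \emph{Brown category} in the sense of \cite{HPS}, and this is a strictly stronger property which fails for general compactly generated triangulated categories. The paper only knows it under a countability hypothesis: Lemma \ref{brown} establishes that $\SH(\Z)_{\TT}$ is a Brown category by combining \cite[Theorem 4.1.5]{HPS} with Voevodsky's countability result for $\SH(\Z)_{\f}$. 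Consequently the paper's proof of Theorem \ref{landw-thm} first produces $\EE$ over $S=\Spec(\Z)$ (and promotes the resulting isomorphism from $\SH(\Z)_{\caT,\f}$ to all of $\SH(\Z)$ via Proposition \ref{extension}), and then transports $\EE$ to an arbitrary base scheme by the base change results, Proposition \ref{base-change}, using that $\RRR f_*$ is a sum-preserving $\SH(S)_{\caT}$-module functor. Your proof omits both the reduction to $\Spec(\Z)$ and the base change step, and without them the representing object is not known to exist.

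The multiplicative part has a second problem. You propose to realize the product via a universal coefficient spectral sequence and assert that the higher $\Ext$/phantom obstructions vanish. This is unjustified and false in general: the paper's spectral sequence (Proposition \ref{spectral}) is only available for $\MGL$-modules, the projective-dimension-one statement (Theorem \ref{maps}) needs $\MMM_{*}$ concentrated in even degrees, and even then the $\Ext^{1}$-term is nonzero in examples (Remark \ref{blubb}); its vanishing in Theorem \ref{phantoms} requires the extra hypothesis that $\EE_{*}^{\Top}\EE^{\Top}$ be projective over $\EE_{*}^{\Top}$. None of these hypotheses is available in Theorem \ref{landw-thm}, and this is precisely why the conclusion is only a \emph{quasi}-multiplication. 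The paper instead obtains the map $\EE\wedge\EE\to\EE$ over $\Spec(\Z)$ by representing the induced morphism of homology theories $(\EE\wedge\EE)_{0}(-)\to\EE_{0}(-)$ --- again using the Brown category property, which determines the lift only modulo phantom maps --- and then descends to general $S$ by Lemma \ref{base-change-quasi-m}. Your diagnosis of where the work lies is accurate for Theorem \ref{mot-landweber} (flatness of the Hopf algebroid and the cartesian square), but for the representability statement itself the essential difficulty is Brown representability for \emph{homology} theories, which your appeal to compact generation does not supply.
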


When the base scheme is the integers $\Z$ we use motivic Landweber exactness and Voevodsky's
result that $\SH(\Z)_{\TT}$ is a Brown category \cite{voevodsky-icm},  
so that all homology theories are representable, 
to conclude the proof of the motivic exact functor theorem.
For more details and a proof of the fact that $\SH(\Z)_{\TT}$ is a Brown category we refer to 
\cite{mn-brown}. 
For a general base scheme we provide base change results which allow us to reduce to the case
of the integers.
The subcategory of Tate objects of the derived category of modules over $\MGL$ 
- relative to $\Z$ - 
turns also out to be a Brown category.
This suffices to show the above remains valid when translated verbatim to the setting of 
highly structured $\MGL$-modules.
Recall $\MGL$ is a motivic symmetric spectrum 
and the monoid axiom introduced in \cite{schwede-shipley.alg}
holds for the motivic stable structure \cite[Proposition 4.19]{Jardinestable}.
Hence the modules over $\MGL$ acquire a closed symmetric monoidal model structure.  
Moreover, 
for every cofibrant replacement of $\MGL$
in commutative motivic symmetric ring spectra
there is a Quillen equivalence between the 
corresponding module categories.
\vspace{0.1in}

We wish to emphasize the close connection between our results and the classical Landweber 
exact functor theorem.
In particular, if $\MMM_{*}$ is concentrated in even degrees
there exists a commutative ring spectrum 
$\EE^{\Top}$ in $\SH$ which represents the corresponding topological Landweber exact theory.
Although $\EE$ and $\EE^{\Top}$ are objects in widely different categories of spectra, 
it turns out there is an isomorphism
\begin{equation*}
\EE_{\ast\ast}\EE\cong
\EE_{**}\otimes_{\EE^{\Top}_{*}}\EE^{\Top}_{*}\EE^{\Top}.
\end{equation*}

In the last part of the paper we describe (co)operations and phantom maps between Landweber exact motivic spectra.
Using a universal coefficient spectral sequence we show that every $\MGL$-module $\EE$ gives rise to a surjection
\begin{equation}
\label{equation:surjection}
\xymatrix{
\EE^{p,q}(\MMM)\ar[r] &
\Hom^{p,q}_{\MGL_{**}}(\MGL_{**}\MMM,\EE_{**}), }
\end{equation}
and the kernel of (\ref{equation:surjection}) identifies with the $\Ext$-term 
\begin{equation}
\label{equation:kernel}
\Ext^{1,(p-1,q)}_{\MGL_{**}}(\MGL_{**}\MMM,\EE_{**}).
\end{equation}
Imposing the assumption that $\EE_{*}^{\Top}\EE^{\Top}$ be a projective $\EE_{*}^{\Top}$-module 
implies the given $\Ext$-term in (\ref{equation:kernel}) vanishes, 
and hence (\ref{equation:surjection}) is an isomorphism. 
The assumption on $\EE^{\Top}$ holds for unitary topological $K$-theory $\KU$ and localizations of 
Johnson-Wilson theories.
By way of example we compute the $\KGL$-cohomology of $\KGL$.
That is, 
using the completed tensor product we show there is an isomorphism of $\KGL^{**}$-algebras
\begin{equation*}
\xymatrix{
\KGL^{**}\KGL\ar[r]^-{\cong} & \KGL^{**}\widehat{\otimes}_{\KU^*}\KU^*\KU. }
\end{equation*}
By \cite{adams-clarke} the group $\KU^1\KU$ is trivial and $\KU^0\KU$ is uncountable.
We also show that $\KGL$ does not support any nontrivial phantom map.
Adopting the proof to $\SH$ reproves the analogous result for $\KU$.
The techniques we use can further be utilized to construct a Chern character in $\SH(S)$ from $\KGL$ to the 
periodized rational motivic Eilenberg-MacLane spectrum representing rational motivic cohomology.
For smooth schemes over fields we prove there is an isomorphism between rational motivic cohomology $\MQ$ 
and the Landweber spectrum representing the additive formal group law over $\Q$.
This leads to explicit computations of rational algebraic cobordism groups, 
cf.~Corollary \ref{mglrational}.
\begin{theorem*}
If $X$ is a smooth scheme over a field and $\LL^*$ denotes the (graded) Lazard ring, then there is an isomorphism
\[ 
\MGL^{**}(X)\otimes_\Z\Q
\cong
\MQ^{**}(X)\otimes_\Z\LL^*.
\]
\end{theorem*}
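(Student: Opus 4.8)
The plan is to compute $\MGL^{**}(X)\otimes_{\Z}\Q$ by identifying the rationalization of $\MGL$ with an explicit wedge of Tate twists of $\MQ$. Since $X$ is smooth of finite type over the field $k$, the suspension spectrum $\Sigma^{\infty}X_{+}$ is a compact object of $\SH(k)$, so maps out of it commute with the homotopy colimit computing rationalization; hence $\MGL^{**}(X)\otimes_{\Z}\Q\cong\EE^{**}(X)$, where I write $\EE$ for $\MGL\smash\SQ$. Now $\MU_{*}\otimes\Q$, regarded as a graded $\MU_{*}$-algebra via the localization $\MU_{*}\to\MU_{*}\otimes\Q$, is Landweber exact: for every prime $p$ the element $v^{(p)}_{0}=p$ acts invertibly, so the quotient by $p$ vanishes and all further regularity conditions are vacuous. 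By the motivic Landweber exact functor theorem (Theorem \ref{landw-thm}) the theory $\MGL_{**}(-)\otimes_{\MU_{*}}(\MU_{*}\otimes\Q)=\MGL_{**}(-)\otimes\Q$ is represented by a Tate spectrum, and since $\MGL$ is Tate and Tate objects are stable under $\smash\SQ$ and homotopy colimits, that representing object is $\EE$.

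Next I would invoke the identification, established earlier for a field base, of rational motivic cohomology $\MQ$ with the Landweber spectrum $\EE_{+}$ attached to the additive formal group law over $\Q$. Over $\Q$ the universal formal group law is strictly isomorphic to the additive one via the logarithm, so the two morphisms $\Spec(\MU_{*}\otimes\Q)\to[\MU_{*}/\MU_{*}\MU]$ classifying the universal, respectively the additive, formal group law become $2$-isomorphic in the moduli stack of strict graded formal group laws. By the stack-theoretic description of Landweber exact theories reviewed in Section \ref{section:2} — such a theory depends only on the induced quasi-coherent sheaf on $[\MGL_{*}/\MGL_{*}\MGL]$, equivalently on the associated point of $[\MU_{*}/\MU_{*}\MU]$ — the spectrum $\EE$ may therefore equally be computed from $\MU_{*}\otimes\Q$ equipped with the additive structure map $\MU_{*}\to\Q\hookrightarrow\MU_{*}\otimes\Q$. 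Choosing a homogeneous $\Q$-basis $\{x^{\alpha}\}$ of $\MU_{*}\otimes\Q\cong\Q[x_{1},x_{2},\dots]$, with $|x_{i}|=2i$, exhibits this module as a direct sum over $\alpha$ of shifts of $\Q$ (with its additive structure) into Adams degree $2|\alpha|$, each summand being Landweber exact. As the associated homology theories then add up and an Adams degree $2n$ shift corresponds to a Tate twist by $(2n,n)$, both $\EE=\MGL\smash\SQ$ and $\bigvee_{\alpha}\Sigma^{2|\alpha|,|\alpha|}\MQ$ represent the homology theory $\bigoplus_{\alpha}\Sigma^{2|\alpha|,|\alpha|}\MQ_{**}(-)$; since both are Tate spectra and the representing Tate spectrum is unique,
\begin{equation*}
\EE\ \simeq\ \bigvee_{\alpha}\Sigma^{2|\alpha|,|\alpha|}\MQ .
\end{equation*}

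Finally, combining the two, $\MGL^{**}(X)\otimes_{\Z}\Q\cong\EE^{**}(X)\cong\bigoplus_{\alpha}\MQ^{\,*-2|\alpha|,\,*-|\alpha|}(X)$, where the last isomorphism again uses compactness of $\Sigma^{\infty}X_{+}$ so that maps out of it commute with the infinite wedge. Since motivic cohomology vanishes in negative weights, in each bidegree only the finitely many $\alpha$ with weight bounded by the second index contribute, so the sum is finite. Identifying $\{x^{\alpha}\}$ with a homogeneous basis of the graded Lazard ring via the Lazard--Quillen isomorphism $\LL\cong\MU_{*}$, and reading off the grading conventions of Section \ref{section:3}, the right-hand side is exactly $\MQ^{**}(X)\otimes_{\Z}\LL^{*}$; as everything is natural in $X$ and compatible with the multiplications — the Landweber spectrum of a graded $\MU_{*}$-algebra carries a quasi-multiplication representing the ring structure — this is an isomorphism of bigraded rings. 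The genuinely substantial input is the prior identification $\MQ\simeq\EE_{+}$; within the present argument the main obstacle is the careful tracking of the Adams/Tate bigrading through the splitting of $\EE$ and the passage from a wedge of representing spectra to a direct sum in cohomology, both handled by compactness of $\Sigma^{\infty}X_{+}$ together with the vanishing of negative-weight motivic cohomology.
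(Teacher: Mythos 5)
Your argument is essentially the paper's own route: identify $\MGL_\Q$ as the Landweber spectrum of the $\MU_*$-module $\MU_*\otimes\Q$, use the logarithm (equivalently, the fact that the Landweber theory depends only on the classifying map to $\FG^s$ up to $2$-isomorphism, the point behind Remark \ref{landweberandsmash}) to replace the universal structure map by the additive one, split the module into shifted copies of $\Q$ to get $\MGL_\Q\simeq\bigvee_\alpha\Sigma^{2|\alpha|,|\alpha|}\LQ$, substitute $\LQ\simeq\MQ$ over a field (Corollary \ref{lqismq}, which you correctly identify as the substantial input), and finish by compactness of $\Sigma^\infty X_+$. The paper packages the splitting as the ring isomorphism $\MGL_\Q\cong\MGL\wedge\LQ\cong\LQ[b_1,\ldots]$ (Theorem \ref{splitmgl}), but the content is the same.

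The one step you should tighten is the appeal to ``the representing Tate spectrum is unique.'' Uniqueness of representing objects for a homology theory needs either an explicit comparison map (then cellularity plus the $\pi_{**}$-isomorphism gives an equivalence) or Brown representability for natural transformations; the paper only establishes the latter for $\SH(S)_\caT$ when $S$ is covered by spectra of countable rings (Lemma \ref{brown}), which an arbitrary field need not satisfy. The paper sidesteps this by writing down the map $\LQ[b_1,\ldots]\to\MGL\wedge\LQ$ of ring spectra under $\LQ$ directly, using the classes $b_i\in\pi_{2i,i}(\MGL\wedge\LQ)$, and checking it is a $\pi_{**}$-isomorphism of cellular spectra. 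Alternatively you can run your uniqueness argument over $\Spec(\Z)$ (where the Brown property holds) with $\LQ$ in place of $\MQ$ and then base-change to the field before invoking $\LQ\simeq\MQ$. Either repair is routine; with it, your proof is complete.
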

For finite fields it follows that $\MGL^{2**}\otimes_\Z\Q\cong\Q\otimes_\Z\LL^{*}$ and $\MGL^{**}\otimes_\Z\Q$ is 
the trivial group if $(*,*)\not\in\Z(2,1)$.
Number fields provide other examples for which $\MGL^{**}\otimes_\Z\Q$ can now be computed explicitly 
(in terms of the number of real and complex embeddings).
The theorem suggests the spectral sequence associated to the slice tower of the algebraic cobordism spectrum takes 
the expected form, 
and that it degenerates rationally, 
cf.~the works of Hopkins-Morel reviewed in \cite{LM} and Voevodsky \cite{voe-slice}.
\vspace{0.1in}

Inspired by the results herein we make some rather speculative remarks concerning future works.
The all-important chromatic approach to stable homotopy theory acquires deep interplays with the 
algebraic geometry of formal groups.
Landweber exact algebras over Hopf algebroids represent a central theme in this endeavor, 
leading for example to the bicomplete closed symmetric monoidal abelian category of $\BP_*\BP$-comodules.
The techniques in this paper furnish a corresponding Landweber exact motivic Brown-Peterson spectrum 
$\MBP$ equivalent to the constructions in \cite{hu-kriz} and \cite{vezzosi}.
The object $\MBP_*\MBP$ and questions in motivic chromatic theory at large can be investigated along 
the lines of this paper.
An exact analog of Bousfield's localization machinery in motivic stable homotopy theory was worked out 
in \cite[Appendix A]{roendigsostvar}, 
cf.~also \cite{hornbostel} for a discussion of the chromatic viewpoint.
In a separate paper \cite{nmp-nonregular} we dispense with the regularity assumption on $S$.
The results in this paper remain valid for noetherian base schemes of finite Krull dimension. 
Since this generalization uses arguments which are independent of the present work, 
we deferred it to {\em loc. cit.}
The slices of motivic Landweber spectra are studied in \cite{spitzweck} by the third author.
\vspace{0.1in}

{\bf Acknowledgments.} 
We wish to thank J.~Hornbostel, O.~R{\"o}ndigs and the referee for helpful comments on this paper.

\section{Preliminaries on algebraic stacks}
\label{section:2}
By a stack we shall mean a category fibered in groupoids over the site 
comprised by the category of commutative rings endowed with the $\fpqc$-topology. 
A stack $\Xh$ is algebraic if its diagonal is representable and affine, 
and there exists an affine scheme $U$ together with a faithfully flat map $U\to\Xh$, 
called a presentation of $\Xh$.
We refer to \cite{goerss}, 
\cite{niko} and \cite{goerssstacks} for motivation and basic properties of these notions.
\begin{lemma} 
\label{cart-lemma}
Suppose there are $2$-commutative diagrams of algebraic stacks 
\begin{equation}
\label{einsheinz} 
\xymatrix{ \Zh \ar[r]\ar[d] & \Zh'\ar[d]\\ \Xh\ar[r]& \Xh' }
\;\;\;\;\;\;
\xymatrix{ \Yh \ar[r]\ar[d]_-{\pi} & \Yh'\ar[d]\\ \Xh\ar[r]& \Xh' }
\end{equation}
where $\pi$ is faithfully flat.
Then the left hand diagram in (\ref{einsheinz}) is cartesian if and only if the naturally 
induced commutative diagram
\begin{equation}
\label{zweifrei} 
\xymatrix{ \Zh\times_{\Xh}\Yh \ar[r]\ar[d] & \Zh'\times_{\Xh'}\Yh' \ar[d]\\ \Yh\ar[r]& \Yh'}
\end{equation}
is cartesian.
\end{lemma}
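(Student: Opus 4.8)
The plan is to exploit the faithful flatness of $\pi$ together with the fact that a square of algebraic stacks is cartesian if and only if it becomes cartesian after base change along a faithfully flat morphism to one of its corners. First I would record the key formal input: if $\Yh \xrightarrow{\pi} \Xh$ is faithfully flat and we are given a $2$-commutative square with target corner $\Xh$, then that square is cartesian if and only if its pullback along $\pi$ is cartesian; this is the descent statement for cartesian squares, valid because the $2$-category of algebraic stacks has fpqc descent and $\pi$ is an fpqc cover (here one uses that the left-hand square in \eqref{einsheinz} has lower-left corner $\Xh$, so pulling back along $\pi\colon\Yh\to\Xh$ makes sense). So I would reduce the ``if and only if'' to comparing the left square of \eqref{einsheinz} with its base change along $\pi$.

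Next I would make the base change along $\pi$ explicit and identify it with \eqref{zweifrei}. Pulling the left-hand square of \eqref{einsheinz} back along $\pi\colon\Yh\to\Xh$ replaces $\Xh$ by $\Yh$, replaces $\Zh$ by $\Zh\times_{\Xh}\Yh$, and — using the given map $\Yh'\to\Xh'$ together with the bottom map $\Xh\to\Xh'$ and the commutativity of the right-hand square of \eqref{einsheinz} — replaces $\Xh'$ by $\Yh'$ and $\Zh'$ by $\Zh'\times_{\Xh'}\Yh'$. The resulting square is exactly \eqref{zweifrei}. The only point needing care is the bookkeeping of the $2$-morphisms: one must check that the canonical comparison $1$-morphism $\Zh\times_\Xh\Yh \to (\Zh'\times_{\Xh'}\Yh')\times_{\Yh'}\Yh$, a priori only defined up to $2$-isomorphism, really is the one induced by the universal properties, so that ``the left square base-changed along $\pi$'' and ``the square \eqref{zweifrei}'' agree as squares and not merely cornerwise. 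I would dispatch this by writing both as limits of the same diagram of stacks indexed over a suitable shape and invoking uniqueness of limits up to canonical equivalence.

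Combining the two steps gives the equivalence: the left square of \eqref{einsheinz} is cartesian $\iff$ its base change along the faithfully flat $\pi$ is cartesian (Step 1) $\iff$ \eqref{zweifrei} is cartesian (Step 2). For the converse direction of Step 1 — deducing that the original square is cartesian from the cartesianness of its pullback — I would either cite fpqc descent for the fibered category of cartesian squares directly, or argue by hand: form the canonical map from $\Zh$ to the fiber product $\Zh'\times_{\Xh'}\Xh$, observe it is a map of algebraic stacks which becomes an equivalence after the faithfully flat base change $-\times_\Xh\Yh$, and conclude it is an equivalence since faithfully flat base change is conservative on algebraic stacks (a morphism of algebraic stacks that is an equivalence after faithfully flat pullback is an equivalence).

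I expect the main obstacle to be Step 2, the $2$-categorical bookkeeping: verifying that the square obtained by base-changing \eqref{einsheinz} along $\pi$ is canonically the square \eqref{zweifrei}, with all coherence $2$-cells accounted for, rather than merely agreeing corner by corner. The faithful-flatness/descent input of Step 1 is essentially a citation, and the conservativity argument closing the converse is standard; it is the compatibility of the various fiber products and the structural $2$-morphisms that requires the most attention, and I would organize it by presenting everything as a single $2$-limit to avoid chasing cells explicitly.
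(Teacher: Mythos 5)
Your proposal is correct and follows essentially the same route as the paper: both reduce to the canonical comparison $1$-morphism $\mathfrak{c}\colon\Zh\to\Zh'\times_{\Xh'}\Xh$, identify its base change along $\pi$ with the comparison morphism of \eqref{zweifrei} via $(\Zh'\times_{\Xh'}\Xh)\times_{\Xh}\Yh\cong\Zh'\times_{\Xh'}\Yh\cong(\Zh'\times_{\Xh'}\Yh')\times_{\Yh'}\Yh$, and conclude by conservativity of faithfully flat base change. The paper simply compresses your ``Step 1'' and ``Step 2'' into a single chain of isomorphisms and dismisses the forward direction as trivial, but the substance is identical.
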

\begin{proof} 
The base change of the canonical $1$-morphism $\mathfrak{c}:\Zh\to\Zh'\times_{\Xh'}\Xh$ over $\Xh$
along $\pi$ identifies with the canonically induced $1$-morphism
\[ 
\xymatrix{
\Zh\times_{\Xh}\Yh\ar[r]^-{\mathfrak{c}\times 1} &
(\Zh'\times_{\Xh'}\Xh)\times_{\Xh}\Yh\cong\Zh'\times_{\Xh'}\Yh
\cong (\Zh'\times_{\Xh'}\Yh')\times_{\Yh'}\Yh. }
\]
This is an isomorphism provided (\ref{zweifrei}) is cartesian; hence so is $\mathfrak{c}\times 1$. 
By faithful flatness of $\mathfrak{\pi}$ it follows that $\mathfrak{c}$ is an isomorphism.
The reverse implication holds trivially.
\end{proof}
\begin{corollary} 
\label{cart-corollary} 
Suppose $\Xh$ and $\Yh$ are algebraic stacks, $U\to\Xh$ and $V\to\Yh$ are presentations and there is a 
$2$-commutative diagram:
\begin{equation}
\label{drei}
\xymatrix{
U\ar[r]\ar[d] & V\ar[d]\\ 
\Xh\ar[r] & \Yh }
\end{equation}
Then (\ref{drei}) is cartesian if and only if one - and hence both - of the commutative diagrams ($i=1,2$) 
\begin{equation}
\xymatrix{
U\times_{\Xh}U \ar[r]\ar[d]_-{pr_i} & V\times_{\Yh}V\ar[d]^-{pr_i}\\ 
U\ar[r] & V}
\end{equation}
is cartesian.
\end{corollary}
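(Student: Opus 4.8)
The plan is to deduce Corollary~\ref{cart-corollary} from Lemma~\ref{cart-lemma} by a single well-chosen substitution, together with a standard descent observation for presentations. First I would record the elementary fact that if $p\colon U\to\Xh$ is a presentation of an algebraic stack, then the induced square
\begin{equation*}
\xymatrix{
U\times_{\Xh}U\ar[r]^-{pr_2}\ar[d]_-{pr_1} & U\ar[d]^-{p}\\
U\ar[r]_-{p} & \Xh }
\end{equation*}
is cartesian by the very definition of the fibre product, and that $pr_1$ (equivalently $pr_2$) is again faithfully flat since it is the base change of the faithfully flat map $p$ along $p$. This is precisely the input needed to feed the diagram~(\ref{drei}) into Lemma~\ref{cart-lemma}.

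Next I would apply Lemma~\ref{cart-lemma} with the following dictionary: take $\Xh:=U$, $\Xh':=V$ in the notation of the lemma, take $\Zh:=\Xh$, $\Zh':=\Yh$ (the stacks of the corollary), and take $\Yh:=U$, $\Yh':=V$ with the map $\pi\colon\Yh\to\Xh$ being $p\colon U\to U$ replaced by $pr_1\colon U\times_{?}\cdots$; more precisely one must be slightly careful, because the lemma wants both vertical maps in the left square to land in the \emph{same} pair of stacks $\Xh\to\Xh'$. The clean way is to instead set $\Xh_{\mathrm{lem}}:=U$, $\Xh'_{\mathrm{lem}}:=V$, let the left square of~(\ref{einsheinz}) be
\begin{equation*}
\xymatrix{
U\times_{\Xh}U\ar[r]\ar[d]_-{pr_1} & V\times_{\Yh}V\ar[d]^-{pr_1}\\
U\ar[r] & V, }
\end{equation*}
which is the $i=1$ square of the corollary, and let the right square of~(\ref{einsheinz}) be~(\ref{drei}) itself with the roles arranged so that $\pi\colon U\to U$ is the identity --- no, this does not match either. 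The genuinely correct bookkeeping is: in Lemma~\ref{cart-lemma} put $\Xh:=U$, $\Xh':=V$, put $\Zh:=\Xh$ (corollary's stack), $\Zh':=\Yh$ (corollary's stack), with the maps $\Zh\to\Xh$ and $\Zh'\to\Xh'$ being \emph{not} available directly; rather one uses that a presentation $U\to\Xh$ together with~(\ref{drei}) gives by adjunction/base change the data $\Xh\times_{\Yh}V$ and one compares. I would therefore organize the proof as: the square~(\ref{drei}) is cartesian iff the canonical $1$-morphism $\mathfrak{c}\colon U\to V\times_{\Yh}\Xh$ is an isomorphism; pulling back along the faithfully flat $V\to\Yh$, equivalently along $U\times_{\Xh}U\to U$ after identifying $V\times_{\Yh}\Xh\times_{\Xh}U=V\times_{\Yh}U$, reduces this to an isomorphism of objects over $U$, and unwinding the fibre products shows that statement is exactly the assertion that the $i=1$ square of the corollary is cartesian. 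The $i=1\Leftrightarrow i=2$ equivalence is then immediate from the symmetry interchanging the two projections.

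The main obstacle I anticipate is purely organizational rather than conceptual: getting the $2$-categorical fibre-product identifications to line up so that Lemma~\ref{cart-lemma} applies \emph{verbatim}, in particular checking that the ``naturally induced'' map of the lemma's conclusion~(\ref{zweifrei}) really is the map appearing in the corollary's square, and that the faithful flatness hypothesis on $\pi$ is supplied by the presentation. Once the substitution $(\Xh,\Xh',\Yh,\pi)\rightsquigarrow(U,V,U,pr_1)$ (with the presentations furnishing $\pi$ faithfully flat) is pinned down and one observes that $\Zh\times_{\Xh}\Yh$ becomes $U\times_{\Xh}U$ and $\Zh'\times_{\Xh'}\Yh'$ becomes $V\times_{\Yh}V$, the corollary drops out of the lemma with essentially no further computation, and the reverse implication is again trivial as in the lemma. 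I would keep the written proof to a few lines, citing Lemma~\ref{cart-lemma} for the substance and leaving the fibre-product identifications to the reader.
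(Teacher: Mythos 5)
Your final argument is correct and is in substance the paper's proof, which is a one--line citation of Lemma \ref{cart-lemma} using that presentations are faithfully flat; but you never quite land on the clean substitution, so let me record it. In Lemma \ref{cart-lemma} take both $\Zh$ and $\Yh$ (of the lemma) to be $U$, both $\Zh'$ and $\Yh'$ to be $V$, let the lemma's bottom map $\Xh\to\Xh'$ be the corollary's $\Xh\to\Yh$, and let $\pi$ be the presentation $U\to\Xh$, which is faithfully flat. Then \emph{both} squares of (\ref{einsheinz}) are the square (\ref{drei}), and the induced square (\ref{zweifrei}) is exactly the $i=2$ square of the corollary, since $\Zh\times_{\Xh}\Yh=U\times_{\Xh}U\to U$ is the second projection and $\Zh'\times_{\Xh'}\Yh'=V\times_{\Yh}V$. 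Your attempts with $\Xh_{\mathrm{lem}}=U$, $\Xh'_{\mathrm{lem}}=V$ cannot work, as you noticed, because nothing maps from the stack $\Xh$ to its presentation $U$. One correction to your fallback argument: the comparison map $\mathfrak{c}\colon U\to V\times_{\Yh}\Xh$ lives over $\Xh$, so the faithfully flat base change to perform is along $U\to\Xh$ (not along $V\to\Yh$); its pullback is $U\times_{\Xh}U\to V\times_{\Yh}\Xh\times_{\Xh}U\cong (V\times_{\Yh}V)\times_{V,pr_2}U$, an isomorphism precisely when the $i=2$ square is cartesian. With that fixed, your descent argument is simply the proof of Lemma \ref{cart-lemma} inlined in this special case, and the passage between $i=1$ and $i=2$ follows from the swap symmetry of the two fibre products (or from the fact that each square is equivalent to (\ref{drei}) being cartesian).
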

\begin{proof} 
Follows from Lemma \ref{cart-lemma} since presentations are faithfully flat.
\end{proof}

A presentation $U\to{\mathfrak X}$ yields a Hopf algebroid or cogroupoid object in commutative rings
$(\Gamma(\cO_U),\Gamma(\cO_{U\times_{{\mathfrak X}}U}))$. 
Conversely, 
if $(A,B)$ is a flat Hopf algebroid, 
denote by $[\Spec(A)/\Spec(B)]$ the associated algebraic stack.
We note that by \cite[Theorem 8]{niko} there is an equivalence of $2$-categories between flat Hopf algebroids 
and presentations of algebraic stacks.

Let $\Qc_{\caX}$ denote the category of quasi-coherent $\caO_{\caX}$-modules and $\caA\in\Qc_{\caX}$ a monoid, 
or quasi-coherent sheaf of $\caO_{\caX}$-algebras. 
If $X_0$ is a scheme and $\pi:X_0\to\caX$ faithfully flat, 
then $\caA$ is equivalent to the datum of the $\caO_{X_0}$-algebra $\caA(X_0)\equiv\pi^*\caA$ combined with 
a descent datum with respect to $\xymatrix{X_1\equiv X_0\times_{\caX} X_0 \ar@<0.5ex>[r] \ar@<-0.5ex>[r] &  X_0}$.
When $X_0=\Spec(A)$ is affine, $X_1=\Spec(\Gamma)$ is affine, 
$(A,\Gamma)$ a flat Hopf algebroid and $\caA(X_0)$ a $\Gamma$-comodule algebra.

Denote the adjunction between left $\caA$-modules in $\Qc_{\caX}$ and left $\caA(X_0)$-modules in $\Qc_{X_0}$ by:
\[ 
\xymatrix{ \pi^*:\caA-\Mod \ar@<0.5ex>[r]  & \ar@<0.5ex>[l] \caA(X_0)-\Mod:\pi_* } 
\]
Since $\pi_*$ has an exact left adjoint $\pi^*$ it preserves injectives and there are isomorphisms  
\begin{equation}
\label{eins}
\Ext^n_{\caA}(\caM,\pi_*\caN)\cong
\Ext^n_{\caA(X_0)}(\pi^*\caM,\caN)
\end{equation}
between $\Ext$-groups in the categories of quasi-coherent left $\caA$- and $\caA(X_0)$-modules.

Now assume that $i:\caU\hookrightarrow\caX$ is the inclusion of an open algebraic substack.
Then \cite[Propositions 20, 22]{niko} imply $i_*:\Qc_\caU\hookrightarrow\Qc_\caX$
is an embedding of a thick subcategory; 
see also \cite[section 3.4]{niko} for a discussion of the functoriality of $\Qc_\caX$ with respect to $\caX$.
For $\caF,\caG\in\Qc_\caU$ the Yoneda description of Ext-groups gives isomorphisms
\begin{equation}
\label{zwei}
\Ext^n_\caA(\caA\otimes_{\caO_\caX} i_*\caF, \caA\otimes_{\caO_\caX} i_*\caG)\cong
\Ext^n_{i_*\caA}(i^*\caA\otimes_{\caO_\caU}\caF, i^*\caA\otimes_{\caO_\caU}\caG).
\end{equation}
We shall make use of the following general result in the context in motivic homotopy theory, 
cf.~the proof of Theorem \ref{phantoms}.
\begin{proposition}
\label{ext}
Suppose there is a $2$-commutative diagram of algebraic stacks
\[ 
\xymatrix{ & X_0\ar[dd]^(.3)\pi & \\ 
X\ar[ur]^\alpha \ar[rr]^(.3)f \ar[dd]_{\pi_X}\ar[rd]_{f_X}& & Y\ar[dd]^{\pi_Y}\ar[dl]^{f_Y}\\
& \caX & \\ 
\caU\ar@{^{(}->}[ur]^{i_X} \ar@{^{(}->}[rr]^i & & \caU'\ar@{_{(}->}[ul]_{i_Y}} 
\] 
where $X$, $Y$, $X_0$ are schemes, $\pi$, $\pi_X$, $\pi_Y$ faithfully flat, 
and $i_X$, $i_Y$ (hence also $i$) open inclusions of algebraic substacks.
If $\pi_Y^*\pi_{Y,*}\caO_Y\in \Qc_Y$ is projective then
\[
\Ext^n_{\caA(X_0)}(\caA(X_0)\otimes_{\caO_{X_0}}\pi^*f_{Y,*}\caO_Y,
\caA(X_0)\otimes_{\caO_{X_0}}\alpha_* \caO_X )
\]
\[ 
\cong 
\left\{ 
\begin{array}{lcc} 0 &  & n\ge 1,\\
\Hom_{\caO_Y}(\pi^*_Y\pi_{Y,*}\caO_Y, 
\caA(Y)\otimes_{\caO_Y}f_*\caO_X) &  & n=0.\end{array}\right. \]
\end{proposition}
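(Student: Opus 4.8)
The plan is to reduce the computation of these $\Ext$-groups over $\caA(X_0)$ to $\Ext$-groups over the algebra $\caA$ on the stack $\caX$, and then localize to the open substacks where the relevant modules live. First I would invoke the isomorphism (\ref{eins}): since $\pi\colon X_0\to\caX$ is faithfully flat and $\pi^*$ is exact, the functor $\pi_*$ preserves injectives, so for any quasi-coherent left $\caA$-modules $\caM,\caN$ one has $\Ext^n_{\caA(X_0)}(\pi^*\caM,\caN)\cong\Ext^n_{\caA}(\caM,\pi_*\caN)$. The two arguments appearing in the statement are of this shape: $\caA(X_0)\otimes_{\caO_{X_0}}\pi^*f_{Y,*}\caO_Y = \pi^*(\caA\otimes_{\caO_\caX}f_{Y,*}\caO_Y)$, and likewise the source of the second slot rewrites as $\pi^*$ applied to $\caA\otimes_{\caO_\caX}(\pi_* \alpha_*\caO_X)$. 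So the group in question is $\Ext^n_\caA(\caA\otimes_{\caO_\caX}f_{Y,*}\caO_Y,\ \pi_*(\caA\otimes_{\caO_{X_0}}\alpha_*\caO_X))$.

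Next I would exploit the factorizations through the open substacks. Since $f_X=i_X\circ(\text{something})$ and $f_Y=i_Y\circ(\text{something})$ — more precisely $f_{Y,*}\caO_Y$ and the $\caX$-pushforward of $\alpha_*\caO_X$ are both pushed forward from the open substacks $\caU'$ and $\caU$ respectively via $i_Y$, $i_X$ — I can apply the Yoneda-Ext comparison (\ref{zwei}) together with the fact that $i_*\colon\Qc_\caU\hookrightarrow\Qc_\caX$ is a thick embedding (from \cite[Propositions 20, 22]{niko}). This moves the $\Ext$-computation onto the smaller stack $\caU$ (using that $\caU\subseteq\caU'$, so both sheaves restrict there), where $\caA$ becomes $i^*\caA$, the source becomes $i^*\caA\otimes_{\caO_\caU}(\text{restriction of }f_{Y,*}\caO_Y)$ and the target $i^*\caA\otimes_{\caO_\caU}(\text{restriction of }\alpha_*\caO_X)$. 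Passing back down to a scheme-level presentation of $\caU$ via (\ref{eins}) again, and using that over $Y$ the pullback $f_Y$ is faithfully flat with $Y$ a scheme, the Ext-groups collapse to $\Ext^n_{\caO_Y}$ of the corresponding $\caO_Y$-modules, namely $\Ext^n_{\caO_Y}(\pi_Y^*\pi_{Y,*}\caO_Y,\ \caA(Y)\otimes_{\caO_Y}f_*\caO_X)$, using the commutativity of the diagram to identify the pulled-back algebra $\caA(Y)$ and the composite $f=f_Y\circ(\cdots)$ so that the target is $\caA(Y)\otimes_{\caO_Y}f_*\caO_X$.

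Finally, the projectivity hypothesis enters: since $\pi_Y^*\pi_{Y,*}\caO_Y\in\Qc_Y$ is a projective object, $\Ext^n_{\caO_Y}(\pi_Y^*\pi_{Y,*}\caO_Y,-)$ vanishes for $n\ge 1$ and equals $\Hom_{\caO_Y}(\pi_Y^*\pi_{Y,*}\caO_Y,-)$ for $n=0$, which gives exactly the asserted answer. I expect the main obstacle to be the bookkeeping in the middle step: carefully checking that the various base-change and restriction identifications (particularly that $f_{Y,*}\caO_Y$ and the $\caX$-pushforward of $\alpha_*\caO_X$ genuinely come from the open substacks, and that restricting to the smaller $\caU$ is harmless because $\caU\hookrightarrow\caU'$) are compatible with the $\caA$-module structures and with the $2$-commutativity of the diagram, so that one really lands on $\caA(Y)\otimes_{\caO_Y}f_*\caO_X$ and not some twisted variant. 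The exactness of all the pullback functors and the thickness of the $i_*$ embeddings are what make these identifications legitimate, but the correct order of reductions — localize on the stack first, or descend to schemes first — needs to be chosen so that projectivity of $\pi_Y^*\pi_{Y,*}\caO_Y$ is available precisely when the last $\Ext$ is taken.
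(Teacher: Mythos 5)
Your overall strategy is the paper's: apply (\ref{eins}) to pass from $\caA(X_0)$-modules to $\caA$-modules on $\caX$, localize to an open substack via (\ref{zwei}), descend again via (\ref{eins}) to the scheme $Y$, and finish with the projectivity hypothesis. But two steps in the middle are off in a way that matters.

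First, the localization must be to $\caU'$ (via $i_Y$), not to the smaller substack $\caU$. The isomorphism (\ref{zwei}) applies to sheaves of the form $i_*\caF$ with $\caF\in\Qc_\caU$, i.e.\ to \emph{pushforwards} from the open substack, not to restrictions. After the projection formula the two arguments are $\caA\otimes_{\caO_\caX}i_{Y,*}\pi_{Y,*}\caO_Y$ and $\caA\otimes_{\caO_\caX}i_{Y,*}(i_*\pi_{X,*}\caO_X)$: both are pushforwards from $\caU'$ (the second because $i_{X,*}=i_{Y,*}\circ i_*$), whereas $f_{Y,*}\caO_Y$ is in general \emph{not} a pushforward from $\caU$. ``Restricting'' it to $\caU$ would genuinely change the $\Ext$-groups, so your reduction as written is not licensed. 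After localizing to $\caU'$ one uses the commutativity $\pi_Y\circ f=i\circ\pi_X$ to replace $i_*\pi_{X,*}\caO_X$ by $\pi_{Y,*}f_*\caO_X$, and only then applies (\ref{eins}) for the presentation $\pi_Y\colon Y\to\caU'$.

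Second, that last application of (\ref{eins}) lands in
$\Ext^n_{\caA(Y)}\bigl(\caA(Y)\otimes_{\caO_Y}\pi_Y^*\pi_{Y,*}\caO_Y,\ \caA(Y)\otimes_{\caO_Y}f_*\caO_X\bigr)$,
an $\Ext$ of $\caA(Y)$-modules, not the $\Ext^n_{\caO_Y}$ you claim. The collapse to $\Ext$ over $\caO_Y$ is not automatic (it would require, e.g., flatness of $\caA$ over $\caO_\caX$, which is not assumed). The correct finish is to observe that extension of scalars carries the projective $\caO_Y$-module $\pi_Y^*\pi_{Y,*}\caO_Y$ to a projective $\caA(Y)$-module, which kills the higher $\Ext$ over $\caA(Y)$, and then to reduce the $n=0$ term to $\Hom_{\caO_Y}$ by the extension/restriction adjunction. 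With the projectivity hypothesis your final answer happens to agree, but the step as you state it is a gap.
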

\begin{proof} 
By (\ref{eins}) the group 
$\Ext^n_{\caA(X_0)}(\pi^*(\caA\otimes_{\caO_\caX}f_{Y,*}\caO_Y),\caA(X_0)\otimes_{\caO_{X_0}}\alpha_*\caO_X)$ 
is isomorphic to 
$\Ext^n_{\caA}(\caA\otimes_{\caO_\caX}f_{Y,*}\caO_Y,\pi_*(\pi^*\caA\otimes_{\caO_{X_0}}\alpha_*\caO_X))$, 
which the projection formula identifies with 
$\Ext^n_{\caA}(\caA\otimes_{\caO_\caX}i_{Y,*}\pi_{Y,*}\caO_Y,\caA\otimes_{\caO_\caX}i_{Y,*}i_*\pi_{X,*}\caO_X)$.
By $(\ref{zwei})$ the latter $\Ext$-group is isomorphic to 
$\Ext^n_{i_Y^*\caA}(i_Y^*\caA\otimes_{\caO_{\caU'}}\pi_{Y,*}\caO_Y,i_Y^*\caA\otimes_{\caO_{\caU'}}i_*\pi_{X,*}\caO_X)$.
Replacing $i_*\pi_{X,*}\caO_X$ by $\pi_{Y,*}f_*\caO_X$ and applying $(\ref{eins})$ gives an isomorphism to
$\Ext^n_{\caA(Y)}(\pi_Y^*(i_Y^*\caA\otimes_{\caO_{\caU'}}\pi_{Y,*}\caO_Y),\caA(Y)\otimes_{\caO_Y}f_*\caO_X)=
\Ext^n_{\caA(Y)}(\caA(Y)\otimes_{\caO_Y}\pi_Y^*\pi_{Y,*}\caO_Y,\caA(Y)\otimes_{\caO_Y}f_*\caO_X)$.
Now $\caA(Y)\otimes_{\caO_Y}\pi_Y^*\pi_{Y,*}\caO_Y$ is a projective left $\caA(Y)$-module by the assumption on 
$\pi_Y^*\pi_{Y,*}\caO_Y$.
Hence the $\Ext$-term vanishes in every positive degree, while for $n=0$, we get
\[
\Hom_{\caA(Y)}(\caA(Y)\otimes_{\caO_Y}\pi_Y^*\pi_{Y,*}\caO_Y,\caA(Y)\otimes_{\caO_Y}f_*\caO_X)
\cong
\Hom_{\caO_Y}(\pi_Y^*\pi_{Y,*}\caO_Y,\caA(Y)\otimes_{\caO_Y}f_*\caO_X).\]
\end{proof}

\section{Conventions}
\label{section:3}
The category of graded objects in an additive tensor category $\caA$ refers to integer-graded objects 
subject to the Koszul sign rule $x\otimes y=(-1)^{|x||y|}y\otimes x$.
However,
in the motivic setting, 
$\caA$ will often have a supplementary graded structure.
The category of Adams graded objects in $\caA$ refers to integer-graded objects in $\caA$, 
but no sign rule for the tensor product is introduced as a consequence of the Adams grading.
It is helpful to think of the Adams grading as being even.
We will deal with graded abelian groups,
Adams graded graded abelian groups, 
or $\Z^2$-graded abelian groups with a sign rule in the first but not in the second variable, 
and Adams graded abelian groups.
For an Adams graded graded abelian group $A_{**}$ we define $A_i\equiv A_{2i,i}$ and let $A_*$ 
denote the corresponding Adams graded abelian group. 
It will be convenient to view evenly graded $\MU_*$-modules as being Adams graded, and implicitly 
divide the grading by a factor of $2$.

The smash product induces a closed symmetric monoidal structure on $\SH(S)$.
We denote the internal function spectrum from $\EE$ to $\FF$ by $\RHom(\EE,\FF)$ and the tensor unit
or sphere spectrum by $\one$.
The Spanier-Whitehead dual of $\EE$ is by definition $\EE^\dual\equiv\RHom(\EE,\one)$. 
Note that $\EE_{**}$ with the usual indexing is an Adams graded graded abelian group.
Let $\EE_i$ be short for $\EE_{2i,i}$.
When $\EE$ is a ring spectrum, 
i.e.~a commutative monoid in $\SH(S)$, 
we implicitly assume $\EE_{**}$ is a commutative monoid in Adams graded graded abelian groups.
This latter holds true for orientable ring spectra \cite[Proposition 2.16]{hu-kriz} in view of 
\cite[Theorem 3.2.23]{MV}.

\section{Homology and cohomology theories}
An object $\FF$ of $\SH(S)$ is called finite (another term is compact) if $\Hom_{\SH(S)}(\FF,-)$ respects sums. 
Using the $5$-lemma one shows the subcategory of finite objects $\SH(S)_{\f}$ of $\SH(S)$ is thick 
\cite[Definition 1.4.3(a)]{HPS}.
For a set $\RR$ of objects in $\SH(S)_{\f}$ let $\SH(S)_{\RR,\f}$ denote the smallest thick triangulated subcategory of 
$\SH(S)_{\f}$ containing $\RR$ and $\SH(S)_\RR$ the smallest localizing subcategory of $\SH(S)$ containing $\RR$
\cite[Definition 1.4.3(b)]{HPS}. 
The examples we will deal with are the sets of mixed motivic spheres $\caT$,
the set of (isomorphism classes of) strongly dualizable objects $\caD$ 
and the set $\SH(S)_{\f}$. 

\begin{remark}
\label{comparison}
According to \cite[Remark 7.4]{dugger-isaksen} $\SH(S)_\caT\subseteq \SH(S)$ is the full subcategory of cellular 
motivic spectra introduced in loc.~cit.
\end{remark}

Recall $\FF\in\SH(S)$ is strongly dualizable if for every $\GG\in\SH(S)$ the canonical map 
\[ 
\xymatrix{
\FF^\dual \wedge \GG \ar[r] &  \RHom(\FF,\GG) }
\]
is an isomorphism. 
A strongly dualizable object is finite since $\one$ is finite.
\begin{lemma}
$\SH(S)_{\caD,\f}$ is the full subcategory of $\SH(S)_{\f}$ of strongly
dualizable objects of $\SH(S)$.
\end{lemma}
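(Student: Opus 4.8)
The plan is to prove both inclusions. First I would show that every strongly dualizable object of $\SH(S)$ lying in $\SH(S)_{\f}$ actually lies in $\SH(S)_{\caD,\f}$. By definition $\caD$ is the set of (isomorphism classes of) strongly dualizable objects, so a strongly dualizable object is tautologically a member of $\caD$; the only thing to check is that $\SH(S)_{\caD,\f}$ contains it, which is immediate once we know $\caD\subseteq\SH(S)_{\f}$. That containment was already observed in the excerpt: a strongly dualizable object is finite because $\one$ is finite (apply the defining isomorphism $\FF^\dual\wedge\GG\xrightarrow{\sim}\RHom(\FF,\GG)$ with $\GG=\one$ to see $\FF^\dual$ is a retract-free model, then use that $\Hom_{\SH(S)}(\FF,-)\cong\Hom_{\SH(S)}(\one,\FF^\dual\wedge -)$ commutes with sums). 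Hence $\SH(S)_{\caD,\f}\subseteq\SH(S)_{\f}$ is a thick triangulated subcategory containing all strongly dualizable objects.

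For the reverse inclusion I would show that the full subcategory $\caB\subseteq\SH(S)_{\f}$ of strongly dualizable objects is itself a thick triangulated subcategory and contains $\caD$, so that by minimality of $\SH(S)_{\caD,\f}$ we get $\SH(S)_{\caD,\f}\subseteq\caB$. That it contains $\caD$ is again tautological. Thickness has three parts. It is closed under shifts since $\RHom(-,\GG)$ and $(-)^\dual\wedge\GG$ are both triangulated in the first variable and the comparison map is natural. It is closed under cofiber sequences: given a distinguished triangle $\FF'\to\FF\to\FF''\to\FF'[1]$ with $\FF',\FF''$ strongly dualizable, one compares the two triangles obtained by applying $(-)^\dual\wedge\GG$ and $\RHom(-,\GG)$; the canonical transformation between them is an isomorphism on $\FF'$ and $\FF''$, hence on $\FF$ by the five lemma (this uses that both functors are exact, which holds because smashing is exact and $\RHom(-,\GG)$ is exact being a right adjoint composed appropriately — more precisely $\RHom(-,\GG)$ sends triangles to triangles up to sign). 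Finally it is closed under retracts: if $\FF$ is a retract of a strongly dualizable $\GG'$, then the comparison map for $\FF$ is a retract of the comparison map for $\GG'$ in the arrow category, hence an isomorphism.

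The only genuinely content-bearing point, and the step I expect to require the most care, is verifying that the class of strongly dualizable objects is closed under cofiber sequences — specifically, checking that $\RHom(-,\GG)\colon\SH(S)^{\op}\to\SH(S)$ is exact (takes distinguished triangles to distinguished triangles) so that the five-lemma argument applies. This is standard for a closed symmetric monoidal triangulated category — it follows from $\RHom(-,\GG)$ being right adjoint to $-\wedge\GG$ together with compatibility of the adjunction with the triangulated structure — and the excerpt has already invoked the closed symmetric monoidal structure on $\SH(S)$, so I would simply cite this. Everything else is formal manipulation with the natural comparison map $\FF^\dual\wedge\GG\to\RHom(\FF,\GG)$ and does not require any input specific to the motivic setting; in particular no cellularity or compact generation is needed, only that $\one$ is compact, which gives $\caD\subseteq\SH(S)_{\f}$ and pins down that the two minimal thick subcategories in question coincide.
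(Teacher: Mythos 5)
Your proposal is correct and follows the same route as the paper: the paper's one-line proof simply asserts that $\caD$ is stable under cofiber sequences and retracts (so that the minimal thick subcategory $\SH(S)_{\caD,\f}$ consists of strongly dualizable objects), which is exactly the content you verify via the five-lemma comparison of $(-)^\dual\wedge\GG$ and $\RHom(-,\GG)$, the converse inclusion being tautological given the already-noted fact that strongly dualizable objects are finite. You have merely filled in the standard details the paper leaves implicit.
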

\begin{proof}
Since $\caD$ is stable under cofiber sequences and retracts, 
every object of $\SH(S)_{\caD,\f}$ is strongly dualizable. 
\end{proof}
\begin{lemma}
\label{compactobjects}
$\SH(S)_{\RR,\f}$ is the full subcategory of compact objects of $\SH(S)_\RR$ and the latter is compactly generated.
\end{lemma}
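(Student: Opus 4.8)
The plan is to prove both assertions by the standard thick-subcategory/Neeman compact-generation argument, adapted to the triangulated category $\SH(S)_\RR$. First I would recall that $\SH(S)$ is compactly generated and that every object of $\RR$ is compact in $\SH(S)$ (since $\RR \subseteq \SH(S)_\f$). The localizing subcategory $\SH(S)_\RR$ is by definition the smallest localizing subcategory containing $\RR$, so it is generated by a set of objects which are compact in $\SH(S)$. I would then invoke Neeman's theorem on well-generated (here, compactly generated) triangulated categories: a localizing subcategory generated by a set of objects compact in the ambient category is itself compactly generated, and objects compact in the ambient category that lie in the subcategory remain compact in the subcategory. This immediately gives that $\SH(S)_\RR$ is compactly generated with the objects of $\RR$ among its compact generators.

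Next I would identify the compact objects of $\SH(S)_\RR$ precisely. By Neeman's characterization, the compact objects of a compactly generated triangulated category generated by a set $\RR$ of compact objects form the smallest thick subcategory containing $\RR$; this is exactly $\SH(S)_{\RR,\f}$ by definition (note $\SH(S)_{\RR,\f}$ is defined as the smallest thick triangulated subcategory of $\SH(S)_\f$ containing $\RR$, and since every object built from $\RR$ by cofiber sequences and retracts is automatically finite in $\SH(S)$, forming the thick closure inside $\SH(S)_\f$ agrees with forming it inside $\SH(S)_\RR$). So the compact objects of $\SH(S)_\RR$ are exactly $\SH(S)_{\RR,\f}$. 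One subtlety worth spelling out: an object of $\SH(S)_{\RR,\f}$ is a priori only known to be compact in $\SH(S)$; but compactness in $\SH(S)$ restricts to compactness in any full triangulated subcategory closed under sums that happens to contain the object, and $\SH(S)_\RR$ is such a subcategory. Conversely, any compact object of $\SH(S)_\RR$ lies in the thick closure of the generators $\RR$, hence in $\SH(S)_{\RR,\f}$.

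I would then assemble the two statements: $\SH(S)_{\RR,\f}$ is the full subcategory of compact objects of $\SH(S)_\RR$, and $\SH(S)_\RR$ is compactly generated (by $\RR$, or equivalently by $\SH(S)_{\RR,\f}$). The main obstacle — really the only nontrivial input — is the citation and correct application of Neeman's compact generation result for localizing subcategories generated by compact objects; everything else is bookkeeping about thick closures and the observation that finiteness is inherited from $\SH(S)$. I expect the proof to be quite short, citing \cite{HPS} or Neeman's work for the key fact and then noting the definitional identifications.

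\begin{proof}
Every object of $\RR$ is compact in $\SH(S)$, and $\SH(S)$ is compactly generated. By the theory of compactly generated triangulated categories, the localizing subcategory $\SH(S)_\RR$ generated by the set $\RR$ of compact objects is itself compactly generated, its compact objects are precisely those of the smallest thick subcategory of $\SH(S)$ containing $\RR$, and these objects remain compact in $\SH(S)_\RR$. Since any object obtained from $\RR$ by finitely many cofiber sequences and retracts lies in $\SH(S)_\f$, this thick subcategory coincides with $\SH(S)_{\RR,\f}$. Conversely, a compact object of $\SH(S)_\RR$ necessarily lies in the thick closure of the generators $\RR$, hence in $\SH(S)_{\RR,\f}$. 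Thus $\SH(S)_{\RR,\f}$ is exactly the full subcategory of compact objects of $\SH(S)_\RR$, and the latter is compactly generated.
\end{proof}
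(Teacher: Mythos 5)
Your proof is correct and follows essentially the same route as the paper's: both rest on Neeman's theorem that a localizing subcategory generated by a set of objects compact in a compactly generated ambient category is itself compactly generated, with compact objects given by the thick closure of the generators, plus the observation that this thick closure lands in $\SH(S)_{\f}$ and hence coincides with $\SH(S)_{\RR,\f}$. The paper packages the same content slightly differently (via the identity $\SH(S)_\RR^c=\SH(S)_\RR\cap\SH(S)_{\f}$ and an auxiliary closed set $\RR'$), but the substance is identical.
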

\begin{proof}
Note $\SH(S)_\RR$ is compactly generated since $\SH(S)$ is so \cite[Theorem 2.1, 2.1.1]{neeman}.
If $(-)^c$ indicates a full subcategory of compact objects \cite[Theorem 2.1, 2.1.3]{neeman} implies 
\[ 
\SH(S)_\RR^c=
\SH(S)_\RR\cap\SH(S)^c=
\SH(S)_\RR\cap\SH(S)_{\f}.
\]
Hence it suffices to show
$\SH(S)_\RR\cap\SH(S)_{\f}=
\SH(S)_{\RR,\f}$.
The inclusion ``$\supseteq$'' is obvious and to prove ``$\subseteq$'' let $\RR'$ be the smallest set of objects 
closed under suspension, retract and cofiber sequences containing $\RR$. 
Then $\RR'\subseteq\SH(S)_{\f}$ and
\[ 
\SH(S)_{\RR,\f}=\SH(S)_{\RR',\f}\subseteq\SH(S)_{\f},  
\SH(S)_\RR=\SH(S)_{\RR'}.
\]
By applying \cite[Theorem 2.1, 2.1.3]{neeman} to $\RR'$ it follows that 
\[ 
\SH(S)_\RR\cap \SH(S)_{\f}=
\SH(S)_{\RR'}\cap \SH(S)_{\f}=
\RR'\subseteq\SH(S)_{\RR',\f}=
\SH(S)_{\RR,\f}.
\]
\end{proof}
\begin{corollary}\label{exrightadj}
If $\RR\subseteq\RR'$ are as above, the inclusion $\SH(S)_\RR\subseteq\SH(S)_{\RR'}$ has a right adjoint $\pp_{\RR,\RR'}$.
\end{corollary}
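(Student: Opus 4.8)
The plan is to deduce this from Lemma \ref{compactobjects} together with the standard adjoint functor theorem for compactly generated triangulated categories. First I would observe that $\SH(S)_\RR$ and $\SH(S)_{\RR'}$ are both compactly generated by Lemma \ref{compactobjects}, and that the inclusion functor $\iota\colon\SH(S)_\RR\hookrightarrow\SH(S)_{\RR'}$ is exact (triangulated) and preserves arbitrary coproducts, since $\SH(S)_\RR$ is by definition a localizing subcategory of $\SH(S)$ and hence closed under coproducts formed in $\SH(S)$, which in turn agree with those formed in $\SH(S)_{\RR'}$.

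Next I would invoke Neeman's version of Brown representability: an exact coproduct-preserving functor out of a compactly generated triangulated category admits a right adjoint (this is the same circle of results from \cite{neeman} already cited in the proof of Lemma \ref{compactobjects}). Applying this to $\iota$ produces the desired right adjoint, which we name $\pp_{\RR,\RR'}\colon\SH(S)_{\RR'}\rightarrow\SH(S)_\RR$. One small point worth recording is that the hypothesis ``$\RR\subseteq\RR'$ are as above'' guarantees $\SH(S)_\RR\subseteq\SH(S)_{\RR'}$ in the first place (a localizing subcategory generated by a smaller set sits inside the one generated by the larger set), so the inclusion under discussion genuinely exists.

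I do not anticipate a serious obstacle here; the only thing to be slightly careful about is making sure the coproducts are computed consistently. Concretely, one checks that a coproduct of a family of objects of $\SH(S)_\RR$, formed inside $\SH(S)$, already lies in $\SH(S)_{\RR'}$ (indeed in $\SH(S)_\RR$) because both are localizing in $\SH(S)$; hence $\iota$ preserves coproducts on the nose. After that, Brown representability applies verbatim. If one prefers a more hands-on route, one can instead exhibit $\pp_{\RR,\RR'}$ via a Bousfield localization triangle associated with the localizing subcategory generated by $\RR$ inside $\SH(S)_{\RR'}$, but the adjoint functor formulation is cleaner and is all that is needed in the sequel (e.g.\ for the Tate projection $\pp_{\SH(S),\TT}$ used throughout).
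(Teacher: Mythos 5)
Your argument is correct and is essentially the paper's own proof: the paper likewise notes that $\SH(S)_\RR$ is compactly generated (Lemma \ref{compactobjects}) and that the inclusion preserves sums, then invokes Neeman's adjoint functor theorem \cite[Theorem 4.1]{neeman}. Your additional remarks on the consistency of coproducts and on why the inclusion exists are fine but not needed beyond what the paper records.
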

\begin{proof} 
Since $\SH(S)_\RR$ is compactly generated and the inclusion preserves sums the claim follows from 
\cite[Theorem 4.1]{neeman}.
\end{proof}
\begin{definition} 
The Tate projection is the functor 
\[ 
\xymatrix{
\pp_{\SH(S)_f,\caT}:\SH(S)\ar[r] & \SH(S)_\caT. }
\] 
\end{definition}
\begin{lemma} 
\label{proj-sums}
In the situation of Corollary \ref{exrightadj}, the right adjoint $\pp_{\RR',\RR}$ preserves sums.
\end{lemma}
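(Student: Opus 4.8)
The statement to prove is Lemma \ref{proj-sums}: in the situation of Corollary \ref{exrightadj}, the right adjoint $\pp_{\RR',\RR}$ to the inclusion $\SH(S)_\RR\subseteq\SH(S)_{\RR'}$ preserves sums. The plan is to deduce this formally from the fact that the left adjoint — the inclusion $\SH(S)_\RR\hookrightarrow\SH(S)_{\RR'}$ — preserves compact objects, together with the compact generation established in Lemma \ref{compactobjects}. This is a standard piece of triangulated-category yoga, so the argument should be short.

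First I would recall the general principle (due to Neeman): if $F\colon\caC\to\caD$ is a triangulated functor between compactly generated triangulated categories that preserves sums and sends compact objects to compact objects, then its right adjoint $G$ preserves sums. The proof of this principle is what I would spell out in our concrete situation. Take the inclusion $\iota\colon\SH(S)_\RR\hookrightarrow\SH(S)_{\RR'}$ as $F$; it preserves sums (a sum computed in $\SH(S)_{\RR'}$ of objects of $\SH(S)_\RR$ already lies in the localizing subcategory $\SH(S)_\RR$, since localizing subcategories are closed under sums), and by Lemma \ref{compactobjects} it carries the compact generators $\RR'$ of $\SH(S)_\RR$ — in fact all of $\SH(S)_{\RR,\f}$ — into $\SH(S)_{\RR',\f}=\SH(S)_{\RR'}^c$, hence preserves compactness. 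Its right adjoint is $\pp_{\RR',\RR}$ by Corollary \ref{exrightadj}.

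Now let $\{\FF_\lambda\}$ be a family of objects in $\SH(S)_{\RR'}$. There is a canonical map $\theta\colon\bigoplus_\lambda\pp_{\RR',\RR}\FF_\lambda\to\pp_{\RR',\RR}\bigl(\bigoplus_\lambda\FF_\lambda\bigr)$ in $\SH(S)_\RR$, and I would check it is an isomorphism by testing against a compact generator $C\in\RR'$ of $\SH(S)_\RR$. Using that $C$ is compact in $\SH(S)_\RR$, that $\iota C$ is compact in $\SH(S)_{\RR'}$, and the adjunction isomorphism $\Hom_{\SH(S)_\RR}(C,\pp_{\RR',\RR}-)\cong\Hom_{\SH(S)_{\RR'}}(\iota C,-)$, one computes
\[
\Hom(C,\textstyle\bigoplus_\lambda\pp_{\RR',\RR}\FF_\lambda)\cong\bigoplus_\lambda\Hom(C,\pp_{\RR',\RR}\FF_\lambda)\cong\bigoplus_\lambda\Hom(\iota C,\FF_\lambda)\cong\Hom(\iota C,\textstyle\bigoplus_\lambda\FF_\lambda)\cong\Hom(C,\pp_{\RR',\RR}\textstyle\bigoplus_\lambda\FF_\lambda),
\]
and this chain of isomorphisms is induced by $\theta$. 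Since such $C$ generate $\SH(S)_\RR$, a standard argument (the full subcategory of objects on which $\Hom(-,\theta)$ is an isomorphism is localizing and contains the generators) shows $\theta$ is an isomorphism.

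There is essentially no serious obstacle here; the only point requiring a little care is the bookkeeping that $\iota$ genuinely preserves compactness, which is exactly the content of Lemma \ref{compactobjects} ($\SH(S)_{\RR,\f}=\SH(S)_\RR\cap\SH(S)_{\f}$ and likewise for $\RR'$, so a compact object of $\SH(S)_\RR$, being finite in $\SH(S)$, is finite in $\SH(S)_{\RR'}$ and thus compact there). Alternatively one can cite Neeman directly — e.g. \cite[Theorem 5.1]{neeman} — that the right adjoint of a coproduct- and compactness-preserving functor between compactly generated triangulated categories preserves coproducts, and simply verify the two hypotheses; I would present the self-contained generator-testing argument since all the needed inputs are already in hand.
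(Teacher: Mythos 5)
Your proposal is correct and follows the same route as the paper: the paper simply invokes \cite[Theorem 5.1]{neeman} (the right adjoint of a sum- and compactness-preserving functor between compactly generated categories preserves sums) and checks via Lemma \ref{compactobjects} that the inclusion $\SH(S)_\RR\subseteq\SH(S)_{\RR'}$ preserves compact objects, exactly the two hypotheses you verify. Your only difference is that you additionally spell out the generator-testing proof of Neeman's theorem, which the paper leaves as a citation.
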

\begin{proof}
Using \cite[Theorem 5.1]{neeman} it suffices to show that $\SH(S)_\RR\subseteq\SH(S)_{\RR'}$ preserves 
compact objects. 
Hence by Lemma \ref{compactobjects} we are done provided $\SH(S)_{\RR,\f}\subseteq\SH(S)_{\RR',\f}$.
Clearly this holds since $\RR\subseteq \RR'$.
\end{proof}
\begin{lemma} 
\label{Tate-mod-func}
Suppose $\RR$ as above contains $\caT$. 
Then
\[ 
\xymatrix{
\pp_{\RR,\caT}:\SH(S)_\RR\ar[r] & \SH(S)_\caT }
\]
is an $\SH(S)_\caT$-module functor.
\end{lemma}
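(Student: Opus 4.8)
We must show that for $\caT \subseteq \RR$, the right adjoint $\pp_{\RR,\caT}\colon \SH(S)_\RR \to \SH(S)_\caT$ is a module functor over the closed symmetric monoidal category $\SH(S)_\caT$; that is, for $\EE \in \SH(S)_\caT$ and $\FF \in \SH(S)_\RR$ the canonical map $\EE \wedge \pp_{\RR,\caT}\FF \to \pp_{\RR,\caT}(\EE \wedge \FF)$ is an isomorphism.

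\begin{proof}[Proof sketch]
The plan is to first fix the canonical comparison map, then reduce to the case where $\EE$ runs over a set of generators, and finally invoke the fact from Corollary~\ref{exrightadj} and Lemma~\ref{proj-sums} that $\pp_{\RR,\caT}$ preserves sums, together with compact generation, to conclude.

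First I would construct the natural transformation. The inclusion $\iota\colon \SH(S)_\caT \hookrightarrow \SH(S)_\RR$ is a strict symmetric monoidal functor (smashing preserves cellularity, so $\SH(S)_\caT$ is a monoidal subcategory), so the adjunction $(\iota, \pp_{\RR,\caT})$ is such that for $\EE \in \SH(S)_\caT$, $\FF \in \SH(S)_\RR$ the unit and counit induce a canonical arrow $\theta_{\EE,\FF}\colon \EE \wedge \pp_{\RR,\caT}\FF \to \pp_{\RR,\caT}(\EE \wedge \FF)$ in $\SH(S)_\caT$, namely the adjunct of $\iota(\EE \wedge \pp_{\RR,\caT}\FF) = \EE \wedge \iota\pp_{\RR,\caT}\FF \xrightarrow{1 \wedge \epsilon} \EE \wedge \FF$ where $\epsilon$ is the counit. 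One then checks $\theta$ is compatible with the associativity and unit constraints; this is formal, since $\theta$ arises from a lax monoidal structure on the right adjoint of a strong monoidal functor (a standard ``doctrinal adjunction'' argument). The content of the lemma is that $\theta$ is an isomorphism.

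Next, I would reduce to generators. Fix $\FF$ and consider the full subcategory $\caC_\FF \subseteq \SH(S)_\caT$ of those $\EE$ for which $\theta_{\EE,\FF}$ is an isomorphism. Both sides of $\theta_{\EE,\FF}$ are triangulated functors of $\EE$ (smashing is exact; $\pp_{\RR,\caT}$ is exact as a right adjoint between triangulated categories, and $\EE \wedge \FF$ is exact in $\EE$), so $\caC_\FF$ is a triangulated subcategory closed under retracts. By Lemma~\ref{proj-sums}, $\pp_{\RR,\caT}$ preserves sums, and smashing preserves sums on either side, so $\caC_\FF$ is localizing. Since $\SH(S)_\caT = \SH(S)_\caT$ is generated as a localizing subcategory by the mixed motivic spheres $\Sphere^{p,q}$, it suffices to verify $\theta_{\Sphere^{p,q},\FF}$ is an isomorphism for all $(p,q)$. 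But $\Sphere^{p,q}$ is $\otimes$-invertible in $\SH(S)$ with inverse $\Sphere^{-p,-q}$, so smashing with it is an equivalence of categories, and its right adjoint is smashing with $\Sphere^{-p,-q}$; the comparison map for an invertible object and its associated adjoint equivalence is always an isomorphism. Concretely, $\pp_{\RR,\caT}(\Sphere^{p,q} \wedge \FF) \cong \Sphere^{p,q} \wedge \pp_{\RR,\caT}(\Sphere^{-p,-q} \wedge \Sphere^{p,q} \wedge \FF) \cong \Sphere^{p,q} \wedge \pp_{\RR,\caT}\FF$, and one traces through that this composite identification is exactly $\theta$.

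The main obstacle I anticipate is purely bookkeeping: verifying that the abstract comparison map $\theta$ agrees, under the invertibility identifications, with the ad hoc isomorphism in the last step, and checking that $\caC_\FF$ really is triangulated and localizing (that the functor $\EE \mapsto \pp_{\RR,\caT}(\EE \wedge \FF)$ commutes with the relevant coproducts and triangles). Neither step involves any new idea beyond exactness of $\pp_{\RR,\caT}$ and Lemma~\ref{proj-sums}. Finally, the module-functor coherence diagrams (associativity of the action, unitality) follow formally once $\theta$ is known to be an isomorphism, by the same doctrinal-adjunction formalism used to produce it.
\end{proof}
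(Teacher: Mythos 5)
Your proof is correct and follows essentially the same route as the paper: construct the comparison map from the counit of the $(\iota,\pp_{\RR,\caT})$ adjunction, observe that the class of $\SH(S)_\caT$-objects for which it is an isomorphism is localizing (using Lemma~\ref{proj-sums} for the sums), and then verify the statement on the mixed motivic spheres $\Sphere^{p,q}$ via their invertibility. The only cosmetic difference is that the paper carries out the sphere case by a chain of $\Hom$-isomorphisms rather than by the ``smash with the inverse'' phrasing, but these are the same computation.
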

\begin{proof}
Let $\iota:\SH(S)_\caT\to\SH(S)_\RR$ be the inclusion and $\FF\in\SH(S)_\caT$, $\GG\in\SH(S)_\RR$.
Then the counit of the adjunction between $\iota$ and $\pp_{\RR,\caT}$ yields the canonical map  
\[ 
\xymatrix{
\iota(\FF\wedge \pp_{\RR,\caT}(\GG))\cong 
\iota(\FF)\wedge \iota(\pp_{\RR,\caT}(\GG))
\ar[r] & 
\iota(\FF)\wedge \GG, }
\]
adjoint to 
\begin{equation}
\label{proj} 
\xymatrix{
\FF\wedge \pp_{\RR,\caT}(\GG)\ar[r] & 
\pp_{\RR,\caT}(\iota(\FF)\wedge \GG).}
\end{equation}
We claim (\ref{proj}) is an isomorphism for all $\FF$, $\GG$.
In effect, 
the full subcategory of $\SH(S)_\caT$ generated by the objects $\FF$ for which (\ref{proj}) 
is an isomorphism for all $\GG\in\SH(S)_\RR$ is easily seen to be localizing, 
and hence we may assume $\FF=\Sphere^{p,q}$ for $p,q\in\Z$. 
The sphere $\Sphere^{p,q}$ is invertible, 
so $\SH(S)_\caT(-,\pp_{\RR,\caT}(\iota(\Sphere^{p,q})\wedge\GG))\cong\SH(S)_\RR(\iota(-),\Sphere^{p,q}\wedge\GG)$ is 
isomorphic to  
$\SH(S)_\RR(\iota(-)\wedge \Sphere^{-p,-q},\GG)\cong 
\SH(S)_\caT(-\wedge \Sphere^{-p-q}, \pp_{\RR,\caT}(\GG))\cong
\SH(S)_\caT(-, \Sphere^{p,q}\wedge \pp_{\RR,\caT}(\GG))$.
This shows $\pp_{\RR,\caT}(\iota(\Sphere^{p,q})\wedge \GG)$ and $\Sphere^{p,q}\wedge \pp_{\RR,\caT}(\GG)$ are isomorphic,
as desired.
\end{proof}

\begin{remark}
\label{cellularization}
\begin{itemize}
\item[(i)] For every $\GG\in\SH(S)$ the counit $\pp_{\RR,\caT}(\GG)\to \GG$,
where $\iota$ is omitted from the notation, 
is an $\pi_{**}$-isomorphism.
Using $\pp_{\SH(S),\caT}$ rather than the cellular functor introduced in \cite{dugger-isaksen} refines 
Proposition 7.3 of loc.~cit.
\item[(ii)] 
If $\EE\in\SH(S)_{\caT}$ and $\FF\in\SH(S)$ then $\EE_{p,q}(\FF)\cong\EE_{p,q}(\pp_{\SH(S),\caT}(\FF))$ 
on account of the isomorphisms between $\SH(S)(\Sphere^{p,q},\EE\wedge \FF)$ and 
\begin{equation*}
\SH(S)_{\caT}(\Sphere^{p,q},\pp_{\SH(S),\caT}(\EE\wedge \FF))\cong 
\SH(S)_{\caT}(\Sphere^{p,q},\EE\wedge \pp_{\SH(S),\caT}(\FF)).
\end{equation*}
In \cite{dugger-isaksen} it is argued that most spectra should be non-cellular.
On the other hand, 
the $\EE$-homology of $\FF$ agrees with the $\EE$-homology of some cellular spectrum. 
We note that many conspicuous motivic (co)homology theories are representable by cellular spectra: 
Landweber exact theories, including algebraic cobordism and homotopy algebraic $K$-theory, and also 
motivic (co)homology over fields of characteristic zero according to work of Hopkins and Morel.
\end{itemize}
\end{remark}

\begin{definition}
A homology theory on a triangulated subcategory $\TTT$ of $\SH(S)$ is a homological functor $\TTT\to\Ab$
which preserves sums. 
Dually, 
a cohomology theory on $\TTT$ is a homological functor $\TTT^\op \to \Ab$ which takes sums to products.
\end{definition}
\begin{lemma} \label{hom-clo}
Suppose $\RR\subseteq \caD$ is closed under duals.
Then every homology theory on $\SH(S)_{\RR,\f}$ extends uniquely to a homology theory on $\SH(S)_\RR$.
\end{lemma}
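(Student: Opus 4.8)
The plan is to use the fact, established in Lemma~\ref{compactobjects}, that $\SH(S)_{\RR,\f}$ is precisely the subcategory of compact objects of the compactly generated triangulated category $\SH(S)_\RR$, together with the closure of $\RR$ under duals. First I would fix a homology theory $h$ on $\SH(S)_{\RR,\f}$. The dualizability hypothesis $\RR\subseteq\caD$ lets me convert $h$ into a \emph{cohomology} theory: define $k$ on $\SH(S)_{\RR,\f}$ by $k(\FF)\equiv h(\FF^\dual)$. Since $\RR$ is closed under duals and $\SH(S)_{\RR,\f}$ is generated from $\RR$ by cofiber sequences, suspensions and retracts, the assignment $\FF\mapsto\FF^\dual$ is a (contravariant) self-equivalence of $\SH(S)_{\RR,\f}$; hence $k$ is a cohomology theory on $\SH(S)_{\RR,\f}$, i.e.\ a homological functor $(\SH(S)_{\RR,\f})^\op\to\Ab$ sending the (finite) coproducts that exist there to products.

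Next I would invoke Brown representability for compactly generated triangulated categories: a cohomology theory defined on the compact objects of a compactly generated triangulated category $\TT$ extends (uniquely) to a cohomology theory on all of $\TT$, represented by an object of $\TT$ --- this is the standard argument using the countable-coproduct construction of a representing object, as in Neeman's book. Applying this to $\TT=\SH(S)_\RR$, whose compact objects are $\SH(S)_{\RR,\f}$ by Lemma~\ref{compactobjects}, I obtain an object $\EE\in\SH(S)_\RR$ with $k(\FF)\cong\SH(S)_\RR(\FF,\EE)$ naturally for $\FF\in\SH(S)_{\RR,\f}$, and this presentation then extends $k$ to a cohomology theory $\tilde k$ on all of $\SH(S)_\RR$. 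Now I would transport back: define $\tilde h$ on $\SH(S)_\RR$ by $\tilde h(\GG)\equiv\SH(S)_\RR(\Sphere,\EE'\wedge\GG)$ for a suitable $\EE'$, or --- more directly --- dualize again. The cleanest route is to represent $h$ itself: the functor $\GG\mapsto\tilde k(\GG^\dual)$ is not available on all of $\SH(S)_\RR$ since arbitrary objects are not dualizable, so instead I use the representing object $\EE$ of $k$ and set $\tilde h(\GG)\equiv\SH(S)_\RR(\Sphere,\,\bD\EE\wedge\GG)$ where on compacts $\bD\EE$ pairs with $\FF$ to give $h(\FF)$; concretely, for $\FF\in\SH(S)_{\RR,\f}$ one has $h(\FF)=k(\FF^\dual)=\SH(S)_\RR(\FF^\dual,\EE)\cong\SH(S)_\RR(\Sphere,\FF\wedge\EE)$, using strong dualizability of $\FF$. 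So $\tilde h(\GG)\equiv\SH(S)_\RR(\Sphere,\EE\wedge\GG)$ is a homology theory on $\SH(S)_\RR$ (it is homological since $\EE\wedge-$ is triangulated and $\SH(S)_\RR(\Sphere,-)$ is homological; it preserves sums since $\EE\wedge-$ does and $\Sphere$ is compact), and it restricts to $h$ on $\SH(S)_{\RR,\f}$.

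For uniqueness I would argue that any two homology theories on $\SH(S)_\RR$ agreeing on $\SH(S)_{\RR,\f}$ agree everywhere: given a natural transformation between homology theories that is an isomorphism on a set of compact generators (here $\RR$, or rather $\RR'$), the class of objects on which it is an isomorphism is localizing --- closed under cofiber sequences by the five-lemma and under arbitrary sums because both theories preserve sums --- and contains the generators, hence is all of $\SH(S)_\RR$. The one subtlety is producing that natural transformation in the first place: I would get it from the universal property of the representing object $\EE$, or alternatively phrase the whole lemma in the language of \cite{HPS} or \cite{neeman} where the extension-and-uniqueness statement for (co)homology functors on compactly generated categories is packaged as a single citation.

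The main obstacle, and the step deserving the most care, is the passage between homology and cohomology theories via Spanier--Whitehead duality and the verification that the representing object can be taken inside $\SH(S)_\RR$ rather than merely in $\SH(S)$: this is exactly where the hypothesis $\RR\subseteq\caD$ and ``closed under duals'' is used, and where one must be careful that $\EE\wedge-$ lands in $\SH(S)_\RR$ (it does, since $\SH(S)_\RR$ is localizing and $\EE$ is built from $\RR$, but this needs a word). Everything else is a formal consequence of Lemma~\ref{compactobjects} and standard Brown representability in compactly generated triangulated categories.
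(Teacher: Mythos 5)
Your reduction of the problem to representing the dual cohomology theory $k(\FF)=h(\FF^\dual)$ by an object $\EE\in\SH(S)_\RR$ is where the argument breaks down. The statement you invoke --- that a cohomological functor defined only on the \emph{compact} objects of a compactly generated triangulated category is represented by an object of the big category --- is not Neeman's Brown representability theorem (which applies to functors defined on all of $\TT$ and taking coproducts to products); it is Brown--Adams representability, and the ``countable-coproduct construction'' you allude to only closes up when the hom-sets between compact objects are countable: one must first extend $k$ to infinite coproducts of compacts before the inductive construction of $\EE$ can even begin, and a $\lim^1$-obstruction appears that countability is needed to kill. Christensen--Keller--Neeman have shown that this representability genuinely fails for general compactly generated categories, and no countability is available at this point in the paper --- it is introduced only later, as the hypothesis of Lemma \ref{brown}, precisely in order to obtain representability of homology theories on $\SH(S)_\RR$. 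So the construction of $\EE$, and with it the formula $\tilde h(\GG)=[\one,\EE\wedge\GG]$, is unjustified. Your uniqueness argument suffers from the gap you yourself flag: without a representing object (or some substitute) there is no natural transformation defined on all of $\SH(S)_\RR$ to feed into the localizing-subcategory argument.

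The paper's proof avoids representability altogether. The hypotheses on $\RR$ make $\SH(S)_\RR$ a unital algebraic stable homotopy category whose finite objects are exactly $\SH(S)_{\RR,\f}$ (Lemmas \ref{ualghomc} and \ref{compactobjects}); in such a category every object is a minimal weak colimit of finite objects, and sum-preserving homological functors take these minimal weak colimits to colimits of abelian groups. Hence $h$ extends by $\tilde h(X)=\colim_\alpha h(X_\alpha)$ for $X$ a minimal weak colimit of finite objects $X_\alpha$, and the same colimit description yields uniqueness of the extension together with the extension of natural transformations; this is exactly the content of \cite[Corollary 2.3.11]{HPS}. The point is that filtered colimits of abelian groups are exact, which is why extension works for \emph{homology} theories with no representing object in sight, whereas your detour through the dual cohomology theory would require either an exact cofiltered limit or a representing object, neither of which is available here. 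The hypothesis that $\RR\subseteq\caD$ be closed under duals is used in the paper to guarantee that $\SH(S)_\RR$ is \emph{algebraic} (its compact objects are strongly dualizable), not to dualize the theory itself.
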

\begin{proof}
In view of Lemma \ref{compactobjects} we can apply \cite[Corollary 2.3.11]{HPS}
which we refer to for a more detailed discussion.
\end{proof}

Homology and cohomology theories on $\SH(S)_{\caD,\f}$ are interchangeable according to the categorical 
duality equivalence $\SH(S)_{\caD,\f}^\op \cong\SH(S)_{\caD,\f}$.
The same holds for every $\RR$ for which $\SH(S)_{\RR,\f}$ is contained in $\SH(S)_{\caD,\f}$ and closed 
under duality, 
e.g.~$\SH(S)_{\caT,\f}$.
We shall address the problem of representing homology theories on $\SH(S)$ in Section \ref{reps}.
Cohomology theories are always defined on $\SH(S)_{\f}$ unless specified to the contrary.

\begin{definition} 
Let $\TTT\subset\SH(S)$ be a triangulated subcategory closed under the smash product.
A multiplicative or ring (co)homology theory on $\TTT$, 
always understood to be commutative,
is a (co)homology theory $E$ on $\TTT$ together with maps $\Z\to E(\Sphere^{0,0})$ and
$E(\FF) \otimes E(\GG) \to E(\FF \wedge \GG)$ which are natural in $\FF,\GG \in \TTT$.
These maps are subject to the usual unitality, associativity and commutativity constraints \cite[pg.~269]{switzer}.
\end{definition}
Ring spectra in $\SH(S)$ give rise to ring homology and cohomology theories.
We shall use the following bigraded version of (co)homology theories.

\begin{definition}
Let $\TTT\subset\SH(S)$ be a triangulated subcategory closed under shifts by
all mixed motivic spheres $\Sphere^{p,q}$. 
A bigraded homology theory on $\TTT$ is a homological functor $\Phi$ from $\TTT$ to Adams graded 
graded abelian groups which preserves sums together with natural isomorphisms
$$\Phi(X)_{p,q} \cong \Phi(\Sigma^{1,0}X)_{p+1,q}$$ and
$$\Phi(X)_{p,q} \cong \Phi(\Sigma^{0,1}X)_{p,q+1}$$
such that the diagram
$$\xymatrix{
\Phi(X)_{p,q} \ar[r] \ar[d] & \Phi(\Sigma^{1,0}X)_{p+1,q} \ar[d] \\
\Phi(\Sigma^{0,1}X)_{p,q+1} \ar[r] &
\Phi(\Sigma^{1,1}X)_{p+1,q+1}}$$
commutes for all $p$ and $q$.

Bigraded cohomology theories are defined likewise.
\end{definition}

We note there is an equivalence of categories between (co)homology theories on $\TTT$ and bigraded (co)homology 
theories on $\TTT$.

\section{Tate objects and flat Hopf algebroids}
Guided by stable homotopy theory, we wish to associate flat Hopf algebroids to suitable motivic ring spectra. 
By a Hopf algebroid we shall mean a cogroupoid object in the category of commutative rings over either abelian groups, 
Adams graded abelian groups or Adams graded graded abelian groups.
Throughout this section $\EE$ is a ring spectrum in $\SH(S)_\caT$. 
We call $\EE_{**}$ flat provided one 
- and hence both - 
of the canonical maps $\EE_{**}\to\EE_{**}\EE$ is flat, and similarly for $\EE_{*}$ and $\EE_{*}\to\EE_{*}\EE$.
\begin{lemma} 
\label{flat-strmaps}
\begin{enumerate}
\item[(i)] 
If $\EE_{**}$ is flat then for every motivic spectrum $\FF$ the canonical map
$$
\xymatrix{
\EE_{**}\EE \otimes_{\EE_{**}} \EE_{**}\FF \ar[r] & (\EE \wedge \EE \wedge \FF)_{**} }
$$
is an isomorphism.
\item[(ii)]  
If $\EE_{*}$ is flat and the canonical map $\EE_* \EE \otimes_{\EE_*} \EE_{**}\to\EE_{**}\EE$
is an isomorphism, 
then for every motivic spectrum $\FF$ the canonical map
$$
\xymatrix{
\EE_*\EE \otimes_{\EE_*} \EE_* \FF  \ar[r] & (\EE \wedge \EE \wedge \FF)_* }
$$
is an isomorphism.
\end{enumerate}
\end{lemma}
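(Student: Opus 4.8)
The plan is to reduce both statements to the case $\FF = \Sphere^{p,q}$ a mixed motivic sphere, using that $\EE$ lies in $\SH(S)_\caT$ and that both sides of each comparison map are homology theories in $\FF$. First I would fix part (i). Consider the full subcategory of $\SH(S)$ consisting of those $\FF$ for which the canonical map $\EE_{**}\EE \otimes_{\EE_{**}} \EE_{**}\FF \to (\EE\wedge\EE\wedge\FF)_{**}$ is an isomorphism. Because $\EE_{**}$ is flat, the functor $\EE_{**}\EE \otimes_{\EE_{**}} (-)$ is exact and commutes with arbitrary direct sums; on the other hand $\FF \mapsto (\EE\wedge\EE\wedge\FF)_{**}$ is a homology theory (it is $\pi_{**}$ of a spectrum smashed with $\FF$) and $\FF\mapsto \EE_{**}\FF$ is too. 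Hence the subcategory is triangulated and closed under sums, i.e.\ localizing. It obviously contains the sphere $\Sphere^{0,0}$, where the map is the identity $\EE_{**}\EE \otimes_{\EE_{**}}\EE_{**} \cong \EE_{**}\EE$, and it is closed under the invertible shifts $\Sphere^{p,q}$ since shifting $\FF$ just reindexes both sides compatibly. Therefore the subcategory contains all of $\SH(S)_\caT$. Finally, for an \emph{arbitrary} motivic spectrum $\FF$ one invokes Remark \ref{cellularization}(ii): the counit $\pp_{\SH(S),\caT}(\FF)\to\FF$ is a $\pi_{**}$-isomorphism, hence induces isomorphisms on $\EE_{**}(-)$ and on $(\EE\wedge\EE\wedge -)_{**}$ (smashing with the fixed spectrum $\EE\wedge\EE$ preserves $\pi_{**}$-isomorphisms, as these are detected by maps out of the compact generators $\Sphere^{p,q}$), so the comparison map for $\FF$ is identified with the one for the Tate object $\pp_{\SH(S),\caT}(\FF)$, which we have already handled.

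For part (ii) I would run the same localizing-subcategory argument with the functor $\EE_*\EE \otimes_{\EE_*}(-)$ in place of $\EE_{**}\EE\otimes_{\EE_{**}}(-)$; flatness of $\EE_*$ gives exactness and sum-preservation of this functor, and $\FF\mapsto \EE_*\FF = (\EE\wedge\FF)_{2*,*}$ and $\FF\mapsto (\EE\wedge\EE\wedge\FF)_*$ are again sum-preserving homology theories. The base case $\FF=\Sphere^{0,0}$ now reads $\EE_*\EE\otimes_{\EE_*}\EE_* \cong \EE_*\EE$, again an identity. The one extra ingredient needed is to see that, on the Tate category, the displayed map for $\FF$ can be obtained from the map in (i) by the hypothesis that $\EE_*\EE\otimes_{\EE_*}\EE_{**}\to\EE_{**}\EE$ is an isomorphism; concretely, one factors the comparison map through $\EE_*\EE\otimes_{\EE_*}\EE_{**}\otimes_{\EE_{**}}\EE_{**}\FF$, applies (i) to the right-hand tensor factor, and then restricts to the Adams-diagonal summand. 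Alternatively, and perhaps more cleanly, one simply repeats the localizing-subcategory verification directly for the $\EE_*$-statement, which avoids having to compare the two maps at all.

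The main obstacle I anticipate is purely bookkeeping rather than conceptual: in part (ii) one must be careful that the comparison map is actually well defined and natural with the stated source — i.e.\ that the pairing $\EE_*\EE\otimes_{\EE_*}\EE_*\FF \to (\EE\wedge\EE\wedge\FF)_*$ really is the Adams-graded restriction of the bigraded pairing, and that the hypothesis $\EE_*\EE\otimes_{\EE_*}\EE_{**}\xrightarrow{\cong}\EE_{**}\EE$ is exactly what is needed to pass between the two indexings. Once the map is pinned down, the localizing-subcategory/cellularization mechanism is routine and identical in both parts. Everything else — exactness from flatness, the reduction along $\pp_{\SH(S),\caT}$, stability under the spheres $\Sphere^{p,q}$ — is standard and follows formally from the results already established in the preceding sections.
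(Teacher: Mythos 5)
Your proposal follows essentially the same route as the paper's proof: reduce to Tate objects via the Tate projection, then compare two sum-preserving homology theories on $\SH(S)_\caT$ that agree on the spheres $\Sphere^{p,q}$ (the paper phrases the reduction via Lemma \ref{Tate-mod-func}, you via Remark \ref{cellularization}(ii) — same content), and deduce (ii) by combining (i) with the hypothesis that $\EE_*\EE\otimes_{\EE_*}\EE_{**}\to\EE_{**}\EE$ is an isomorphism and then restricting to $(2,1)$-multiples. One caution worth recording: your parenthetical justification that ``smashing with $\EE\wedge\EE$ preserves $\pi_{**}$-isomorphisms, as these are detected by maps out of the compact generators $\Sphere^{p,q}$'' is not valid as stated — the motivic spheres are compact generators of $\SH(S)_\caT$, not of all of $\SH(S)$, so a $\pi_{**}$-acyclic spectrum need not stay $\pi_{**}$-acyclic after smashing with an arbitrary spectrum. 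The correct reason is simply that $\EE\wedge\EE$ is itself a Tate object (Tate objects are a tensor ideal), so Remark \ref{cellularization}(ii) applies with $\EE\wedge\EE$ in place of $\EE$; since you already cite that Remark, the argument stands once the parenthetical is dropped or corrected. Also note that in (ii) the paper first checks that the two hypotheses imply $\EE_{**}$ is flat (base change), which you should include if you go the route-2 reduction to (i).
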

\begin{proof}
$(i)$: Using Lemma \ref{Tate-mod-func} we may assume that $\FF$ is a Tate object.
The proof follows now along the same lines as in topology by first noting that the statement clearly 
holds when $\FF$ is a mixed motivic sphere, 
and secondly that we are comparing homology theories on $\SH(S)_\caT$ which respect sums.
$(ii)$: The two assumptions imply we may refer to $(i)$.
Hence there is an isomorphism
$$
\xymatrix{
\EE_{**}\EE \otimes_{\EE_{**}} \EE_{**}\FF \ar[r] &  (\EE \wedge \EE \wedge \FF)_{**}. }
$$
By the second assumption the left hand side identifies with   
$$
(\EE_*\EE \otimes_{\EE_*} \EE_{**}) \otimes_{\EE_{**}} \EE_{**} \FF\cong 
\EE_*\EE \otimes_{\EE_*} \EE_{**} \FF.
$$
Restricting to bidegrees which are multiples of $(2,1)$ yields the claimed isomorphism.
\end{proof}
\begin{corollary} 
\label{hopf-algebroids}
\begin{enumerate}
\item[(i)]  
If $\EE_{**}$ is flat then $(\EE_{**},\EE_{**} \EE)$ is canonically a flat Hopf algebroid in Adams graded graded 
abelian groups and for every $\FF\in\SH(S)$ the module $\EE_{**}\FF$ is an $(\EE_{**},\EE_{**} \EE)$-comodule.
\item[(ii)]   
If $\EE_{*}$ is flat and the canonical map $\EE_*\EE\otimes_{\EE_*}\EE_{**}\to\EE_{**}\EE$ is an isomorphism, 
then $(\EE_*, \EE_* \EE)$ is canonically a flat Hopf algebroid in Adams graded abelian groups and for every 
$\FF\in\SH(S)$ the modules $\EE_{**}\FF$ and $\EE_*\FF$ are $(\EE_*, \EE_* \EE)$-comodules.
\end{enumerate}
\end{corollary}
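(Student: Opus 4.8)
The plan is to transport the classical argument (as in Adams' blue book or Ravenel's green book) to the present setting, using the flatness-based comparison isomorphism of Lemma \ref{flat-strmaps} in place of the topological Künneth map, and the Tate-projection machinery of Lemma \ref{Tate-mod-func} and Remark \ref{cellularization}(ii) to reduce all structural verifications to mixed motivic spheres. The basic point is that $\EE \wedge \EE$ represents $\EE_{**}\EE$, and the various structure maps of a Hopf algebroid (left and right unit, counit, comultiplication, conjugation) come from the unit $\unit \to \EE$, the multiplication $\EE \wedge \EE \to \EE$, and the twist $\EE \wedge \EE \to \EE \wedge \EE$, exactly as in topology; the content of the corollary is that flatness makes $\EE_{**}\EE$ into a \emph{ring} object with the right properties and that $\EE_{**}\FF$ becomes a comodule.

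For part (i) I would proceed as follows. First, since $\EE_{**}$ is flat, $\EE_{**}\EE = \pi_{**}(\EE \wedge \EE)$ is a commutative ring: the product is induced by the map $\EE \wedge \EE \wedge \EE \wedge \EE \to \EE \wedge \EE$ that multiplies the outer and inner factors, and Lemma \ref{flat-strmaps}(i) (applied with $\FF = \EE$) identifies $(\EE\wedge\EE\wedge\EE)_{**}$ with $\EE_{**}\EE \otimes_{\EE_{**}} \EE_{**}\EE$, which is what is needed to see this pairing is unital, associative and commutative. The two unit maps $\EE_{**} \to \EE_{**}\EE$ are $\eta_L, \eta_R$ induced by $\EE \wedge \unit \to \EE \wedge \EE$ and $\unit \wedge \EE \to \EE \wedge \EE$; the counit $\EE_{**}\EE \to \EE_{**}$ is induced by the multiplication; and the comultiplication $\EE_{**}\EE \to \EE_{**}\EE \otimes_{\EE_{**}} \EE_{**}\EE \cong (\EE \wedge \EE \wedge \EE)_{**}$ is induced by $\EE \wedge \unit \wedge \EE \to \EE \wedge \EE \wedge \EE$ — again using Lemma \ref{flat-strmaps}(i) to make the codomain identification. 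The Hopf algebroid axioms then follow formally by applying $\pi_{**}$ to the commuting diagrams of spectra witnessing associativity, unitality and cocommutativity of the coalgebra-with-diagonal structure on $\EE \wedge \EE$. Flatness of $\eta_R$ (equivalently $\eta_L$) is the hypothesis. For the comodule structure on $\EE_{**}\FF$: the coaction $\EE_{**}\FF \to \EE_{**}\EE \otimes_{\EE_{**}} \EE_{**}\FF$ is induced by $\EE \wedge \unit \wedge \FF \to \EE \wedge \EE \wedge \FF$, where the target's homotopy is identified via Lemma \ref{flat-strmaps}(i); coassociativity and counitality again come from commuting diagrams of spectra. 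Note that here $\FF$ ranges over all of $\SH(S)$, not just Tate objects — this is fine because $\EE_{**}\FF \cong \EE_{**}(\pp_{\SH(S),\caT}\FF)$ by Remark \ref{cellularization}(ii), so one may replace $\FF$ by its Tate projection throughout and invoke Lemma \ref{flat-strmaps}(i) legitimately.

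For part (ii) the strategy is to deduce everything from part (i) by restriction to bidegrees that are multiples of $(2,1)$, exactly as in the proof of Lemma \ref{flat-strmaps}(ii). Under the two hypotheses — $\EE_*$ flat and $\EE_*\EE \otimes_{\EE_*} \EE_{**} \xrightarrow{\cong} \EE_{**}\EE$ — the ring $\EE_{**}\EE$ is obtained from $\EE_*\EE$ by base change along $\EE_* \to \EE_{**}$, so $\EE_*\EE$ inherits a commutative ring structure and the structure maps of the Adams graded Hopf algebroid $(\EE_*, \EE_*\EE)$ are obtained by taking $(2{*},{*})$-components of those in (i); the isomorphism $\EE_*\EE \otimes_{\EE_*}\EE_*\EE \cong (\EE\wedge\EE\wedge\EE)_*$ needed to even state the comultiplication is the $\FF = \EE$ case of Lemma \ref{flat-strmaps}(ii). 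The comodule structures on $\EE_{**}\FF$ and $\EE_*\FF$ are read off the same way, using Lemma \ref{flat-strmaps}(ii) (for the $\EE_*\FF$ case) and part (i) together with the base-change isomorphism (for the $\EE_{**}\FF$ case over $(\EE_*,\EE_*\EE)$). The main obstacle — really the only nonformal point — is making sure the target identifications of all the structure maps are the correct ones, i.e. that Lemma \ref{flat-strmaps} is being applied to the right iterated smash product and that the resulting isomorphisms are compatible so that the classical diagrammatic verifications go through verbatim; once the bookkeeping of these Künneth-type identifications is pinned down, no genuinely new motivic input beyond Lemmas \ref{Tate-mod-func} and \ref{flat-strmaps} and Remark \ref{cellularization}(ii) is required.
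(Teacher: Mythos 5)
Your argument is correct and follows exactly the route the paper intends: part (i) is the classical Hopf-algebroid construction with Lemma \ref{flat-strmaps}(i) (applied with $\FF=\EE$) supplying the K\"unneth identification $(\EE\wedge\EE\wedge\EE)_{**}\cong\EE_{**}\EE\otimes_{\EE_{**}}\EE_{**}\EE$ needed for the comultiplication and coaction targets, and part (ii) is the restriction to bidegrees in $\Z(2,1)$, which is precisely the content of the purely algebraic lemma the paper states immediately after the corollary. The only cosmetic remark is that the ring structure on $\EE_{**}\EE$ exists without any flatness hypothesis (it is just $\pi_{**}$ of the ring spectrum $\EE\wedge\EE$); flatness is only needed for the coalgebra side, but this does not affect your argument.
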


The second part of Corollary \ref{hopf-algebroids} is really a statement about Hopf algebroids:
\begin{lemma}
Suppose $(A_{**},\Gamma_{**})$ is a flat Hopf algebroid in Adams graded graded abelian groups and the natural
map $\Gamma_*\otimes_{A_*} A_{**}\to\Gamma_{**}$ is an isomorphism. 
Then $(A_*,\Gamma_*)$ has the natural structure of a flat Hopf algebroid in Adams graded abelian groups,  
and for every comodule $M_{**}$ over $(A_{**},\Gamma_{**})$ the modules $M_{**}$ and $M_*$ are 
$(A_*,\Gamma_*)$-comodules. 
\end{lemma}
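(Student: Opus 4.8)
Here is how I would approach it.

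The plan is to transport the Hopf algebroid structure along the \emph{diagonal part} functor $(-)_*$, which sends an Adams graded graded abelian group $M_{**}$ to the Adams graded abelian group $M_*=M_{2*,*}$. The two properties of this functor that make everything go through are: it is exact (it is the projection onto a sub-grading, so exactness is checked degreewise), and it is compatible with the relevant relative tensor products in the following sense. If $P_*$ is an Adams graded $A_*$-module — so, placed diagonally, an Adams graded graded $A_*$-module supported in bidegrees $(2i,i)$ — and $N_{**}$ is any Adams graded graded $A_*$-module, then the natural map $P_*\otimes_{A_*}(N_{**})_*\to(P_*\otimes_{A_*}N_{**})_*$ is an isomorphism, since in a diagonal bidegree of $P_*\otimes_{A_*}N_{**}$ only the diagonal part of $N_{**}$ can contribute, and the relations imposed by $A_*$, which is itself diagonal, do not affect this.

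Next I would observe that the hypothesis says exactly that $\Gamma_{**}$ is the extension of scalars of the diagonal $A_*$-module $\Gamma_*$ along $A_*\hookrightarrow A_{**}$; applying the conjugation of the Hopf algebroid shows the same with the roles of the two unit maps interchanged. Hence for every $A_{**}$-module $N_{**}$ there is a natural isomorphism $\Gamma_{**}\otimes_{A_{**}}N_{**}\cong\Gamma_*\otimes_{A_*}N_{**}$ by transitivity of base change, and combining this with the previous paragraph, $(\Gamma_{**}\otimes_{A_{**}}N_{**})_*\cong\Gamma_*\otimes_{A_*}(N_{**})_*$.

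With these identifications the Hopf algebroid structure transports formally. The left unit, right unit, counit and conjugation of $(A_{**},\Gamma_{**})$ are bidegree preserving, so they restrict to the diagonal parts and yield maps $A_*\to\Gamma_*$ (twice), $\Gamma_*\to A_*$ and $\Gamma_*\to\Gamma_*$; the comultiplication $\Gamma_*\to\Gamma_*\otimes_{A_*}\Gamma_*$ is obtained by applying $(-)_*$ to $\Delta\colon\Gamma_{**}\to\Gamma_{**}\otimes_{A_{**}}\Gamma_{**}$ and using the isomorphism $(\Gamma_{**}\otimes_{A_{**}}\Gamma_{**})_*\cong\Gamma_*\otimes_{A_*}\Gamma_*$ above. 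Every Hopf algebroid axiom is a commutative diagram, and applying the exact functor $(-)_*$ and inserting these natural isomorphisms produces the corresponding diagram for $(A_*,\Gamma_*)$. Similarly, if $(M_{**},\psi)$ is an $(A_{**},\Gamma_{**})$-comodule, then restricting scalars makes $M_{**}$ an $A_*$-module and the composite of $\psi$ with $\Gamma_{**}\otimes_{A_{**}}M_{**}\cong\Gamma_*\otimes_{A_*}M_{**}$ is a coaction exhibiting $M_{**}$ as an $(A_*,\Gamma_*)$-comodule in Adams graded graded abelian groups; applying $(-)_*$ to this comodule — which, as noted, carries $\Gamma_*\otimes_{A_*}-$ to $\Gamma_*\otimes_{A_*}(-)_*$ — exhibits $M_*$ as an $(A_*,\Gamma_*)$-comodule in Adams graded abelian groups.

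The one point that is not purely formal, and which I expect to be the main obstacle, is flatness of $\Gamma_*$ over $A_*$: a priori the flatness of $\Gamma_{**}$ over $A_{**}$ together with $\Gamma_{**}\cong\Gamma_*\otimes_{A_*}A_{**}$ carries no information about $A_*$ unless $A_{**}$ is itself flat over $A_*$, which it need not be. The way around this: for an ideal $J\subseteq A_*$, flatness of $\Gamma_{**}$ over $A_{**}$ gives an injection $JA_{**}\otimes_{A_{**}}\Gamma_{**}\hookrightarrow\Gamma_{**}$. Rewriting the left-hand side as $\Gamma_*\otimes_{A_*}JA_{**}$, applying the exact functor $(-)_*$, invoking the diagonal tensor identity, and using that the diagonal part of the ideal $JA_{**}$ is exactly $J$ — which holds because $J$ is already contained in $A_*$ — one obtains an injection $\Gamma_*\otimes_{A_*}J\hookrightarrow\Gamma_*$, i.e.\ $\mathrm{Tor}^{A_*}_1(A_*/J,\Gamma_*)=0$. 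Since this holds for every ideal $J$, the $A_*$-module $\Gamma_*$ is flat. The remaining verifications — that all of these identifications respect the ring and comodule structures and that the transported axioms hold — are routine diagram chases, which I would only indicate.
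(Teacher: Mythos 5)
Your proof is correct. The paper states this lemma without any proof, treating it as a routine transport of structure; your argument supplies the missing verification, and there is no paper proof to compare against. The key points you isolate — that the diagonal functor $(-)_*$ is exact, that $(P_*\otimes_{A_*}N_{**})_*\cong P_*\otimes_{A_*}(N_{**})_*$ whenever $P_*$ is concentrated on the diagonal, and that combined with the hypothesis this identifies $(\Gamma_{**}\otimes_{A_{**}}N_{**})_*$ with $\Gamma_*\otimes_{A_*}(N_{**})_*$ — are exactly what is needed to transport the Hopf algebroid axioms and the comodule structures formally. You are also right that flatness of $\Gamma_*$ over $A_*$ is the only genuinely non-formal point, since $A_{**}$ need not be flat over $A_*$; your ideal-criterion argument is sound, with the observation $(JA_{**})_*=J$ justified by noting that $A_{**}$ is graded by the defect $p-2q$, the subring $A_*$ is the degree-zero part, and hence $JA_{**}$ is homogeneous for this grading with degree-zero piece $JA_*=J$.
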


\section{The stacks of topological and algebraic cobordism}
\subsection{The algebraic stack of $\MU$}
Denote by $\FG$ the moduli stack of one-dimensional commutative formal groups \cite{niko}. 
It is algebraic and a presentation is given by the canonical map $\FGL \to \FG$, 
where $\FGL$ is the moduli scheme of formal group laws.
The stack $\FG$ has a canonical line bundle $\omega$, 
and $[\MU_*/\MU_* \MU]$ is equivalent to the corresponding $\G$-torsor $\FG^s$ over $\FG$.

\subsection{The algebraic stack of  $\MGL$}
In this section we first study the (co)homology of finite Grassmannians over regular noetherian base schemes 
of finite Krull dimension.
Using this computational input we relate the algebraic stacks of $\MU$ and $\MGL$. 
A key result is the isomorphism
\[ 
\MGL_{**}\MGL\cong\MGL_{**}\otimes_{\MU_*}\MU_*\MU.
\]
When $S$ is a field this can easily be extracted from \cite[Theorem 5]{borghesi}.
Since it is crucial for the following, we will give a rather detailed argument for the generalization.
\vspace{0.1in}

We recall the notion of oriented motivic ring spectra formulated by Morel \cite{morel:basicproperties}, 
cf.~\cite{hu-kriz}, \cite{PPR2} and \cite{vezzosi}:
If $\EE$ is a motivic ring spectrum, 
the unit map $\unit\rightarrow\EE$ yields a class $1\in\EE^{0,0}(\unit)$ and hence by smashing with the 
projective line a class $\cc_{1}\in\EE^{2,1}(\PP^{1})$. 
An orientation on $\EE$ is a class $\cc_{\infty}\in\EE^{2,1}(\PP^{\infty})$ that restricts to $\cc_{1}$.
Note that $\KGL$ and $\MGL$ are canonically oriented.
\vspace{0.1in}

For $0\leq d\leq n$ define the ring
\begin{equation}\label{651}
\RRRR_{n,d}\equiv \Z[\x_1,\ldots,\x_{n-d}]/(\s_{d+1},\ldots,\s_n),
\end{equation}
where $\s_i$ is given by
\[ 
1+\sum\limits_{n=1}^\infty \s_nt^n\equiv (1+\x_1t+\x_2t^2+�\ldots+\x_{n-d}t^{n-d})^{-1}
\mbox{ in } \Z[\x_1,\ldots,\x_{n-d}][[t]]^{\times}.\]
By assigning weight $i$ to $\x_i$ every $\s_k\in\Z[\x_1,\ldots,\x_k]$ is homogeneous of degree $k$. 
In (\ref{651}), 
$\s_j=\s_j(\x_1,\ldots,\x_{n-d},0,\ldots)$ by convention when $d+1\leq i\leq n$.

We note that $\RRRR_{n,d}$ is a free $\Z$-module of rank $n\choose d$.
For every sequence $\underline{a}=(a_1,\ldots,a_d)$ subject to the inequalities 
$n-d\ge a_1\ge a_2\ge\ldots\ge a_d\ge 0$, 
set:
\[ 
\Delta_{\underline{a}}\equiv 
\det\left(\begin{array}{cccc} 
\x_{a_1} & \x_{a_1+1} & \ldots & \x_{a_1+d-1}\\ \x_{a_2-1} & \x_{a_2} & \ldots & \x_{a_2+d-2}\\
\ldots & \ldots & \ldots & \ldots\\ \x_{a_d-d+1} & \ldots & \ldots & \x_{a_d}\end{array}\right)
\]
Here $\x_0\equiv 1$ and $\x_i\equiv 0$ for $i<0$ or $i>n-d$.
The Schur polynomials $\{\Delta_{\underline{a}}\}$ form a basis for $\RRRR_{n,d}$ as a $\Z$-module.
Let $\pi:\RRRR_{n+1,d+1}\rightarrow \RRRR_{n,d+1}$ be the unique surjective ring homomorphism
where $\pi(\x_i)=\x_i$ for $1\leq i\leq n-d-1$ and $\pi(\x_{n-d})=0$.
It is easy to see that $\pi(\Delta_{\underline{a}})=\Delta_{\underline{a}}$
if $a_1\leq n-d-1$ and $\pi(\Delta_{\underline{a}})=0$ for $a_1=n-d$.
Hence the kernel of $\pi$ is the principal ideal generated by $\x_{n-d}$.
That is, 
\begin{equation*}
\ker(\pi)=\x_{n-d}\cdot \RRRR_{n+1,d+1}.
\end{equation*}
Moreover,
let $\iota: \RRRR_{n,d}\rightarrow\RRRR_{n+1,d+1}$ be the unique monomorphism of abelian groups 
such that for every $\underline{a}$, 
$\iota(\Delta_{\underline{a}})=\Delta_{\underline{a}'}$ 
where $\underline{a}'=(n-d,\underline{a})\equiv (n-d,a_1,\ldots,a_{d})$.
Clearly we get 
\begin{equation}
\label{65mu}
\im(\iota)=\ker(\pi).
\end{equation}
Note that $\iota$ is a map of degree $n-d$.
We will also need the unique ring homomorphism
$f:\RRRR_{n+1,d+1}\rightarrow \RRRR_{n,d}=\RRRR_{n+1,d+1}/(\s_{d+1})$
where $f(\x_i)=\x_i$ for all $1\leq i\leq n-d$. 
Elementary matrix manipulations establish the equalities 
\begin{equation}
\label{65eq1}
f(\Delta_{(a_1,\ldots,a_d,0)})=\Delta_{(a_1,\ldots,a_d)}
\end{equation}
and
\begin{equation}
\label{65eq2}
\iota(\Delta_{(a_1,\ldots,a_d)})=\x_{n-d}\cdot\Delta_{(a_1,\ldots,a_d,0)}.
\end{equation} 
Next we discuss some geometric constructions involving Grassmannians.
\vspace{0.1in}

For $0\leq d\leq n$, 
denote by $\Gr_{n-d}(\bA^n)$ the scheme parametrizing subvector bundles of rank $n-d$ of the
trivial rank $n$ bundle such that the inclusion of the subbundle is locally split.
Similarly, 
$\GG(n,d)$ denotes the scheme parametrizing locally free quotients of rank $d$ of the trivial 
bundle of rank $n$; $\GG(n,d)\cong\Gr_{n-d}(\bA^n)$ is smooth of relative dimension $d(n-d)$. 
If
\begin{equation}\label{65alpha}
\xymatrix{ 0\ar[r]& \caK_{n,d}\ar[r]& \caO_{\GG(n,d)}^n\ar[r]& \caQ_{n,d}\ar[r]& 0}
\end{equation}
is the universal short exact sequence of vector bundles on $\GG(n,d)$ and $\caK_{n,d}'$ denotes 
the dual of $\caK_{n,d}$, 
then the tangent bundle 
\begin{equation}
\label{652} 
\caT_{\GG(n,d)}\cong \caQ_{n,d}\otimes \caK_{n,d}'.
\end{equation}
The map
\[ 
\xymatrix{ i:\GG(n,d)\cong \Gr_{n-d}(\bA^n)\ar@{^{(}->}[r] &  \Gr_{n-d}(\bA^{n+1})\cong \GG(n+1,d+1)} \]
classifying $\caK_{n,d}\subseteq\caO_{\GG(n,d)}^n\hookrightarrow\caO_{\GG(n,d)}^{n+1}$
is a closed immersion.
From (\ref{652}) it follows that the normal bundle $\caN(i)$ of $i$ identifies with $\caK_{n,d}$.
Next consider the composition on $\GG(n+1,d+1)$
\[ 
\xymatrix{ 
\alpha:\caO_{\GG(n+1,d+1)}^n\ar@{^{(}->}[r] & \caO_{\GG(n+1,d+1)}^{n+1}\ar[r] & \caQ_{n+1,d+1} }
\]
for the inclusion into the first $n$ factors.
The complement of the support of coker($\alpha$) is an open subscheme $U\subseteq\GG(n+1,d+1)$ and there is a map
$\pi:U\to\GG(n,d+1)$ classifying $\alpha|_U$.
It is easy to see that $\pi$ is an affine bundle of dimension $d$, and hence
\begin{equation}
\label{65beta}
\pi\mbox{ is a motivic weak equivalence.}
\end{equation}
An argument with geometric points reveals that $U=\GG(n+1,d+1)\smallsetminus i(\GG(n,d))$.
We summarize the above with the diagram
\begin{equation}\label{653}
\xymatrix{ 
\GG(n,d) \ar@{^{(}->}[r]^-i & \GG(n+1,d+1) & U \ar@{_{(}->}[l] \ar[r]^-\pi & \GG(n,d+1).}
\end{equation}
With these precursors out of the way we are ready to compute the (co)homology of finite Grassmannians with respect
to any oriented motivic ring spectrum.

For every $0\leq d\leq n$ there is a unique morphism of $\EE^{**}$-algebras
$\varphi_{n,d}:\EE^{**}\otimes_\Z \RRRR_{n,d}\to \EE^{**}(\GG(n,d))$ such that $\varphi_{n,d}(\x_i)=\cc_i(\caK_{n,d})$ 
for $1\leq i\leq n-d$.
This follows from (\ref{65alpha}) and the standard calculus of Chern classes in $\EE$-cohomology.
Note that $\varphi_{n,d}$ is bigraded if we assign degree $(2i,i)$ to $\x_i\in\RRRR_{n,d}$.
\begin{proposition} 
For $0\leq d\leq n$ the map of $\EE^{**}$-algebras 
\[
\xymatrix{
\varphi_{n,d}: \EE^{**}\otimes_\Z \RRRR_{n,d}\ar[r] & \EE^{**}(\GG(n,d)) }
\] 
is an isomorphism.
\end{proposition}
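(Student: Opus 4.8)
The plan is to argue by induction on $n$, following the classical template for the (co)homology of Grassmannians. When $d=0$ or $d=n$ the scheme $\GG(n,d)$ is a point and $\RRRR_{n,d}=\Z$, so $\varphi_{n,d}$ is manifestly an isomorphism; these are the base cases. For the inductive step one assumes the proposition for $\GG(n,d)$ and $\GG(n,d+1)$ and proves it for $\GG(n+1,d+1)$ with $1\le d+1\le n$, exploiting the geometry recorded in $(\ref{653})$.

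First I would feed the closed immersion $i\colon\GG(n,d)\hookrightarrow\GG(n+1,d+1)$ with open complement $U$ into the homotopy purity theorem, producing the cofiber sequence $U_+\to\GG(n+1,d+1)_+\to\Thom(\caN(i))$, where $\caN(i)$ is the normal bundle of $i$, a rank $n-d$ bundle determined by $(\ref{652})$. Since $\EE$ is oriented, the Thom isomorphism identifies $\widetilde{\EE}^{**}(\Thom(\caN(i)))$ with $\EE^{**}(\GG(n,d))$, shifted by $(2(n-d),n-d)$, and the motivic weak equivalence $\pi\colon U\to\GG(n,d+1)$ of $(\ref{65beta})$ identifies $\EE^{**}(U)$ with $\EE^{**}(\GG(n,d+1))$. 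The resulting Gysin long exact sequence takes the form
\[
\cdots\longrightarrow\EE^{**}(\GG(n,d))\xrightarrow{\ i_!\ }\EE^{**}(\GG(n+1,d+1))\xrightarrow{\ j^{*}\ }\EE^{**}(\GG(n,d+1))\longrightarrow\cdots,
\]
with $j^{*}$ the composite of restriction to $U$ with the inverse of $\pi^{*}$. I would then observe that over $U$ the surjectivity of $\alpha$ yields a splitting $\caK_{n+1,d+1}|_U\cong\pi^{*}\caK_{n,d+1}\oplus\caO$ (the trivial summand being the last coordinate), so that $j^{*}$ sends $\cc_i(\caK_{n+1,d+1})$ to $\cc_i(\caK_{n,d+1})$ for $i\le n-d-1$ and kills $\cc_{n-d}(\caK_{n+1,d+1})$; under the inductive identification this reads $j^{*}\circ\varphi_{n+1,d+1}=\varphi_{n,d+1}\circ(\id\otimes\pi)$, which is surjective because $\pi\colon\RRRR_{n+1,d+1}\to\RRRR_{n,d+1}$ is. Since restriction to $U$ is thus surjective in every bidegree, the connecting maps vanish and the long exact sequence breaks into short exact sequences.

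Finally I would compare this with the $\EE^{**}$-linear short exact sequence obtained by applying $\EE^{**}\otimes_{\Z}(-)$ to $0\to\RRRR_{n,d}\xrightarrow{\iota}\RRRR_{n+1,d+1}\xrightarrow{\pi}\RRRR_{n,d+1}\to0$, which stays exact since $\RRRR_{n,d+1}$ is $\Z$-free and $\im(\iota)=\ker(\pi)$ by $(\ref{65mu})$. The maps $\varphi_{n,d}$, $\varphi_{n+1,d+1}$, $\varphi_{n,d+1}$ should assemble into a ladder between the two short exact sequences: commutativity of the right square is the identity just established, while commutativity of the left square unwinds, via the projection formula together with the relations $(\ref{65eq1})$ and $(\ref{65eq2})$, to the assertion that the Gysin class $i_!(1)\in\EE^{2(n-d),n-d}(\GG(n+1,d+1))$ equals $\varphi_{n+1,d+1}(\x_{n-d})=\cc_{n-d}(\caK_{n+1,d+1})$, up to the duality implicit in $(\ref{652})$. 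With the outer vertical maps isomorphisms by the inductive hypothesis, the five lemma closes the induction. I expect this last point — the precise identification of $i_!(1)$ — to be the main obstacle: one needs to present $i$ as the zero locus of a regular section of a rank $n-d$ bundle associated to the tautological sub-bundle on $\GG(n+1,d+1)$ and recognize the corresponding Euler class, which is precisely what the auxiliary maps $\iota$, $f$ and the identities $(\ref{65eq1})$--$(\ref{65eq2})$ are engineered to encode.
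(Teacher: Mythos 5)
Your proposal is correct and follows essentially the same route as the paper: induction via the Gysin sequence for $\GG(n,d)\hookrightarrow\GG(n+1,d+1)$ with open complement an affine bundle over $\GG(n,d+1)$, comparison with the algebraic short exact sequence from (\ref{65mu}), and the five lemma. The paper carries out your "main obstacle" exactly as you anticipate — it computes the Thom class of $\caN(i)\cong\caK_{n,d}$ explicitly in $\bP(\caK_{n,d}\oplus\caO)$ via the projective bundle theorem and shows $\alpha\circ\EE^{**}(i)$ is multiplication by $(-1)^{n-d}\cc_{n-d}(\caK_{n+1,d+1})$, so the left square only commutes up to that sign, which is harmless for the five lemma.
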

\begin{proof} 
We observe that the result holds when $d=0$ and $d=n$, 
since then $\GG(n,d)=S$.
By induction it suffices to show that if $\varphi_{n,d}$ and $\varphi_{n,d+1}$ are isomorphisms, 
then so is $\varphi_{n+1,d+1}$.
To that end we contemplate the diagram:
\begin{equation}
\label{654}
\xymatrix{ \EE^{*-2r,*-r}(\GG(n,d)) \ar[r]^-{\alpha} & \EE^{**}(\GG(n+1,d+1)) 
\ar[r]^-{\beta} & \EE^{**}(\GG(n,d+1))\\ (\EE^{**}\otimes_\Z \RRRR_{n,d})(-2r,-r)
\ar[u]^-{\varphi_{n,d}(-2r,-r)}_-{\cong} \ar[r]^-{1\otimes\iota} & 
\EE^{**}\otimes_\Z \RRRR_{n+1,d+1} \ar[u]^-{\varphi_{n+1,d+1}} \ar[r]^-{1\otimes \pi} & 
\EE^{**}\otimes_\Z \RRRR_{n,d+1} \ar[u]^-{\varphi_{n,d+1}}_\cong}
\end{equation}
Here $r\equiv \mathrm{codim}(i)=n-d$ and $(-2r,-r)$ indicates a shift.
The top row is part of the long exact sequence in $\EE$-cohomology associated with (\ref{653}) using the 
Thom isomorphism $\EE^{*+2r,*+r}(\Thom(\caN(i)))\cong \EE^{**}(\GG(n,d))$ and the fact that 
$\EE^{**}(U)\cong \EE^{**}(\GG(n,d+1))$ by (\ref{65beta}).
The lower sequence is short exact by (\ref{65mu}).
Since $\caK_{n+1,d+1}|_U\cong\pi^*(\caK_{n,d+1})\oplus\caO_U$ we get
$\beta(\varphi_{n+1,d+1}(\x_i))=\beta(\cc_i(\caK_{n+1,d+1}))=
\cc_i(\caK_{n+1,d+1}|_U)=\pi^*(\cc_i(\caK_{n,d+1}))=\varphi_{n,d+1}(1\otimes\pi(\x_i))$.
Therefore, 
the right hand square in (\ref{654}) commutes, 
$\beta$ is surjective and the top row in (\ref{654}) is short exact.
Next we study the Gysin map $\alpha$.

Since $i^*(\caK_{n+1,d+1})=\caK_{n,d}$ there is a cartesian square of projective bundles:
\[ 
\xymatrix{ \bP(\caK_{n,d}\oplus\caO) \ar[r]^-{i'}\ar[d]_-p & 
\bP(\caK_{n+1,d+1}\oplus\caO)\ar[d]\\ \GG(n,d)\ar[r]^-i & \GG(n+1,d+1) }
\]
By the induction hypothesis $\varphi_{n,d}$ is an isomorphism. 
Thus the projective bundle theorem gives
\[ 
\EE^{**}(\bP(\caK_{n,d}\oplus\caO))\cong (\EE^{**}\otimes_\Z \RRRR_{n,d})[\x]/(\x^{r+1}+
\sum\limits_{i=1}^r(-1)^i\varphi_{n,d}(\x_i)\x^{r+1-i}),
\]
where $\x\equiv \cc_1(\caO_{\bP(\caK_{n,d}\oplus\caO)}(1))\in \EE^{2,1}(\bP(\caK_{n,d}\oplus\caO))$.
Similarly, 
\[ 
\EE^{**}(\bP(\caK_{n+1,d+1}\oplus\caO))\cong \EE^{**}(\GG(n+1,d+1))[\x']/(\x'^{r+1}+
\sum\limits_{i=1}^r(-1)^i\varphi_{n+1,d+1}(\x_i')\x'^{r+1-i}),
\]
where $\x'\equiv \cc_1(\caO_{\bP(\caK_{n+1,d+1}\oplus\caO)}(1))$ and $\x_i'=\cc_i(\caK_{n+1,d+1})\in
\RRRR_{n+1,d+1}$. 
(We denote the canonical generators of $\RRRR_{n+1,d+1}$ by $\x'_i$ in order to distinguish them 
from $\x_i\in \RRRR_{n,d}$.)
Recall the Thom class of $\caK_{n,d}\cong\caN(i)$ is constructed from
\[ 
\thclass\equiv \cc_r(p^*(\caK_{n,d})\otimes \caO_{\bP(\caK_{n,d}\oplus\caO)}(1))=
\x^r+\sum\limits_{i=1}^r (-1)^i\varphi_{n,d}(\x_i)\x^{r-i}\in \EE^{2r,r}(\bP(\caK_{n,d}\oplus\caO)).
\]
Using $i'^*(\x')=\x$ and $i^*(\varphi_{n+1,d+1}(\x_i'))=\varphi_{n,d}(\x_i)$ for $1\leq i\leq r$, 
we get that 
\[ 
\tilde{\thclass}\equiv 
\x'^r+\sum\limits_{i=1}^r(-1)^i\varphi_{n+1,d+1}(\x_i')\x'^{r-i}\in \EE^{2r,r}(\bP(\caK_{n+1,d+1}\oplus\caO))
\]
satisfies $i^{'*}(\tilde{\thclass})=\thclass$, 
and if $z:\GG(n+1,d+1)\to\bP(\caK_{n+1,d+1}\oplus\caO)$ denotes the zero-section, 
then 
\begin{equation}
\label{65eq5}
z^*(\tilde{\thclass})=(-1)^{n-d}\varphi_{n+1,d+1}(\x'_{n-d})\in \EE^{2(n-d),n-d}(\GG(n+1,d+1)).
\end{equation}
Moreover, 
since $i^*(\caK_{n+1,d+1})=\caK_{n,d}$ we conclude 
\begin{equation}
\label{65eq3}
\EE^{**}(i)\circ\varphi_{n+1,d+1}=\varphi_{n,d}\circ (1\otimes f).
\end{equation}
By inspection of the construction of the Thom isomorphism, 
it follows that 
\begin{equation}\label{65eq4}
\alpha\circ \EE^{**}(i)\mbox{ equals multiplication by } z^*(\tilde{\thclass}).
\end{equation}
And for every partition $\underline{a}$ as above,
\[ 
\alpha\circ\varphi_{n,d}(\Delta_{\underline{a}})\stackrel{(\ref{65eq1})}{=}
\alpha\circ\varphi_{n,d}\circ (1\otimes f)(\Delta_{(\underline{a},0)})\stackrel{(\ref{65eq3})}{=}
\alpha\circ \EE^{**}(i)\circ\varphi_{n+1,d+1}(\Delta_{(\underline{a},0)})
\]
\[ 
\stackrel{(\ref{65eq4})}{=}z^*(\tilde{\thclass})\cdot\varphi_{n+1,d+1}(\Delta_{(\underline{a},0)})
\stackrel{(\ref{65eq5})}{=}\varphi_{n+1,d+1}((-1)^{n-d}\x_{n-d}'\cdot\Delta_{(\underline{a},0)})
\]
\[ 
\stackrel{(\ref{65eq2})}{=}(-1)^{n-d}\cdot\varphi_{n+1,d+1}((1\otimes \iota)(\Delta_{\underline{a}})).
\]
This verifies that the left hand square in (\ref{654}) commutes up to a sign.
Hence, by the $5$-lemma, $\varphi_{n+1,d+1}$ is an isomorphism.
\end{proof}

Since $\Sigma_+^\infty\GG(n,d)\in\SH(S)$ is dualizable and $\EE$ is oriented we see that for all
$0\leq d\leq n$ the Kronecker product
\begin{equation}
\label{65dual}
\xymatrix{
\EE^{**}(\GG(n,d))\otimes_{\EE_{**}} \EE_{**}(\GG(n,d))\ar[r] & \EE_{**} }
\end{equation}
is a perfect pairing of finite free $\EE_{**}$-modules.
\begin{proposition}
\label{homofgrass}
\begin{itemize}
\item[(i)] $\EE^{**}(\BGL_d)=\EE^{**}[[\cc_1,\ldots,\cc_d]]$ where $\cc_i\in\EE^{2i,i}(\BGL_d)$ 
is the $i$th Chern class of the tautological rank $d$ vector bundle.
\item[(ii)]\begin{itemize}\item[a)] $\EE^{**}(\BGL)=\EE^{**}[[\cc_1,\cc_2,\ldots]]$ where $\cc_i$ 
is the $i$th Chern class of the universal bundle.
\item[b)] $\EE_{**}(\BGL)=\EE_{**}[\beta_0,\beta_1,\ldots]/(\beta_0=1)$ as $\EE_{**}$-algebras where 
$\beta_i\in\EE_{2i,i}(\BGL)$ is the image of the dual of $\cc_1^i \in \EE^{2i,i}(\BGL_1)$.
\end{itemize}
\item[(iii)] There are Thom isomorphisms of $\EE^{**}$-modules
\[ 
\xymatrix{
\EE^{**}(\BGL)\ar[r]^-{\cong} & \EE^{**}(\MGL) } 
\]
and $\EE_{**}$-algebras
\[ 
\xymatrix{
\EE_{**}(\MGL)\ar[r]^-{\cong} & \EE_{**}(\BGL).}
\]
\end{itemize}
\end{proposition}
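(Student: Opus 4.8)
The plan is to deduce all three parts from the finite-Grassmannian computation of the preceding Proposition together with the perfect pairing (\ref{65dual}), passing to (co)limits; the Thom isomorphisms are then a formal consequence of orientability. Realise the infinite Grassmannians as filtered colimits, $\BGL_{d}=\colim_{n}\Gr_{d}(\bA^{n})$ along the closed immersions $\Gr_{d}(\bA^{n})\hookrightarrow\Gr_{d}(\bA^{n+1})$, and $\BGL=\colim_{d}\BGL_{d}$ along $\BGL_{d}\hookrightarrow\BGL_{d+1}$. For such a homotopy colimit one has, in each bidegree $(p,q)$, a Milnor exact sequence
\[
0 \to {\lim}^{1}\, \EE^{p-1,q}(\FF_{i}) \to \EE^{p,q}\bigl(\colim_{i}\FF_{i}\bigr) \to \lim_{i}\EE^{p,q}(\FF_{i}) \to 0 .
\]
Under the isomorphism $\varphi_{n,d}$ the restriction maps $\EE^{**}(\Gr_{d}(\bA^{n+1}))\to\EE^{**}(\Gr_{d}(\bA^{n}))$ correspond to the evident surjections of the rings $\RRRR_{\bullet,\bullet}$ (which kill the relations of lowest degree), and the maps $\EE^{**}(\BGL_{d+1})\to\EE^{**}(\BGL_{d})$ send $\cc_{d+1}$ to $0$; both towers are therefore Mittag-Leffler, so the $\lim^{1}$-terms vanish and the inverse limits stabilise degreewise. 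This gives $\EE^{**}(\BGL_{d})=\EE^{**}[[\cc_{1},\dots,\cc_{d}]]$ with $\cc_{i}$ the $i$-th Chern class of the tautological rank-$d$ bundle, which is (i), and then $\EE^{**}(\BGL)=\lim_{d}\EE^{**}[[\cc_{1},\dots,\cc_{d}]]=\EE^{**}[[\cc_{1},\cc_{2},\dots]]$, which is (ii)(a).

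For (ii)(b) I would first note that $\EE_{**}(-)=\pi_{**}(\EE\wedge-)$ commutes with filtered homotopy colimits, so $\EE_{**}(\BGL)=\colim_{n,d}\EE_{**}(\Gr_{d}(\bA^{n}))$; by the perfect pairing (\ref{65dual}) each term is the finite free $\EE_{**}$-linear dual of the corresponding cohomology, hence $\EE_{**}(\BGL)$ is free over $\EE_{**}$ on the basis dual to the Schur polynomials $\Delta_{\underline{a}}$ in the $\cc_{i}$. It remains to identify the $\EE_{**}$-algebra structure coming from the Whitney-sum map $\oplus\colon\BGL\times\BGL\to\BGL$, for which I would run, mutatis mutandis, the classical computation of $\EE_{*}(BU)$ for a complex-orientable spectrum; this transfers verbatim because for an oriented $\EE$ the ring $\EE_{**}$ is graded-commutative with the Adams grading regarded as even. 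Concretely: the coproduct on $\EE^{**}(\BGL)$ is $\cc_{k}\mapsto\sum_{i+j=k}\cc_{i}\otimes\cc_{j}$, the composites $(\BGL_{1})^{\times m}\to\BGL$ realise the splitting principle $\EE^{**}(\BGL)\hookrightarrow\EE^{**}\bigl((\PP^{\infty})^{\times m}\bigr)=\EE^{**}[[t_{1},\dots,t_{m}]]$ with $\cc_{i}\mapsto e_{i}(t_{1},\dots,t_{m})$, and dualising shows that the classes $\beta_{i}\in\EE_{2i,i}(\BGL)$ pushed forward from $\EE_{**}(\BGL_{1})=\EE_{**}(\PP^{\infty})$ — the basis dual to the powers $\cc_{1}^{i}$ — are polynomial generators, i.e.\ $\EE_{**}(\BGL)=\EE_{**}[\beta_{0},\beta_{1},\dots]/(\beta_{0}-1)$.

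For (iii): $\MGL$ is the motivic Thom spectrum whose $d$-th space is $\Thom(\gamma_{d})$ over $\BGL_{d}$, with structure maps built from $\gamma_{d+1}|_{\BGL_{d}}\cong\gamma_{d}\oplus\caO$. Since $\EE$ is oriented there is a Thom class $\thclass_{d}\in\EE^{2d,d}(\Thom(\gamma_{d}))$ and a Thom isomorphism $\EE^{**}(\BGL_{d})\xrightarrow{\cong}\EE^{*+2d,*+d}(\Thom(\gamma_{d}))$, natural in $d$ because $\thclass_{d+1}$ restricts to $\thclass_{d}$ smashed with the suspension class. Passing to the limit — the tower again being Mittag-Leffler, and the Thom shifts cancelling the desuspensions that define the spectrum — yields the $\EE^{**}$-module isomorphism $\EE^{**}(\BGL)\cong\EE^{**}(\MGL)$. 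The homology Thom isomorphism $\EE_{**}(\MGL)\cong\EE_{**}(\BGL)$ then follows by dualising via (\ref{65dual}) (equivalently, by capping with the Thom classes), and it respects the algebra structures because the ring structure on $\MGL$ corresponds under the Thom isomorphism to Whitney sum on $\BGL$.

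I expect the genuine obstacle to be the ring-structure statement in (ii)(b): exhibiting the $\beta_{i}$ as polynomial generators demands the full coproduct/splitting-principle bookkeeping relating monomials in the $\cc_{i}$, Schur polynomials and their homology duals, and one must also check that the $\lim^{1}$-vanishing and the degreewise stabilisation of the inverse limits cause no trouble in the bigraded $\EE^{**}$-setting. By comparison, (i), (ii)(a) and the Thom isomorphisms in (iii) are essentially formal given the preceding Proposition, (\ref{65dual}) and orientability.
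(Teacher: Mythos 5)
Your proposal is correct and follows essentially the same route as the paper, which simply declares (i) and (ii)(a) ``clear from the above,'' records the two duality isomorphisms coming from the perfect pairing (\ref{65dual}) (with the continuous dual $\Hom_{\EE_{**},c}$ for the inverse-limit topology on $\EE^{**}(\BGL_d)$), and states that (ii)(b) and (iii) then ``carry over verbatim from topology.'' You have merely spelled out the $\lim^1$/Mittag--Leffler bookkeeping and the classical $\EE_*(\mathrm{BU})$-style argument that the paper leaves implicit.
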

\begin{proof} Parts $(i)$ and $(ii)a)$ are clear from the above. 
From (\ref{65dual}) we conclude there are canonical isomorphisms
\[ 
\xymatrix{
\EE^{**}(\BGL_d)\ar[r]^-{\cong} & \Hom_{\EE_{**}}(\EE_{**}(\BGL_d),\EE_{**}),} 
\]
\[ 
\xymatrix{
\EE_{**}(\BGL_d)\ar[r]^-{\cong} & \Hom_{\EE_{**},c}(\EE^{**}(\BGL_d),\EE_{**}). } 
\]
The notation $\Hom_{\EE_{**},c}$ refers to continuous $\EE_{**}$-linear maps with respect to the inverse limit 
topology on $\EE^{**}(\BGL_d)$ and the discrete topology on $\EE_{**}$.
Using this, 
the proofs of parts $(ii)b)$ and $(iii)$ carry over verbatim from topology.
\end{proof}
\begin{corollary} 
\label{mgl-hopf-algebroid}
\begin{enumerate}
\item[(i)] 
The tuple $(\MGL_{**}, \MGL_{**} \MGL)$ is a flat Hopf algebroid in Adams graded graded abelian groups.
For every motivic spectrum $\FF$ the module $\MGL_{**}\FF$ is an $(\MGL_{**},\MGL_{**}\MGL)$-comodule.
\item[(ii)]  
By restriction of structure the tuple $(\MGL_*, \MGL_* \MGL)$ is a flat Hopf algebroid in Adams graded abelian groups.
For every motivic spectrum $\FF$ the modules $\MGL_{**}\FF$ and $\MGL_*\FF$ are $(\MGL_*,\MGL_*\MGL)$-comodules.
\end{enumerate}
\end{corollary}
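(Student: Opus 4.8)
The plan is to derive both parts as formal consequences of the Hopf algebroid machinery of Section 6 once the requisite flatness is in place, and that flatness will come directly from the polynomial presentation of $\MGL_{**}(\BGL)$ provided by Proposition \ref{homofgrass}. For part (i) I would first combine the Thom isomorphism of Proposition \ref{homofgrass}(iii), which identifies $\MGL_{**}\MGL=\MGL_{**}(\MGL)$ with $\MGL_{**}(\BGL)$ as $\MGL_{**}$-algebras, with part (ii)b) of that proposition, which presents $\MGL_{**}(\BGL)$ as the polynomial algebra $\MGL_{**}[\beta_1,\beta_2,\ldots]$ (after setting $\beta_0=1$). A polynomial algebra is a free module over its ring of coefficients, with the monomials as a basis, so $\MGL_{**}\MGL$ is a free, hence flat, $\MGL_{**}$-module; that is, $\MGL_{**}$ is flat in the sense of Section 6. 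Since $\MGL\in\SH(S)_{\caT}$ (algebraic cobordism is cellular), Corollary \ref{hopf-algebroids}(i) then immediately gives that $(\MGL_{**},\MGL_{**}\MGL)$ is a flat Hopf algebroid in Adams graded graded abelian groups and that $\MGL_{**}\FF$ is a comodule over it for every motivic spectrum $\FF$.

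For part (ii) I would appeal to the Lemma stated immediately after Corollary \ref{hopf-algebroids}, applied to $(A_{**},\Gamma_{**})=(\MGL_{**},\MGL_{**}\MGL)$; its hypothesis is that the canonical map $\MGL_*\MGL\otimes_{\MGL_*}\MGL_{**}\to\MGL_{**}\MGL$ be an isomorphism. This is the point at which the bidegrees of the generators matter: each $\beta_i$ sits in bidegree $(2i,i)$, hence in a bidegree that is an integer multiple of $(2,1)$, so that $\MGL_*\MGL=\MGL_*[\beta_1,\beta_2,\ldots]$ and extension of scalars along $\MGL_*\to\MGL_{**}$ recovers $\MGL_{**}[\beta_1,\beta_2,\ldots]=\MGL_{**}\MGL$. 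The cited Lemma then upgrades $(\MGL_*,\MGL_*\MGL)$ to a flat Hopf algebroid in Adams graded abelian groups and shows that both $\MGL_{**}\FF$ and $\MGL_*\FF$ are comodules over it; alternatively one could invoke Corollary \ref{hopf-algebroids}(ii) directly, since $\MGL_*\MGL=\MGL_*[\beta_1,\ldots]$ is visibly free over $\MGL_*$.

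There is no real obstacle here: the statement is a corollary in the literal sense, and all the substance is already contained in Proposition \ref{homofgrass} and the lemmas of Section 6. The only place calling for a bit of care is the grading bookkeeping --- one must check that the isomorphisms of Proposition \ref{homofgrass} respect the Adams graded graded structure and that passing to the subring of bidegrees in $\Z\cdot(2,1)$ is compatible with forming polynomial algebras --- but both are immediate from the conventions fixed in Section \ref{section:3}.
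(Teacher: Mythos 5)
Your argument is correct and is essentially the paper's own proof: part (i) follows from the Thom isomorphism and the polynomial description in Proposition \ref{homofgrass}, which make $\MGL_{**}\MGL$ free (hence flat) over $\MGL_{**}$, together with cellularity of $\MGL$ and Corollary \ref{hopf-algebroids}(i); part (ii) follows because the generators $\beta_i$ sit in bidegrees that are multiples of $(2,1)$, so the hypotheses of Corollary \ref{hopf-algebroids}(ii) hold. Your write-up merely spells out the steps that the paper's two-line proof leaves implicit.
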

\begin{proof}
$(i)$: We note $\MGL$ is a Tate object by \cite[Theorem 6.4]{dugger-isaksen}, 
Remark \ref{comparison} and $\MGL_{**}$ is flat by Proposition \ref{homofgrass}$(iii)$ with $\EE=\MGL$.
Hence the statement follows from Corollary \ref{hopf-algebroids}$(i)$.
$(ii)$: The bidegrees of the generators $\beta_i$ in Proposition \ref{homofgrass} are multiples of $(2,1)$.
This implies the assumptions in Corollary \ref{flat-strmaps}$(ii)$ hold, and the statement follows.
\end{proof}

The flat Hopf algebroid $(\MGL_*, \MGL_* \MGL)$ gives rise to the algebraic stack
$$[\MGL_*/\MGL_*\MGL].$$
Although the grading is not required for the definition, 
it defines a $\G$-action on the stack and we may therefore form the quotient stack $[\MGL_*/\MGL_*\MGL]/\G$.
For $\FF\in\SH(S)$, let $\Fh(\FF)$ be the $\G$-equivariant quasi-coherent sheaf on $[\MGL_*/\MGL_*\MGL]$ 
associated with the comodule structure on $\MGL_*\FF$ furnished by Corollary \ref{mgl-hopf-algebroid}$(ii)$.
Denote by $\Fh/\G(\FF)$ the descended quasi-coherent sheaf on $[\MGL_*/\MGL_*\MGL]/\G$.
\begin{lemma} 
\label{2-orien-mglmgl}\begin{itemize}
\item[(i)] $\MGL_{**}\MGL\cong\MGL_{**}\otimes_{\MU_*}\MU_*\MU\cong\MGL_{**}[b_0,b_1,\ldots]/(b_0=1)$.
\item[(ii)] Let $\x$, $\x'$ be the images of the orientation on $\MGL$ with respect to the two natural maps 
$\MGL_* \to \MGL_* \MGL$.
Then $\x'=\sum_{i \ge 0} b_i \x^{i+1}$ (where $b_0=1$).
\end{itemize}
\end{lemma}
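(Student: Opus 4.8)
The plan is to bootstrap from the computation of $\MGL_{**}(\MGL)$ in Proposition \ref{homofgrass}(iii) and Corollary \ref{mgl-hopf-algebroid}, together with the analogous structure of the topological cooperation Hopf algebroid $\MU_*\MU$. First I would establish part (i). By Proposition \ref{homofgrass}(iii) there is a Thom isomorphism $\MGL_{**}(\MGL)\cong\MGL_{**}(\BGL)$, and by Proposition \ref{homofgrass}(ii)b) the latter is the polynomial $\MGL_{**}$-algebra on classes $\beta_0,\beta_1,\ldots$ with $\beta_0=1$, where $\beta_i$ sits in bidegree $(2i,i)$. Renaming these generators $b_i$ (to match the topological notation, where $\MU_*\MU\cong\MU_*[b_1,b_2,\ldots]$ arises from $H_*(\BU)$), we get $\MGL_{**}\MGL\cong\MGL_{**}[b_0,b_1,\ldots]/(b_0=1)$. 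Since all generators lie in bidegrees that are multiples of $(2,1)$, Corollary \ref{mgl-hopf-algebroid}(ii) (via Corollary \ref{flat-strmaps}(ii)) already tells us $\MGL_{**}\MGL\cong\MGL_{**}\otimes_{\MU_*}\MU_*\MU$; alternatively, one observes directly that the orientation $\MGL\to\MU$-comparison map sends $\MU_*\MU=\MU_*[b_1,b_2,\ldots]$ onto the subalgebra generated by the $b_i$, and the base-change isomorphism follows by comparing the two explicit polynomial presentations. This gives both displayed isomorphisms in (i).

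For part (ii) I would argue exactly as in topology (see e.g.\ the treatment of $\MU_*\MU$ in Adams' blue book or Ravenel's green book). The two ring maps $\eta_L,\eta_R\colon\MGL_*\to\MGL_*\MGL$ classify the two orientations, and by general orientation theory the universal formal group laws attached to $\eta_L(\x)$ and $\eta_R(\x)$ over $\MGL_{**}\MGL$ are related by the universal strict isomorphism whose power series is $\x'=\sum_{i\ge 0}b_i\x^{i+1}$. Concretely: the class $b_i$ is defined (Proposition \ref{homofgrass}(ii)b)) as the image of the dual of $\cc_1^i\in\MGL^{2i,i}(\BGL_1)=\MGL^{2i,i}(\PP^\infty)$ under $\MGL^{**}(\PP^\infty)\to\MGL_{**}(\PP^\infty)\to\MGL_{**}(\BGL)\cong\MGL_{**}(\MGL)$. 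Unwinding the definition of the coaction on the orientation class $\x=\cc_1\in\MGL^{2,1}(\PP^\infty)$ via the diagonal $\PP^\infty\to\PP^\infty$ and the Thom class, and using the perfect pairing (\ref{65dual}) between $\MGL^{**}(\PP^\infty)$ and $\MGL_{**}(\PP^\infty)$, one reads off precisely the formula $\eta_R(\x)=\sum_{i\ge 0}b_i\,\eta_L(\x)^{i+1}$. I would note that the entire computation is formally identical to the topological one once the perfect duality, the projective bundle formula, and the Thom isomorphism are in place — all of which are supplied by the Proposition preceding this Lemma.

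The main obstacle, and the only place where motivic subtleties could intrude, is making sure the coaction computation on $\PP^\infty$ is legitimate in the bigraded setting: one needs that the Kronecker pairing $\MGL^{**}(\PP^\infty)\otimes_{\MGL_{**}}\MGL_{**}(\PP^\infty)\to\MGL_{**}$ is perfect and compatible with the comultiplication, which requires passing to the finite Grassmannian approximations $\GG(n,1)=\PP^{n-1}$ where (\ref{65dual}) applies, and then taking a limit. Care is needed because $\MGL^{**}(\PP^\infty)$ is a completed (power series) object while $\MGL_{**}(\PP^\infty)$ is a genuine direct sum; this is handled by the continuous-duality statement in the proof of Proposition \ref{homofgrass}(iii). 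Once that bookkeeping is done, the identification of $b_i$ with the $i$-th coefficient of the universal strict isomorphism is a direct consequence of how Chern classes transform under the tensor product formula, and hence $\x'=\sum_{i\ge 0}b_i\x^{i+1}$ with $b_0=1$ corresponding to the normalization that the isomorphism is strict.
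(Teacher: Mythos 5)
Your proposal is correct and follows essentially the same route as the paper: part (i) is the Thom isomorphism of Proposition \ref{homofgrass}(iii) combined with a comparison of the polynomial presentations of $\MGL_{**}\MGL$ and $\MU_*\MU$, and part (ii) is Adams' topological computation (the material leading to Corollary 6.8 of the blue book) transported verbatim using the perfect duality and Thom isomorphism already established. One small correction: Corollary \ref{mgl-hopf-algebroid}(ii) only yields $\MGL_{**}\MGL\cong\MGL_{**}\otimes_{\MGL_*}\MGL_*\MGL$, not base change along $\MU_*\to\MGL_{**}$, so it is your ``alternative'' argument --- matching the two explicit polynomial presentations under the Hopf algebroid comparison map, which requires knowing that map sends the topological $b_i$ to the motivic $b_i$ (in effect, part (ii)) --- that actually establishes the middle isomorphism in (i).
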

\begin{proof}
Here $b_i$ is the image under the Thom isomorphism of $\beta_i$ in Proposition \ref{homofgrass}. 
Part $(i)$ follows by comparing the familiar computation of $\MU_*\MU$ with our computation of $\MGL_{**}\MGL$.
For part $(ii)$, 
the computations leading up to \cite[Corollary 6.8]{adams} carry over to the algebraic cobordism spectrum.
\end{proof}

\subsection{Formal groups and stacks}
A graded formal group over an evenly graded ring $A_*$ or more generally over an algebraic $\G$-stack is a 
group object in formal schemes over the base with a compatible $\G$-action such that locally in the Zariski 
topology it looks like $\mathrm{Spf}(R_* \poauf \x \pozu)$,
as a formal scheme with $\G$-action,
where $\x$ has weight $-1$.  
(Note that every algebraic $\G$-stack can be covered by affine $\G$-stacks.)
This is equivalent to demanding that $\x$ has weight $0$ (or any other fixed weight) by looking at the base
change $R\to R[\yy,\yy^{-1}]$, $\yy$ of weight $1$.
A strict graded formal group is a graded formal group together with a trivialization of the line bundle
of invariant vector fields with the trivial line bundle of weight $1$.
The strict graded formal group associated with the formal group law over $\MU_*$ inherits a coaction of $\MU_* \MU$ 
compatible with the grading and the trivialization; thus, it descends to a strict graded formal group over $\FG^s$.
As a stack, 
$\FG^s$ is the moduli stack of formal groups with a trivialization of the line bundle
of invariant vector fields, 
while as a $\G$-stack it is the moduli stack of strict graded formal groups.
It follows that $\FG$ (with trivial $\G$-action) is the moduli stack of graded formal groups.
For a $\G$-stack $\Xh$ the space of $\G$-maps to $\FG$ is the space of maps from the stack quotient
$\Xh/\G$ to $\FG$. 
Hence a graded formal group is tantamount to a formal group over $\Xh/\G$.

An orientable theory gives rise to a strict graded formal group over the coefficients:
\begin{lemma} \label{hopf-act-gro}
If $\EE\in\SH(S)$ is an oriented ring spectrum satisfying the assumptions in Corollary \ref{hopf-algebroids}$(ii)$
then the corresponding strict graded formal group over $\EE_*$ inherits a compatible $\EE_* \EE$-coaction
and there is a descended strict graded formal group over the stack $[\EE_*/\EE_* \EE]$.
In particular, 
the flat Hopf algebroid $(\MGL_*, \MGL_* \MGL)$ acquires a well defined strict graded formal group, 
$[\MGL_*/\MGL_*\MGL]$ a strict graded formal group and the quotient stack $[\MGL_*/\MGL_*\MGL]/\G$ a 
formal group.
\end{lemma}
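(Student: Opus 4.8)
The plan is to construct the strict graded formal group straight from the orientation, first over $\EE_*$, then to extract the $\EE_*\EE$-coaction by comparing the two orientations carried by $\EE\wedge\EE$, and finally to descend along the presentation $\Spec(\EE_*)\to[\EE_*/\EE_*\EE]$.

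First I would record the formal group over $\EE_*$. Since $\EE$ is an oriented ring spectrum in $\SH(S)_\caT$, Proposition \ref{homofgrass}(i) (with $d=1$, so that $\BGL_1\simeq\PP^\infty$) gives $\EE^{**}(\PP^\infty)=\EE^{**}[[x]]$ with $x=\cc_\infty$ of bidegree $(2,1)$, and finiteness together with the perfect pairing (\ref{65dual}) yields, after passage to the limit, the Künneth isomorphism $\EE^{**}(\PP^\infty\times\PP^\infty)\cong\EE^{**}[[x,y]]$ (this carries over from topology, as in Proposition \ref{homofgrass}). Pulling $x$ back along the multiplication $m\colon\PP^\infty\times\PP^\infty\to\PP^\infty$ that classifies the tensor product of line bundles produces a formal group law $F_\EE(x,y)\in\EE^{**}[[x,y]]$; a bidegree count shows the coefficient of $x^iy^j$ lies in $\EE_{i+j-1}$, so $F_\EE$ has coefficients in the evenly graded ring $\EE_*$, with $x$ of weight $-1$ as in the definition of a graded formal group. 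The coordinate $x$ trivializes the line bundle of invariant vector fields, so this is the strict graded formal group over $\EE_*$.

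Next I would exhibit the coaction. The spectrum $\EE\wedge\EE$ again lies in $\SH(S)_\caT$ (a localizing subcategory closed under $\wedge$, since $\caT$ is), and it carries two orientations $x_L=\eta_L^*\cc_\infty$ and $x_R=\eta_R^*\cc_\infty$ coming from the two unit maps $\eta_L,\eta_R\colon\EE\to\EE\wedge\EE$. Under the assumptions of Corollary \ref{hopf-algebroids}(ii) we have $(\EE\wedge\EE)_*=\EE_*\EE$, and orienting $\EE\wedge\EE$ by $x_L$ (resp.\ $x_R$) identifies its formal group law with $\eta_L^*F_\EE$ (resp.\ $\eta_R^*F_\EE$) over $\EE_*\EE$. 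Since $x_L$ and $x_R$ are two coordinates on the one formal group $\mathrm{Spf}\bigl((\EE\wedge\EE)^{**}(\PP^\infty)\bigr)$, there is a unique $g(t)=\sum_{i\ge0}b_it^{i+1}\in\EE_*\EE[[t]]$ with $b_0=1$ such that $x_R=g(x_L)$ — in the case $\EE=\MGL$ this is precisely Lemma \ref{2-orien-mglmgl}(ii) — and applying $m^*$ to the identity $x_R=g(x_L)$ shows $g$ is a strict isomorphism $\eta_L^*F_\EE\to\eta_R^*F_\EE$ of formal group laws over $\EE_*\EE$. The cocycle identity for $g$ over $\EE_*\EE\otimes_{\EE_*}\EE_*\EE$ follows from the coassociativity of the Hopf algebroid $(\EE_*,\EE_*\EE)$ (Corollary \ref{hopf-algebroids}(ii)), i.e.\ from the usual diagram chase with the three unit maps into $\EE\wedge\EE\wedge\EE$. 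Thus the data $(\eta_L,\eta_R,g)$ constitute a coaction of $\EE_*\EE$ on $F_\EE$, compatible with the grading and with the chosen trivialization.

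Finally I would descend and specialize. Because $(\EE_*,\EE_*\EE)$ is a flat Hopf algebroid, $\Spec(\EE_*)\to[\EE_*/\EE_*\EE]$ is a presentation; a strict graded formal group is Zariski-locally the formal spectrum of a topological quasi-coherent algebra equipped with further quasi-coherent structure (comultiplication, unit and the trivializing section of the line bundle of invariant vector fields), and since quasi-coherent sheaves satisfy fppf descent, the coaction above is an effective descent datum and $F_\EE$ descends to a strict graded formal group over $[\EE_*/\EE_*\EE]$. Equivalently one phrases the coaction as a $\G$-equivariant morphism $\Spec(\EE_*)\to\FG^s$ that equalizes the two projections from $\Spec(\EE_*\EE)$ up to the canonical $2$-isomorphism encoded by $g$, and invokes the $2$-Yoneda description of maps into $\FG^s$. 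Specializing to $\EE=\MGL$: it is oriented and $(\MGL_*,\MGL_*\MGL)$ is a flat Hopf algebroid by Corollary \ref{mgl-hopf-algebroid}(ii), so we get a well defined strict graded formal group over $\MGL_*$ and its descent to $[\MGL_*/\MGL_*\MGL]$; passing to the quotient by $\G$ and using the identification (recorded before the lemma) of graded formal groups over a $\G$-stack $\Xh$ with formal groups over $\Xh/\G$ yields the formal group over $[\MGL_*/\MGL_*\MGL]/\G$. I expect the main obstacle to be the descent step — verifying that ``strict graded formal group'' is genuinely an effective descent datum along the fppf presentation, i.e.\ that every piece of the structure is quasi-coherent in nature — together with the bookkeeping needed to keep the $\G$-action and the weight conventions of the preceding subsection aligned with the bidegrees of $\EE_{**}$.
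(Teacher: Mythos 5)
Your proposal is correct and follows essentially the same route as the paper: the paper's (much terser) proof likewise obtains the coaction from the functoriality of $\EE^*(\P^\infty)$ and $\EE^*(\P^\infty\times\P^\infty)$ in $\EE$, notes that the two orientations on $\EE\wedge\EE$ differ by a strict isomorphism compatible with the gradings, and then descends to the stack and quotients by the $\G$-action. Your version merely makes explicit the power series $g$ with $x_R=g(x_L)$, the cocycle condition over $(\EE\wedge\EE\wedge\EE)_*$, and the quasi-coherent descent, all of which are consistent with (and fill in) the paper's sketch.
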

\begin{proof}
Functoriality of $\EE^*(\FF)$ in $\EE$ and $\FF$ ensures the formal group over $\EE_*$ inherits an 
$\EE_* \EE$-coaction.
For example, 
compatibility with the comultiplication of the formal group amounts to commutativity of the diagram:
$$
\xymatrix{
(\EE \wedge \EE)^*(\P^\infty) \ar[r] \ar[d] &
(\EE \wedge \EE \wedge \EE)^*(\P^\infty) \ar[d] \\
(\EE \wedge \EE)^*(\P^\infty \times \P^\infty) \ar[r] &
(\EE \wedge \EE \wedge \EE)^*(\P^\infty \times \P^\infty) }
$$
All maps respect gradings, so there is a graded formal group over the Hopf algebroid.
Different orientations yield formal group laws which differ by a strict isomorphism,  
so there is an enhanced strict graded formal group over the Hopf algebroid.
It induces a strict graded formal group over the $\G$-stack $[\MGL_*/\MGL_*\MGL]$ and quotienting out by the 
$\G$-action yields a formal group over the quotient stack.
\end{proof}

For oriented motivic ring spectra $\EE$ and $\FF$, 
denote by $\varphi(\EE,\FF)$ the strict isomorphism of formal group laws over $(\EE \wedge \FF)_*$ from the 
pushforward of the formal group law over $\EE_*$ to the one of the formal group law over $\FF_*$ given by the 
orientations on $\EE \wedge \FF$ induced by $\EE$ and $\FF$.
\begin{lemma} 
\label{change-orien}
Suppose $\EE,\FF,\GG$ are oriented spectra and let $p \colon (\EE \wedge \FF)_* \to (\EE \wedge \FF \wedge \GG)_*$,
$q \colon (\FF \wedge \GG)_* \to (\EE \wedge \FF \wedge \GG)_*$ and 
$r \colon (\EE \wedge \GG)_* \to (\EE \wedge \FF \wedge \GG)_*$
denote the natural maps.
Then $r_* \varphi(\EE,\GG) = \pp_* \varphi(\EE,\FF) \circ q_* \varphi(\FF,\GG)$.
\end{lemma}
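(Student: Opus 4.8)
The plan is to deduce the cocycle identity from the behaviour of the orientation class on $\P^\infty$, exactly as in topology.

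First I would recall the mechanism producing $\varphi(\EE,\FF)$. For any oriented ring spectrum $\HH$ the projective bundle theorem gives $\HH^{**}(\P^\infty)=\HH^{**}\poauf\cc\pozu$ with $\cc$ the orientation, and $\HH^{**}(\P^\infty\times\P^\infty)=\HH^{**}\poauf\cc\otimes 1,1\otimes\cc\pozu$; the associated formal group law over $\HH_*$ is defined by $m^{*}(\cc)$, where $m\colon\P^\infty\times\P^\infty\to\P^\infty$ classifies the tensor product of line bundles. If $\cc,\cc'$ are two orientations on $\HH$, then since both restrict to the canonical class on $\P^1$ there is a unique power series $g(t)=t+O(t^{2})\in\HH_{*}\poauf t\pozu$ with $\cc'=g(\cc)$ in $\HH^{**}(\P^\infty)$, and $g$ is a strict isomorphism from the formal group law of $\cc$ to that of $\cc'$; this is the computation already invoked in Lemma \ref{2-orien-mglmgl}. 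Applying this to $\HH=\EE\wedge\FF$ with its two orientations $\cc_{\EE}$, $\cc_{\FF}$ induced by $\EE$ and $\FF$, the formal group laws of $\cc_{\EE}$ and $\cc_{\FF}$ are the pushforwards along $\EE_{*}\to(\EE\wedge\FF)_{*}$ and $\FF_{*}\to(\EE\wedge\FF)_{*}$ of the formal group laws over $\EE_{*}$ and $\FF_{*}$, and $\varphi(\EE,\FF)$ is precisely the power series characterised by $\cc_{\FF}=\varphi(\EE,\FF)(\cc_{\EE})$ in $(\EE\wedge\FF)^{**}(\P^\infty)$.

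Next I would base change everything to $(\EE\wedge\FF\wedge\GG)_{*}$. The three unit maps $\EE\wedge\FF\to\EE\wedge\FF\wedge\GG$, $\FF\wedge\GG\to\EE\wedge\FF\wedge\GG$ and $\EE\wedge\GG\to\EE\wedge\FF\wedge\GG$ are maps of oriented ring spectra carrying the relevant orientations to the orientations $\cc_{\EE},\cc_{\FF},\cc_{\GG}$ on $\EE\wedge\FF\wedge\GG$, by naturality of Chern classes together with the fact that, say, $\FF\to\FF\wedge\GG\to\EE\wedge\FF\wedge\GG$ is the unit map $\FF\to\EE\wedge\FF\wedge\GG$. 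Applying $p^{*},q^{*},r^{*}$ on $\P^\infty$-cohomology, which are continuous $(\EE\wedge\FF\wedge\GG)^{**}$-algebra maps fixing the orientation classes and acting on coefficients by $p_{*},q_{*},r_{*}$, the three defining identities become
$$\cc_{\FF}=(p_{*}\varphi(\EE,\FF))(\cc_{\EE}),\qquad \cc_{\GG}=(q_{*}\varphi(\FF,\GG))(\cc_{\FF}),\qquad \cc_{\GG}=(r_{*}\varphi(\EE,\GG))(\cc_{\EE})$$
in $(\EE\wedge\FF\wedge\GG)^{**}(\P^\infty)=(\EE\wedge\FF\wedge\GG)^{**}\poauf\cc_{\EE}\pozu$. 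Substituting the first into the second gives $\cc_{\GG}=(q_{*}\varphi(\FF,\GG))\bigl((p_{*}\varphi(\EE,\FF))(\cc_{\EE})\bigr)$, and comparing with the third identity, using that the power series representing $\cc_{\GG}$ in terms of $\cc_{\EE}$ over the power series ring $(\EE\wedge\FF\wedge\GG)^{**}(\P^\infty)$ is unique, yields the required equality of strict isomorphisms of formal group laws over $(\EE\wedge\FF\wedge\GG)_{*}$.

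The mathematical content is thus only naturality of the orientation together with uniqueness of the coordinate change on $\P^\infty$; I expect the one point demanding care to be purely bookkeeping, namely tracking which smash factor each of $p,q,r$ leaves fixed and matching the order of substitution of power series with the composition convention in force for isomorphisms of formal group laws, so that the final identity reads $r_{*}\varphi(\EE,\GG)=p_{*}\varphi(\EE,\FF)\circ q_{*}\varphi(\FF,\GG)$.
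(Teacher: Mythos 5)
The paper gives no proof of this lemma at all; the authors evidently regard it as a routine cocycle identity, so there is no ``paper's approach'' to compare against. Your argument is the standard one and is correct in substance: characterize $\varphi(\EE,\FF)$ as the unique change-of-coordinate power series on $\P^\infty$ relating the two orientations of $\EE\wedge\FF$, push all three defining identities into $(\EE\wedge\FF\wedge\GG)^{**}(\P^\infty)$ via the unit maps, substitute, and conclude by uniqueness of the coordinate change.

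One small point is worth making explicit rather than leaving as ``bookkeeping''. With your defining relations $\cc_{\FF}=\varphi(\EE,\FF)(\cc_{\EE})$, etc., the substitution literally yields
\[
r_{*}\varphi(\EE,\GG)(t)=\bigl(q_{*}\varphi(\FF,\GG)\bigr)\bigl(p_{*}\varphi(\EE,\FF)(t)\bigr),
\]
which, reading $\circ$ as ordinary composition of power series, is $r_{*}\varphi(\EE,\GG)=q_{*}\varphi(\FF,\GG)\circ p_{*}\varphi(\EE,\FF)$ --- the opposite order from the lemma. To land on the stated order one must either adopt the opposite direction convention for a ``strict isomorphism $F\to G$'' (i.e.\ a power series $g$ with $g(G(x,y))=F(g(x),g(y))$, so that the defining relation becomes $\cc_{\EE}=\varphi(\EE,\FF)(\cc_{\FF})$), or read $\circ$ as substitution in diagrammatic order. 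Either way the mathematical content is exactly what you wrote; but since you explicitly set up the convention $\cc'=g(\cc)$ and $g:F\to F'$ with $g(F(x,y))=F'(g(x),g(y))$, your argument as written proves the identity with the two factors swapped, and you should say which convention reconciles it with the statement rather than leave it as a promissory note.
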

\begin{corollary} 
\label{class-stack}
If $\EE\in\SH(S)$ is an oriented ring spectrum and satisfies the assumptions in Corollary \ref{hopf-algebroids}$(i)$, 
there is a map of Hopf algebroids $(\MU_*,\MU_*\MU) \to (\EE_{**},\EE_{**}\EE)$ such that $\MU_* \to \EE_{**}$ classifies 
the formal group law on $\EE_{**}$ and $\MU_* \MU \to \EE_{**} \EE$ the strict isomorphism $\varphi(\EE,\EE)$.
If $\EE$ satisfies the assumptions in Corollary \ref{hopf-algebroids}$(ii)$ then this map factors through a map of Hopf 
algebroids $(\MU_*,\MU_*\MU) \to (\EE_*,\EE_*\EE)$.
The induced map of stacks classifies the strict graded formal group on $[\EE_*/\EE_* \EE]$.
\end{corollary}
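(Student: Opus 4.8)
The plan is to produce the map of Hopf algebroids by invoking the universality of $(\MU_*, \MU_*\MU)$ as the Hopf algebroid carrying the universal formal group law together with the universal strict isomorphism. First I would recall that, since $\EE$ is oriented and satisfies the assumptions in Corollary \ref{hopf-algebroids}$(i)$, the orientation $\cc_\infty \in \EE^{2,1}(\PP^\infty)$ determines a formal group law $F_\EE$ over $\EE_{**}$: this is the standard calculus of Chern classes for oriented motivic ring spectra (as used throughout this section and in \cite{hu-kriz}), and $F_\EE$ is strict because the orientation restricts to $\cc_1$ on $\PP^1$. By the defining property of Lazard's ring, which is $\MU_*$ by Quillen's theorem, there is a unique ring map $\MU_* \to \EE_{**}$ classifying $F_\EE$; by the Adams-grading bookkeeping of Section \ref{section:3} it is a map of Adams graded rings.

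Next I would treat the $\MU_*\MU$ part. The ring $\MU_*\MU$ corepresents pairs consisting of a formal group law together with a strict isomorphism to it from the pushforward of the universal one; concretely $\MU_*\MU \cong \MU_*[b_0,b_1,\dots]/(b_0=1)$ with the $b_i$ recording the coefficients of the universal strict isomorphism. The ring spectrum $\EE\wedge\EE$ is oriented in two ways, via the left and right unit $\EE \to \EE\wedge\EE$, and by Lemma \ref{change-orien} the two induced formal group laws over $(\EE\wedge\EE)_* = \EE_{**}\EE$ are strictly isomorphic via $\varphi(\EE,\EE)$. Universality then yields a unique ring map $\MU_*\MU \to \EE_{**}\EE$ sending the universal formal group law and universal strict isomorphism to the right-hand formal group law $\eta_R^*F_\EE$ and to $\varphi(\EE,\EE)$ respectively. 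One checks these two maps are compatible with the left and right units (both composites $\MU_* \to \MU_*\MU \to \EE_{**}\EE$ classify the two orientations, hence agree with $\eta_L, \eta_R$ applied to $\MU_* \to \EE_{**}$) and with the counit. Compatibility with the comultiplication is exactly a cocycle identity: the comultiplication of $\MU_*\MU$ encodes composition of strict isomorphisms over a triple smash, and Lemma \ref{change-orien} (the relation $r_*\varphi(\EE,\GG) = p_*\varphi(\EE,\FF)\circ q_*\varphi(\FF,\GG)$ specialized to $\EE=\FF=\GG$) says precisely that $\varphi(\EE,\EE)$ satisfies this cocycle condition. I expect this verification — that the pair of ring maps respects all the structure maps of the Hopf algebroid — to be the main obstacle, though it is a formal consequence of Lemma \ref{change-orien} and functoriality rather than a genuine difficulty.

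For the second assertion, suppose $\EE$ satisfies the stronger assumptions of Corollary \ref{hopf-algebroids}$(ii)$, so that $\EE_* \to \EE_*\EE$ is flat and $\EE_*\EE \otimes_{\EE_*}\EE_{**} \xrightarrow{\cong} \EE_{**}\EE$. The generators $b_i$ of $\MU_*\MU$ sit in bidegrees which are multiples of $(2,1)$ (their topological degrees are even), and likewise $\MU_* \to \EE_{**}$ lands in the summand $\EE_* = \EE_{2*,*}$; hence the map $\MU_*\MU \to \EE_{**}\EE$ carries $\MU_*\MU$ into the sub-Hopf-algebroid $\EE_*\EE$, giving a factorization $(\MU_*,\MU_*\MU) \to (\EE_*, \EE_*\EE)$. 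That this is again a map of Hopf algebroids follows from the compatibility already established together with the identification of $(\EE_*,\EE_*\EE)$ as the degree-$(2*,*)$ part, as in the proof of Corollary \ref{mgl-hopf-algebroid}$(ii)$.

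Finally, to see that the induced map of stacks $[\EE_*/\EE_*\EE] \to [\MU_*/\MU_*\MU] \cong \FG^s$ classifies the strict graded formal group on $[\EE_*/\EE_*\EE]$ produced in Lemma \ref{hopf-act-gro}, I would note that $\FG^s$ is the moduli $\G$-stack of strict graded formal groups, so a $\G$-map into it is the same datum as a strict graded formal group on the source; under this dictionary the map of stacks coming from the Hopf algebroid map corresponds to pulling back the universal strict graded formal group, which over the presentation $\Spec(\EE_*)$ is exactly $F_\EE$ (viewed with its weight grading) together with its $\EE_*\EE$-coaction $\varphi(\EE,\EE)$ — i.e.\ the strict graded formal group of Lemma \ref{hopf-act-gro}. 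This is a restatement via the equivalence of $2$-categories between flat Hopf algebroids and presentations of algebraic stacks (\cite[Theorem 8]{niko}) together with the descriptions of $\FG$, $\FG^s$ recorded in the subsection on formal groups and stacks, so no further work is needed.
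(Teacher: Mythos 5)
Your proposal is correct and follows essentially the implicit argument the paper intends: one classifies the strict formal group law via Quillen's theorem, classifies $\varphi(\EE,\EE)$ via the universality of $\MU_*\MU$, and checks compatibility with the Hopf algebroid structure maps using Lemma \ref{change-orien} (in particular the specialization $\EE=\FF=\GG$ gives the cocycle condition for the comultiplication); the factorization through $(\EE_*,\EE_*\EE)$ is a degree bookkeeping as you say, and the stack statement is the $2$-categorical translation under Naumann's equivalence.
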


\subsection{A map of stacks}
Corollary \ref{class-stack} and the orientation of $\MGL$ furnish a map of flat Hopf algebroids
$$
\xymatrix{
(\MU_*, \MU_* \MU) \ar[r] & (\MGL_*, \MGL_* \MGL) }
$$
such that the induced map of $\G$-stacks $[\MGL_*/\MGL_*\MGL] \to \FG^s$
classifies the strict graded formal group on $[\MGL_*/\MGL_*\MGL]$.
Thus there is a $2$-commutative diagram:
\begin{equation} 
\label{mu-mgl-stack}
\xymatrix{
\Spec(\MGL_*) \ar[r]
\ar[d] & \Spec(\MU_*) \ar[d] \\
[\MGL_*/\MGL_*\MGL] \ar[r] & \FG^s}
\end{equation}
Quotienting out by the $\G$-action yields a map of stacks $[\MGL_*/\MGL_*\MGL]/\G \to \FG$ 
which classifies the formal group on $[\MGL_*/\MGL_*\MGL]/\G$.
\begin{proposition} 
\label{mu-mgl-cart1}
The diagram (\ref{mu-mgl-stack}) is cartesian.
\end{proposition}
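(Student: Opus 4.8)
The plan is to reduce the assertion that (\ref{mu-mgl-stack}) is cartesian to the already-established computation $\MGL_{**}\MGL \cong \MGL_{**}\otimes_{\MU_*}\MU_*\MU$ from Lemma \ref{2-orien-mglmgl}(i), together with the general descent machinery for algebraic stacks packaged in Corollary \ref{cart-corollary}. First I would record presentations of the two stacks appearing in the bottom row: the map $\Spec(\MGL_*)\to[\MGL_*/\MGL_*\MGL]$ is a presentation by construction of the quotient stack attached to the flat Hopf algebroid $(\MGL_*,\MGL_*\MGL)$, and $\Spec(\MU_*)\to\FG^s$ is a presentation since $\FG^s\simeq[\MU_*/\MU_*\MU]$. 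Thus (\ref{mu-mgl-stack}) is a square of the shape to which Corollary \ref{cart-corollary} applies, with $U=\Spec(\MGL_*)$, $V=\Spec(\MU_*)$, $\Xh=[\MGL_*/\MGL_*\MGL]$, $\Yh=\FG^s$.

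Next I would invoke Corollary \ref{cart-corollary} directly: (\ref{mu-mgl-stack}) is cartesian if and only if the square
\[
\xymatrix{
U\times_{\Xh}U \ar[r]\ar[d]_-{pr_1} & V\times_{\Yh}V\ar[d]^-{pr_1}\\
U\ar[r] & V}
\]
is cartesian, where $U\times_{\Xh}U=\Spec(\MGL_*\MGL)$ and $V\times_{\Yh}V=\Spec(\MU_*\MU)$. On the level of rings, cartesianness of this square is precisely the statement that the natural map
\[
\MGL_*\otimes_{\MU_*}\MU_*\MU \xrightarrow{\ \cong\ } \MGL_*\MGL
\]
(along the source map $\MU_*\to\MU_*\MU$, say) is an isomorphism. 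This is exactly the content of Lemma \ref{2-orien-mglmgl}(i) after restricting the isomorphism $\MGL_{**}\MGL\cong\MGL_{**}\otimes_{\MU_*}\MU_*\MU$ to the bidegrees that are multiples of $(2,1)$, as is legitimate because the generators $b_i$ sit in such bidegrees (cf.\ the proof of Corollary \ref{mgl-hopf-algebroid}(ii)). One must also check that the ring structures and the Hopf-algebroid structure maps match under this identification — i.e.\ that the isomorphism is one of $(\MGL_*,\MGL_*\MGL)$-algebras over the base change of $(\MU_*,\MU_*\MU)$ — but this is built into the way Lemma \ref{2-orien-mglmgl}(i) is proved, by comparing the Thom-isomorphism-induced generators with the classical generators of $\MU_*\MU$ and tracking the orientation (Lemma \ref{2-orien-mglmgl}(ii) pins down the image of the orientation).

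The main obstacle is bookkeeping rather than conceptual: one has to be careful that the two maps $U\times_\Xh U\to U$ and $V\times_\Yh V\to V$ appearing in Corollary \ref{cart-corollary} correspond to the same structure map (left or right unit) of the respective Hopf algebroids, and that the map $[\MGL_*/\MGL_*\MGL]\to\FG^s$ used here is the one from Corollary \ref{class-stack} classifying the strict graded formal group — so that the induced map $\MU_*\MU\to\MGL_*\MGL$ really is the base-change map and not some twist by $\varphi(\MGL,\MGL)$. Once the identifications of the structure maps are lined up, the isomorphism of Lemma \ref{2-orien-mglmgl}(i) gives cartesianness of the square of diagonals, and Corollary \ref{cart-corollary} transports this back to cartesianness of (\ref{mu-mgl-stack}), completing the proof.
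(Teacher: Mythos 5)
Your proof is correct and follows exactly the paper's approach: combining Corollary \ref{cart-corollary} with Lemma \ref{2-orien-mglmgl}, reducing cartesianness of (\ref{mu-mgl-stack}) to the isomorphism $\MGL_*\MGL\cong\MGL_*\otimes_{\MU_*}\MU_*\MU$ of diagonals. Your remark about tracking which unit map corresponds to which projection, and invoking Lemma \ref{2-orien-mglmgl}(ii) for it, is precisely the point the paper makes when it notes part (ii) ensures the left and right units of the two Hopf algebroids are suitably compatible.
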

\begin{proof}
Combine Corollary \ref{cart-corollary} and Lemma \ref{2-orien-mglmgl}.
Part $(ii)$ of the lemma is needed to ensure that the left and right units of $(\MU_*,\MU_*\MU)$ and
$(\MGL_*,\MGL_*\MGL)$ are suitably compatible.
\end{proof}
\begin{corollary} 
\label{mu-mgl-cart2}
The diagram
\begin{equation*} 
\label{1mu-mgl-stack}
\xymatrix{
\Spec(\MGL_*) \ar[r] \ar[d] & \Spec(\MU_*) \ar[d] \\
[\MGL_*/\MGL_*\MGL]/\G \ar[r] & \FG }
\end{equation*}
is cartesian.
\end{corollary}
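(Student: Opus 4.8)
The plan is to deduce Corollary~\ref{mu-mgl-cart2} from Proposition~\ref{mu-mgl-cart1} by passing to $\G$-quotients, exploiting that after this operation the two vertical arrows of the latter become $\G$-torsors. First I would record the auxiliary $2$-commutative square
\[
\xymatrix{
[\MGL_*/\MGL_*\MGL] \ar[r] \ar[d] & \FG^s \ar[d] \\
[\MGL_*/\MGL_*\MGL]/\G \ar[r] & \FG, }
\]
whose top horizontal arrow is the bottom arrow of~(\ref{mu-mgl-stack}), whose bottom horizontal arrow is the map $[\MGL_*/\MGL_*\MGL]/\G \to \FG$ obtained from~(\ref{mu-mgl-stack}) by quotienting out the $\G$-action, and whose vertical arrows are the canonical projections from a $\G$-stack to its quotient stack. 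The right vertical arrow is a $\G$-torsor by the very definition of $\FG^s$ as the $\G$-torsor over $\FG$ attached to the line bundle $\omega$ (so that $[\FG^s/\G]\cong\FG$), and the left vertical arrow is a $\G$-torsor because for any action of $\G$ on an algebraic stack the canonical map to the quotient stack is one.

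The key step is to show that this auxiliary square is cartesian. Since the top horizontal arrow $[\MGL_*/\MGL_*\MGL]\to\FG^s$ is a morphism of $\G$-stacks it is $\G$-equivariant, and it lies over the quotient map $[\MGL_*/\MGL_*\MGL]/\G\to\FG$; hence the canonically induced $1$-morphism from $[\MGL_*/\MGL_*\MGL]$ to the pullback $\bigl([\MGL_*/\MGL_*\MGL]/\G\bigr)\times_{\FG}\FG^s$ is a $\G$-equivariant morphism of $\G$-torsors over the stack $[\MGL_*/\MGL_*\MGL]/\G$, and any such morphism is an isomorphism. (Alternatively one could feed the faithfully flat projection $[\MGL_*/\MGL_*\MGL]\to[\MGL_*/\MGL_*\MGL]/\G$ into Lemma~\ref{cart-lemma}, thereby reducing the cartesian property of the square in the statement to that of its base change along this projection; but the torsor argument avoids the bookkeeping with the extra $\G$-factors that this introduces.)

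Finally I would paste the cartesian square of Proposition~\ref{mu-mgl-cart1} and the auxiliary square along their common edge $[\MGL_*/\MGL_*\MGL]\to\FG^s$: the composite $\Spec(\MGL_*)\to[\MGL_*/\MGL_*\MGL]\to[\MGL_*/\MGL_*\MGL]/\G$ is the left vertical arrow of the corollary, the composite $\Spec(\MU_*)\to\FG^s\to\FG$ is the right vertical arrow, and by the pasting law for $2$-cartesian squares the resulting outer rectangle --- which is precisely the diagram in Corollary~\ref{mu-mgl-cart2} --- is cartesian. I expect the only point demanding genuine care to be the cartesian property of the auxiliary square, that is, the compatible identification of both of its vertical maps as $\G$-torsors sitting over the morphism $[\MGL_*/\MGL_*\MGL]/\G\to\FG$; the pasting itself and the identification of the composite arrows with the ones displayed in the corollary are purely formal.
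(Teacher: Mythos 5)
The paper states this corollary without proof, treating it as an immediate consequence of Proposition~\ref{mu-mgl-cart1} obtained by quotienting out the $\G$-action; your argument supplies exactly the details that are being elided. The two ingredients you identify are correct: the auxiliary square built from the two $\G$-torsor projections $[\MGL_*/\MGL_*\MGL]\to[\MGL_*/\MGL_*\MGL]/\G$ and $\FG^s\to\FG$ is cartesian because the induced comparison map is a $\G$-equivariant morphism of $\G$-torsors over the common base (note the paper explicitly records that $[\MGL_*/\MGL_*\MGL]\to\FG^s$ is a map of $\G$-stacks, which is the equivariance you need), and the pasting law then yields the cartesian property of the outer rectangle. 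Your parenthetical alternative --- feeding the faithfully flat $\G$-torsor projection into Lemma~\ref{cart-lemma} --- would also work and is closer in spirit to the toolkit the authors set up in Section~\ref{section:2}, but the torsor argument is cleaner and both are correct.
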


\section{Landweber exact theories}
Recall the Lazard ring $\LL$ is isomorphic to $\MU_*$. 
For a prime $p$ we fix a regular sequence 
\[ 
v_0^{(p)}=p,v_1^{(p)},\ldots\in \MU_*
\]
where $v_n^{(p)}$ has degree $2(p^n-1)$ as explained in the introduction.
An (ungraded) $\LL$-module $\MMM$ is Landweber exact if $(v_0^{(p)},v_1^{(p)},\ldots)$ is a regular sequence on $\MMM$ for every $p$.
An Adams graded $\MU_*$-module $\MMM_*$ is Landweber exact if the underlying ungraded module is Landweber exact as an 
$\LL$-module \cite[Definition 2.6]{hov-strick.mor}.
In stacks this translates as follows:
An $\LL$-module $\MMM$ gives rise to a quasi-coherent sheaf $\MMM^\sim$ on $\Spec(\LL)$ and $\MMM$ is Landweber exact 
if and only if $\MMM^\sim$ is flat over $\FG$ with respect to $\Spec(\LL)\to\FG$, 
see \cite[Proposition 7]{niko}.
\begin{lemma} 
\label{grad-land}
Let $\MMM_*$ be an Adams graded $\MU_*$-module and $\MMM_*^\sim$ the associated quasi-coherent sheaf on $\Spec(\MU_*)$. 
Then $\MMM_*$ is Landweber exact if and only if $\MMM_*^\sim$ is flat over $\FG^s$ with respect to $\Spec(\MU_*)\to\FG^s$.
\end{lemma}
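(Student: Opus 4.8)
The plan is to reduce the graded statement (Lemma \ref{grad-land}) to the ungraded Landweber exactness criterion recalled just above it, namely that an $\LL$-module $\MMM$ is Landweber exact if and only if $\MMM^\sim$ is flat over $\FG$ along $\Spec(\LL)\to\FG$. The key geometric input is the relationship between the two moduli stacks: $\FG^s$ is the $\G$-torsor over $\FG$ corresponding to the canonical line bundle $\omega$ (equivalently, $\FG^s = [\MU_*/\MU_*\MU]$ and $\FG = [\MU_*/\MU_*\MU]/\G$, cf.\ the previous section). So the diagram
\begin{equation*}
\xymatrix{
\Spec(\MU_*) \ar[r] \ar[d] & \FG^s \ar[d] \\
\Spec(\MU_*)/\G \ar[r] & \FG }
\end{equation*}
is available, with vertical maps the $\G$-torsor projections (the left one being the quotient map $\Spec(\MU_*)\to\Spec(\LL)$ after remembering $\MU_*\cong\LL$ and that the $\G$-action encodes the grading).

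**First I would** observe that flatness can be checked after faithfully flat base change, and that both vertical maps above are faithfully flat (being $\G$-torsors, or: the quotient presentations are faithfully flat). Given an Adams graded $\MMM_*$, the associated sheaf $\MMM_*^\sim$ on $\Spec(\MU_*)$ is precisely the pullback under $\Spec(\MU_*)\to\Spec(\LL)$ of the sheaf $\MMM^\sim$ attached to the underlying ungraded module $\MMM$; equivalently $\MMM_*^\sim$ is $\G$-equivariant and descends to $\MMM^\sim$ on $\Spec(\LL)=\Spec(\MU_*)/\G$. Then I would chase flatness around the square: $\MMM_*^\sim$ is flat over $\FG^s$ (via $\Spec(\MU_*)\to\FG^s$) if and only if its descent $\MMM^\sim$ is flat over $\FG$ (via $\Spec(\LL)\to\FG$), because the property ``flat over the target'' for a quasi-coherent sheaf pulled back along a faithfully flat (indeed torsor) map descends and ascends along that map. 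By the ungraded criterion the latter is equivalent to $\MMM$ being Landweber exact, which by definition is exactly what it means for $\MMM_*$ to be Landweber exact.

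**The main obstacle** I anticipate is bookkeeping the $\G$-equivariance correctly: making precise that ``flat over $\FG^s$ as a $\G$-equivariant sheaf'' is the same as ``flat over $\FG$ after descent,'' i.e.\ that the $\G$-torsor $\FG^s\to\FG$ is faithfully flat and that flatness of a sheaf relative to the base is insensitive to passing between a $\G$-torsor and its quotient. Concretely one wants: for a faithfully flat map $\Yh\to\Xh$ of algebraic stacks and a quasi-coherent sheaf $\caF$ on a scheme $T$ equipped with a map $T\to\Yh$, $\caF$ is flat over $\Yh$ iff the pullback of $\caF$ to $T\times_{\Xh}\Yh$ — or rather, the appropriate base-changed picture using Proposition \ref{mu-mgl-cart1}-style cartesian diagrams — is flat over $\Xh$. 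This is a standard fppf-descent fact for flatness, and I would cite \cite{niko} (in the spirit of the cited \cite[Proposition 7]{niko}) rather than reprove it. The rest is formal.

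**A cleaner alternative** that avoids some of this is to work directly with the cartesian square of Corollary \ref{mu-mgl-cart2} versus Proposition \ref{mu-mgl-cart1}: since both $[\MGL_*/\MGL_*\MGL]\to\FG^s$ and its $\G$-quotient $[\MGL_*/\MGL_*\MGL]/\G\to\FG$ are cartesian over the respective cobordism spectra, the comparison of flatness over $\FG^s$ with flatness over $\FG$ is an instance of the same torsor-descent principle. Either way, the proof is short: identify $\MMM_*^\sim$ with the equivariant pullback of $\MMM^\sim$, invoke faithfully flat descent of flatness along the $\G$-torsor $\FG^s\to\FG$, and appeal to the already-recalled ungraded characterization.
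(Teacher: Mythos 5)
Your argument rests on an identification that does not hold: the quotient $[\Spec(\MU_*)]/\G$ is a non-representable stack (its quasi-coherent sheaves are Adams graded $\MU_*$-modules), not the affine scheme $\Spec(\LL)$. Consequently the claim that ``$\MMM_*^\sim$ \ldots\ descends to $\MMM^\sim$ on $\Spec(\LL)=\Spec(\MU_*)/\G$'' is false: forgetting the Adams grading corresponds to the pullback $\qq^*$ along $\qq\colon\Spec(\MU_*)\to[\Spec(\MU_*)]/\G$, not to descent, and the actual descent of $\MMM_*^\sim$ is a sheaf $\NNN_*^\sim$ on the quotient stack, not the sheaf $\MMM^\sim$ on $\Spec(\LL)$. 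The left vertical arrow in your square is the $\G$-torsor $\qq$, which is not the identity of $\Spec(\MU_*)=\Spec(\LL)$, so your ``flatness chase'' in fact proves that $\MMM_*^\sim$ is flat over $\FG^s$ iff $\NNN_*^\sim$ is flat over $\FG$ via $[\Spec(\MU_*)]/\G\to\FG$ --- which is not yet the Landweber exactness criterion for the underlying ungraded module.

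The missing step is precisely what the paper supplies, albeit in a different style. You would still need to pass from flatness of $\NNN_*^\sim$ over $\FG$ (via the stack) to flatness of $\MMM_*^\sim$ over $\FG$ (via the composite $\Spec(\MU_*)\to\FG$, i.e.\ the usual presentation $\Spec(\LL)\to\FG$); this is another instance of faithfully flat descent, this time along $\qq$. The paper instead works with the pushforward $\qq_*$: since $\qq$ is affine, the projection formula shows $\qq_*\MMM_*^\sim$ is flat over $\FG$ whenever $\MMM_*^\sim$ is, and then $\NNN_*^\sim$ is exhibited as the weight-zero direct summand of $\qq_*\MMM_*^\sim$ (using that $\G$-representations split into weight spaces), hence flat, and one finishes by base change along the cartesian square. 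Your pullback-plus-descent route can be completed with the extra descent step noted above, but as written the proposal collapses two genuinely different objects and so omits the heart of the argument.
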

\begin{proof}
We need to prove the ``only if'' implication.
Assume $\MMM_*$ is Landweber exact so that $\MMM^\sim$ has a compatible $\G$-action. 
Let $\qq\colon \Spec(\MU_*) \to [\Spec(\MU_*)]/\G$ denote the quotient map and $\NNN_*^\sim$ the descended quasi-coherent sheaf 
of $\MMM_*^\sim$ on $[\Spec(\MU_*)/\G]$.
There is a canonical map $\NNN_*^\sim \to \qq_*\MMM_*^\sim$, which is the inclusion of the weight zero part of the $\G$-action.
By assumption, $\MMM_*^\sim$ is flat over $\FG$, i.e.~$\qq_*\MMM_*^\sim$ is flat over $\FG$.
Since $\NNN_*^\sim$ is a direct summand of $\qq_*\MMM_*^\sim$ it is flat over $\FG$.
Hence $\MMM_*^\sim$ is flat over $\FG^s$ since there is a cartesian diagram:
$$
\xymatrix{
\Spec(\MU_*) \ar[r] \ar[d] & \FG^s \ar[d] \\
[\Spec(\MU_*)]/\G \ar[r] & \FG }
$$
\end{proof}
\begin{remark} 
Lemma \ref{grad-land} does not hold for (ungraded) $\LL$-modules:
The map $\Spec(\Z) \to \FG^s$ classifying the strict formal multiplicative group over the integers is not flat, 
whereas the corresponding $\LL$-module $\Z$ is Landweber exact.
\end{remark}

In the following statements we view Adams graded abelian groups as Adams graded graded abelian groups via the line $\Z (2,1)$.
For example an $\MU_*$-module structure on an Adams graded graded abelian group $\MMM_{**}$ is an $\MU_*$-module in this way. 
In particular, $\MGL_{**}\FF$ is an $\MU_*$-module for every motivic spectrum $\FF$.
\begin{theorem} 
\label{mot-landweber}
Suppose $\AAA_*$ is a Landweber exact $\MU_*$-algebra,
i.e.~there is a map of commutative algebras $\MU_* \to \AAA_*$ in Adams graded abelian groups
such that $\AAA_*$ viewed as an $\MU_*$-module is Landweber exact.
Then the functor $\MGL_{**}(-) \otimes_{\MU_*} \AAA_*$ is a bigraded ring homology theory on $\SH(S)$.
\end{theorem}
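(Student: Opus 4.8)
The plan is to reduce the statement to a question about flat sheaves on the moduli stack of formal groups, exploiting the cartesian square of Proposition~\ref{mu-mgl-cart1}. First I would recall that by Corollary~\ref{mgl-hopf-algebroid} the pair $(\MGL_{**},\MGL_{**}\MGL)$ is a flat Hopf algebroid in Adams graded graded abelian groups and $\MGL_{**}\FF$ is a comodule over it for every $\FF\in\SH(S)$; via the line $\Z(2,1)$ the algebra map $\MU_*\to\AAA_*$ makes $\AAA_*$ an Adams graded graded $\MGL_{**}$-algebra through the classifying map $\MU_*\to\MGL_{**}$ of Corollary~\ref{class-stack}. The functor $\FF\mapsto\MGL_{**}(\FF)\otimes_{\MU_*}\AAA_*$ is visibly a covariant functor to Adams graded graded abelian groups which takes (co)fiber sequences to exact sequences \emph{provided} the tensor factor $\AAA_*$ is sufficiently flat, and it manifestly preserves sums; the multiplicative structure maps $\Z\to(\MGL_{**}(\Sphere^{0,0})\otimes_{\MU_*}\AAA_*)$ and the external product come for free from the ring structure on $\MGL$ and the algebra structure on $\AAA_*$, with unitality, associativity and commutativity inherited from those of $\MGL_{**}(-)$. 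So the entire content is the \emph{exactness} of the functor: that $-\otimes_{\MU_*}\AAA_*$ turns the long exact $\MGL$-homology sequence of a cofiber sequence into a long exact sequence.

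The key step is therefore to prove that $\AAA_*$, regarded as an $\MU_*$-module, is such that $\MGL_{**}(\FF)\otimes_{\MU_*}\AAA_*$ computes exactly. Here is where the stacky picture does the work. Write $\Fh(\FF)$ for the $\G$-equivariant quasi-coherent sheaf on $[\MGL_*/\MGL_*\MGL]$ attached to the comodule $\MGL_*\FF$ (as in the paragraph before Lemma~\ref{2-orien-mglmgl}); a cofiber sequence in $\SH(S)$ gives a long exact sequence of such comodules. By Proposition~\ref{mu-mgl-cart1} the square relating $\Spec(\MGL_*)$, $\Spec(\MU_*)$, $[\MGL_*/\MGL_*\MGL]$ and $\FG^s$ is cartesian, so pulling back the quasi-coherent sheaf $\MMM_*^\sim$ on $\Spec(\MU_*)$ along $\Spec(\MGL_*)\to\Spec(\MU_*)$ is the same as pulling back along $\Spec(\MGL_*)\to[\MGL_*/\MGL_*\MGL]$ the descent of $\MMM_*^\sim$ from $\FG^s$; and Lemma~\ref{grad-land} identifies Landweber exactness of $\AAA_*$ with flatness of $\AAA_*^\sim$ over $\FG^s$. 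Since $\Spec(\MGL_*)\to[\MGL_*/\MGL_*\MGL]$ is faithfully flat (it is a presentation) and flatness is stable under pullback, the functor $M\mapsto M\otimes_{\MU_*}\AAA_*$ on $\MGL_*$-modules-with-$\MGL_*\MGL$-comodule-structure is exact: concretely, $\MGL_{**}(\FF)\otimes_{\MU_*}\AAA_*$ is obtained by tensoring the comodule $\MGL_{**}\FF$ with a sheaf that is flat after the faithfully flat pullback, hence the long exact sequences are preserved. This is the point I expect to be the main obstacle: one must check carefully that the bigrading (the passage between Adams graded graded modules via $\Z(2,1)$ and honest Adams graded modules) is compatible with forming $\Fh(\FF)$, with the cartesian square, and with Lemma~\ref{grad-land}, so that ``flat over $\FG^s$'' really does translate into exactness of $-\otimes_{\MU_*}\AAA_*$ applied to the $\MGL$-homology sequence. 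Corollary~\ref{flat-strmaps}(ii) and Corollary~\ref{mgl-hopf-algebroid}(ii), which allow one to pass between the $**$-graded and $*$-graded Hopf algebroids, are exactly what make this compatibility work.

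Having established exactness, the remaining verifications are routine and I would only sketch them. The resulting functor $\Phi(\FF)=\MGL_{**}(\FF)\otimes_{\MU_*}\AAA_*$ is a homological functor to Adams graded graded abelian groups preserving sums, and the two suspension isomorphisms $\Phi(X)_{p,q}\cong\Phi(\Sigma^{1,0}X)_{p+1,q}$ and $\Phi(X)_{p,q}\cong\Phi(\Sigma^{0,1}X)_{p,q+1}$ together with the commuting square are inherited from the corresponding structure on $\MGL_{**}(-)$ (they survive tensoring with the fixed module $\AAA_*$), so $\Phi$ is a bigraded homology theory in the sense of the relevant definition. For the multiplicative structure, the unit $\Z\to\MU_*\to\MGL_{**}\to\MGL_{**}\otimes_{\MU_*}\AAA_*$ gives the class in $\Phi(\Sphere^{0,0})$, and the external product is the composite
\[
\bigl(\MGL_{**}\FF\otimes_{\MU_*}\AAA_*\bigr)\otimes\bigl(\MGL_{**}\GG\otimes_{\MU_*}\AAA_*\bigr)
\longrightarrow
\MGL_{**}(\FF\wedge\GG)\otimes_{\MU_*}\AAA_*,
\]
using the lax monoidal product on $\MGL_{**}(-)$ and the multiplication of $\AAA_*$ over $\MU_*$; the unitality, associativity and commutativity constraints follow from those for $\MGL$ and the commutativity of $\AAA_*$. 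This exhibits $\Phi$ as a bigraded ring homology theory on $\SH(S)$, as claimed.
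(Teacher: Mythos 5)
Your proposal is correct and follows essentially the same route as the paper: Landweber exactness is converted via Lemma \ref{grad-land} into flatness of $\Spec(\AAA_*)\to\FG^s$, the cartesian square of Proposition \ref{mu-mgl-cart1} transports this to a flat affine map onto $[\MGL_*/\MGL_*\MGL]$, and exactness of the flat pullback together with exactness of global sections over an affine scheme makes $\FF\mapsto\MGL_{**}\FF\otimes_{\MU_*}\AAA_*$ a homological functor, the remaining bigraded and multiplicative structure being routine. The one imprecise phrase is ``the descent of $\AAA_*^{\sim}$ from $\FG^s$'' --- that sheaf does not descend to $\FG^s$; what one actually uses is the fiber product $\Spec(\AAA_*)\times_{\FG^s}[\MGL_*/\MGL_*\MGL]\cong\Spec(\AAA_*\otimes_{\MU_*}\MGL_*)$ (equivalently the pushforward of $\caO_{\Spec(\AAA_*)}$ along the flat affine map to $\FG^s$, as in Remark \ref{sheaf-reform}) --- but this does not affect the substance of the argument.
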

\begin{proof}
By Corollary \ref{mu-mgl-cart1} there is a projection $\pp$ from 
$$
\Spec(\AAA_*) \times_{\FG^s} [\MGL_*/\MGL_*\MGL]
\cong
\Spec(\AAA_*)\times_{\Spec(\MU_*)} \Spec(\MGL_*)
$$
to $[\MGL_*/\MGL_*\MGL]$ such that 
\begin{equation} 
\label{mod-sheaf-correspondence}
\MGL_*\FF \otimes_{\MU_*}\AAA_*\cong
\Gamma(\Spec(\AAA_*) \times_{\FG^s} [\MGL_*/\MGL_*\MGL],\pp^* \Fh(\FF)).
\end{equation}
(This is an isomorphism of Adams graded abelian groups, but we won't use that fact.)
The assignment $\FF\mapsto\Fh(\FF)$ is a homological functor since $\FF\mapsto\MGL_*\FF$ is a homological functor, 
and $\pp$ is flat since it is the pullback of $\Spec(\AAA_*) \to \FG^s$ which is flat by Lemma \ref{grad-land}.
Thus $\pp^*$ is exact. 
Taking global sections over an affine scheme is an exact functor \cite[Corollary 4.23]{ueno}.
Therefore, 
$\FF \mapsto \Gamma(\Spec(\AAA_*) \times_{\FG^s} [\MGL_*/\MGL_*\MGL],\pp^* \Fh(\FF))$ is a homological functor on $\SH(S)$,
so that by (\ref{mod-sheaf-correspondence}) $\FF \mapsto \MGL_*\FF \otimes_{\MU_*} \AAA_*$ is a homological functor 
with values in Adams graded abelian groups.
It follows that $\FF \mapsto (\MGL_*\FF \otimes_{\MU_*} \AAA_*)_0$, 
the degree zero part in the Adams graded abelian group, is a homological functor, and it preserves sums.
Hence it is a homology theory on $\SH(S)$.
The associated bigraded homology theory is clearly the one formulated in the theorem. 
Finally, 
the ring structure is induced by the ring structures on the homology theory represented by $\MGL$ and on $\AAA_*$.
\end{proof}

We note the proof works using $\Fh/\G(\FF)$ instead of $\Fh(\FF)$; 
this makes the reference to Lemma \ref{grad-land} superfluous since neglecting the grading does not affect the proof. 
\begin{corollary} 
\label{mot-coh-landweber}
The functor $\MGL^{**}(-)\otimes_{\MU_*} \AAA_*$ is a ring cohomology theory on strongly dualizable motivic spectra.
\end{corollary}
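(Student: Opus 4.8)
The plan is to deduce the statement from Theorem \ref{mot-landweber} by Spanier--Whitehead duality. First I would restrict the bigraded ring homology theory $\MGL_{**}(-)\otimes_{\MU_*}\AAA_*$ of Theorem \ref{mot-landweber} to $\SH(S)_{\caD,\f}$, the thick triangulated subcategory of strongly dualizable objects. Since this subcategory admits only finite coproducts, which an additive functor preserves automatically, the restriction is still a homology theory on $\SH(S)_{\caD,\f}$ in the sense of the relevant definition.

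Next I would invoke the duality equivalence $\SH(S)_{\caD,\f}^{\op}\xrightarrow{\ \sim\ }\SH(S)_{\caD,\f}$, $\FF\mapsto\FF^{\dual}$, under which homology theories on $\SH(S)_{\caD,\f}$ correspond to cohomology theories on $\SH(S)_{\caD,\f}$ (as recorded after Lemma \ref{hom-clo}). Precomposing, $\FF\mapsto\MGL_{**}(\FF^{\dual})\otimes_{\MU_*}\AAA_*$ is then a cohomology theory on $\SH(S)_{\caD,\f}$. It remains to identify it with $\MGL^{**}(-)\otimes_{\MU_*}\AAA_*$: for strongly dualizable $\FF$ the canonical map $\FF^{\dual}\wedge\MGL\to\RHom(\FF,\MGL)$ is an isomorphism, giving a natural isomorphism $\MGL_{**}(\FF^{\dual})\cong\MGL^{**}(\FF)$, and tensoring with $\AAA_*$ over $\MU_*$ yields the asserted identification of functors. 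The associated bigraded cohomology theory is then the one named in the statement.

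Finally, for the multiplicative structure I would transport the ring structure produced by Theorem \ref{mot-landweber} along the same duality, using that $(\FF\wedge\GG)^{\dual}\cong\FF^{\dual}\wedge\GG^{\dual}$ for strongly dualizable $\FF,\GG$, so that the pairing $\MGL^{**}(\FF)\otimes\MGL^{**}(\GG)\to\MGL^{**}(\FF\wedge\GG)$ corresponds under the duality isomorphisms to the product on the $\MGL_{**}$-homology side; unitality, associativity and commutativity then follow from the corresponding properties of the homology theory. The only point needing care --- and the closest thing to an obstacle --- is checking that the duality isomorphisms $\MGL_{**}(\FF^{\dual})\cong\MGL^{**}(\FF)$ are compatible with smash products and with the $\MU_*$-module structure, so that the transported product is genuinely natural and satisfies the switch and associativity constraints; this is routine but must be verified coherently. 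I would also note that restricting to strongly dualizable objects is essential: on a general finite motivic spectrum $\MGL^{**}$ need not be the $\MGL$-homology of a dual, so the argument does not extend to all of $\SH(S)_{\f}$.
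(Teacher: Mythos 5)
Your proposal is correct and follows essentially the same route as the paper: the paper's proof likewise applies the homology theory of Theorem \ref{mot-landweber} to Spanier--Whitehead duals of strongly dualizable spectra and notes that the ring structure is inherited. Your write-up simply makes explicit the identification $\MGL_{**}(\FF^{\dual})\cong\MGL^{**}(\FF)$ and the compatibility checks that the paper leaves implicit.
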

\begin{proof}
Applying the functor in Theorem \ref{mot-landweber} to the Spanier-Whitehead duals of strongly dualizable motivic 
spectra yields the cohomology theory on display. 
Its ring structure is induced by the ring structure on $\AAA_*$.
\end{proof}
\begin{proposition} 
\label{affine}
The maps $[\MGL_*/\MGL_*\MGL]\to\FG^s$ and $[\MGL_*/\MGL_*\MGL]/\G\to \FG$ are affine.
\end{proposition}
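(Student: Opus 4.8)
The plan is to reduce the assertion about $[\MGL_*/\MGL_*\MGL]\to\FG^s$ to the corresponding classical fact about $\Spec(\MU_*)\to\FG^s$ by exploiting the cartesian diagram (\ref{mu-mgl-stack}) established in Proposition \ref{mu-mgl-cart1}. The key observation is that affineness of a morphism of algebraic stacks can be checked after a faithfully flat base change on the target: if $\caX\to\caY$ is a morphism of algebraic stacks and $V\to\caY$ is a presentation (hence faithfully flat), then $\caX\to\caY$ is affine if and only if $\caX\times_{\caY}V\to V$ is affine. This is the standard fpqc-descent statement for the property of being an affine morphism, and it applies in our setting since all the stacks in question are algebraic with affine diagonal.

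So the main step is: choose the presentation $\Spec(\MU_*)=\FGL^s\to\FG^s$ (coming from the Hopf algebroid $(\MU_*,\MU_*\MU)$; recall $\FG^s$ identifies with $[\MU_*/\MU_*\MU]$). Base-changing $[\MGL_*/\MGL_*\MGL]\to\FG^s$ along this presentation gives, by Proposition \ref{mu-mgl-cart1}, exactly the morphism $\Spec(\MGL_*)\to\Spec(\MU_*)$, which is visibly affine since both source and target are affine schemes. By faithfully flat descent, $[\MGL_*/\MGL_*\MGL]\to\FG^s$ is affine. The statement for $[\MGL_*/\MGL_*\MGL]/\G\to\FG$ follows in exactly the same way, using Corollary \ref{mu-mgl-cart2}: base-changing along the presentation $\Spec(\MU_*)\to\FG$ yields $\Spec(\MGL_*)\to\Spec(\MU_*)$ again, which is affine, so the quotient morphism is affine too. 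Alternatively one can deduce the second statement from the first by descending along the $\G$-torsor $\FG^s\to\FG$, since the property of being affine is again local on the base for the fpqc topology and $\FG^s\to\FG$ is faithfully flat.

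The only point requiring a little care — and the step I would expect to be the mild obstacle — is to be sure that the descent principle for affine morphisms genuinely applies to morphisms of \emph{algebraic stacks} in the sense used here (fibered categories over affine schemes with the fpqc topology, with representable affine diagonal), rather than just to morphisms of schemes. This is where one invokes the machinery of \cite{niko}: a morphism of such algebraic stacks is affine precisely when its pullback along some (equivalently any) presentation of the target is an affine morphism of schemes, and this is compatible with the equivalence of $2$-categories between flat Hopf algebroids and presentations of algebraic stacks cited after Corollary \ref{cart-corollary}. Once that is in hand the proof is essentially a one-line consequence of Propositions \ref{mu-mgl-cart1} and Corollary \ref{mu-mgl-cart2}, together with the trivial remark that a morphism of affine schemes is affine.
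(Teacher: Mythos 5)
Your proof is correct and takes essentially the same approach as the paper: the paper's one-line argument likewise combines Proposition \ref{mu-mgl-cart1}, Corollary \ref{mu-mgl-cart2}, and the fact that affineness of a morphism of algebraic stacks may be tested after faithfully flat base change. Your write-up simply spells out the base change step that the paper leaves implicit.
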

\begin{proof}
Use Proposition \ref{mu-mgl-cart1}, Corollary \ref{mu-mgl-cart2} and the fact that being an affine morphism can be tested after 
faithfully flat base change.
\end{proof}
\begin{remark} 
\label{sheaf-reform}
We may formulate the above reasoning in more sheaf theoretic terms:
Namely, 
denoting by $i\colon [\MGL_*/\MGL_*\MGL] \to \FG^s$ the
canonical map, the Landweber exact theory is given by taking sections of $i_* \Fh(\FF)$ 
over $\Spec(\AAA_*)\to\FG^s$.
It is a homology theory by Proposition \ref{affine} since $\Spec(\AAA_*)\to\FG^s$ is flat.
\end{remark}

Next we give the versions of the above theorems for $\MU_*$-modules.
\begin{proposition} 
\label{landw-mod}
Suppose $\MMM_*$ is an Adams graded Landweber exact $\MU_*$-module.
Then $\MGL_{**}(-)\otimes_{\MU_*} \MMM_*$ is a homology theory on $\SH(S)$ and 
$\MGL^{**}(-) \otimes_{\MU_*} \MMM_*$ a cohomology theory on strongly dualizable spectra.
\end{proposition}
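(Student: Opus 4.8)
The plan is to reprise the proof of Theorem \ref{mot-landweber} almost verbatim, carried out in the sheaf-theoretic language of Remark \ref{sheaf-reform}; the algebra structure on the coefficients enters there only to equip the resulting theory with a ring structure, and that part we simply drop. Write $\Xh \equiv [\MGL_*/\MGL_*\MGL]$, let $i\colon \Xh \to \FG^s$ be the canonical morphism and $g\colon \Spec(\MU_*) \to \FG^s$ the right-hand edge of the cartesian square of Proposition \ref{mu-mgl-cart1}. The morphism $i$ is affine by Proposition \ref{affine}, so $i_*$ is exact on quasi-coherent sheaves; since $\FF \mapsto \Fh(\FF)$ is a homological functor into quasi-coherent sheaves on $\Xh$ (the comodule structure on $\MGL_*\FF$ of Corollary \ref{mgl-hopf-algebroid}$(ii)$ being homological in $\FF$), the assignment $\FF \mapsto i_*\Fh(\FF)$ is a homological functor into quasi-coherent sheaves on $\FG^s$.

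First I would record that, because $\MMM_*$ is Landweber exact, the functor $\caG \mapsto g^*\caG \otimes_{\cO_{\Spec(\MU_*)}} \MMM_*^\sim$ from quasi-coherent sheaves on $\FG^s$ to $\MU_*$-modules is exact: this is precisely the assertion of Lemma \ref{grad-land} that $\MMM_*^\sim$ is flat over $\FG^s$ with respect to $g$. Composing this exact functor with the homological $\FF \mapsto i_*\Fh(\FF)$ and with the exact global-sections functor $\Gamma(\Spec(\MU_*),-)$, and noting that all of these preserve sums (as $\MGL_*(-)$ does, and pullbacks, pushforwards along affine maps, tensor products and global sections over an affine scheme are colimit-preserving), one obtains a sum-preserving homological functor
\[
\FF \;\longmapsto\; \Gamma\bigl(\Spec(\MU_*),\ g^* i_*\Fh(\FF) \otimes_{\cO_{\Spec(\MU_*)}} \MMM_*^\sim\bigr)
\]
on $\SH(S)$. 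Since $i$ is affine, base change along $g$ in the cartesian square of Proposition \ref{mu-mgl-cart1} identifies $g^* i_*\Fh(\FF)$, as a quasi-coherent sheaf on $\Spec(\MU_*)$, with $\MGL_*\FF$ regarded as an $\MU_*$-module through $\MU_* \to \MGL_*$; as global sections over an affine commute with tensor products, the displayed functor is naturally $\MGL_*\FF \otimes_{\MU_*} \MMM_*$. Passing to the degree-zero part and then to the associated bigraded theory, exactly as in the final lines of the proof of Theorem \ref{mot-landweber} and using the conventions of Section \ref{section:3}, shows that $\MGL_{**}(-) \otimes_{\MU_*} \MMM_*$ is a bigraded homology theory on $\SH(S)$.

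For the cohomology assertion I would mimic Corollary \ref{mot-coh-landweber}: a strongly dualizable $\FF$ satisfies $\MGL^{**}(\FF) \cong \MGL_{**}(\FF^\dual)$ naturally, so $\MGL^{**}(\FF)\otimes_{\MU_*}\MMM_* \cong \MGL_{**}(\FF^\dual)\otimes_{\MU_*}\MMM_*$; precomposing the homology theory just built with the contravariant duality self-equivalence of $\SH(S)_{\caD,\f}$, which exchanges sums and products, exhibits $\MGL^{**}(-)\otimes_{\MU_*}\MMM_*$ as a cohomology theory on strongly dualizable spectra.

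I do not expect a genuine difficulty: the substance is already contained in Theorem \ref{mot-landweber} together with Lemma \ref{grad-land}. The one point to watch is that $\MMM_*$ is merely an $\MU_*$-module, so the tensor product $-\otimes_{\MU_*}\MMM_*$ must be formed at the level of quasi-coherent sheaves on $\FG^s$ (equivalently of $(\MU_*,\MU_*\MU)$-comodules) rather than of bare modules, which is exactly why imitating the $i_*$-formulation of Remark \ref{sheaf-reform} — instead of the fibre-product formulation written out in the body of Theorem \ref{mot-landweber} — is the efficient route.
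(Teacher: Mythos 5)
Your proposal is correct and follows essentially the same route as the paper: the paper's proof likewise expresses the functor as $\FF \mapsto \Gamma(\Spec(\MU_*),\MMM_* \otimes_{\MU_*} p^* i_*\Fh(\FF))$ using the affineness of $i$ from Proposition \ref{affine} and the exactness supplied by Landweber exactness via Lemma \ref{grad-land}, and it also handles the cohomology statement by Spanier--Whitehead duality. Your write-up merely spells out in more detail the identifications the paper leaves implicit.
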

\begin{proof}
The map $i\colon [\MGL_*/\MGL_*\MGL] \to \FG^s$ is affine according to 
Proposition \ref{affine}.
With $p \colon \Spec(\MU_*) \to \FG^s$ the canonical map, 
the first functor in the proposition is given by 
$$
\xymatrix{
\FF \ar@{|->}[r] & \Gamma(\Spec(\MU_*),\MMM_* \otimes_{\MU_*} p^* i_*\Fh(\FF)), }
$$
which is exact by assumption.

The second statement is proven by taking Spanier-Whitehead duals.
\end{proof}

A Landweber exact theory refers to a homology or cohomology theory constructed as in Proposition \ref{landw-mod}.
There are periodic versions of the previous results:
\begin{proposition}
Suppose $\MMM$ is a Landweber exact $\LL$-module.
Then $\MGL_*(-)\otimes_\LL \MMM$ is a $(2,1)$-periodic homology theory on $\SH(S)$ with values in ungraded abelian groups. 
The same statement holds for cohomology of strongly dualizable objects.
These are ring theories if $\MMM$ is a commutative $\LL$-algebra.
\end{proposition}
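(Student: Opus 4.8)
The plan is to transcribe the proofs of Theorem~\ref{mot-landweber} and Proposition~\ref{landw-mod}, but working over $\FG$ and with the $\G$-quotient stack $[\MGL_*/\MGL_*\MGL]/\G$ in place of $\FG^s$ and $[\MGL_*/\MGL_*\MGL]$; this is exactly the setting into which the \emph{ungraded} Landweber exactness hypothesis feeds, via the stack-theoretic criterion \cite[Proposition 7]{niko}. Concretely, write $i\colon[\MGL_*/\MGL_*\MGL]/\G\to\FG$ and $p\colon\Spec(\LL)=\Spec(\MU_*)\to\FG$ for the canonical maps, let $\MMM^\sim$ be the quasi-coherent sheaf on $\Spec(\LL)$ attached to $\MMM$, and for $\FF\in\SH(S)$ let $\Fh/\G(\FF)$ be the descended quasi-coherent sheaf on $[\MGL_*/\MGL_*\MGL]/\G$ associated with the $\MGL_*\MGL$-comodule $\MGL_*\FF$ as in Corollary~\ref{mgl-hopf-algebroid}. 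Using that the square of Corollary~\ref{mu-mgl-cart2} is cartesian and that $i$ is affine (Proposition~\ref{affine}), affine base change identifies $p^*i_*(\Fh/\G(\FF))$ with the quasi-coherent sheaf $(\MGL_*\FF)^\sim$ on $\Spec(\LL)$, where $\MGL_*\FF$ is regarded as an $\LL$-module through the canonical ring map $\LL\cong\MU_*\to\MGL_*$ of Corollary~\ref{class-stack}. Tensoring by $\MMM^\sim$ and taking sections over the affine scheme $\Spec(\LL)$ then yields a natural isomorphism
\[
\MGL_*\FF\otimes_\LL\MMM\;\cong\;\Gamma\bigl(\Spec(\LL),\ \MMM^\sim\otimes_{\cO_{\Spec(\LL)}}p^*i_*(\Fh/\G(\FF))\bigr).
\]

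Next I would assemble the exactness. The assignment $\FF\mapsto\Fh/\G(\FF)$ is a homological functor into quasi-coherent sheaves on $[\MGL_*/\MGL_*\MGL]/\G$ because $\FF\mapsto\MGL_*\FF$ is homological; the functor $i_*$ is exact since $i$ is affine; the functor $\caG\mapsto\MMM^\sim\otimes_{\cO_{\Spec(\LL)}}p^*\caG$ from quasi-coherent sheaves on $\FG$ to $\LL$-modules is exact precisely because $\MMM$ is Landweber exact, that is, because $\MMM^\sim$ is flat over $\FG$ relative to $p$ by \cite[Proposition 7]{niko}; and global sections over an affine scheme is exact by \cite[Corollary 4.23]{ueno}. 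Composing, $\FF\mapsto\MGL_*\FF\otimes_\LL\MMM$ is a homological functor with values in ungraded abelian groups, and it preserves sums because $\MGL_*(-)$ does and $-\otimes_\LL\MMM$ commutes with coproducts; hence it is a homology theory on $\SH(S)$. It is $(2,1)$-periodic because the suspension isomorphism $\MGL_{2i,i}(\Sigma^{2,1}\FF)\cong\MGL_{2(i-1),i-1}(\FF)$, after forgetting the Adams grading, becomes a natural isomorphism $\MGL_*(\Sigma^{2,1}\FF)\otimes_\LL\MMM\cong\MGL_*\FF\otimes_\LL\MMM$. The cohomology statement is obtained by precomposing with the duality equivalence $\SH(S)_{\caD,\f}^{\op}\cong\SH(S)_{\caD,\f}$ on strongly dualizable objects, and when $\MMM$ is a commutative $\LL$-algebra the external products $\MGL_*\FF\otimes\MGL_*\GG\to\MGL_*(\FF\wedge\GG)$ together with the multiplication $\MMM\otimes_\LL\MMM\to\MMM$ descend to a ring structure, just as in the closing lines of the proof of Theorem~\ref{mot-landweber}.

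The step I expect to be the main obstacle is the base-change identification of $p^*i_*(\Fh/\G(\FF))$. One has to be careful that working over $\FG$ through $[\MGL_*/\MGL_*\MGL]/\G$, rather than over $\FG^s$ through $[\MGL_*/\MGL_*\MGL]$, is precisely what makes the ungraded flatness criterion \cite[Proposition 7]{niko} available — the graded analogue would instead require Lemma~\ref{grad-land}, which as noted in the remark following it fails for ungraded $\LL$-modules — and that the affineness of $i$ (Proposition~\ref{affine}) is genuinely needed, both for the exactness of $i_*$ and for commuting $i_*$ past the base change along $p$. With this identification in hand, the remaining steps are a routine transcription of arguments already present in the paper.
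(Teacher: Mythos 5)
Your proposal is correct and is exactly the argument the paper intends: the proposition is stated without proof as a "periodic version of the previous results," and the remark following Theorem \ref{mot-landweber} already notes that the proof goes through with $\Fh/\G(\FF)$ over $\FG$ in place of $\Fh(\FF)$ over $\FG^s$, which is precisely your transcription. Your observations that the ungraded flatness criterion of \cite[Proposition 7]{niko} is what replaces Lemma \ref{grad-land} here, and that affineness of $i$ (Proposition \ref{affine}) supplies both exactness of $i_*$ and the base-change identification, are the right points to isolate.
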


Next we formulate the corresponding results for (highly structured) $\MGL$-modules. 
In stable homotopy theory this viewpoint is emphasized in \cite{may-idemp-landweber} and it plays an important role in this paper, 
cf.~Section \ref{op-coop}.
\begin{proposition} 
\label{mgl-landw}
Suppose $\MMM_*$ is a Landweber exact Adams graded $\MU_*$-module.
Then $\FF \mapsto \FF_{**} \otimes_{\MU_*} \MMM_*$ is a bigraded homology theory on the derived category $\caD_{\MGL}$
of $\MGL$-modules.
\end{proposition}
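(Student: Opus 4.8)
The plan is to mimic the proof of Proposition~\ref{landw-mod}, but now working internally in the derived category $\caD_{\MGL}$ of $\MGL$-modules rather than in $\SH(S)$. The key point is that for an $\MGL$-module $\FF$ the bigraded homotopy groups $\FF_{**}$ carry a natural $\MGL_{**}$-module structure, hence (via the unit map $\MU_* \to \MGL_{**}$ coming from Corollary~\ref{class-stack}) an $\MU_*$-module structure, and the assignment $\FF \mapsto \FF_{**}$ is a homological functor $\caD_{\MGL} \to \MGL_{**}\text{-}\Mod$ which preserves sums, since cofiber sequences of $\MGL$-modules induce long exact sequences on homotopy and arbitrary coproducts in $\caD_{\MGL}$ compute homotopy groups degreewise. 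So the content of the proposition is entirely that tensoring this functor with $\MMM_*$ over $\MU_*$ preserves exactness.

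First I would record that $\FF \mapsto \FF_{**}$ is a sum-preserving homological functor with values in $\MGL_{**}$-modules, viewed as $\MU_*$-modules through the canonical ring map $\MU_* \to \MGL_*$; restricting along the line $\Z(2,1)$ gives the Adams graded $\MU_*$-module structure used in the statement. Next, exactly as in the proof of Theorem~\ref{mot-landweber} and Proposition~\ref{landw-mod}, I would translate the tensor product $\FF_{**} \otimes_{\MU_*} \MMM_*$ into the language of quasi-coherent sheaves on the stack $\FG^s$: the $\MGL_*$-module $\FF_*$ (the multiples-of-$(2,1)$ part) corresponds to a $\G$-equivariant quasi-coherent sheaf on $\Spec(\MGL_*)$, and using the affine map $i\colon [\MGL_*/\MGL_*\MGL] \to \FG^s$ of Proposition~\ref{affine} together with $p\colon \Spec(\MU_*) \to \FG^s$, one has a natural identification
\[
\FF_* \otimes_{\MU_*} \MMM_* \cong \Gamma\bigl(\Spec(\MU_*), \MMM_* \otimes_{\MU_*} p^* i_* \Gh(\FF)\bigr),
\]
where $\Gh(\FF)$ denotes the sheaf on $[\MGL_*/\MGL_*\MGL]$ associated to the $\MGL_*\MGL$-comodule $\FF_*$ — which is a comodule by the same Tate-object argument as in Corollary~\ref{mgl-hopf-algebroid}, since $\MGL_*\FF$ is functorial in $\FF$. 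The functor $\Gh(-)$ is exact on short exact sequences of $\MGL_*$-modules (those underlie short exact sequences of comodules because $\MGL_{**}$ is flat), $i_*$ is exact because $i$ is affine, $p^*$ is exact, tensoring with $\MMM_*$ over $\MU_*$ is exact because $\MMM_*$ is Landweber exact (Lemma~\ref{grad-land}: $\MMM_*^\sim$ is flat over $\FG^s$), and global sections over the affine scheme $\Spec(\MU_*)$ is exact. Composing, $\FF \mapsto \FF_{**} \otimes_{\MU_*}\MMM_*$ is a homological functor; it visibly preserves sums since each constituent does, and the bigraded structure together with the suspension isomorphisms is inherited from that of $\FF \mapsto \FF_{**}$.

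The one point that needs slightly more care than in Proposition~\ref{landw-mod} — and which I expect to be the main obstacle — is justifying the sheaf-theoretic identification of $\FF_* \otimes_{\MU_*}\MMM_*$ when $\FF$ ranges over \emph{all} of $\caD_{\MGL}$, not just over objects built from spheres: one must know that $\FF_*$ really is an $\MGL_*\MGL$-comodule and that the correspondence $\FF_* \leftrightarrow \Gh(\FF)$ behaves well, which ultimately again reduces to the flatness of $\MGL_{**}$ over $\MU_*$ and the fact that $\MGL \wedge_{\MGL} \FF \simeq \FF$ makes the relevant comparison maps isomorphisms on the generators and hence everywhere by a localizing-subcategory argument. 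Granting that, the proof is a routine transcription of the arguments already given for $\SH(S)$; alternatively, one can bypass stacks entirely and argue directly that $- \otimes_{\MU_*} \MMM_*$ applied to the long exact homotopy sequence of a triangle in $\caD_{\MGL}$ stays exact, using Landweber exactness of $\MMM_*$ in the form of the standard filtration by the regular sequences $v^{(p)}_i$, exactly as in the topological Landweber exact functor theorem.
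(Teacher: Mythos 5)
Your proposal matches the paper's proof in both structure and substance: the paper likewise first establishes that $\FF_*$ is an $(\MGL_*,\MGL_*\MGL)$-comodule for every $\FF\in\caD_{\MGL}$ by checking that $\MGL_*\MGL\otimes_{\MGL_*}\FF_*\to(\MGL\wedge\FF)_*$ is an isomorphism on the free modules $\Sigma^{p,q}\MGL$ and extending via the fact that both sides are sum-preserving homological functors, and then runs the argument of Proposition \ref{landw-mod} using flatness of $\MMM_*$ over $[\MGL_*/\MGL_*\MGL]$. One small correction: the flatness that makes this comparison work (and makes its left-hand side homological) is that of $\MGL_*\MGL$ over $\MGL_*$, not of $\MGL_{**}$ over $\MU_*$.
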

\begin{proof}
The proof proceeds along a now familiar route.
What follows reviews the main steps. 
We wish to construct a homological functor from $\caD_{\MGL}$ to quasi-coherent sheaves on $[\MGL_*/\MGL_*\MGL]$.
Our first claim is that for every $\FF \in \caD_{\MGL}$ the Adams graded $\MGL_*$-module $\FF_*$ is an 
$(\MGL_*, \MGL_* \MGL)$-comodule.
As in Lemma \ref{flat-strmaps}, 
$$
\xymatrix{
\MGL_{**} \MGL \otimes_{\MGL_{**}} \FF_{**}\ar[r] &  (\MGL \wedge \FF)_{**} }
$$
is an isomorphism restricting to an isomorphism
$$
\xymatrix{
\MGL_* \MGL \otimes_{\MGL_*} \FF_*\ar[r] & (\MGL \wedge \FF)_*.}
$$
This is proven by first observing that it holds for ``spheres'' $\Sigma^{p,q} \MGL$,
and secondly that both sides are homological functors which commute with sums.
This establishes the required comodule structure.
Next,
the proof of Proposition \ref{landw-mod} using flatness of $\MMM_*$ viewed as a quasi-coherent sheaf on 
$[\MGL_*/\MGL_*\MGL]$ shows the functor in question is a homology theory.
The remaining parts are clear.
\end{proof}
\begin{remark}
We leave the straightforward formulations of the cohomology, algebra and periodic versions of 
Proposition \ref{mgl-landw} to the reader.
\end{remark}

\section{Representability and base change} 
\label{reps}
Here we deal with the question when a motivic (co)homology theory is representable.
Let $\RR$ be a subset of $\SH(S)_{\f}$ such that $\SH(S)_{\RR,\f}$ consists of strongly dualizable objects,
is closed under smash products and duals and contains the unit.

First, recall the notions of unital algebraic stable homotopy categories and Brown categories from
\cite[Definition 1.1.4 and next paragraph]{HPS}:
A stable homotopy category is a triangulated category equipped with sums, a compatible closed tensor product, 
a set $\caG$ of strongly dualizable objects generating the triangulated category as a localizing subcategory, 
and such that every cohomological functor is representable.
It is unital algebraic if the tensor unit is finite (thus the objects of $\caG$ are finite) and a Brown category 
if homology functors and natural transformations between them are representable.

A map between objects in a stable homotopy category is  phantom if the induced map between the 
corresponding cohomology functors on the full subcategory of finite objects is the zero map.
In case the category is unital algebraic this holds if and only if the map between the induced homology 
theories is the zero map.
\begin{lemma} 
\label{ualghomc}
The category $\SH(S)_\RR$ is a unital algebraic stable homotopy category. 
The set $\caG$ can be chosen to be (representatives of) the objects of $\SH(S)_{\RR,\f}$.
\end{lemma}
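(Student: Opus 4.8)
The plan is to verify the defining properties of a unital algebraic stable homotopy category (\cite[Definition 1.1.4]{HPS}) for $\SH(S)_\RR$ one at a time, feeding off results already established in the excerpt. First I would record the structural features: $\SH(S)_\RR$ is triangulated and has all set-indexed sums because it is a localizing subcategory of $\SH(S)$, and it inherits a closed symmetric monoidal (smash) product from $\SH(S)$ because $\SH(S)_{\RR,\f}$ is assumed closed under smash products and contains the unit, so the localizing subcategory it generates is closed under smashing (smashing with a fixed object preserves sums and triangles, so the full subcategory of objects whose smash with everything in $\SH(S)_{\RR,\f}$ lands in $\SH(S)_\RR$ is localizing and contains the generators; iterating gives closure of $\SH(S)_\RR$ under $\wedge$). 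The internal hom is obtained likewise, or simply restricted; the key point for the closed structure on $\SH(S)_\RR$ is that the generators are strongly dualizable in $\SH(S)$, so $\RHom$ out of them is computed by smashing with the dual, which stays in $\SH(S)_\RR$.

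Next I would address the generating set. Take $\caG$ to be a set of representatives of isomorphism classes of objects of $\SH(S)_{\RR,\f}$; this is a set because $\SH(S)_{\RR,\f}$ is essentially small (it is the smallest thick subcategory containing the countable — indeed small — set $\RR$). By Lemma \ref{compactobjects}, $\SH(S)_{\RR,\f}$ is exactly the subcategory of compact objects of $\SH(S)_\RR$ and $\SH(S)_\RR$ is compactly generated by it, which is precisely the statement that $\caG$ generates $\SH(S)_\RR$ as a localizing subcategory. By hypothesis every object of $\SH(S)_{\RR,\f}$ is strongly dualizable in $\SH(S)$; since $\caG$ and $\caD$-type duals all lie in $\SH(S)_{\RR,\f}$, these objects remain strongly dualizable inside $\SH(S)_\RR$ (the defining natural map $\FF^\dual\wedge\GG\to\RHom(\FF,\GG)$ is the same whether computed in $\SH(S)$ or in the full subcategory $\SH(S)_\RR$, as both sides lie there). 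So the objects of $\caG$ are strongly dualizable, and being compact with a finite unit they are finite, giving the "unital algebraic" qualifier once we check the unit $\unit = \Sphere^{0,0}$ is finite — which holds since it lies in $\SH(S)_{\RR,\f}$ by the assumption that $\SH(S)_{\RR,\f}$ contains the unit, and finiteness of the unit in $\SH(S)$ was noted earlier.

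The remaining axiom — every cohomological functor on $\SH(S)_\RR$ is representable — is the Brown representability step, and this is the part I expect to be the main obstacle, or at least the one requiring an external input rather than bookkeeping. The clean way is to invoke Neeman's Brown representability theorem: a compactly generated triangulated category satisfies Brown representability for cohomological functors. Since $\SH(S)_\RR$ is compactly generated (Lemma \ref{compactobjects}), this applies verbatim, and I would cite \cite{neeman} (the same source already used in the excerpt) for it. With that in hand all the clauses of \cite[Definition 1.1.4]{HPS} are verified, and the final sentence of the lemma — that $\caG$ may be taken to be the objects of $\SH(S)_{\RR,\f}$ — has already been justified in the course of checking the generation axiom. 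I would then simply assemble these observations into a short proof, the only genuinely substantive citation being Neeman's representability theorem; everything else is inheritance from $\SH(S)$ together with Lemma \ref{compactobjects}.
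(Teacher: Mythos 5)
Your verification is correct in substance but takes a genuinely different route from the paper: the paper's entire proof is a one-line appeal to \cite[Theorem 9.1.1]{HPS}, which packages precisely the criterion you check by hand (a localizing subcategory of a stable homotopy category generated by a set of strongly dualizable objects that is closed under smash products and duals and contains the unit is again a unital algebraic stable homotopy category). What your unpacking buys is transparency: it makes explicit that compact generation (Lemma \ref{compactobjects}) together with Neeman's Brown representability theorem supplies the representability axiom, and that the unit is compact because it lies in $\SH(S)_{\RR,\f}$. What the citation buys is that one need not re-verify the closed monoidal structure, and that is the one place where your argument is thin. For general $\FF,\GG\in\SH(S)_\RR$ the internal hom $\RHom(\FF,\GG)$ formed in $\SH(S)$ need \emph{not} lie in $\SH(S)_\RR$, and the localizing-subcategory trick you use for the smash product does not transfer, since $\RHom(-,\GG)$ takes sums to products rather than to sums; the strong dualizability of the generators identifies $\RHom(\FF,\GG)$ with $\FF^\dual\wedge\GG$ only when $\FF$ is finite. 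The correct internal hom on $\SH(S)_\RR$ is $\pp(\RHom(\FF,\GG))$, where $\pp$ is the right adjoint to the inclusion $\SH(S)_\RR\subseteq\SH(S)$ furnished by Corollary \ref{exrightadj}; adjunction and the closure of $\SH(S)_\RR$ under $\wedge$ that you already established then give
$\Hom_{\SH(S)_\RR}(\HH\wedge\FF,\GG)\cong\Hom_{\SH(S)_\RR}(\HH,\pp(\RHom(\FF,\GG)))$
for every $\HH\in\SH(S)_\RR$. With that repair the remaining clauses (generation by $\caG$, compactness and strong dualizability of its objects, finiteness of the unit, Brown representability via \cite{neeman}) go through exactly as you describe.
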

\begin{proof}
This is an immediate application of \cite[Theorem 9.1.1]{HPS}.
\end{proof}

\begin{remark} If $S=\Spec(k)$ for a field $k$ admitting
resolutions of singularities, then $\SH(S)$ itself is unital algebraic, essentially
because every smooth $k$-scheme is strongly dualizable in $\SH(S)$, cf.
\cite[Theorem 52]{modulesovermotivic}. For $S$ the spectrum of a discrete
valuation ring $R$ with quotient field $K$, $U_+:=\Spec(K)_+\in\SH(S)$
is compact but not strongly dualizable, hence by \cite[Theorem 2.1.3,d)]{HPS}
$\SH(S)$ is not unital algebraic. We sketch a proof of the fact that $U_+$
is not dualizable which arose in discussion with J. Riou: Assume $U_+$ was dualizable, and
consider the trace of its identity, an element of $\pi_{0,0}(\one_R)$ which restricts to
$1\in \pi_{0,0}(\one_K)$ and to $0\in \pi_{0,0}(\one_\kappa)$ ($\kappa$ the residue field of $R$).
To obtain a contradiction, it would thus suffice to know that $\pi_{0,0}(\one_R)$ is simple,
which seems plausible but is open to the authors' knowledge. However, it suffices to construct
a tensor-functor $\SH(S)\to D$ (a ``realization'') such that the corresponding statements hold in 
$D$. Taking for $D$ the category of $\LQ$-modules (cf. Section \ref{CHERN}) is easily seen to work.
\end{remark}

\begin{lemma} 
\label{brown}
Suppose $S$ is covered by Zariski spectra of countable rings. 
Then $\SH(S)_\RR$ is a Brown category and the category of homology functors on $\SH(S)_\RR$ is naturally equivalent 
to $\SH(S)_\RR$ modulo phantom maps.
\end{lemma}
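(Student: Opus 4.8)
The plan is to deduce both assertions from the Brown representability machinery in \cite{HPS}, once we verify the relevant countability hypotheses. First I would establish that $\SH(S)_\RR$ is generated, as a localizing subcategory, by a \emph{countable} set of compact objects. Using Lemma \ref{ualghomc}, the category $\SH(S)_\RR$ is unital algebraic with generating set the objects of $\SH(S)_{\RR,\f}$, and by Lemma \ref{compactobjects} these are exactly the compact objects of $\SH(S)_\RR$. So it suffices to show $\SH(S)_{\RR,\f}$ has only countably many isomorphism classes. Here the hypothesis that $S$ is covered by finitely many spectra of countable rings enters: one reduces (via the thick-subcategory description of $\SH(S)_{\RR,\f}$ as generated under cofibers, retracts and shifts by $\RR$, which is finite or countable in the cases of interest) to counting morphism sets between the generating compact objects, and morphism groups in $\SH(S)$ between compact objects built out of smooth $S$-schemes of finite type are countable when the base is built from countable rings, because the relevant simplicial mapping spaces are computed from countable diagrams of countable sets. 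This is the step I expect to be the main obstacle — not conceptually deep, but the bookkeeping needed to propagate ``countably generated'' through the Nisnevich-local and stable model structures, and to check that $\RR$ itself (e.g.\ $\caT$, which is genuinely countable, or $\SH(S)_{\f}$, where one uses that $\SH(S)$ is compactly generated by a countable set under the hypothesis) satisfies the bound.

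Once countability of a generating set of compact objects is in hand, the first claim follows from \cite[Theorem 4.1.5]{HPS}: a unital algebraic stable homotopy category generated by a countable set of compact objects is a Brown category, i.e.\ both homology functors and natural transformations between them are representable. I would simply invoke this theorem with the generating set supplied by Lemma \ref{ualghomc}, having arranged it to be countable.

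For the second claim — that the category of homology functors on $\SH(S)_\RR$ is equivalent to $\SH(S)_\RR$ modulo phantoms — I would argue as follows. Representability of homology functors gives a functor $\SH(S)_\RR \to \{\text{homology functors}\}$, $\EE \mapsto \EE_{**}(-)$ (restricted to degree zero, or working with the bigraded version), which is essentially surjective by the Brown property just established. It is full because natural transformations between the represented homology theories are themselves representable (again the Brown property), so every natural transformation of homology functors is induced by a map of spectra. Finally, a map $f\colon \EE \to \FF$ induces the zero natural transformation on homology functors precisely when $f$ induces zero on $\GG_{**}(-)$ for all compact $\GG$, and in a unital algebraic category this is exactly the definition of a phantom map (as recalled in the paragraph preceding Lemma \ref{ualghomc}). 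Hence the kernel of the full, essentially surjective functor is the ideal of phantom maps, and passing to the quotient yields the asserted equivalence $\SH(S)_\RR/(\text{phantoms}) \simeq \{\text{homology functors on } \SH(S)_\RR\}$. The only subtlety here is to confirm that ``induces zero on all homology theories'' coincides with ``phantom'' — this is immediate from the unital algebraic structure, since a homology functor is determined by its values on compacts and phantoms are defined by vanishing on cohomology of finite objects, which are the same as the compact objects by Lemma \ref{compactobjects}.
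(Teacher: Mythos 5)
Your plan matches the paper's proof: the paper invokes \cite[Theorem 4.1.5]{HPS} together with countability of the compact objects of $\SH(S)$ (delegated to \cite[Proposition 5.5]{voevodsky-icm} and \cite[Theorem 1]{mn-brown}) for the first assertion, and observes that the second assertion is precisely what the definition of a Brown category says. The step you flag as the main obstacle --- verifying countability of isomorphism classes and $\Hom$-groups of compacts --- is indeed the only nontrivial input, and the paper handles it by citation rather than redoing the bookkeeping you sketch.
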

\begin{proof}
The first part follows by combining \cite[Theorem 4.1.5]{HPS} and \cite[Proposition 5.5]{voevodsky-icm}, \cite[Theorem 1]{mn-brown}
 and the second part
by the definition of a Brown category.
\end{proof}

Suppose $\RR,\RR'$ are as above and $\SH(S)_{\RR,\f}\subset\SH(S)_{\RR',\f}$. 
Then a cohomology theory on $\SH(S)_{\RR',\f}$ represented by $\FF$ restricts to a cohomology theory on $\SH(S)_{\RR,\f}$ 
represented by $\pp_{\RR',\RR}(\FF)$.
For Landweber exact theories the following holds:
\begin{proposition}
\label{extension}
Suppose a Landweber exact homology theory restricted to $\SH(S)_{\caT,\f}$ is represented by a Tate spectrum $\EE$.
Then $\EE$ represents the theory on $\SH(S)$.
\end{proposition}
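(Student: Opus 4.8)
The plan is to upgrade the equivalence of Proposition~\ref{landw-mod} from $\SH(S)_{\caT,\f}$ to all of $\SH(S)$ by a Tate-projection argument, exactly the device already set up in Section~4. First I would recall that a Landweber exact homology theory $\Phi(-) = \MGL_{**}(-)\otimes_{\MU_*}\MMM_*$ is defined on all of $\SH(S)$ by Proposition~\ref{landw-mod}, and that $\EE \in \SH(S)_\caT$ represents its restriction to $\SH(S)_{\caT,\f}$. The candidate natural transformation on all of $\SH(S)$ is $\EE_{**}(\FF) \to \Phi(\FF)$; I want to show it is an isomorphism for every $\FF$, not merely for finite cellular $\FF$.

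The key observation is the one recorded in Remark~\ref{cellularization}(ii): since $\EE$ is a Tate object, $\EE_{**}(\FF) \cong \EE_{**}(\pp_{\SH(S),\caT}\FF)$ naturally in $\FF$. So the first step is to check that $\Phi$ enjoys the same property, i.e. the counit $\pp_{\SH(S),\caT}\FF \to \FF$ induces an isomorphism $\Phi(\pp_{\SH(S),\caT}\FF) \to \Phi(\FF)$. This holds because $\Phi$ is built out of $\MGL_{**}(-)$, and $\MGL$ is itself a Tate object (Corollary~\ref{mgl-hopf-algebroid} / \cite[Theorem 6.4]{dugger-isaksen}), so $\MGL_{**}(\pp_{\SH(S),\caT}\FF) \to \MGL_{**}(\FF)$ is an isomorphism by Remark~\ref{cellularization}(ii) applied to $\EE = \MGL$; tensoring with $\MMM_*$ over $\MU_*$ preserves this. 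Hence both $\EE_{**}(-)$ and $\Phi(-)$ factor, up to natural isomorphism, through $\pp_{\SH(S),\caT}$.

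It therefore suffices to prove that $\EE_{**}(\GG) \to \Phi(\GG)$ is an isomorphism for all $\GG \in \SH(S)_\caT$. Both sides are homology theories on $\SH(S)_\caT$ in the sense of Lemma~\ref{hom-clo}: they are homological, preserve sums, and by Lemma~\ref{hom-clo} (with $\RR = \caT$, which is closed under duals) a homology theory on $\SH(S)_\caT$ is determined by its restriction to $\SH(S)_{\caT,\f}$. Since the two theories agree on $\SH(S)_{\caT,\f}$ by hypothesis, and a natural transformation of homology theories on $\SH(S)_\caT$ which is an isomorphism on the compact generators (the mixed motivic spheres, or more generally all of $\SH(S)_{\caT,\f}$) is an isomorphism — the full subcategory of $\GG$ on which it is an isomorphism is localizing and contains the generators — the transformation is an isomorphism on all of $\SH(S)_\caT$. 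Combining this with the factorization through $\pp_{\SH(S),\caT}$ gives the isomorphism $\EE_{**}(\FF) \cong \Phi(\FF)$ for every $\FF \in \SH(S)$, which is the assertion.

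The step I expect to require the most care is verifying that $\Phi$ factors through the Tate projection, i.e. the reduction to $\SH(S)_\caT$; this rests on $\MGL$ being cellular and on the naturality in Remark~\ref{cellularization}(ii), and one must make sure the natural transformation $\EE_{**}(-)\to\Phi(-)$ is genuinely compatible with the counit maps so that the two factorizations are identified by the \emph{same} isomorphism. Once that compatibility is in hand, the remainder is the standard "isomorphism on generators of a compactly generated localizing subcategory" argument and is routine.
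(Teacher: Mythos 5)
Your proposal is correct and follows essentially the same route as the paper: extend the isomorphism from $\SH(S)_{\caT,\f}$ to $\SH(S)_\caT$ via Lemma \ref{hom-clo} (equivalently, the localizing-subcategory argument on generators), then use cellularity of $\MGL$ and the Tate projection as in Remark \ref{cellularization}(ii) to pass to all of $\SH(S)$. The only difference is the order in which the two steps are performed, which is immaterial.
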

\begin{proof}
Let $\MMM_*$ be a Landweber exact Adams graded $\MU_*$-module affording the homology theory under consideration.
By assumption there is an isomorphism on $\SH(S)_{\caT,\f}$
\[ 
\EE_{**}(-)\cong \MGL_{**}(-)\otimes_{\MU_*} \MMM_*.
\]
By Lemma \ref{hom-clo} the isomorphism extends to $\SH(S)_{\caT}$. 
Since $\MGL$ is cellular, 
an argument as in Remark \ref{cellularization} shows that both sides of the isomorphism remain unchanged when 
replacing a motivic spectrum by its Tate projection.
\end{proof}

Next we consider a map $f\colon S' \to S$ of base schemes.
The derived functor $\LLL f^*$, 
see \cite[Proposition A.7.4]{PPR1}, 
sends the class of compact generators $\Sigma^{p,q}\Sigma^\infty X_+$ of $\SH(S)$ 
- $X$ a smooth $S$-scheme - to compact objects of $\SH(S')$. 
Hence \cite[Theorem 5.1]{neeman} implies $\RRR f_*$ preserves sums, 
and the same result shows $\LLL f^*$ preserves compact objects in general.
A modification of the proof of Lemma \ref{Tate-mod-func} shows $\RRR f_*$ is an $\SH(S)_\caT$-module functor, 
i.e.~there is an isomorphism 
\begin{equation}
\label{projection} 
\RRR f_*(\FF'\wedge \LLL f^*\GG)\cong \RRR f_*(\FF')\wedge \GG
\end{equation}
in $\SH(S)$, which is natural in $\FF'\in\SH(S')$, $\GG\in\SH(S)_{\caT}$.
\begin{proposition} 
\label{base-change}
Suppose a Landweber exact homology theory over $S$ determined by the Adams graded $\MU_*$-module $\MMM_*$ is 
representable by $\EE\in\SH(S)_{\caT}$.
Then $\LLL f^* \EE\in\SH(S')_{\caT}$ represents the Landweber exact homology theory over $S'$ determined by $\MMM_*$.
\end{proposition}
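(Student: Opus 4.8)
The plan is to reduce the statement to the projection formula (\ref{projection}) together with two facts: that $\LLL f^*$ is a monoidal left adjoint sending mixed motivic spheres to mixed motivic spheres, and that the algebraic cobordism spectrum is stable under base change. First I would record that $\LLL f^*\EE\in\SH(S')_\caT$. Indeed $\LLL f^*$ preserves sums and distinguished triangles, and being monoidal it carries $\Sphere^{0,0}_S$, $\G_S$ and the simplicial circle to their counterparts over $S'$, so $\LLL f^*\Sphere^{p,q}_S\cong\Sphere^{p,q}_{S'}$; hence $\LLL f^*$ maps the localizing subcategory $\SH(S)_\caT$ into $\SH(S')_\caT$. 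Since $\MGL$ is built from Thom spaces over Grassmannians and both of these commute with $\LLL f^*$, one also has $\LLL f^*\MGL_S\simeq\MGL_{S'}$, compatibly with the canonical orientations.

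Next, for an arbitrary $\FF'\in\SH(S')$, using $\Sphere^{p,q}_{S'}\cong\LLL f^*\Sphere^{p,q}_S$, the $(\LLL f^*,\RRR f_*)$-adjunction and the projection formula (\ref{projection}) with $\GG=\EE\in\SH(S)_\caT$, one obtains natural isomorphisms
\begin{align*}
(\LLL f^*\EE)_{p,q}(\FF')
&\cong\SH(S)\bigl(\Sphere^{p,q}_S,\RRR f_*(\LLL f^*\EE\wedge\FF')\bigr)\\
&\cong\SH(S)\bigl(\Sphere^{p,q}_S,\EE\wedge\RRR f_*\FF'\bigr)=\EE_{p,q}(\RRR f_*\FF').
\end{align*}
Running the identical argument with $\EE$ replaced by $\MGL_S$ and invoking $\LLL f^*\MGL_S\simeq\MGL_{S'}$ yields a natural isomorphism $\MGL_{S',**}(\FF')\cong\MGL_{S,**}(\RRR f_*\FF')$, which is $\MU_*$-linear because base change along $f$ carries the orientation of $\MGL_S$ to that of $\MGL_{S'}$, so that the two resulting maps out of $\MU_*$ agree.

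Finally I would invoke the hypothesis: $\EE$ represents the Landweber exact theory over $S$, so there is a natural isomorphism $\EE_{**}(-)\cong\MGL_{S,**}(-)\otimes_{\MU_*}\MMM_*$ of homology theories on $\SH(S)$. Precomposing with $\RRR f_*$ and splicing in the two isomorphisms above gives
\[
(\LLL f^*\EE)_{**}(\FF')\cong\MGL_{S,**}(\RRR f_*\FF')\otimes_{\MU_*}\MMM_*\cong\MGL_{S',**}(\FF')\otimes_{\MU_*}\MMM_*,
\]
naturally in $\FF'\in\SH(S')$, which exhibits $\LLL f^*\EE$ as representing the Landweber exact theory over $S'$ determined by $\MMM_*$. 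The argument is formal once the base-change isomorphism $\LLL f^*\MGL_S\simeq\MGL_{S'}$ is granted; I expect the only point demanding genuine care to be the bookkeeping showing that the displayed isomorphisms are $\MU_*$-linear, so that the functor $-\otimes_{\MU_*}\MMM_*$ may be applied consistently throughout.
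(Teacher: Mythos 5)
Your argument is correct and is essentially the paper's own proof: both proceed by adjointness for $(\LLL f^*,\RRR f_*)$ plus the projection formula (\ref{projection}) to identify $(\LLL f^*\EE)_{**}(\FF')$ with $\EE_{**}(\RRR f_*\FF')$, apply the hypothesis on $\EE$, and then run the same adjunction/projection argument for $\MGL$ together with $\LLL f^*\MGL_S\simeq\MGL_{S'}$. Your added remarks on Tate objects being preserved by $\LLL f^*$ and on $\MU_*$-linearity are harmless elaborations of points the paper leaves implicit.
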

\begin{proof}
For an object $\FF'$ of $\SH(S')$, adjointness, the assumption on $\EE$ and (\ref{projection}) imply 
$(\LLL f^*\EE)_{**}(\FF')=\pi_{**}(\FF'\wedge \LLL f^* \EE)$ is isomorphic to  
\[
\pi_{**}(\RRR f_*(\FF'\wedge \LLL f^*\EE))\cong 
\pi_{**}(\RRR f_*\FF'\wedge \EE)\cong 
\pi_{**}(\MGL\wedge \RRR f_*\FF')\otimes_{\MU_*}\MMM_*.
\]
Again by adjointness and (\ref{projection}) there is an isomorphism with 
\[
\pi_{**}(\MGL_{S'}\wedge \FF')\otimes_{\MU_*}\MMM_*=\MGL_{S',**}\FF'\otimes_{\MU_*}\MMM_*.
\]
\end{proof}

In the next lemma we show the pullback from Proposition \ref{base-change} respects multiplicative structures.
In general one cannot expect that ring structures on the homology theory lift to commutative monoid structures 
on representing spectra.
Instead we will consider quasi-multiplications on spectra, 
by which we mean maps $\EE\wedge\EE\to\EE$ rendering the relevant diagrams commutative up to phantom maps.
\begin{lemma} 
\label{base-change-quasi-m}
Suppose a Landweber exact homology theory afforded by the Adams graded $\MU_*$-algebra $\AAA_*$ is represented 
by a Tate object $\EE\in\SH(S)_\caT$ with quasi-multiplication $m\colon\EE\wedge\EE\to\EE$. 
Then $\LLL f^*m\colon \LLL f^*\EE \wedge \LLL
f^*\EE\to \LLL f^*\EE$ is a quasi-multiplication and represents the ring structure on 
the Landweber exact homology theory determined by $\AAA_*$ over $S'$.
\end{lemma}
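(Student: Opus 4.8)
\emph{The plan is to exploit that $\LLL f^{*}$ is a symmetric monoidal functor}, with lax monoidal right adjoint $\RRR f_{*}$, so that there is a canonical isomorphism $\LLL f^{*}\EE\wedge\LLL f^{*}\EE\cong\LLL f^{*}(\EE\wedge\EE)$ under which $\LLL f^{*}m$ is simply $\LLL f^{*}$ applied to $m$. To see that $\LLL f^{*}m$ is again a quasi-multiplication I would apply $\LLL f^{*}$ to the associativity, unit and commutativity diagrams witnessing that $m$ is one: these commute up to phantom maps by hypothesis, and since $\LLL f^{*}$ is exact, preserves coproducts and is monoidal, the corresponding diagrams for $\LLL f^{*}m$ commute up to $\LLL f^{*}$ of phantom maps. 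So the whole matter reduces to the assertion that $\LLL f^{*}$ \emph{carries phantom maps to phantom maps}.

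\textbf{Preservation of phantoms.} Let $\varphi\colon X\to Y$ be phantom in $\SH(S)$. First write $X$ as a filtered homotopy colimit $X\cong\hocolim_{\alpha}X_{\alpha}$ of finite objects, with structure maps $s_{\alpha}\colon X_{\alpha}\to X$; since each $X_{\alpha}$ is finite and $\varphi$ is phantom, $\varphi\circ s_{\alpha}=0$ for all $\alpha$. Being an exact left adjoint, $\LLL f^{*}$ preserves homotopy colimits, so $\LLL f^{*}X\cong\hocolim_{\alpha}\LLL f^{*}X_{\alpha}$ and $\LLL f^{*}\varphi$ restricts to $0$ on each $\LLL f^{*}X_{\alpha}$. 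Now let $W'$ be any finite object of $\SH(S')$; compactness of $W'$ gives $\SH(S')(W',\LLL f^{*}X)\cong\colim_{\alpha}\SH(S')(W',\LLL f^{*}X_{\alpha})$, and composing with $(\LLL f^{*}\varphi)_{*}$ is zero on each term, hence $(\LLL f^{*}\varphi)_{*}\colon\SH(S')(W',\LLL f^{*}X)\to\SH(S')(W',\LLL f^{*}Y)$ vanishes. Thus $\LLL f^{*}\varphi$ is phantom. This invokes that $\SH(S)$ and $\SH(S')$ are compactly generated unital algebraic stable homotopy categories (Lemma \ref{ualghomc}), so that the filtered-resolution statement and the commutation of $\SH(S')(W',-)$ with filtered homotopy colimits are available in the form of the Neeman-type results already used in this section. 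Together with the previous paragraph this shows $\LLL f^{*}m$ is a quasi-multiplication on the Tate object $\LLL f^{*}\EE\in\SH(S')_{\caT}$.

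\textbf{Identifying the represented ring structure.} Proposition \ref{base-change} already tells us $\LLL f^{*}\EE$ represents $\MGL_{S',**}(-)\otimes_{\MU_{*}}\AAA_{*}$ over $S'$. I would revisit the chain of isomorphisms in that proof, which for $\FF'\in\SH(S')$ reads $(\LLL f^{*}\EE)_{**}(\FF')\cong\EE_{**}(\RRR f_{*}\FF')\cong\MGL_{S,**}(\RRR f_{*}\FF')\otimes_{\MU_{*}}\AAA_{*}\cong\MGL_{S',**}(\FF')\otimes_{\MU_{*}}\AAA_{*}$, and check that each isomorphism respects the evident pairings as $\FF',\GG'$ vary. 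For the first this is a diagram chase built on the adjunction, the fact that $\RRR f_{*}$ is an $\SH(S)_{\caT}$-module functor (the projection-formula isomorphism (\ref{projection})) and lax monoidality of $\RRR f_{*}$, and it matches the $\LLL f^{*}m$-induced pairing with the $m$-induced one; the middle isomorphism transports the $m$-induced, i.e.\ Landweber, ring structure over $S$; and the last is induced by the ring isomorphism $\LLL f^{*}\MGL_{S}\cong\MGL_{S'}$. Hence the ring structure on $\MGL_{S',**}(-)\otimes_{\MU_{*}}\AAA_{*}$ induced by $\LLL f^{*}m$ is the canonical Landweber one — briefly, the formation of a Landweber exact theory together with its ring structure is natural under $\LLL f^{*}$.

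\textbf{Main obstacle.} The hard part will be putting the phantom-preservation step on firm footing: one must make sure that ``$X$ is a filtered homotopy colimit of finite objects'' and the commutation of $\SH(S')(W',-)$ with such colimits are legitimate in the abstract triangulated framework, which they are by the unital algebraic structure (Lemma \ref{ualghomc}) and Neeman's representability machinery invoked earlier. The final identification of the ring structure is pure, if somewhat tedious, diagram-chasing with the (lax) monoidal structures and the projection formula, and holds no surprises.
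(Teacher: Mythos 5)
Your proof of the ring-structure identification (third paragraph) is essentially the paper's argument: trace the chain of isomorphisms from Proposition \ref{base-change} and verify they intertwine the pairings via the projection formula and lax monoidality of $\RRR f_*$; the paper just records this as an explicit commuting square. Where you genuinely diverge is in how you obtain the quasi-multiplication property. The paper deduces it as a free corollary of the ring-structure identification: once $\LLL f^*m$ is known to induce the canonical Landweber ring structure on homology, the associativity/commutativity/unitality diagrams commute at the level of homology theories, so by the very definition of phantom maps (in the unital algebraic setting of Lemma \ref{ualghomc}) the spectrum-level diagrams commute up to phantom — no separate argument is needed. You instead insert an independent lemma that $\LLL f^*$ preserves phantom maps, proved by writing $X$ as a filtered homotopy colimit of finite objects and using compactness. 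That route is plausible but is technically heavier than the surrounding section warrants: ``filtered homotopy colimit of finite objects'' and the commutation of $\SH(S')(W',-)$ with such colimits are not available in the bare triangulated (HPS/Neeman) framework used throughout this section, and require descending to the underlying model or $\infty$-category to justify. More to the point, the entire phantom-preservation step is redundant once you have carried out your third paragraph — the paper's ordering (identify the ring structure first, then read off the quasi-multiplication) eliminates the detour. Your argument is correct in outcome, but I would reorganize it to put the ring-structure compatibility first and let the quasi-multiplication fall out, avoiding the foundational subtleties your phantom-preservation argument introduces.
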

\begin{proof}
Let $\phi \colon \FF_1 \wedge \FF_2 \to \FF_3$ be a map in $\SH(S)_{\caT}$. 
Let $\FF_i'$ be the base change of $\FF_i$ to $S'$.
If $\FF', \GG' \in \SH(S')$ there are isomorphisms $\FF_{i,**}'\FF'\cong \FF_{i,**} \RRR f_* \FF'$ employed in the 
proof of Proposition \ref{base-change}, and likewise for $\GG'$.
These isomorphisms are compatible with $\phi$ in the sense provided by the commutative diagram:
$$
\xymatrix{\FF_{1,**}' \FF' \otimes \FF_{2,**}' \GG' \ar[r] & \FF_{3,**}'(\FF'\wedge \GG') \\
& \FF_{3,**} (\RRR f_*(\FF' \wedge \GG')) \ar[u]_\cong \\
\FF_{1,**} \RRR f_* \FF' \otimes \FF_{2,**} \RRR f_* \GG' \ar[uu]_\cong \ar[r] & 
\FF_{3,**} (\RRR f_* \FF' \wedge \RRR f_* \GG') \ar[u] }
$$
Applying the above to the quasi-multiplication $m$ implies $\LLL f^* m$ represents the ring structure on the 
Landweber theory over $S'$. 
Hence $\LLL f^*m$ is a quasi-multiplication since the commutative diagrams exist for the homology theories, 
i.e. up to phantom maps.
\end{proof}

We are ready to prove the motivic analog of Landweber's exact functor theorem.
\begin{theorem} 
\label{landw-thm}
Suppose $\MMM_*$ is an Adams graded Landweber exact $\MU_*$-module. 
Then there exists a Tate object $\EE\in\SH(S)_\caT$ and an isomorphism of homology theories on $\SH(S)$
\[ 
\EE_{**}(-)\cong\MGL_{**}(-)\otimes_{\MU_*}\MMM_*.
\]
In addition, 
if $\MMM_*$ is a graded $\MU_*$-algebra, 
then $\EE$ acquires a quasi-multiplication which represents the ring structure on the Landweber exact theory.
\end{theorem}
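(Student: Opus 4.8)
The plan is to establish the theorem first over the base $\Spec(\Z)$, where the subcategory of Tate objects is known to be a Brown category and hence every homology theory is representable, and then to deduce the case of an arbitrary regular noetherian base of finite Krull dimension by pulling back along the structure morphism $S\to\Spec(\Z)$.

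\emph{Representability over $\Z$.} By Proposition~\ref{landw-mod} the assignment $\FF\mapsto\MGL_{**}(\FF)\otimes_{\MU_*}\MMM_*$ is a homology theory on $\SH(\Z)$, and since the inclusion $\SH(\Z)_\caT\hookrightarrow\SH(\Z)$ preserves sums its restriction is a homology theory on $\SH(\Z)_\caT$. As $\Z$ is a countable ring, Lemma~\ref{brown} applies and $\SH(\Z)_\caT$ is a Brown category; consequently this homology theory is represented by some $\EE\in\SH(\Z)_\caT$, so in particular its restriction to $\SH(\Z)_{\caT,\f}$ is represented by $\EE$. Proposition~\ref{extension} then promotes this to a representation on all of $\SH(\Z)$, i.e.\ a natural isomorphism $\EE_{**}(-)\cong\MGL_{**}(-)\otimes_{\MU_*}\MMM_*$ of homology theories on $\SH(\Z)$.

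\emph{Base change and the multiplicative structure.} For a general base $S$ let $f\colon S\to\Spec(\Z)$ be the structure morphism and $\EE_\Z\in\SH(\Z)_\caT$ the spectrum just produced. Proposition~\ref{base-change} shows that $\LLL f^*\EE_\Z\in\SH(S)_\caT$ represents the Landweber exact homology theory over $S$ determined by $\MMM_*$; we set $\EE\equiv\LLL f^*\EE_\Z$, which settles the first assertion. If moreover $\MMM_*$ is a graded $\MU_*$-algebra, then by the argument proving Theorem~\ref{mot-landweber} the theory $\MGL_{**}(-)\otimes_{\MU_*}\MMM_*$ is a ring homology theory, equipped with a natural external product and unit. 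Over $\Z$, Lemma~\ref{brown} identifies the category of homology functors on $\SH(\Z)_\caT$ with $\SH(\Z)_\caT$ modulo phantom maps, and under this identification the external product and unit correspond to a quasi-multiplication $m_\Z\colon\EE_\Z\wedge\EE_\Z\to\EE_\Z$ together with a unit $\one\to\EE_\Z$, satisfying the associativity, commutativity and unit constraints up to phantoms and representing the given ring structure. Lemma~\ref{base-change-quasi-m} then shows that $\LLL f^*m_\Z$ is a quasi-multiplication on $\EE$ representing the ring structure on the Landweber exact theory over $S$.

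\emph{The main difficulty.} Most of the technical content has already been absorbed into the cited results --- the Brown property of $\SH(\Z)_\caT$, the extension statement of Proposition~\ref{extension}, and the base change isomorphisms --- so what is left is largely a matter of assembling them correctly. The point requiring the most care is the multiplicative statement: one must turn the two-variable external product on the homology theory into an honest map $\EE\wedge\EE\to\EE$ --- which in general exists only up to phantom maps, whence the need for the notion of quasi-multiplication --- and then check that forming this map commutes with the pullback $\LLL f^*$, which is precisely the content of Lemma~\ref{base-change-quasi-m}.
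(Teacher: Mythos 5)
Your proof reproduces the paper's argument in its essential structure: establish representability over $\Spec(\Z)$ by combining Proposition~\ref{landw-mod}, Lemma~\ref{brown} and Proposition~\ref{extension}, then pull back along $f\colon S\to\Spec(\Z)$ via Proposition~\ref{base-change} (using that $\LLL f^*(\SH(\Z)_\caT)\subseteq\SH(S)_\caT$), and handle the multiplicative statement over $\Z$ first before applying Lemma~\ref{base-change-quasi-m}. The non-multiplicative part is essentially the paper's proof verbatim.

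Where you are less precise than the paper is in how the quasi-multiplication on $\EE_\Z$ is actually produced. You appeal to the equivalence between homology functors on $\SH(\Z)_\caT$ and $\SH(\Z)_\caT$ modulo phantoms and say the external product ``corresponds to'' a map $\EE_\Z\wedge\EE_\Z\to\EE_\Z$, but the external pairing $E(\FF)\otimes E(\GG)\to E(\FF\wedge\GG)$ is a bilinear transformation in two variables, not a morphism in the category of homology functors, so it is not immediately an input for that equivalence. One has to first convert it into a genuine map of single-variable homology theories $(\EE_\Z\wedge\EE_\Z)_{**}(-)\to\EE_{Z,**}(-)$. The paper does this by extending the cohomology theory to ind-representable presheaves on $\SH(\Z)_{\caT,\f}$ and evaluating the external product at the identity class $\id_\EE\otimes\id_\EE$ of the ind-representable object $\EE_\Z$ --- a Yoneda-type manoeuvre --- and only then invoking the Brown property to lift the resulting morphism of homology theories to a map of spectra. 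Inserting that conversion step would close the gap; the remainder of your argument, including the base change of the quasi-multiplication via Lemma~\ref{base-change-quasi-m}, is correct as written.
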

\begin{proof}
First, let $S=\Spec(\Z)$. 
By Landweber exactness, see Proposition \ref{landw-mod},
the right hand side of the claimed isomorphism is a homology theory on $\SH(\Z)$.
Its restriction to $\SH(\Z)_{\caT,\f}$ is represented by some $\EE\in\SH(\Z)_{\caT}$ since $\SH(\Z)_{\caT}$ is a 
Brown category by Lemma \ref{brown}. 
We may conclude in this case using Proposition \ref{extension}. 
The general case follows from Proposition \ref{base-change} since $\LLL f^*(\SH(\Z)_{\caT})\subseteq\SH(S)_{\caT}$
for $f:S\to\Spec(\Z)$.

Now assume $\MMM_*$ is a graded $\MU_*$-algebra.
We claim that the representing spectrum $\EE\in\SH(\Z)_{\caT}$ has a quasi-multiplication representing 
the ring structure on the Landweber theory: 
The corresponding ring cohomology theory on $\SH(\Z)_{\caT,\f}$ can be extended to ind-representable 
presheaves on $\SH(\Z)_{\caT,\f}$. 
Evaluating $\EE(\FF) \otimes \EE(\GG) \to \EE(\FF \wedge \GG)$ with $\FF=\GG$ the ind-representable presheaf 
given by $\EE$ on $\id_\EE \otimes \id_\EE$ gives a map $(\EE \wedge \EE)_0(-) \to \EE_0(-)$ of homology theories.
Since $\SH(\Z)_{\caT}$ is a Brown category this map lifts to a map $\EE \wedge \EE \to \EE$ of spectra which 
is a quasi-multiplication since it represents the multiplication of the underlying homology theory.
The general case follows from Lemma \ref{base-change-quasi-m}.
\end{proof}
\begin{remark}
A complex point $\Spec(\C) \to S$ induces a sum preserving $\SH(S)_\caT$-module realization functor 
$r\colon \SH(S) \to \SH$ to the stable homotopy category.
By the proof of Proposition \ref{base-change} it follows that the topological realization of a Landweber exact 
theory is the corresponding topological Landweber exact theory, as one would expect.
\end{remark} 
\begin{proposition}
Suppose $\MMM_*$ is an Adams graded Landweber exact $\MU_*$-module. 
Then there exists an $\MGL$-module $\EE$ and an isomorphism 
of homology theories on $\caD_\MGL$
\[ 
(\EE\wedge_\MGL -)_{**}
\cong (-)_{**}\otimes_{\MU_*}\MMM_*.
\]
In addition, 
if $\MMM_*$ is a graded $\MU_*$-algebra then $\EE$ acquires a quasi-multiplication in $\caD_\MGL$ which represents
the ring structure on the Landweber exact theory.
\end{proposition}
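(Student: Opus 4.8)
The plan is to follow the proof of Theorem~\ref{landw-thm} essentially verbatim, with $\SH(S)$ and $\SH(S)_\caT$ replaced throughout by the derived category $\caD_\MGL$ of $\MGL$-modules and the localizing subcategory $\caD_{\MGL,\caT}\subseteq\caD_\MGL$ generated by the free Tate modules $\Sigma^{p,q}\MGL$. First I would transcribe the formal machinery of Sections~4--5 to this setting: the finite Tate $\MGL$-modules form a thick subcategory consisting of strongly dualizable objects and closed under duals, $\caD_{\MGL,\caT}$ is compactly generated by them, and the inclusion $\caD_{\MGL,\caT}\subseteq\caD_\MGL$ acquires a right adjoint Tate projection as in Corollary~\ref{exrightadj}, which is a $\caD_{\MGL,\caT}$-module functor by the argument of Lemma~\ref{Tate-mod-func}. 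In particular $\caD_{\MGL,\caT}$ is a unital algebraic stable homotopy category by \cite[Theorem 9.1.1]{HPS}, just as in Lemma~\ref{ualghomc}.

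Next I would settle the case $S=\Spec(\Z)$. By Proposition~\ref{mgl-landw} the assignment $\FF\mapsto\FF_{**}\otimes_{\MU_*}\MMM_*$ is a bigraded homology theory on $\caD_{\MGL_\Z}$, so its degree-zero part is a homology theory with values in $\Ab$. Its restriction to the finite Tate objects is representable once one knows $\caD_{\MGL_\Z,\caT}$ is a Brown category --- the analog over $\MGL_\Z$ of Lemma~\ref{brown} --- yielding an $\MGL_\Z$-module $\EE\in\caD_{\MGL_\Z,\caT}$ with $(\EE\wedge_\MGL\FF)_{**}\cong\FF_{**}\otimes_{\MU_*}\MMM_*$ for finite Tate $\FF$. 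By the $\MGL$-module version of Lemma~\ref{hom-clo} this isomorphism extends to all of $\caD_{\MGL_\Z,\caT}$, and then to all of $\caD_{\MGL_\Z}$ as in Remark~\ref{cellularization} and Proposition~\ref{extension}: the left side is unchanged under Tate projection because $\EE$ is Tate and the projection is a module functor, and the right side because $(-)_{**}$ on $\caD_\MGL$ factors through the Tate projection.

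For arbitrary $S$ I would use base change exactly as in Proposition~\ref{base-change}. A morphism $f\colon S'\to S$ induces a derived pullback $\LLL f^*\colon\caD_{\MGL_S}\to\caD_{\MGL_{S'}}$ compatible with the monoidal structures --- here one uses the Quillen equivalences between module categories over cofibrant replacements of $\MGL$ recalled in the introduction --- whose right adjoint preserves sums and satisfies the projection formula over $\caD_{\MGL,\caT}$, and which sends $\caD_{\MGL_\Z,\caT}$ into $\caD_{\MGL_S,\caT}$; repeating the computation in the proof of Proposition~\ref{base-change} with $\wedge$ replaced by $\wedge_\MGL$ shows $\LLL f^*\EE$ represents the theory over $S'$. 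When $\MMM_*$ is a graded $\MU_*$-algebra, the corresponding ring cohomology theory on the finite objects of $\caD_{\MGL_\Z,\caT}$ extends to ind-representable presheaves; evaluating its multiplication on $\id_\EE\otimes\id_\EE$ gives a map of homology theories $(\EE\wedge_\MGL\EE\wedge_\MGL -)_{**}\to(\EE\wedge_\MGL -)_{**}$, which lifts by Brown representability to a quasi-multiplication $\EE\wedge_\MGL\EE\to\EE$ in $\caD_{\MGL_\Z}$ representing the ring structure, precisely as in Theorem~\ref{landw-thm}; the $\MGL$-module analog of Lemma~\ref{base-change-quasi-m} then propagates this to general $S$.

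The main obstacle is verifying that $\caD_{\MGL_\Z,\caT}$ is a Brown category. This amounts to checking the countability hypotheses of \cite[Theorem 4.1.5]{HPS} for this compactly generated triangulated category, i.e.\ producing a countable set of compact generators with countable mutual graded Hom-groups; via the cellular structure this reduces to countability of $\MGL_{\Z,**}$ and $\MGL_{\Z,**}\MGL_\Z$, which is provided by Proposition~\ref{homofgrass} and Lemma~\ref{2-orien-mglmgl} together with Voevodsky's countability result, and the details are in \cite{mn-brown}. The remaining ingredients --- the monoid axiom, the closed symmetric monoidal model structure on $\MGL$-modules, and the Quillen invariance under cofibrant replacement --- are exactly those recalled in the introduction, and with them in hand the rest of the argument is a routine rerun of the proofs already given for $\SH(S)$.
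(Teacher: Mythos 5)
Your proposal is correct and follows essentially the same route as the paper's (sketched) proof: reduce to representability via Proposition \ref{mgl-landw}, establish that $\caD_{\MGL_\Z,\caT}$ is a Brown category by a countability argument feeding into \cite[Theorem 4.1.5]{HPS}, represent the theory over $\Spec(\Z)$, transport it to a general base by the $\MGL$-module analog of Proposition \ref{base-change}, and obtain the quasi-multiplication via the analog of Lemma \ref{base-change-quasi-m} as in Theorem \ref{landw-thm}. You supply more of the formal scaffolding (the Tate projection and extension steps) than the paper, which only indicates the proof, but the substance is the same.
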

\begin{proof}
We indicate a proof.
By Proposition \ref{mgl-landw} it suffices to show that the homology theory given by the right hand side
of the isomorphism is representable.
When the base scheme is $\Spec(\Z)$ we claim that $\caD_{\MGL,\caT}$ is a Brown category. 
In effect, 
$\SH(S)_{\f}$ is countable, 
cf.~\cite[Proposition 5.5]{voevodsky-icm}, \cite[Theorem 1]{mn-brown}, 
and $\MGL$ is a countable direct homotopy limit of finite spectra, 
so it follows that $\caD_{\MGL,\caT,\f}$ is also countable.
The conclusion that $\caD_{\MGL,\caT}$ be a Brown category follows now from \cite[Theorem 4.1.5]{HPS}.
Thus there exists an object of $\caD_{\MGL,\caT}$ representing the Landweber exact theory over $\Spec(\Z)$.
Now let $f\colon S\to\Spec(\Z)$ be the unique map and $\LLL f_\MGL^*\colon\caD_{\MGL_\Z}\to\caD_{\MGL_S}$ 
the pullback functor between $\MGL$-modules. 
It has a right adjoint $\RRR f_{\MGL,*}$.
As prior to Proposition \ref{base-change},
we conclude $\RRR f_{\MGL,*}$ preserves sums and is a $\caD_{\MGL_\Z,\caT}$-module functor.
The proof of Proposition \ref{base-change} shows $\LLL f_\MGL^*$ represents the Landweber theory over $S$.

By inferring  the analog of Lemma \ref{base-change-quasi-m} our claim about the quasi-multiplication is proven 
along the lines of the corresponding statement in Theorem \ref{landw-thm}.
\end{proof}

\section{Operations and cooperations} \label{op-coop}
Let $\AAA_*$ be a Landweber exact Adams graded $\MU_*$-algebra and $\EE$ a motivic spectrum with a 
quasi-multiplication which represents the corresponding Landweber exact theory.
Denote by $\EE^\Top$ the ring spectrum representing the corresponding topological Landweber exact theory. 
Then $\EE^\Top_* \cong \AAA_*$, 
$\EE^\Top$ is a commutative monoid in the stable homotopy category and there are no even degree nontrivial 
phantom maps between such topological spectra \cite[Section 2.1]{hov-strick.mor}.
\begin{proposition} 
\label{coop}
In the above situation the following hold.
\begin{itemize}
\item[(i)] 
$\EE_{**} \EE \cong \EE_{**} \otimes_{\EE^\Top_*} \EE^\Top_* \EE^\Top$.
\item[(ii)] 
$\EE$ satisfies the assumption of Corollary \ref{hopf-algebroids}$(ii)$.
\item[(iii)]  
The flat Hopf algebroid $(\EE_{**},\EE_{**}\EE)$ is induced from $(\MGL_{**},\MGL_{**} \MGL)$ via the map 
$\MGL_{**}\to\MGL_{**}\otimes_{\MU_*}\AAA_*\cong \EE_{**}$.
\end{itemize}
\end{proposition}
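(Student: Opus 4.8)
The plan is to prove the three assertions in tandem, bootstrapping from the topological situation via the realization functor and the cartesian diagram \eqref{mu-mgl-stack}. First I would set up the comparison: by Theorem \ref{landw-thm} (and the base-change discussion preceding it) the spectrum $\EE$ over $S$ is pulled back from $\Spec(\Z)$, and by the remark following Theorem \ref{landw-thm} a complex point induces a realization $r\colon\SH(S)\to\SH$ sending $\EE$ to $\EE^\Top$ and $\MGL$ to $\MU$. The key structural input is that $\MGL_{**}\MGL\cong\MGL_{**}\otimes_{\MU_*}\MU_*\MU$ (Lemma \ref{2-orien-mglmgl}(i)) together with the isomorphism $\EE_{**}\cong\MGL_{**}\otimes_{\MU_*}\AAA_*$ afforded by the Landweber construction.

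For part (iii) I would argue directly on Hopf algebroids: the map $\MU_*\to\AAA_*=\EE^\Top_*$ and Corollary \ref{class-stack} give a map of Hopf algebroids $(\MU_*,\MU_*\MU)\to(\EE_{**},\EE_{**}\EE)$, and since $\MGL_{**}$ is already a $\MU_*$-algebra with $\EE_{**}=\MGL_{**}\otimes_{\MU_*}\AAA_*$, one checks that $(\EE_{**},\EE_{**}\EE)$ is the Hopf algebroid obtained by base change of $(\MGL_{**},\MGL_{**}\MGL)$ along $\MGL_{**}\to\EE_{**}$; concretely, the stack $[\EE_{**}/\EE_{**}\EE]/\G$ is the pullback of $[\MGL_{**}/\MGL_{**}\MGL]/\G$ along $\Spec(\AAA_*)\to\FG$, using that the latter is flat (Lemma \ref{grad-land}) and that for a flat Hopf algebroid base change along a flat ring map again yields a flat Hopf algebroid presenting the pulled-back stack. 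This is largely formal once one knows $\EE_{**}$ is flat, which follows from Proposition \ref{homofgrass}(iii) and flatness of $\MMM_*$.

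For part (i), the computation $\EE_{**}\EE$: using flatness of $\EE_{**}$ and Lemma \ref{flat-strmaps}(i), $\EE_{**}\EE\cong\pi_{**}(\EE\wedge\EE\wedge\unit)$, and then I would run the Landweber recipe \emph{twice} — once for each smash factor of $\EE$ — to identify $\EE_{**}\EE\cong\MGL_{**}\MGL\otimes_{\MU_*}\AAA_*\otimes_{\MU_*}\AAA_*$, where the two copies of $\AAA_*$ enter through the two orientations. Feeding in $\MGL_{**}\MGL\cong\MGL_{**}\otimes_{\MU_*}\MU_*\MU$ and $\MU_*\MU\cong\MU_*\otimes_{\MU_*}\MU_*\MU$, and comparing with the topological identity $\EE^\Top_*\EE^\Top\cong\AAA_*\otimes_{\MU_*}\MU_*\MU\otimes_{\MU_*}\AAA_*$ (which holds since $\EE^\Top$ is Landweber exact in even degrees, cf.~\cite[Section 2.1]{hov-strick.mor}), one gets $\EE_{**}\EE\cong\MGL_{**}\otimes_{\MU_*}\EE^\Top_*\EE^\Top\cong\EE_{**}\otimes_{\EE^\Top_*}\EE^\Top_*\EE^\Top$. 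Part (ii), that $\EE_*\EE\otimes_{\EE_*}\EE_{**}\to\EE_{**}\EE$ is an isomorphism, then follows because all the generators in sight (those of $\MU_*\MU$ and the $\AAA_*$-factors) sit in bidegrees that are multiples of $(2,1)$, exactly as in the proof of Corollary \ref{mgl-hopf-algebroid}(ii); this also yields flatness of $\EE_*$.

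The main obstacle I anticipate is bookkeeping the two orientations coherently in part (i): one must be careful that the $\MU_*$-module structures used in the two tensor factors are the correct ones (left vs.\ right unit of $\MU_*\MU$), so that the triple tensor product $\AAA_*\otimes_{\MU_*}\MU_*\MU\otimes_{\MU_*}\AAA_*$ genuinely matches $\EE^\Top_*\EE^\Top$ and not some twisted variant. Lemma \ref{change-orien} is the tool that guarantees the cocycle condition among the three orientations on $\EE\wedge\EE\wedge\EE$ holds, and hence that the identifications are compatible with the Hopf algebroid structure maps; I would invoke it to pin down that the structure obtained is precisely the base change asserted in (iii). The remaining steps — checking flatness, tracking gradings, and verifying naturality — are routine given the results already established.
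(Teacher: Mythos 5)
Your proposal follows essentially the same route as the paper: apply the Landweber isomorphism to each smash factor in turn to obtain $\EE_{**}\EE\cong\EE_{**}\otimes_{\MGL_{**}}\MGL_{**}\MGL\otimes_{\MGL_{**}}\EE_{**}$, substitute $\MGL_{**}\MGL\cong\MGL_{**}\otimes_{\MU_*}\MU_*\MU$ and compare with the topological identity $\EE^\Top_*\EE^\Top\cong\AAA_*\otimes_{\MU_*}\MU_*\MU\otimes_{\MU_*}\AAA_*$ to get (i), read off (iii) from the two-sided base-change formula, and deduce (ii) by restricting to bidegrees in $\Z(2,1)$ together with flatness of $\EE^\Top_*\EE^\Top$ over $\EE^\Top_*$. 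This matches the paper's proof in both structure and substance.
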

\begin{proof}
The isomorphism $\EE_{**} \FF \cong \MGL_{**} \FF \otimes_{\MU_*} \AAA_*$ can be recast as 
$$
\EE_{**} \FF \cong \MGL_{**} \FF \otimes_{\MGL_*} \MGL_* \otimes_{\MU_*} \EE^\Top_*
\cong \MGL_{**} \FF \otimes_{\MGL_*} \EE_*
$$
and
$$
\EE_{**} \FF \cong \MGL_{**} \FF \otimes_{\MGL_{**}} \MGL_{**} \otimes_{\MU_*}
\EE^\Top_* \cong \MGL_{**} \FF \otimes_{\MGL_{**}} \EE_{**}.
$$
In particular, 
$\EE_{**} \EE \cong \MGL_{**} \EE \otimes_{\MGL_{**}} \EE_{**}\cong \EE_{**} \MGL \otimes_{\MGL_{**}} \EE_{**}$
is isomorphic to 
\begin{equation} 
\label{induced-hopf}
(\MGL_{**} \MGL \otimes_{\MGL_{**}} \EE_{**}) \otimes_{\MGL_{**}} \EE_{**}\cong
\EE_{**} \otimes_{\MGL_{**}} \MGL_{**} \MGL \otimes_{\MGL_{**}} \EE_{**}.
\end{equation}
Moreover, since $\MGL_{**}\MGL\cong \MGL_{**}\otimes_{\MU_*}\MU_* \MU$, 
$$
\EE^\Top_* \otimes_{\MU_*} \MGL_{**} \MGL \otimes_{\MU_*} \EE^\Top_*
\cong \EE^\Top_* \otimes_{\MU_*} \MGL_{**} \otimes_{\MU_*} \MU_* \MU
\otimes_{\MU_*} \EE^\Top_*$$
is isomorphic to 
$$
\MGL_{**} \otimes_{\MU_*} \EE^\Top_* \EE^\Top\cong 
\MGL_{**} \otimes_{\MU_*} \EE^\Top_* \otimes_{\EE^\Top_*} \EE^\Top_* \EE^\Top\cong 
\EE_{**} \otimes_{\EE^\Top_*} \EE^\Top_* \EE^\Top.
$$
This proves the first part of the proposition.
In particular, 
\begin{equation} 
\label{coop-basech}
\EE_* \EE \cong \EE_* \otimes_{\EE^\Top_*} \EE^\Top_* \EE^\Top
\end{equation}
and 
\begin{equation} 
\label{coop-basech2}
\EE_{**} \EE \cong \EE_{**} \otimes_{\EE_*} \EE_* \EE.
\end{equation}
We note that $\EE^\Top_* \EE^\Top$ is flat over $\EE^\Top_*$ by the topological analog of (\ref{induced-hopf}) 
(this equation shows $\Spec(\EE^\Top_* \EE^\Top)= \Spec(\EE^\Top_*) \times_{\FG^s} \Spec(\EE^\Top_*)$).
Hence by (\ref{coop-basech}) $\EE_* \EE$ is flat over $\EE_*$.
Together with (\ref{coop-basech2}) this is Part $(ii)$ of the proposition.
Part $(iii)$ follows from (\ref{induced-hopf}).
\end{proof}
\begin{remark}\label{landweberandsmash}
Let $\EE^\Top$ and $\FF^\Top$ be evenly graded topological Landweber exact spectra, 
$\EE$ and $\FF$ the corresponding motivic spectra. 
Then $\EE \wedge \FF$ is Landweber exact corresponding to the $\MU_*$-module $(\EE^\Top \wedge \FF^\Top)_*$ 
(with either $\MU_*$-module structure).

\end{remark}
\begin{theorem} 
\label{stab-op-kgl}
\begin{itemize}
\item[(i)] The map afforded by the Kronecker product
\[ 
\xymatrix{
\KGL^{**}\KGL\ar[r] & \Hom_{\KGL_{**}}(\KGL_{**}\KGL,\KGL_{**}) }
\]
is an isomorphism of $\KGL^{**}$-algebras.
\item[(ii)] With the completed tensor product there is an isomorphism of $\KGL^{**}$-algebras
\[ 
\KGL^{**} \KGL\cong 
\KGL^{**}\widehat{\otimes}_{\KU^*}\KU^*\KU
\]
\end{itemize}
Item $(i)$ and the module part of $(ii)$ generalize to $\KGL^{**}(\KGL^{\wedge j})$ for $j>1$.
\end{theorem}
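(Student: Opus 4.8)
The plan is to derive both parts from the universal coefficient spectral sequence for $\MGL$-modules together with Propositions \ref{ext} and \ref{coop}. Since $\KGL$ is canonically oriented it is an $\MGL$-algebra, hence an $\MGL$-module, so for part (i) the universal coefficient surjection (\ref{equation:surjection}) applies with $\EE=\MMM=\KGL$ and yields a surjection $\KGL^{p,q}(\KGL)\to\Hom^{p,q}_{\MGL_{**}}(\MGL_{**}\KGL,\KGL_{**})$ with kernel the $\Ext$-term (\ref{equation:kernel}). The base-change isomorphism $\KGL_{**}\KGL\cong\MGL_{**}\KGL\otimes_{\MGL_{**}}\KGL_{**}$ obtained in the proof of Proposition \ref{coop}, together with tensor-hom adjunction along $\MGL_{**}\to\KGL_{**}$, identifies the target with $\Hom_{\KGL_{**}}(\KGL_{**}\KGL,\KGL_{**})$; under this identification the edge map is the Kronecker pairing, and its multiplicativity for the $\KGL^{**}$-algebra structure dual to the $\KGL_{**}$-coalgebra $\KGL_{**}\KGL$ is formal, just as in topology. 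Thus (i) comes down to the vanishing of $\Ext^{n}_{\MGL_{**}}(\MGL_{**}\KGL,\KGL_{**})$ for $n\geq 1$, which is precisely what Proposition \ref{ext} provides in the situation underlying Theorem \ref{phantoms}: here $\KGL$ is Landweber exact with topological partner $\KU$, so the algebraic stack it determines is the open substack of $\FG^s$ presented by $\Spec(\KU_*)$, and the projectivity hypothesis of Proposition \ref{ext} translates into the assertion that $\KU_*\KU$ is a projective $\KU_*$-module; this holds for $\KU$, as recorded in the introduction, so (\ref{equation:surjection}) is an isomorphism.

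Turning to part (ii), I would start from the isomorphism of (i) and substitute $\KGL_{**}\KGL\cong\KGL_{**}\otimes_{\KU_*}\KU_*\KU$ from Proposition \ref{coop}(i), then apply adjunction to obtain
\[
\KGL^{**}\KGL\;\cong\;\Hom_{\KGL_{**}}(\KGL_{**}\KGL,\KGL_{**})\;\cong\;\Hom_{\KU_*}(\KU_*\KU,\KGL_{**}).
\]
Writing $\KU=\hocolim_{\alpha}F_{\alpha}$ as a filtered homotopy colimit of finite spectra exhibits $\KU_*\KU=\colim_{\alpha}\KU_*F_{\alpha}$ as a filtered colimit of finitely generated free $\KU_*$-modules, so $\Hom_{\KU_*}(\KU_*\KU,\KGL_{**})=\lim_{\alpha}\Hom_{\KU_*}(\KU_*F_{\alpha},\KGL_{**})=\lim_{\alpha}\bigl(\KGL_{**}\otimes_{\KU_*}\KU^{*}F_{\alpha}\bigr)$, using finiteness of $F_{\alpha}$ to dualize. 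After matching gradings this becomes $\lim_{\alpha}\bigl(\KGL^{**}\otimes_{\KU^{*}}\KU^{*}F_{\alpha}\bigr)$; since cohomology carries the homotopy colimit to a homotopy limit and $\KU^{1}\KU=0$ kills the Milnor $\lim^{1}$-term (equivalently, Mittag-Leffler holds over the Noetherian ring $\KU_*\cong\Z[\beta,\beta^{-1}]$), one has $\KU^{*}\KU\cong\lim_{\alpha}\KU^{*}F_{\alpha}$ with the skeletal filtration, and the displayed limit is by definition the completed tensor product $\KGL^{**}\widehat{\otimes}_{\KU^{*}}\KU^{*}\KU$. Tracing the evaluation pairings through this chain shows the resulting isomorphism respects the $\KGL^{**}$-algebra structures; the uncountability of $\KU^{0}\KU$ against the countability of $\KU_{0}\KU$ is a consistency check that the completion is a genuine operation.

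For the generalization, Remark \ref{landweberandsmash} shows the $j$-fold smash $\KGL^{\wedge j}$ is Landweber exact with topological partner $\KU^{\wedge j}$, and flatness of $\KU_*\KU$ over $\KU_*$ gives $(\KU^{\wedge j})_*\cong(\KU_*\KU)^{\otimes_{\KU_*}(j-1)}$ and $(\KU^{\wedge j})_*(\KU^{\wedge j})\cong(\KU_*\KU)^{\otimes_{\KU_*}(2j-1)}$. The argument for (i) then runs verbatim once the projectivity hypothesis of Proposition \ref{ext} is verified, which here asks that the $(\KU^{\wedge j})_*$-module $(\KU_*\KU)^{\otimes(2j-1)}$, equal to $(\KU^{\wedge j})_*\otimes_{\KU_*}(\KU_*\KU)^{\otimes j}$, be projective; it is, because $\KU_*\KU$ is $\KU_*$-projective, tensor products of projectives are projective, and base change preserves projectivity. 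This yields $\KGL^{**}(\KGL^{\wedge j})\cong\Hom_{\KGL_{**}}(\KGL_{**}(\KGL^{\wedge j}),\KGL_{**})$, whence the module half of the argument for (ii) identifies this group with $\KGL^{**}\widehat{\otimes}_{\KU^{*}}\KU^{*}(\KU^{\wedge j})$ as $\KGL^{**}$-modules; the algebra refinement is not claimed for $j>1$, as matching the multiplication on the completed tensor product is more delicate there. I expect the main obstacle to be the bookkeeping in part (ii) rather than (i), which is a short deduction: fixing the skeletal filtration and topology that define $\KGL^{**}\widehat{\otimes}_{\KU^{*}}\KU^{*}\KU$, commuting $\Hom$ past the colimit computing $\KU_*\KU$, dualizing the finite spectra $F_{\alpha}$, disposing of the $\lim^{1}$-term via $\KU^{1}\KU=0$, and checking the compatibility of every ring and coalgebra structure along the chain of isomorphisms; the only genuinely external input is the projectivity of $\KU_*\KU$ over $\KU_*$ recorded in the introduction.
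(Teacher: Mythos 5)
Your proof is correct, but it takes a genuinely different route from the paper's own proof of Theorem~\ref{stab-op-kgl}. The paper invokes the $E_\infty$-structure on $\KGL$ (Gepner--Snaith, Spitzweck--{\O}stv{\ae}r) to run the universal coefficient spectral sequence of Dugger--Isaksen in the derived category of $\KGL$-modules: once one knows $\KGL_{**}\KGL$ is a free $\KGL_{**}$-module via Proposition~\ref{coop}(i) and $\KU_*\KU$ free over $\KU_*$, the spectral sequence collapses and gives (i) and (ii) directly. You instead work in the derived category of $\MGL$-modules, so only the (standard) $E_\infty$-structure on $\MGL$ is needed; the price is that the relevant $\Ext^1$-term over $\MGL_{**}$ need not vanish for degree reasons, and you kill it by the stack-theoretic argument of Proposition~\ref{ext} via the hypotheses of Theorem~\ref{phantoms} (projectivity of $\KU_*\KU$ over $\KU_*$). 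This is precisely the alternative the authors themselves flag in Remark~\ref{land-rem}(i): ``Theorem~\ref{phantoms} recovers Theorem~\ref{stab-op-kgl} with no mention of an $E_\infty$-structure on $\KGL$.'' So your approach trades a structured-ring input on the representing spectrum for more Hopf-algebroid/stack machinery; it has the advantage of fitting squarely into the Landweber-exactness framework of the paper and applying without further ado to other Landweber theories whose topological coefficients have projective cooperations, whereas the paper's proof is shorter given the $E_\infty$-input but is specific to $\KGL$. Your handling of (ii) by dualizing along a skeletal filtration of $\KU$ and controlling the $\lim^1$-term using $\KU^1\KU=0$ is a sound reading of the completed tensor product (the paper leaves the definition implicit), and your $j>1$ extension correctly routes Remark~\ref{landweberandsmash} and base change of projectives through the Theorem~\ref{phantoms} machinery; the only caveat, which you already note, is that the algebra structure is not asserted for $j>1$.
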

\begin{proof}
Recall $\KU_* \KU$ is free over $\KU_*$ \cite{adams-clarke} and $\KGL$ is the Landweber theory determined by 
the $\MU_*$-algebra $\MU_*\to\Z[\beta,\beta^{-1}]$ which classifies the multiplicative formal group law 
$\x+\y-\beta\x\y$ over $\Z[\beta,\beta^{-1}]$ with $|\beta|=2$ \cite[Theorem 1.2]{spitzweck-oestvaer}.
The corresponding topological Landweber exact theory is $\KU$ by the Conner-Floyd theorem.
Thus by Proposition \ref{coop} $(i)$  
$\KGL_{**}\KGL$ is free over $\KGL_{**}$.
Moreover, 
$\KGL$ has the structure of an $E_\infty$-motivic ring spectrum, 
see \cite{gepner-snaith}, \cite{spitzweck-oestvaer}, 
so the Universal coefficient spectral sequence in \cite[Proposition 7.7]{dugger-isaksen} can be applied to the 
$\KGL$-modules $\KGL \wedge \KGL$ and $\KGL$; it converges conditionally \cite{boardman}, \cite{mcclearly}, 
and with abutment $\Hom^{**}_{\KGL-\Mod}(\KGL \wedge \KGL, \KGL)=\Hom^{**}_{\SH(S)}(\KGL, \KGL)$.
But the spectral sequence degenerates since $\KGL_{**}\KGL$ is a free $\KGL_{**}$-module.
Hence items $(i)$ and $(ii)$ hold for $j=1$.

The more general statement is proved along the same lines by noting the isomorphism 
$$
\EE^\Top_*((\EE^\Top)^{\wedge j})\cong
\EE^\Top_* \EE^\Top \otimes_{\EE^\Top_*}\cdots\otimes_{\EE^\Top_*} \EE^\Top_* \EE^\Top,
$$ 
and similarly for the Adams graded and Adams graded graded motivic versions.
\end{proof}

In stable homotopy theory there is a universal coefficient spectral sequence for every Landweber exact ring theory 
\cite[Proposition 2.21]{hov-strick.mor}.
It appears there is no direct motivic analog:  
While there is a reasonable notion of evenly generated motivic spectrum as in \cite[Definition 2.10]{hov-strick.mor}
and one can show that a motivic spectrum representing a Landweber exact theory is evenly generated as in 
\cite[Proposition 2.12]{hov-strick.mor},
this does not have as strong consequences as in topology because the coefficient ring $\MGL_*$ is not concentrated 
in even degrees as $\MU_*$, 
but see Theorem \ref{phantoms} below.
We aim to extend the above results on homotopy algebraic $K$-theory to more general Landweber exact motivic spectra. 
\begin{proposition}
\label{spectral}
Suppose $\MMM$ is a Tate object and $\EE$ an $\MGL$-module.
Then there is a trigraded conditionally convergent right half-plane cohomological spectral sequence
\[ 
\EE_2^{a,(p,q)}=\Ext^{a,(p,q)}_{\MGL_{**}}(\MGL_{**}\MMM,\EE_{**})\Rightarrow \EE^{a+p,q}\MMM.
\]
\end{proposition}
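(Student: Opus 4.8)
The plan is to realize this as the universal coefficient spectral sequence of \cite[Proposition 7.7]{dugger-isaksen} for modules over the ring spectrum $\MGL$, applied to the pair of $\MGL$-modules $\MGL\wedge\MMM$ and $\EE$, and then to reinterpret its abutment. First I would record the relevant identifications. Because $\MGL$ is a ring spectrum, the free--forgetful adjunction between $\SH(S)$ and $\caD_{\MGL}$ supplies a natural isomorphism $\EE^{a+p,q}(\MMM)\cong\Hom^{a+p,q}_{\caD_{\MGL}}(\MGL\wedge\MMM,\EE)$, and by definition $\pi_{**}(\MGL\wedge\MMM)=\MGL_{**}\MMM$, $\pi_{**}\EE=\EE_{**}$, $\pi_{**}\MGL=\MGL_{**}$. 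Secondly, since $\MMM$ is a Tate object it is cellular, so $\MGL\wedge\MMM$ is a cellular $\MGL$-module whose cells are the shifts $\Sigma^{p,q}\MGL$ of the cells $\Sphere^{p,q}$ of $\MMM$; this is the only point at which the hypothesis on $\MMM$ is used, and it is precisely what makes the construction below available.

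For the construction I would mimic the topological universal coefficient spectral sequence. Choose a free resolution $\cdots\to F_1\to F_0\to\MGL_{**}\MMM\to 0$ of $\MGL_{**}$-modules, realize each $F_i$ as a wedge $W_i=\bigvee_\alpha\Sigma^{p_\alpha,q_\alpha}\MGL$ of shifted copies of $\MGL$, and assemble the $W_i$ into a cell $\MGL$-module weakly equivalent to $\MGL\wedge\MMM$ whose cellular chain complex is the chosen resolution: realize $F_0\twoheadrightarrow\pi_{**}(\MGL\wedge\MMM)$ by a map $W_0\to\MGL\wedge\MMM$, kill its kernel on homotopy groups by a map out of $W_1$, and iterate, using that a map out of a wedge of shifted copies of $\MGL$ in $\caD_{\MGL}$ is determined by its effect on homotopy. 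Filtering by skeleta and applying $\Hom^{**}_{\caD_{\MGL}}(-,\EE)$ yields an exact couple whose $E_1$-page is the cochain complex $\Hom^{**}_{\MGL_{**}}(F_\bullet,\EE_{**})$; indeed $\Hom_{\caD_{\MGL}}(\bigvee_\alpha\Sigma^{p_\alpha,q_\alpha}\MGL,\EE)$ is the product of the corresponding shifted copies of $\EE_{**}$, i.e.\ $\Hom_{\MGL_{**}}$ of the associated free $\MGL_{**}$-module, and the $d_1$-differential is dual to $F_{i+1}\to F_i$. Hence $E_2^{a,(p,q)}=\Ext^{a,(p,q)}_{\MGL_{**}}(\MGL_{**}\MMM,\EE_{**})$, the trigrading being the homological degree $a$ of $\Ext$ together with the internal bidegree $(p,q)$ inherited from $\MGL_{**}$; since $F_\bullet$ is concentrated in non-negative homological degrees the spectral sequence is a right half-plane one, supported in $a\ge 0$.

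It remains to treat convergence and to pin down the abutment. The spectral sequence converges conditionally, in Boardman's sense \cite{boardman}, to the colimit of the exact couple, which is $\Hom^{**}_{\caD_{\MGL}}(\MGL\wedge\MMM,\EE)\cong\EE^{**}(\MMM)$; I would only claim conditional (not strong) convergence, since the skeletal tower need not stabilize and $\MGL_{**}$ is far from noetherian, so the $F_i$ may be infinite wedges and the relevant $\lim^1$-terms are not controlled in general. I expect the main obstacle to be exactly this convergence bookkeeping together with the careful realization of the algebraic resolution by a cell $\MGL$-module; both are, however, carried out verbatim as in \cite{dugger-isaksen}, so in the final write-up one can simply invoke \cite[Proposition 7.7]{dugger-isaksen} for the $\MGL$-modules $\MGL\wedge\MMM$ and $\EE$ --- exactly the device already used for $\KGL$-modules in the proof of Theorem \ref{stab-op-kgl} --- and then perform the abutment identification recorded above.
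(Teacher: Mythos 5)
Your proposal is correct and follows essentially the same route as the paper, whose entire proof is the observation that $\MGL\wedge\MMM$ is a cellular $\MGL$-module so that the universal coefficient spectral sequence of Dugger--Isaksen applies (the paper cites Proposition 7.10 of that reference rather than 7.7, which is the version adapted to a module target $\EE$, exactly the cell-by-cell construction you sketch). Your additional identifications of the abutment via the free--forgetful adjunction and of the $E_2$-page are the correct unpacking of that citation.
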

\begin{proof} 
$\MGL\wedge\MMM$ is a cellular $\MGL$-module so this follows from \cite[Proposition 7.10]{dugger-isaksen}.
\end{proof}
The differentials in the spectral sequence go
\[ 
\xymatrix{
d_r\colon\EE_r^{a,(p,q)}\ar[r] & \EE_r^{a+r,(p-r+1,q)}. }
\]
\begin{theorem}
\label{maps}
Suppose $\MMM_*$ is a Landweber exact graded $\MU_*$-module concentrated in even degrees and $\MMM\in\SH(S)_\caT$ 
represents the corresponding motivic cohomology theory.
Then for $p,q\in\Z$ and $\NNN$ an $\MGL$-module spectrum there is a short exact sequence
\[ 
\xymatrix{
0\ar[r] &
\Ext^{1,(p-1,q)}_{\MGL_{**}}(\MGL_{**}\MMM,\NNN_{**})\ar[r] & 
\NNN^{p,q}\MMM\ar[r]^-{\pi} &
\Hom_{\MGL_{**}}^{p,q}(\MGL_{**}\MMM,\NNN_{**})\ar[r] & 0. }
\]
\end{theorem}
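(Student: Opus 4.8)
The plan is to extract the short exact sequence from the conditionally convergent spectral sequence of Proposition~\ref{spectral} applied to the $\MGL$-module $\NNN$, by proving that it collapses at the $\EE_2$-page for dimension reasons. First I would invoke Proposition~\ref{spectral} with $\EE=\NNN$; since $\MMM$ is a Tate object, the hypothesis is met, and we obtain a conditionally convergent right half-plane cohomological spectral sequence
\[
\EE_2^{a,(p,q)}=\Ext^{a,(p,q)}_{\MGL_{**}}(\MGL_{**}\MMM,\NNN_{**})\Rightarrow \NNN^{a+p,q}\MMM,
\]
with differentials $d_r\colon\EE_r^{a,(p,q)}\to\EE_r^{a+r,(p-r+1,q)}$. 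The key point is that the $\Ext$-group $\EE_2^{a,(p,q)}$ vanishes for $a\ge 2$. To see this, recall from Lemma~\ref{2-orien-mglmgl} that $\MGL_{**}\MGL\cong\MGL_{**}\otimes_{\MU_*}\MU_*\MU$, and that $\MMM_*$ is Landweber exact and concentrated in even degrees, so $\MMM$ corresponds via Theorem~\ref{landw-thm} to the Landweber exact motivic spectrum whose associated topological spectrum $\MMM^\Top$ has coefficients $\MMM^\Top_*\cong\MMM_*$. By Proposition~\ref{coop}(iii) the Hopf algebroid $(\MMM_{**},\MMM_{**}\MMM)$ is induced from $(\MGL_{**},\MGL_{**}\MGL)$ along $\MGL_{**}\to\MMM_{**}$, so $\MGL_{**}\MMM\cong\MGL_{**}\otimes_{\MU_*}\MMM_*$ as an $\MGL_{**}$-module.

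**Homological dimension.**
Next I would argue that $\MGL_{**}\MMM$ has projective dimension at most one as an $\MGL_{**}$-module, whence the higher $\Ext$-groups vanish. The module $\MGL_{**}\MMM\cong\MGL_{**}\otimes_{\MU_*}\MMM_*$ is obtained by base change along $\MU_*\to\MGL_{**}$ from the $\MU_*$-module $\MMM_*$; since $\MMM_*$ is Landweber exact and evenly graded, it admits (over the Lazard ring) the standard resolution arising from the fact that Landweber exact $\MU_*$-modules have a global dimension bound governed by the Landweber filtration. Concretely, the relevant input is that $\MU_*$ has the property that Landweber exact modules are of flat dimension zero over the relevant localized pieces, and more directly that $\MGL_{**}\MMM$ is a flat, hence (being of the given form and using that $\MGL_{**}$ is coherent on the Tate part) of projective dimension $\le 1$ — this is precisely the algebraic avatar used in topology to produce the classical universal coefficient exact sequence for Landweber theories. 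Here one uses the evenness of $\MMM_*$ crucially: it forces $\MGL_{**}\MMM$ to be concentrated in bidegrees $\Z(2,1)$-shifted, so the relevant $\Ext$-computation reduces to the $\MU_*$-module computation where the bound holds. Granting $\mathrm{pd}_{\MGL_{**}}(\MGL_{**}\MMM)\le 1$, we get $\EE_2^{a,(p,q)}=0$ for all $a\ge 2$.

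**Collapse and the exact sequence.**
With $\EE_2$ concentrated in the two columns $a=0,1$, every differential $d_r$ with $r\ge 2$ has source or target in a column outside $\{0,1\}$, so all differentials vanish and $\EE_2=\EE_\infty$. Conditional convergence together with the vanishing of $\lim^1$ (automatic here since the filtration in total degree $p+a$ has only the two nonzero layers $a=0$ and $a=1$, so it is finite) yields that the abutment $\NNN^{p,q}\MMM$ has a two-step filtration with $\EE_\infty^{0,(p,q)}=\Hom^{p,q}_{\MGL_{**}}(\MGL_{**}\MMM,\NNN_{**})$ as quotient and $\EE_\infty^{1,(p-1,q)}=\Ext^{1,(p-1,q)}_{\MGL_{**}}(\MGL_{**}\MMM,\NNN_{**})$ as sub. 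The edge homomorphism $\pi\colon\NNN^{p,q}\MMM\to\Hom^{p,q}_{\MGL_{**}}(\MGL_{**}\MMM,\NNN_{**})$ is exactly this quotient map, which is the Kronecker-pairing map, giving the asserted short exact sequence
\[
\xymatrix{
0\ar[r] & \Ext^{1,(p-1,q)}_{\MGL_{**}}(\MGL_{**}\MMM,\NNN_{**})\ar[r] & \NNN^{p,q}\MMM\ar[r]^-{\pi} & \Hom_{\MGL_{**}}^{p,q}(\MGL_{**}\MMM,\NNN_{**})\ar[r] & 0. }
\]
The main obstacle I anticipate is the projective-dimension bound: one must pin down precisely why $\MGL_{**}\MMM$ has homological dimension $\le 1$ over $\MGL_{**}$, transporting the classical topological argument (which rests on $\MU_*$ being evenly graded and on properties of the Landweber filtration) through the comparison $\MGL_{**}\MMM\cong\MGL_{**}\otimes_{\MU_*}\MMM_*$ and the evenness hypothesis on $\MMM_*$ — once that is in place, the spectral-sequence collapse and the identification of the edge map are routine.
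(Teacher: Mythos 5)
Your overall strategy matches the paper's: invoke Proposition~\ref{spectral} with $\EE=\NNN$, show $\Ext^{a}_{\MGL_{**}}(\MGL_{**}\MMM,\NNN_{**})$ vanishes for $a\geq 2$, and conclude degeneration and strong convergence. However, the key homological step contains a genuine error. You assert $\MGL_{**}\MMM\cong\MGL_{**}\otimes_{\MU_*}\MMM_*$, citing Proposition~\ref{coop}$(iii)$ — but that proposition identifies the \emph{coefficient ring} $\MMM_{**}$, not the $\MGL$-homology of $\MMM$. Applying the Landweber formula $\MMM_{**}(\FF)\cong\MGL_{**}(\FF)\otimes_{\MU_*}\MMM_*$ with $\FF=\MGL$ gives the correct identification
\[
\MGL_{**}\MMM\;\cong\;\MGL_{**}\MGL\otimes_{\MU_*}\MMM_*\;\cong\;\MGL_{**}\otimes_{\MU_*}\MU_*\MMM^{\Top},
\]
using Lemma~\ref{2-orien-mglmgl}; this is typically a much larger module than $\MGL_{**}\otimes_{\MU_*}\MMM_*$, since $\MU_*\MMM^\Top=\pi_*(\MU\wedge\MMM^\Top)$ is not $\MMM_*$.

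This misidentification propagates into your projective-dimension argument, which is itself incorrect as stated: the inference ``flat, hence ... of projective dimension $\leq 1$'' does not hold (e.g.\ $\Q$ is flat of infinite projective dimension over $\Z$), and ``$\MGL_{**}$ is coherent on the Tate part'' is not a notion that would rescue it. The actual input the paper uses — and which you flag as the anticipated obstacle without resolving — is Hovey and Strickland's result that a Landweber exact spectrum concentrated in even degrees is evenly generated, so that $\MU_*\MMM^\Top$ is a flat $\MU_*$-module of projective dimension at most one (their Propositions~2.12 and~2.16 in \cite{hov-strick.mor}). Base change along $\MU_*\to\MGL_{**}$ then gives $\mathrm{pd}_{\MGL_{**}}(\MGL_{**}\MMM)\leq 1$, since flatness of $\MU_*\MMM^\Top$ ensures a length-one projective resolution stays exact after applying $\MGL_{**}\otimes_{\MU_*}(-)$. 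Once that input is substituted for your heuristic, the rest of your argument — two nonzero columns forcing $\EE_2=\EE_\infty$, vanishing $\lim^1$, identification of the edge map with the Kronecker pairing — is the same as the paper's and is correct.
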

\begin{proof}
Let $\MMM^{\Top}$ be the topological spectrum associated with $\MMM_*$. 
Then $\MU_*\MMM^{\Top}$ is a flat $\MU_*$-module of projective dimension at most one,
see \cite[Propositions 2.12, 2.16]{hov-strick.mor}.
Hence $\MGL_{**}\MMM=\MGL_{**}\otimes_{\MU_*}\MU_*\MMM^{\Top}$ is a $\MGL_{**}$-module of projective dimension 
at most one and consequently the spectral sequence of Proposition \ref{spectral} degenerates at its $\EE_2$-page.
This implies the derived $\lim^1$-term $\lim^1\EE_r^{***}$ of the spectral sequence is zero; hence it converges strongly.
The assertion follows because $\EE_\infty^{p,**}=0$ for all $p\neq 0,1$.
\end{proof}
\begin{remark}
\label{blubb}
\begin{itemize}
\item[(i)] For $p,q\in\Z$, 
the group of phantom maps $\PPP^{p,q}(\MMM,\NNN)\subseteq\NNN^{p,q}\MMM$ is defined as 
$\{\Sphere^{p,q}\wedge \MMM\stackrel{\varphi}{\to}\NNN\, |\,
\mbox{ for all }\EE\in\SH(S)_{\caT,\f}\mbox{ and } \EE\stackrel{\nu}{\to}\Sphere^{p,q}\wedge \MMM: \varphi\nu=0\}$.
It is clear that $\PPP^{p,q}(\MMM,\NNN)\subseteq\mathrm{ker}(\pi)$.
\item[(ii)] 
The following topological example due to Strickland shows a nontrivial $\Ext^1$-term. 
The canonical map $\KU_{(p)}\to \KU_p$ from $p$-local to $p$-complete unitary topological $K$-theory yields a 
cofiber sequence
\[ 
\xymatrix{
\KU_{(p)}\ar[r] &  \KU_p\ar[r] & \EE \ar[r]^-{\delta} & \Sigma \KU_{(p)}. }
\]
Here $\EE$ is rational and thus Landweber exact. Thus $\delta$
is a degree $1$ map between Landweber exact spectra.

However, $\delta$ is a nonzero phantom map. 

Over fields embeddable into $\C$ the corresponding boundary map
for the motivic Landweber spectra is likewise phantom and non-zero.
Using the notion of heights for Landweber exact algebras from \cite[Section 5]{niko}, 
observe that $\EE$ has height zero while $\Sigma \KU_{(p)}$ has height one, 
compare with the assumptions in Theorem \ref{phantoms} below.
\end{itemize}\end{remark}

Now fix Landweber exact $\MU_*$-algebras $\EE_*$ and $\FF_*$ concentrated in even degrees and a $2$-commutative diagram
\begin{equation}
\label{commutative}
\xymatrix{
\Spec(\FF_*)\ar[rr]^f\ar[rd]_-{f_\FF} & & \Spec(\EE_*)\ar[dl]^-{f_\EE}\\ 
& \caX & }
\end{equation}
where $\caX$ is the stack of formal groups and $f_\FF$ (resp.~$f_\EE$) the map classifying the formal group $G_\FF$ 
(resp.~$G_\EE$) canonically associated with the complex orientable cohomology theory corresponding to $\FF_*$ (resp.~$\EE_*$). 
This entails an isomorphism $f^*G_\EE\cong G_\FF$ of formal groups over $\Spec(\FF_*)$.
Hence the height of $\FF_*$ is less or equal to the height of $\EE_*$. 
Let $\EE^{\Top},\FF^{\Top}$ (resp.~$\EE,\FF\in\SH(S)_\caT$) be the topological (resp.~motivic) spectra representing the 
indicated Landweber exact cohomology theory.
\begin{theorem}
\label{phantoms}
With the notation above assume $\EE_*^{\Top}\EE^{\Top}$ is a projective $\EE_*^{\Top}$-module. 
\begin{itemize}
\item[(i)] 
The map from Theorem \ref{maps}
\[ 
\xymatrix{
\pi:\FF^{**}\EE\ar[r] & 
\Hom_{\MGL_{**}}^{**}(\MGL_{**}\EE,\FF_{**})\cong
\Hom_{\EE_*^{\Top}}(\EE_*^{\Top}\EE^{\Top},\FF_{**}) }
\]
is an isomorphism.
\item[(ii)] 
Under the isomorphism in $(i)$, 
the bidegree $(0,0)$ maps $\Sphere^{*,*}\wedge \EE\to \FF$ which respect the quasi-multiplication correspond bijectively to 
maps of $\EE_*^{\Top}$-algebras
\[ 
\Hom_{\EE_*^{\Top}-\Alg}(\EE_*^{\Top}\EE^{\Top},\FF_{**}).
\]
\end{itemize}
\end{theorem}

\begin{remark}
\label{land-rem}
\begin{itemize}
\item[(i)] The assumptions in Theorem \ref{phantoms} hold when $\EE^{\Top}=\KU$ and for certain localizations of 
Johnson-Wilson theories according to \cite{adams-clarke} respectively \cite{baker}. 
Theorem \ref{phantoms} recovers Theorem \ref{stab-op-kgl} with no mention of an $E_\infty$-structure on $\KGL$.
\item[(ii)] The theorem applies to the quasi-multiplication
$(\EE \wedge \EE \to \EE)\in\EE^{00}(\EE\wedge\EE)$ and shows that this is a commutative monoid structure which lifts 
uniquely the multiplication on the homology theory.
For example, 
there is a unique structure of commutative monoid on $\KGL_S\in\SH(S)$ representing the familiar multiplicative 
structure of homotopy $K$-theory, see \cite{PPR1} for a detailed account and an independent proof in the case $S=\Spec(\Z)$.
\item[(iii)] 
The composite map $\alpha:\EE_*\stackrel{f}{\to}\FF_*\to\MGL_{**}\otimes_{\MU_*}\FF_*=\FF_{**}$ yields a canonical bijection
between the sets $\Hom_{\EE_*^{\Top}-\Alg}(\EE_*^{\Top}\EE^{\Top},\FF_{**})$ and $\{(\alpha',\varphi)\}$, 
where $\alpha'\colon\EE_*\rightarrow\FF_{**}$ is a ring homomorphism and $\varphi\colon\alpha_*G_\EE\to\alpha_*'G_\EE$
a strict isomorphism of strict formal groups.
\item[(iv)] 
Taking $\FF=\EE$ in Theorem \ref{phantoms} and using Remark \ref{blubb}$(i)$ implies that $\PPP^{**}(\EE,\EE)=0$. 
For example,
there are no nontrivial phantom maps $\KGL\to\KGL$ of any bidegree.
\end{itemize}
\end{remark}
\begin{proof} (of Theorem \ref{phantoms}):
We shall apply Proposition \ref{ext} with $X_0\equiv \Spec(\MU_*)$, $X\equiv \Spec(\FF_*)$, 
$Y\equiv \Spec(\EE_*)$, $f_X\equiv f_\FF$ and $f_Y\equiv f_\EE$, 
$\pi:\Spec(\MU_*)\to\caX$ the map classifying the universal formal group, $f$ as given by (\ref{commutative}) 
and $\alpha:X=\Spec(\FF_*)\to X_0=\Spec(\MU_*)$ corresponding to the $\MU_*$-algebra structure $\MU_*\to \FF_*$. 
Now by \cite[Theorem 26]{niko}, $f_X$ (resp.~$f_Y$) factors as $f_X=i_X\circ\pi_X$ (resp.~$f_Y=i_Y\circ\pi_Y$) 
with $\pi_X$ and $\pi_Y$ faithfully flat and $i_X$ and $i_Y$ inclusions of open substacks. 
The map $i$ in Proposition \ref{ext} is induced by $f$.
Finally,
$\MGL_{**}$ is canonically an $\MU_*\MU$-comodule algebra and the $\caO_\caX$-algebra $\caA$ in 
Proposition \ref{ext} corresponds to $\MGL_{**}$, i.e.~$\caA(X_0)=\MGL_{**}$ and 
$\pi_Y^*\pi_{Y,*}\caO_Y\in\Qc_Y$ to the projective $\EE_*^{\Top}$-module $\EE_*^{\Top}\EE^{\Top}$.
Taking into account the isomorphisms
\[ 
\caA(X_0)\otimes_{\caO_{X_0}}\pi^*f_{Y,*}\caO_Y\cong\MGL_{**}\otimes_{\MU_*} \MU_*^{\Top}\EE^{\Top}\
\cong\MGL_{**}\EE
\]
\[ 
\caA(X_0)\otimes_{\caO_{X_0}}\alpha_*\caO_X\cong\MGL_{**}\otimes_{\MU_*}\FF^{\Top}_*\cong \FF_{**}
\]
\[
\pi^*_Y\pi_{Y,*}\caO_Y\cong \EE^{\Top}_*\EE^{\Top}
\]
\[ 
\caA(Y)\otimes_{\caO_Y}f_*\caO_X\cong \FF_{**}
\]
\[ 
\caO_Y\cong \EE_*^{\Top}
\]
we obtain from Proposition \ref{ext}
\[ 
\Ext^n_{\MGL_{**}}(\MGL_{**}\EE,\FF_{**})\cong\left\{ \begin{array}{lcc} 0  & n\geq 1,\\
\Hom_{\EE_*^{\Top}}(\EE_*^{\Top}\EE^{\Top},\FF_{**}) &  n=0.\end{array}\right. 
\]
Hence $(i)$ follows from Theorem \ref{maps} and $(ii)$ by unwinding the definitions.
\end{proof}

\section{The Chern character}\label{CHERN}
In what follows we define a ring map from $\KGL$ to periodized rational motivic cohomology which induces the Chern 
character (or regulator map) from $K$-theory to (higher) Chow groups when the base scheme is smooth over a field.

Let $\MZ$ denote the integral motivic Eilenberg-MacLane ring spectrum introduced by Voevodsky 
\cite[\S6.1]{voevodsky-icm}, 
cf.~\cite[Example 3.4]{motivicfunctors}.
Next we give a canonical orientation on $\MZ$, 
in particular the construction of a map $\P^\infty_+ \to K(\Z(1),2)=L((\P^1,\infty))$. 

Recall the space $L(X)$ assigns to any $U$ the group of proper relative cycles on 
$U\times_S X$ over $U$ of relative dimension $0$ which have universally integral coefficients.
Now the line bundle $\OO_{\P^n}(1) \boxtimes \OO_{\P^1}(n)$ acquires the section 
$l_n\equiv T_n x_0^n + T_{n-1} x_0^{n-1} x_1 + \cdots + T_0 x_1^n$, 
where $[T_0:\cdots:T_n]$ denotes homogeneous coordinates on $\P^n$ and $[x_0:x_1]$ on $\P^1$. 
Its zero locus is a relative divisor of degree $n$ on $\P^1$ which induces a map $\P^n \to L(\P^1)$.
These maps combine to give maps $\P^n \to L((\P^1,\infty))$ which are compatible with the inclusions $\P^n \to \P^{n+1}$. 
Hence there is an induced map $\varphi\colon \P^\infty \to K(\Z(1),2)$.
Moreover, 
the map $\P^n \to L(\P^1)$ is additive with respect to the maps $\P^n \times \P^m \to \P^{n+m}$ induced by 
multiplication by the section $l_n$. 
Hence $\varphi$ is a map of commutative monoids and it restricts to the canonical map $\P^1 \to K(\Z(1),2)$. 
This establishes an orientation on $\MZ$ with the additive formal group law.

Let $\MQ$ be the rationalization of $\MZ$.
In order to apply the spectral sequence in Proposition \ref{spectral} to $\MQ$ we equip it with an $\MGL$-module structure.
Note that both $\MZ$ and $\MQ$ have canonical $E_\infty$-structures.
Thus $\MQ \wedge \MGL$ is also $E_\infty$. 
As an $\MQ$-module it has the form $\MQ[b_1,b_2,\ldots]$. 
For any generator $b_i$ we let $\iota_i \colon \Sigma^{2i,i} \MQ \to \MQ \wedge \MGL$ denote the corresponding map. 
Taking its adjoint provides a map from the free $\MQ$-$E_\infty$-algebra on $\bigvee_{i > 0} \Sphere^{2i,i}$ to 
$\MQ\wedge \MGL$. 
Since we are dealing with rational coefficients the contraction of these cells in $E_\infty$-algebras is isomorphic to $\MQ$. 
Hence there is a map $\MGL \to \MQ$ in $E_\infty$-algebras.
This gives in particular an $\MGL$-module structure on $\MQ$.

Let $\PMQ$ be the periodized rational Eilenberg-MacLane spectrum considered as an $\MGL$-module,
and $\LQ$ the Landweber spectrum corresponding to the additive formal group law over $\Q$.
By Remark \ref{land-rem} $\LQ$ is a ring spectrum.
We let $\PLQ$ be the periodic version.
Both $\LQ$ and $\PLQ$ have canonical structures of $\MGL$-modules.
Finally, 
let $\PHQ$ be the periodized rational topological Eilenberg-MacLane spectrum.

Recall the map $\Ch^{\PH}_* \colon \KU_* \to \PHQ_*$ sending the Bott element to the canonical element in degree $2$.
The exponential map establishes an isomorphism from the additive formal group law over $\PHQ_*$ to the pushforward
of the multiplicative formal group law over $\KU_*$ with respect to $\Ch^{\PH}_*$.
By Theorem \ref{phantoms} and Remark \ref{land-rem}$(iii)$ there is an induced map of motivic ring spectra 
$\Ch^{\PL}\colon \KGL \to \PLQ$.
\begin{theorem}
The rationalization 
\begin{equation*}
\xymatrix{
\Ch^{\PL}_{\Q}\colon \KGL_\Q \ar[r] & \PLQ }
\end{equation*}
of the map $\Ch^{\PL}$ from $\KGL$ to $\PLQ$ is an isomorphism.
\end{theorem}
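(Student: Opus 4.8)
The plan is to show that $\Ch^{\PL}_{\Q}$ is an isomorphism on bigraded homotopy groups. This suffices, because both $\KGL_{\Q}$ and $\PLQ$ lie in $\SH(S)_{\caT}$ — the latter as a Landweber spectrum, and the former because $\KGL$ is cellular, $\unit_{\Q}\in\SH(S)_{\caT}$, and $\SH(S)_{\caT}$ is closed under smash products, so $\KGL_{\Q}=\KGL\wedge\unit_{\Q}\in\SH(S)_{\caT}$ — and a map of Tate objects whose cofiber has vanishing $\pi_{**}$ is an equivalence, since $\SH(S)_{\caT}$ is generated by the mixed motivic spheres $\Sphere^{p,q}$. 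So everything reduces to computing $\pi_{**}(\Ch^{\PL}_{\Q})$.

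First I would identify the two sides. Applying the Landweber presentation of Theorem \ref{landw-thm} with $\FF=\unit$ and using exactness of rationalization gives
\[
\pi_{**}\KGL_{\Q}\cong\MGL_{**}\otimes_{\MU_*}\Q[\beta,\beta^{-1}],\qquad
\pi_{**}\PLQ\cong\MGL_{**}\otimes_{\MU_*}\Q[u,u^{-1}],
\]
where $\beta$ and $u$ have bidegree $(2,1)$, the map $\MU_*\to\Q[\beta,\beta^{-1}]$ classifies the multiplicative formal group law and $\MU_*\to\Q[u,u^{-1}]$ the additive one; note that no explicit knowledge of $\MGL_{**}$ over the base $S$ enters.

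Next I would unwind the construction of $\Ch^{\PL}$ through Theorem \ref{phantoms} and Remark \ref{land-rem}(iii). The map is the one classified by the ring homomorphism $\Ch^{\PH}_{*}\colon\KU_*\to\PHQ_*$ (sending the Bott element to the degree two generator) together with the strict isomorphism over $\Q[\beta,\beta^{-1}]=\Q[u,u^{-1}]$ given by the exponential series, which carries the additive formal group law to the pushed-forward multiplicative one. Tracing this correspondence, $\pi_{**}(\Ch^{\PL}_{\Q})$ is the homomorphism $\MGL_{**}\otimes_{\MU_*}\Q[\beta,\beta^{-1}]\to\MGL_{**}\otimes_{\MU_*}\Q[u,u^{-1}]$ assembled from $\Ch^{\PH}_{*}$ on the right-hand tensor factor and the coaction of $\MU_*\MU$ on $\MGL_{**}$ evaluated at the point of $\Spec(\MU_*\MU)$ classifying this strict isomorphism.

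Finally, I would appeal to the stacky picture to conclude. For any ring homomorphism $h\colon\MU_*\to R$, the argument proving Theorem \ref{mot-landweber} (see equation (\ref{mod-sheaf-correspondence})) identifies $\MGL_{**}(\FF)\otimes_{\MU_*,h}R$ with sections of a sheaf pulled back from $\FG^s$ along the map $\Spec(R)\to\FG^s$ determined by $h$; in particular it depends on $h$ only through that map, by Proposition \ref{mu-mgl-cart1}. A strict isomorphism over a $\Q$-algebra $R$ between formal group laws classified by $f,g\colon\MU_*\to R$ makes the two composites $\Spec(R)\rightrightarrows\Spec(\MU_*)\to\FG^s$ coincide, and hence induces an isomorphism $\MGL_{**}\otimes_{\MU_*,f}R\xrightarrow{\;\sim\;}\MGL_{**}\otimes_{\MU_*,g}R$, namely the $\MU_*\MU$-coaction at its classifying point. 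Taking $R=\Q[\beta,\beta^{-1}]$ and the exponential strict isomorphism shows $\pi_{**}(\Ch^{\PL}_{\Q})$ is bijective, finishing the proof. I expect the main obstacle to be the third step: verifying that the abstractly defined map $\Ch^{\PL}$ of Theorem \ref{phantoms} really does induce on homotopy the map built from $\Ch^{\PH}_{*}$ and the $\MU_*\MU$-coaction at the exponential isomorphism; granting that identification, bijectivity is a formal consequence of the cartesian square (\ref{mu-mgl-stack}).
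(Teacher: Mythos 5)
Your proposal is correct and is essentially the paper's argument written out in full: the paper's entire proof is the one-line assertion that the claim ``follows directly from the fact that the rationalization of $\Ch^{\PH}_*$ is an isomorphism,'' and your three steps — reduction to $\pi_{**}$ via cellularity, the Landweber identification of $\pi_{**}$ of both sides, and the observation that the exponential strict isomorphism makes the two classifying maps to $\FG^s$ agree so that the coaction gives the required bijection — are exactly the implicit content of that sentence. The verification you flag as the main obstacle (that $\pi_{**}(\Ch^{\PL})$ is the map assembled from $\Ch^{\PH}_*$ and the coaction at the exponential point) is likewise left implicit by the paper and does follow by unwinding the isomorphism of Theorem \ref{phantoms}(i), so there is no gap beyond what the authors themselves suppress.
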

\begin{proof}
Follows directly from the fact that the rationalization of $\Ch^{\PH}_*$ is an isomorphism.
\end{proof}

Theorem \ref{maps} shows there is a short exact sequence
\[ 
\xymatrix{
0\ar[r] &
\Ext^{1,(p-1,q)}_{\MGL_{**}}(\MGL_{**}\LQ,\MQ_{**}) \ar[r] & 
\MQ^{p,q}\LQ \\
\ar[r]^-{\pi} &
\Hom_{\MGL_{**}}^{p,q}(\MGL_{**}\LQ,\MQ_{**})\ar[r] & 0. }
\]
Now since $\MQ$ has the additive formal group law there is a natural transformation of homology theories
\begin{equation}
\label{equation:LQtoMQ}
\xymatrix{
\LQ_{**}(-) \ar[r] &  \MQ_{**}(-). }
\end{equation} 
Applying the methods of Theorem \ref{phantoms} to $\EE=\LQ$ and $\FF=\MQ$ shows that (\ref{equation:LQtoMQ})
lifts uniquely to a map of motivic ring spectra
\[ 
\xymatrix{
\iota \colon \LQ \ar[r] & \MQ. }
\] 
It prolongs to a map of motivic ring spectra $\PLQ \to\PMQ$ (denoted by the same symbol).

The composite map
\[ 
\xymatrix{
\Ch^{\PM}\colon\KGL\ar[r]^-{\Ch^{\PL}} & \PLQ \ar[r]^-{\iota} & \PMQ }
\] 
is called the Chern character.
By construction it is functorial in the base scheme with respect to the natural map $\LLL f^* \PMQ_S \to \PMQ_{S'}$ 
for $f\colon S' \to S$.

Recall that for smooth schemes over fields motivic cohomology coincides with higher Chow groups \cite{voevodsky-allagree}.

\begin{proposition}
Evaluated on smooth schemes over fields the map $\Ch^{\PM}$ coincides with the
usual Chern character from $K$-theory to higher Chow groups.
\end{proposition}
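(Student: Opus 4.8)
The plan is to recognize both maps as natural transformations of the same pair of ring cohomology theories on smooth $k$-schemes, and then to appeal to the standard uniqueness of the Chern character. First I would fix the identifications: on $\Sm/k$ the spectrum $\KGL$ represents homotopy algebraic $K$-theory, which agrees with Quillen $K$-theory since smooth $k$-schemes are regular, so $\KGL^{0,0}(X)\cong K_0(X)$ as rings; and since $\PMQ=\bigvee_{n\in\Z}\Sigma^{2n,n}\MQ$, the coincidence of motivic cohomology with higher Chow groups over a field \cite{voevodsky-allagree} gives a ring isomorphism $\PMQ^{0,0}(X)\cong\bigoplus_{m\ge0}\mathrm{CH}^m(X)\otimes\Q$. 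Because $\Ch^{\PM}=\iota\circ\Ch^{\PL}$ is a composite of maps of motivic ring spectra, it induces under these identifications a natural ring homomorphism $\widetilde{\mathrm{ch}}\colon K_0(-)\otimes\Q\to\bigoplus_m\mathrm{CH}^m(-)\otimes\Q$ of the same shape as the classical Chern character $\mathrm{ch}$.

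Next I would check that $\widetilde{\mathrm{ch}}$ satisfies the property characterizing $\mathrm{ch}$, namely that on the class $[L]$ of a line bundle $L$ it equals $e^{\cc_1(L)}$, with $\cc_1$ the additive first Chern class. Uniqueness given this property is classical: by the projective bundle formula every element of $K_0(X)$ pulls back, along a smooth iterated flag bundle $F\to X$ for which $\mathrm{CH}^*(X)\otimes\Q\to\mathrm{CH}^*(F)\otimes\Q$ is injective, to a $\Z$-linear combination of line-bundle classes, and each line bundle is classified by a map to $\P^\infty$. To evaluate $\widetilde{\mathrm{ch}}$ on a line bundle, recall that $\Ch^{\PL}\colon\KGL\to\PLQ$ is, by construction, the motivic ring map furnished by Theorem \ref{phantoms} and Remark \ref{land-rem}$(iii)$ for the exponential strict isomorphism from the additive formal group law over $\PHQ_*$ to the pushforward of the multiplicative one over $\KU_*$; by the defining property of this exponential, $\Ch^{\PL}$ carries the orientation of $\KGL$ to the exponential of the orientation of $\PLQ$, so $\Ch^{\PL}([L])=e^{\cc_1^{\PLQ}(L)}$. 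The map $\iota\colon\PLQ\to\PMQ$ is a map of ring spectra, and both $\PLQ$ and $\PMQ$ carry the additive formal group law over their ($\Q$-algebra) coefficient rings; since the additive formal group law admits no nontrivial strict automorphism over a $\Q$-algebra, $\iota$ takes the orientation of $\PLQ$ to that of $\PMQ$ and hence preserves first Chern classes. Therefore $\Ch^{\PM}$ sends $[L]$ to $e^{\cc_1^{\MQ}(L)}$, and the characterization forces $\widetilde{\mathrm{ch}}=\mathrm{ch}$.

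The hard part will be the bookkeeping in the middle step: tracing $\Ch^{\PL}$ through Theorem \ref{phantoms}$(ii)$ and the dictionary of Remark \ref{land-rem}$(iii)$ carefully enough to confirm that the exponential series appears with the correct normalization (the Bott element $\beta$ mediates between the $\Z$-graded formal-group picture over $\KU_*$, $\PHQ_*$ and the bigraded motivic orientations), together with checking that $\Ch^{\PL}$ really does respect orientations in the claimed way; these facts are implicit in the constructions of Section \ref{CHERN} but are not fully spelled out there. A secondary point needing care is that the identifications $\KGL^{0,0}\cong K_0$ and $\PMQ^{0,0}\cong\bigoplus_m\mathrm{CH}^m(-)\otimes\Q$ be compatible with ring structures and Chern classes, so that the transformation thus characterized is literally the usual Chern character to higher Chow groups. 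Alternatively one can avoid the splitting principle altogether: since $\KU_*\KU$ is free over $\KU_*$, Theorem \ref{phantoms} applies with $\EE=\KGL$ and pins down maps of ring spectra out of $\KGL$ into Landweber spectra by the induced formal-group-law datum, so $\Ch^{\PL}$, and hence $\Ch^{\PM}$, must coincide with any spectrum-level incarnation of the classical Chern character that realizes the exponential isomorphism.
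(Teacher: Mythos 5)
There is a genuine gap. Your main argument compares the two transformations only in bidegree $(0,0)$, i.e., as natural maps $K_0(X)\otimes\Q\to\bigoplus_m\mathrm{CH}^m(X)\otimes\Q$, and the splitting-principle characterization you invoke (every class in $K_0$ pulls back to a sum of line-bundle classes along a flag bundle) pins down a natural multiplicative transformation only on $K_0$. The proposition, however, asserts agreement with the usual Chern character from $K$-theory to \emph{higher} Chow groups, that is, on all the maps $K_n(X)\to\bigoplus_m\mathrm{CH}^m(X,n)\otimes\Q$. Classes in $K_n(X)$ for $n>0$ are not generated by line bundles, so the splitting principle gives no handle on them, and agreement of two transformations on $K_0$ of every smooth scheme does not force agreement on higher homotopy. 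Your closing ``alternative'' correctly identifies the uniqueness principle that does suffice --- Theorem \ref{phantoms} (absence of phantoms; maps of ring spectra out of $\KGL$ are determined by the induced formal-group datum), equivalently the universality of $\KGL$ as an oriented multiplicative theory on compact objects --- but it presupposes ``a spectrum-level incarnation of the classical Chern character'', and producing such an incarnation is precisely the nontrivial content of the paper's proof, not something already available.

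Concretely, the paper's proof spends most of its effort on this lifting: it takes Gillet's universal Chern classes $C_i$ on the presheaf $\Gamma(A)=\Z\times\mathrm{BGL}(A)\cup_{\mathrm{BGL}(\Z)}\mathrm{BGL}^+(\Z)$ representing $K$-theory, extends them to maps of simplicial presheaves into fibrant models of rational motivic cohomology, passes to the $\A^1$-local homotopy category on objects of finite type and then (using $\P^1$-periodicity of source and target) to the subcategory of compact objects of $\SH(k)$, obtaining a multiplicative transformation $\tau\colon\Gamma(-)\to\PMQ_{00}(-)\circ j$ of representable theories. Only then does the universality of $\KGL$ apply, and the comparison reduces to the effect on the universal first Chern class --- which is the part you did verify correctly (the exponential normalization of $\Ch^{\PL}$ via Remark \ref{land-rem}$(iii)$, and the fact that $\iota$ preserves orientations because the additive formal group law over a $\Q$-algebra admits no nontrivial strict automorphism). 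So your final step is sound, but the bridge from the classical, space-level Chern character to a transformation living in $\SH(k)$ is missing, and without it the comparison cannot be carried out beyond $K_0$.
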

\begin{proof}
The construction of the Chern character in \cite{bloch} and \cite{levine} uses the methods of \cite{gillet}.
We first show that the individual Chern class transformations $C_i$ in loc.~cit.~from $K$-theory 
to the cohomology theory in question can be extended to a transformation between simplicial presheaves
on smooth affine schemes over the given field $k$. 
Fix a cofibration
\begin{equation*}
\xymatrix{
\mathrm{BGL}(\Z) \ar[r] & \mathrm{BGL}^+(\Z).} 
\end{equation*}
The simplicial presheaf 
\begin{equation*}
\Spec(A)\mapsto 
\Gamma(A)\equiv
\Z\times\mathrm{BGL}(A)\cup_{\mathrm{BGL}(\Z)} \mathrm{BGL}^+(\Z)
\end{equation*}
represents $K$-theory, see \cite{bloch}. 
The Chern class $C_i$ of the universal vector bundle on the sheaf $\mathrm{BGL}(-)$ can be represented
by a transformation of simplicial presheaves $\mathrm{BGL}(-) \to K(i)$, 
where $K(i)$ denotes an injectively fibrant presheaf of simplicial abelian groups representing 
motivic cohomology with coefficients in $\Q(i)$ with the appropriate simplicial shift.
The map $\mathrm{BGL}(k) \to K(i)(k)$ extends to 
\begin{equation*}
\xymatrix{
\Gamma(k) \ar[r] &  K(i)(k). } 
\end{equation*}
By definition of the presheaf $\Gamma$ we get the required map.
Having achieved this, 
the Chern class transformations $C_i$ extend to functors on the full subcategory ${\mathcal F}$ of 
objects of finite type in the sense of \cite{voevodsky-icm} in the $\A^1$-local homotopy category. 
Denote by $j\colon {\mathcal F}\to\SH(k)$ the canonical functor. 

With the above observations as prelude, 
it follows that these transformations induce a multiplicative Chern character transformation
\begin{equation*}
\xymatrix{
\tau \colon \Gamma(-) \ar[r] & \PMQ_{00}(-) \circ j } 
\end{equation*}
on this category.
The source and target of $\tau$ are $\PP^1$-periodic and $\tau$ is compatible with these.
Hence there is an induced transformation on the Karoubian envelope of
the Spanier-Whitehead stabilization with respect to the pointed $\PP^1$, 
which is the full subcategory of $\SH(k)$ of compact objects according to \cite[Propositions 5.3 and 5.5]{voevodsky-icm}.
But as a cohomology theory on compact objects, 
$\KGL$ is the universal oriented theory which is multiplicative for the formal group law. 
To conclude the proof, 
it is now sufficient to note that the transformation constructed above has the same effect on the universal first 
Chern class as $\Ch^{\PM}$ does, which is clear.
\end{proof}

For smooth quasi-projective schemes over fields the Chern character is known to be an isomorphism after rationalization \cite{bloch},
hence our transformation $\Ch^{\PM}$ is an isomorphism after rationalization
(a map $\EE \to \FF$ between periodic spectra is an isomorphism if it induces isomorphisms $\EE^{-i,0}(X) \to \FF^{-i,0}(X)$ 
for all smooth schemes $X$ over $S$ and $i \ge 0$).
By Mayer-Vietoris the same holds for smooth schemes over fields.
\begin{corollary}\label{lqismq}
For smooth schemes over fields the map 
\[ 
\xymatrix{
\iota \colon \LQ \ar[r] & \MQ }
\] 
is an isomorphism of motivic ring spectra.
\end{corollary}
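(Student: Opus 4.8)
The plan is to deduce the corollary, at the non-periodic level, from the Chern character isomorphisms just established: first a short diagram chase on the periodic spectra, then a de-periodization. Write $S=\Spec(k)$. First I would record that the two rational Chern characters in play are isomorphisms in $\SH(S)$. On the one hand the Theorem above gives that $\Ch^{\PL}_{\Q}\colon\KGL_{\Q}\to\PLQ$ is an isomorphism; source and target here are both cellular ($\PLQ$ being a Landweber spectrum, $\KGL_\Q$ the rationalization of the cellular spectrum $\KGL$), so this is an equivalence in $\SH(S)$. On the other hand the paragraph preceding the corollary gives, via Bloch's theorem that the rational Chern character is an isomorphism for smooth quasi-projective schemes over fields, Mayer-Vietoris, and the criterion that a map of periodic spectra is an isomorphism as soon as it induces isomorphisms on $\EE^{-i,0}(X)$ for all smooth $X$ and $i\ge 0$, that $\Ch^{\PM}_{\Q}\colon\KGL_{\Q}\to\PMQ$ is an isomorphism.

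Since $\Ch^{\PM}=\iota\circ\Ch^{\PL}$ and $\PLQ$, $\PMQ$ are already rational, this immediately yields that $\iota=\Ch^{\PM}_{\Q}\circ(\Ch^{\PL}_{\Q})^{-1}\colon\PLQ\to\PMQ$ is an isomorphism, being a composite of two isomorphisms in $\SH(S)$. This exhausts the input coming from homotopy algebraic $K$-theory.

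The remaining step is to descend this to $\iota\colon\LQ\to\MQ$. Inverting the periodicity element of bidegree $(2,1)$ produces splittings $\PLQ\simeq\bigvee_{n\in\Z}\Sigma^{2n,n}\LQ$ and $\PMQ\simeq\bigvee_{n\in\Z}\Sigma^{2n,n}\MQ$ under which the prolonged map $\iota\colon\PLQ\to\PMQ$ is the wedge of the shifted copies of $\iota\colon\LQ\to\MQ$. For a smooth scheme $X$ over $k$ the suspension spectrum $\Sigma^{\infty}X_{+}$ is a compact object of $\SH(S)$, so mapping out of its shifts turns these wedges into direct sums; concretely $\iota^{p,q}_{\PLQ\to\PMQ}(X)$ is identified with $\bigoplus_{n\in\Z}\iota^{p+2n,q+n}_{\LQ\to\MQ}(X)$. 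A direct sum of homomorphisms is an isomorphism precisely when each summand is, so one obtains that $\iota^{a,b}\colon\LQ^{a,b}(X)\to\MQ^{a,b}(X)$ is an isomorphism for every smooth $X$ over $k$ and all $a,b$. Since the objects $\Sigma^{p,q}\Sigma^{\infty}X_{+}$ generate $\SH(S)$, the long exact sequence and the $5$-lemma applied to the cofibre of $\iota\colon\LQ\to\MQ$ show that this cofibre admits no nonzero map from any generator and is therefore zero; hence $\iota$ is an isomorphism of motivic ring spectra.

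I do not expect a genuine obstacle here: the substantive work is already contained in the preceding Theorem, in the preceding Proposition identifying $\Ch^{\PM}$ with the classical Chern character, and in Bloch's theorem. The one point deserving care is the de-periodization — verifying that the periodic equivalence $\iota\colon\PLQ\to\PMQ$ is genuinely compatible with the weight-graded wedge decompositions of $\PLQ$ and $\PMQ$, so that it can be read off summand by summand as $\iota\colon\LQ\to\MQ$.
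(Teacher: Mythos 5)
Your argument is correct and is exactly the route the paper intends: the corollary is read off from $\Ch^{\PM}=\iota\circ\Ch^{\PL}$ together with the two rational Chern character isomorphisms ($\Ch^{\PL}_{\Q}$ from the preceding theorem, $\Ch^{\PM}_{\Q}$ from Bloch's theorem plus Mayer--Vietoris), the paper leaving the de-periodization step implicit. Your explicit verification that the prolonged $\iota\colon\PLQ\to\PMQ$ respects the decompositions $\PLQ\simeq\bigvee_{n}\Sigma^{2n,n}\LQ$ and $\PMQ\simeq\bigvee_{n}\Sigma^{2n,n}\MQ$, so that the isomorphism can be read off summand by summand against compact generators, is a correct filling-in of that implicit step.
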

\begin{corollary}
For smooth schemes over fields $$\MQ_{**}(-)$$ is the universal oriented homology theory with rational coefficients and 
additive formal group law.
\end{corollary}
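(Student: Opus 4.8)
The plan is to deduce this from the universal property of $\MGL$ together with Corollary \ref{lqismq}. Since every $\Sigma^\infty_+ X$ with $X$ smooth over the field $k$ is strongly dualizable, Spanier--Whitehead duality lets me rephrase the assertion: $\MQ_{**}(-)$ should be initial among oriented multiplicative homology theories on smooth $k$-schemes that have rational coefficients and additive formal group law. Note $\MQ$ itself is such a theory, its orientation with additive formal group law having been pinned down at the start of this section and its coefficients being rational.

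First I would invoke, as in the proof of the Chern character comparison for $\KGL$, that $\MGL$ is the universal oriented multiplicative theory: any oriented multiplicative homology theory $A_{**}(-)$ admits a unique morphism of oriented multiplicative theories $\MGL_{**}(-)\to A_{**}(-)$. Suppose now $A_{**}(-)$ moreover has rational coefficients and additive formal group law. Then the classifying homomorphism $\MU_*\to A_{**}$ of its formal group law (taken along the line $\Z(2,1)$) factors through $\MU_*\otimes_\Z\Q\cong\Q[m_1,m_2,\dots]$ and, the formal group law being additive, further through the augmentation $\Q[m_1,m_2,\dots]\to\Q$ killing every $m_i$, i.e.\ through the map $\MU_*\to\Q$ classifying the additive formal group law over $\Q$. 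As the natural transformation $\MGL_{**}(-)\to A_{**}(-)$ is $\MU_*$-linear with target a $\Q$-module on which $\MU_*$ acts through that map, it factors uniquely as
\[
\MGL_{**}(X)\longrightarrow \MGL_{**}(X)\otimes_{\MU_*}\Q\longrightarrow A_{**}(X).
\]

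Next I would identify the middle theory. By Theorem \ref{landw-thm} applied to $\MMM_*=\Q$ with the additive classifying map, $\MGL_{**}(-)\otimes_{\MU_*}\Q$ is the homology theory of the Landweber spectrum $\LQ$; by Corollary \ref{lqismq} the map $\iota$ then yields an isomorphism $\LQ_{**}(-)\xrightarrow{\ \cong\ }\MQ_{**}(-)$ on smooth $k$-schemes, compatibly with orientations and products, and the canonical map $\MGL_{**}(-)\to\MGL_{**}(-)\otimes_{\MU_*}\Q$ is a morphism of oriented multiplicative theories. Composing, I obtain a morphism of oriented multiplicative homology theories $\MQ_{**}(-)\to A_{**}(-)$. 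For uniqueness: the structure map $\MGL_{**}(X)\to\MGL_{**}(X)\otimes_{\MU_*}\Q\cong\MQ_{**}(X)$ has image spanning the target as a $\Q$-module, and $A_{**}(X)$ is uniquely divisible, so any additive natural transformation out of $\MQ_{**}(-)$ is determined by its restriction along $\MGL_{**}(-)\to\MQ_{**}(-)$; that restriction is forced by the universality of $\MGL$. Hence $\MQ_{**}(-)$ is the asserted universal theory, and the multiplicative and orientation data are recovered along the way.

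The step I expect to be the main obstacle is the bookkeeping of universality: fixing the precise category of oriented multiplicative homology theories on smooth $k$-schemes and checking that $\MGL$ is genuinely initial there (morphisms being the orientation- and product-preserving natural transformations), and then verifying that the two successive factorizations --- first rationalizing the coefficients, then quotienting to impose the additive formal group law --- are each available and unique. The algebro-geometric inputs (the structure of $\MU_*\otimes_\Z\Q$, Landweber exactness of the rationals, and the identification $\LQ\cong\MQ$ over fields) are already in hand.
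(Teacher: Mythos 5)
Your proposal is correct and matches the argument the paper leaves implicit: the corollary is meant to follow at once from Corollary \ref{lqismq}, since $\LQ_{**}(-)\cong\MGL_{**}(-)\otimes_{\MU_*}\Q$ inherits the universal property from $\MGL$ (with the two factorizations, rationalization and imposing the additive formal group law, being exactly as you describe), and $\iota\colon\LQ\to\MQ$ transports that universal property to $\MQ_{**}(-)$ over smooth schemes over a field. Your extra bookkeeping about $\MU_*$-linearity and divisibility of $A_{**}$ is sound and just makes explicit the steps the paper omits.
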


Next we identify the rationalization $\MGL_\Q$ of the algebraic cobordism spectrum:
\begin{theorem}
\label{splitmgl}
There are isomorphisms of motivic ring spectra
\[ 
\MGL_\Q\cong\MGL\wedge\LQ\cong\LQ[b_1,\ldots],
\]
where the generator $b_i$ has bidegree $(2i,i)$ for every $i\geq 1$.
\end{theorem}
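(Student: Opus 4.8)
The plan is to prove the two displayed isomorphisms in turn. The isomorphism $\MGL\wedge\LQ\cong\LQ[b_1,b_2,\dots]$ is the Thom-isomorphism computation of $\LQ_{**}\MGL$; the isomorphism $\MGL_\Q\cong\MGL\wedge\LQ$ is obtained by exhibiting both sides as the motivic Landweber spectrum of $\MU_*\otimes\Q$, using Remark \ref{landweberandsmash} and the classical rational computation of $\MU_*\MU$.

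First I would treat $\MGL\wedge\LQ\cong\LQ[b_1,b_2,\dots]$. The spectrum $\LQ$ is an oriented ring spectrum lying in $\SH(S)_\caT$ (Theorem \ref{landw-thm} and Remark \ref{land-rem}), so Proposition \ref{homofgrass}$(ii)b)$ and $(iii)$, applied with $\EE=\LQ$, identify $\pi_{**}(\MGL\wedge\LQ)=\LQ_{**}\MGL$ with the polynomial $\LQ_{**}$-algebra $\LQ_{**}[b_1,b_2,\dots]$, where $b_i$ is the Thom image of the dual of $\cc_1^i\in\LQ^{2i,i}(\BGL_1)$ and has bidegree $(2i,i)$ (this is the class $b_i$ of Lemma \ref{2-orien-mglmgl}). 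Regarding the $b_i$ as elements of $\pi_{2i,i}(\MGL\wedge\LQ)$ yields a map of commutative $\LQ$-algebra spectra $\LQ[b_1,b_2,\dots]\to\MGL\wedge\LQ$ which realises this isomorphism on $\pi_{**}$; since $\MGL\wedge\LQ$ and $\LQ[b_1,b_2,\dots]$ are cellular (smash products and coproducts of Tate objects are again Tate), it is an isomorphism of ring spectra, and it records the asserted form of the generators.

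For $\MGL_\Q\cong\MGL\wedge\LQ$ I would first note that $\pi_{**}\LQ\cong\MGL_{**}\otimes_{\MU_*}\Q$ is rational, so $\LQ$ is a rational spectrum and $\MGL\wedge\LQ\simeq\MGL_\Q\wedge\LQ$. Next, $\MGL_\Q$ is the motivic Landweber spectrum of $\MU_*\otimes\Q$: this module is Landweber exact over $\MU_*$ (trivially, $p$ being invertible in it), and its Landweber homology theory $\FF\mapsto\MGL_{**}(\FF)\otimes_{\MU_*}(\MU_*\otimes\Q)=\MGL_{**}(\FF)\otimes_\Z\Q$ is the one represented by $\MGL_\Q$; since $\SH(\Z)_\caT$ is a Brown category (Lemma \ref{brown}), representing objects are unique up to equivalence, so the two agree over $\Spec(\Z)$, and over a general base by Proposition \ref{base-change}. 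The topological counterpart of $\MGL_\Q$ is thus $\MU_\Q$, and that of $\LQ$ is $\HQ$, so Remark \ref{landweberandsmash} identifies $\MGL_\Q\wedge\LQ$ with the motivic Landweber spectrum of the $\MU_*$-module $(\MU_\Q\wedge\HQ)_*=H_*(\MU;\Q)$. Finally, the classical rational computation of $\MU_*\MU$ --- equivalently, the rational Hurewicz isomorphism $\MU_*\otimes\Q\xrightarrow{\ \cong\ }H_*(\MU;\Q)$ --- exhibits $H_*(\MU;\Q)$ as $\MU_*\otimes\Q$ as an $\MU_*$-module, compatibly with the classifying maps to the moduli stack of formal groups; hence $\MGL_\Q\wedge\LQ$ is the Landweber spectrum of $\MU_*\otimes\Q$, i.e.~$\cong\MGL_\Q$. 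The identification $H_*(\MU;\Q)=\Q[b_1,b_2,\dots]$ returns the generators of bidegree $(2i,i)$ found in the first step.

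The main obstacle is this second isomorphism. First, one must justify that Remark \ref{landweberandsmash} really applies, i.e.~that $\MGL_\Q$ is the motivic realisation of a topological Landweber exact spectrum --- this is exactly what the Brown-category argument above supplies. Second, the classical isomorphism $H_*(\MU;\Q)\cong\MU_*\otimes\Q$ must be used in a form compatible with the $\MU_*$-module structures and the formal-group-law data; it is this compatibility that forces $\MGL_\Q\wedge\LQ$ and $\MGL_\Q$ genuinely to coincide, rather than merely to have abstractly isomorphic homotopy. Finally, one must keep track throughout that the comparisons are isomorphisms of commutative ring spectra (using the functoriality of the Landweber construction and of orientations) and that they carry the two natural families of polynomial generators --- the ones from $\MU_*$ and the ones $b_i$ from $\BGL$ --- to one another.
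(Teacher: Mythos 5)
Your proposal is correct and follows essentially the same route as the paper: the identification $\MGL\wedge\LQ\cong\MGL_\Q$ via Remark \ref{landweberandsmash} and the rational equivalence $\MU\wedge\HQ\cong\MU_\Q$, and the polynomial description via the computation of $\pi_{**}(\MGL\wedge\LQ)$ together with a $\pi_{**}$-isomorphism between cellular spectra. The extra justifications you supply (Brown-category uniqueness to recognize $\MGL_\Q$ as the Landweber spectrum of $\MU_*\otimes\Q$, and compatibility of the rational Hurewicz isomorphism with the $\MU_*$-module structures) are points the paper leaves implicit, and your ordering of the two isomorphisms is simply reversed.
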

\begin{proof} 
According to Remark \ref{landweberandsmash} $\MGL\wedge\LQ$ is the motivic Landweber exact spectrum associated with
$\MU\wedge\HQ\cong\MU_\Q$; this implies the first isomorphism.
In homotopy, 
the canonical map of ring spectra $\LQ\to\MGL\wedge\LQ$ yields
\[ 
\xymatrix{ \pi_{**}\LQ=\MGL_{**}\otimes_{\MU_*}\Q\ar[r] & 
\pi_{**}(\MGL\wedge\LQ)=\MGL_{**}\MGL\otimes_{\MU_*}\Q=\pi_{**}\LQ[b_1,\ldots].  }
\]
Hence there is a map of ring spectra $\LQ[b_1,\ldots]\to\MGL\wedge\LQ$ under $\LQ$ which is an $\pi_{**}$-isomorphism. 
Since all spectra above are cellular the second isomorphism follows.
\end{proof}
\begin{corollary}
\label{mglrational} 
Suppose $S$ is smooth over a field.
\begin{itemize}\item[(i)] 
There are isomorphisms of motivic ring spectra 
\[ 
\MGL_\Q\cong\MGL\wedge\MQ\cong\MQ[b_1,\ldots].
\]
\item[(ii)] For $X/S$ smooth and $\LL^*$ the (graded) Lazard ring, there is an isomorphism 
\[ 
\MGL^{**}(X)\otimes_\Z\Q\cong\MQ^{**}(X)\otimes_\Z\LL^*.
\]
\end{itemize}
\end{corollary}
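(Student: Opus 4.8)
The plan is to deduce both parts formally from Theorem~\ref{splitmgl} together with Corollary~\ref{lqismq}, so that essentially no new argument is required. For part~(i), Theorem~\ref{splitmgl} already furnishes isomorphisms of motivic ring spectra $\MGL_\Q\cong\MGL\wedge\LQ\cong\LQ[b_1,b_2,\ldots]$. Under the hypothesis that $S$ is smooth over a field, Corollary~\ref{lqismq} supplies an isomorphism of motivic ring spectra $\iota\colon\LQ\xrightarrow{\sim}\MQ$ in $\SH(S)$. Smashing $\iota$ with $\MGL$ yields a ring spectrum isomorphism $\MGL\wedge\LQ\xrightarrow{\sim}\MGL\wedge\MQ$, and, $\iota$ being a map of ring spectra, it is compatible with the evident polynomial structures and induces $\LQ[b_1,b_2,\ldots]\xrightarrow{\sim}\MQ[b_1,b_2,\ldots]$. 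Composing these isomorphisms gives $\MGL_\Q\cong\MGL\wedge\MQ\cong\MQ[b_1,b_2,\ldots]$ with each $b_i$ in bidegree $(2i,i)$, which is precisely~(i). The only thing to keep track of is that every arrow in the chain is a map of ring spectra, so the composite identifications are multiplicative; this is automatic.

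For part~(ii), I would evaluate the isomorphism of~(i) on a smooth $X/S$. Since $S$ is of finite type over a field, $\Sigma^\infty X_+$ is a compact object of $\SH(S)$, and this enters twice. First, $[\Sigma^\infty X_+,-]$ commutes with the filtered colimit computing rationalization, so $\MGL_\Q^{**}(X)\cong\MGL^{**}(X)\otimes_\Z\Q$. Second, writing $\MQ[b_1,b_2,\ldots]$ as a wedge $\bigvee_\mu\Sigma^{2w(\mu),w(\mu)}\MQ$ over the monomials $\mu$ in the $b_i$, with $w(\mu)$ the total weight, compactness turns the resulting cohomology into the direct sum $\bigoplus_\mu\MQ^{*-2w(\mu),*-w(\mu)}(X)$, carrying the polynomial ring structure $\MQ^{**}(X)[b_1,b_2,\ldots]$ over $\MQ^{**}(X)$. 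On the other side, the graded Lazard ring $\LL^*\cong\MU_*$ is, with its Adams bigrading, free over $\Z$ on the monomials in polynomial generators $x_i$ of bidegree $(2i,i)$, so $\MQ^{**}(X)\otimes_\Z\LL^*$ is the polynomial ring $\MQ^{**}(X)[x_1,x_2,\ldots]$, i.e.~the same direct sum indexed by the same set of bidegrees. Matching $x_i$ with $b_i$ yields the asserted isomorphism, which is multiplicative since it is induced by the ring spectrum isomorphism of~(i).

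I do not expect a genuine obstacle here: the substantive input is carried entirely by Corollary~\ref{lqismq} (proved via the Chern character) and the structural splitting of Theorem~\ref{splitmgl}. The only steps demanding a little care are cosmetic — tracking the multiplicative, rather than merely additive, structure through part~(i), and invoking compactness of $\Sigma^\infty X_+$ in part~(ii) both to pull rationalization out of cohomology and to replace an infinite wedge of shifted copies of $\MQ$ by a direct sum — and both are routine for $X$ smooth of finite type over a field.
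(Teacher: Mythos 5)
Your proposal is correct and follows exactly the paper's route: part (i) is obtained by combining Theorem \ref{splitmgl} with the isomorphism $\iota\colon\LQ\xrightarrow{\sim}\MQ$ of Corollary \ref{lqismq}, and part (ii) is deduced from (i) using compactness of $\Sigma^\infty X_+$ to identify the cohomology of the wedge $\MQ[b_1,b_2,\ldots]$ with $\MQ^{**}(X)\otimes_\Z\LL^*$. The paper's own proof is just a two-line version of the same argument, so your elaboration of the multiplicativity and compactness points is a faithful (and more detailed) rendering of it.
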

\begin{proof}
Part (i) is immediate from Theorem \ref{splitmgl}, 
specialized to smooth schemes over fields, 
and Corollary \ref{lqismq}. 
Part (ii) follows from (i) using compactness of $X$.
\end{proof}

As alluded to in the introduction we may now explicate the rationalized algebraic cobordism of number fields.
The answer is conveniently formulated in terms of the (graded) Lazard ring $\LL^*=\Z[x_1,x_2,\ldots]$ with its cohomological 
grading $|x_i|=(-2i,-i)$, $i\geq 1$.
\begin{corollary}
\label{mglrationalnumberfield}
Suppose $k$ is a number field with $r_1$ real embeddings and $r_2$ pairs of complex embeddings. 
Then there are isomorphisms 
\begin{equation*}
\MGL^{2i,j}(k)\otimes\Q\cong
\begin{cases}
\LL^{2i}\otimes\Q & j=i\\
0 & j\neq i
\end{cases}
\end{equation*}
\begin{equation*}
\MGL^{2i+1,j}(k)\otimes \Q\cong
\begin{cases}
\LL^{2i}\otimes k^*\otimes \Q & j=i+1, i\leq 0\\
\LL^{2i}\otimes\Q^{r_2} & j-i\equiv 3\, (4),j-i>1\\
\LL^{2i}\otimes\Q^{r_1+r_2} & j-i\equiv 1\, (4),j-i>1 \\
0 & \text{otherwise.} 
\end{cases}
\end{equation*}
\end{corollary}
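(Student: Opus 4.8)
The plan is to obtain this from Corollary \ref{mglrational}(ii) by feeding in the known structure of rational motivic cohomology of a number field. Applying that corollary with $X = S = \Spec(k)$ produces an isomorphism of bigraded rings $\MGL^{**}(k)\otimes_\Z\Q \cong \MQ^{**}(\Spec k)\otimes_\Z\LL^{*}$, so the task splits into identifying $\MQ^{**}(\Spec k) = H^{**}_{\mathcal{M}}(\Spec k,\Q)$ and then carrying out the bookkeeping against the cohomological grading $|x_i| = (-2i,-i)$ of the Lazard ring.

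For the first part I would invoke the classical input. Since $\Spec k$ is smooth over the field $k$, motivic cohomology agrees with Bloch's higher Chow groups \cite{voevodsky-allagree}, and rationally $H^{p}_{\mathcal{M}}(\Spec k,\Q(q))$ is the weight-$q$ eigenspace of $K_{2q-p}(k)\otimes\Q$ for the Adams operations. For a number field the Beilinson--Soul\'e vanishing is known, so $H^{p}_{\mathcal{M}}(\Spec k,\Q(q)) = 0$ for $p\le 0$ apart from $H^{0}_{\mathcal{M}}(\Spec k,\Q(0)) = \Q$; moreover the structure of the rational $K$-theory of number rings (Borel, together with the localization sequence whose residue-field contributions are torsion) forces $H^{p}_{\mathcal{M}}(\Spec k,\Q(q)) = 0$ for $p\ge 2$. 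Hence the only nonvanishing groups are $H^{0}_{\mathcal{M}}(\Spec k,\Q(0)) = \Q$, $H^{1}_{\mathcal{M}}(\Spec k,\Q(1)) = k^{*}\otimes\Q$, and $H^{1}_{\mathcal{M}}(\Spec k,\Q(q))$ for $q\ge 2$, whose $\Q$-dimension is read off from Borel's computation of the rank of $K_{2q-1}(\mathcal O_k)$ in terms of $r_1$ and $r_2$. This is precisely the data that reappears, suitably reindexed, in the asserted formulas.

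For the bookkeeping, the bidegree-$(p,q)$ piece of $\MQ^{**}(\Spec k)\otimes_\Z\LL^{*}$ is $\bigoplus_{d\le 0} H^{p-2d}_{\mathcal{M}}(\Spec k,\Q(q-d))\otimes\LL^{2d}$, using that $\LL^{c,d}$ is nonzero exactly when $c = 2d$ and $d\le 0$, and is then free of rank the number of partitions of $|d|$ (with $\LL^{0} = \Z$). When $p$ is even only the $H^{0}$-term can contribute, which pins down both the Lazard index and the weight and makes $\MGL^{2i,j}(k)\otimes\Q$ vanish unless $j = i$, where it equals $\LL^{2i}\otimes\Q$ (in particular $0$ for $i>0$); this gives the even part and, specialized to a finite field, the remark that $\MGL^{**}\otimes\Q$ is trivial off $\Z(2,1)$, the positive $K$-groups there being torsion. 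When $p$ is odd only the $H^{1}$-term contributes; matching indices fixes the surviving weight, and substituting the values from the previous paragraph — $k^{*}\otimes\Q$ in the first case, the ranks $\Q^{r_1+r_2}$ and $\Q^{r_2}$ in their respective congruence ranges, and $0$ outside — yields the displayed formulas.

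The combinatorics with $\LL^{*}$ and its partition-counting multiplicities is routine; the step that requires genuine care is lining up the motivic bidegree conventions with the $K$-theoretic grading so that Borel's $r_1+r_2$ and $r_2$ ranks land in the correct residue classes modulo $4$ and so that the constraint $i\le 0$ (equivalently $\LL^{2i}\neq 0$) is enforced throughout. Beyond the results already proved in the paper, the only external ingredients are Borel's rank theorem, Beilinson--Soul\'e vanishing for number fields, and the comparison of motivic cohomology with higher Chow groups, all of which are standard.
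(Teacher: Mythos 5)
Your reduction is exactly the paper's proof: the paper disposes of this corollary in one sentence, citing Corollary \ref{mglrational}(ii) together with the known rational motivic cohomology of number fields, and your identification of the inputs (comparison with higher Chow groups, Beilinson--Soul\'e vanishing, vanishing of $H^{p}(k,\Q(q))$ for $p\geq 2$, Borel's rank computation) and of the decomposition $\MGL^{p,q}(k)\otimes\Q\cong\bigoplus_{d\leq 0}H^{p-2d}(k,\Q(q-d))\otimes\LL^{2d}$ is correct. The even-degree case and the case $j=i+1$ come out as you say.

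However, the one step you explicitly defer --- ``lining up \dots so that Borel's $r_1+r_2$ and $r_2$ ranks land in the correct residue classes modulo $4$'' --- is precisely where your proposal does not close, because carrying it out does not reproduce the displayed formulas. The odd-degree contribution is $H^{1}(k,\Q(j-i))\otimes\LL^{2i}$, and $H^{1}(k,\Q(q))\cong K_{2q-1}(k)\otimes\Q$ for $q\geq 2$; Borel's dichotomy is governed by $2q-1\bmod 4$, i.e.\ by the parity of $q=j-i$. One gets rank $r_1+r_2$ for $j-i$ odd with $j-i\geq 3$, and rank $r_2$ for $j-i$ even with $j-i\geq 2$. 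This is a mod $2$ condition on $j-i$, not the mod $4$ condition in the statement; in particular the asserted vanishing for $j-i\equiv 0,2\ (4)$ is incompatible with $H^{1}(k,\Q(2))\cong\Q^{r_2}$ being nonzero whenever $r_2>0$, and for $j-i\equiv 3\ (4)$ (e.g.\ $j-i=3$, where the group is $K_{5}(k)\otimes\Q$) the rank is $r_1+r_2$, not $r_2$. So you must either exhibit a convention under which the displayed congruences on $j-i$ are correct, or conclude --- as the computation indicates --- that the congruence conditions should be read on the $K$-theory degree $2(j-i)-1$ rather than on $j-i$. As written, the claim that the bookkeeping ``yields the displayed formulas'' is the gap.
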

\begin{proof} 
Follows from Corollary \ref{mglrational}(ii) and the well-known computation of the rational 
motivic cohomology of number fields.
\end{proof}

\begin{remark} 
In Corollary \ref{lqismq} we identified the (unique) Landweber exact motivic spectrum 
$\LQ$ with rational motivic cohomology $\MQ$ (for base schemes smooth over some field).
The topological analog of this result is a triviality because $\HQ$ is the Landweber exact 
spectrum associated with the additive formal group over $\Q$.
To appreciate the content of Corollary \ref{lqismq}, 
we offer the following remark: 
In stable homotopy theory it is trivial that $S^0_\Q\cong \HQ$ but the motivic analog of 
this result fails.
Let $\unit_{\Q}$ denote the rationalized motivic sphere spectrum. 
Using orthogonal idempotents, 
Morel \cite{morelrationalsphere} has constructed a splitting
\[ 
\unit_{\Q}\cong \unit_{\Q}^+ \vee \unit_{\Q}^-
\]
and noted that $\unit_{\Q}^-$ is nontrivial for formally real fields 
(e.g.~the rational numbers).
It is easy to show that every map from the motivic sphere spectrum to an oriented motivic 
ring spectrum annihilates $\unit_{\Q}^-$. 
In particular, 
$\unit_{\Q}$ and $\LQ$ are not isomorphic in general.
\end{remark}

\begin{center}
Fakult{\"a}t f{\"u}r Mathematik, Universit{\"a}t Regensburg, Germany.\\
e-mail: niko.naumann@mathematik.uni-regensburg.de
\end{center}
\begin{center}
Fakult{\"a}t f{\"u}r Mathematik, Universit{\"a}t Regensburg, Germany.\\
e-mail: Markus.Spitzweck@mathematik.uni-regensburg.de
\end{center}
\begin{center}
Department of Mathematics, University of Oslo, Norway.\\
e-mail: paularne@math.uio.no
\end{center}

\begin{thebibliography}{10}

\bibitem{adams}
J.~F. Adams.
\newblock {\em Stable homotopy and generalised homology}.
\newblock University of Chicago Press, Chicago, Ill., 1974.
\newblock Chicago Lectures in Mathematics.

\bibitem{adams-clarke}
J.~F. Adams and F.~W. Clarke.
\newblock Stable operations on complex {$K$}-theory.
\newblock {\em Illinois J. Math.}, 21(4):826--829, 1977.

\bibitem{baker}
A.~Baker.
\newblock {$I\sb n$}-local {J}ohnson-{W}ilson spectra and their {H}opf
  algebroids.
\newblock {\em Doc. Math.}, 5:351--364 (electronic), 2000.

\bibitem{bloch}
S.~Bloch.
\newblock Algebraic cycles and higher {$K$}-theory.
\newblock {\em Adv. in Math.}, 61(3):267--304, 1986.

\bibitem{boardman}
J.~M. Boardman.
\newblock Conditionally convergent spectral sequences.
\newblock In {\em Homotopy invariant algebraic structures (Baltimore, MD,
  1998)}, volume 239 of {\em Contemp. Math.}, pages 49--84. Amer. Math. Soc.,
  Providence, RI, 1999.

\bibitem{borghesi}
S.~Borghesi.
\newblock Algebraic {M}orava {$K$}-theories.
\newblock {\em Invent. Math.}, 151(2):381--413, 2003.

\bibitem{dugger-isaksen}
D.~Dugger and D.~C. Isaksen.
\newblock Motivic cell structures.
\newblock {\em Algebr. Geom. Topol.}, 5:615--652 (electronic), 2005.

\bibitem{motivicfunctors}
B.~I. Dundas, O.~R{\"o}ndigs, and P.~A. {\O}stv{\ae}r.
\newblock Motivic functors.
\newblock {\em Doc. Math.}, 8:489--525 (electronic), 2003.

\bibitem{gepner-snaith}
D.~Gepner and V.~Snaith.
\newblock On the motivic spectra representing algebraic cobordism and algebraic
  {$K$}-theory.
\newblock {\em Doc. Math.}, 14:359--396 (electronic), 2009.

\bibitem{gillet}
H.~Gillet.
\newblock Riemann-{R}och theorems for higher algebraic {$K$}-theory.
\newblock {\em Adv. in Math.}, 40(3):203--289, 1981.

\bibitem{goerssstacks}
P.~G. Goerss.
\newblock Quasi-coherent sheaves on the moduli stack of formal groups.
\newblock Preprint, arXiv 0802.0996.

\bibitem{goerss}
P.~G. Goerss.
\newblock ({P}re-)sheaves of ring spectra over the moduli stack of formal group
  laws.
\newblock In {\em Axiomatic, enriched and motivic homotopy theory}, volume 131
  of {\em NATO Sci. Ser. II Math. Phys. Chem.}, pages 101--131. Kluwer Acad.
  Publ., Dordrecht, 2004.

\bibitem{hornbostel}
J.~Hornbostel.
\newblock Localizations in motivic homotopy theory.
\newblock {\em Math. Proc. Cambridge Philos. Soc.}, 140(1):95--114, 2006.

\bibitem{HPS}
M.~Hovey, J.~H. Palmieri, and N.~P. Strickland.
\newblock Axiomatic stable homotopy theory.
\newblock {\em Mem. Amer. Math. Soc.}, 128(610):x+114, 1997.

\bibitem{hov-strick.mor}
M.~Hovey and N.~P. Strickland.
\newblock Morava {$K$}-theories and localisation.
\newblock {\em Mem. Amer. Math. Soc.}, 139(666):viii+100, 1999.

\bibitem{hu-kriz}
P.~Hu and I.~Kriz.
\newblock Some remarks on {R}eal and algebraic cobordism.
\newblock {\em $K$-Theory}, 22(4):335--366, 2001.

\bibitem{Jardinestable}
J.~F. Jardine.
\newblock Motivic symmetric spectra.
\newblock {\em Doc. Math.}, 5:445--553 (electronic), 2000.

\bibitem{levine}
M.~Levine.
\newblock Bloch's higher {C}how groups revisited.
\newblock {\em Ast\'erisque}, (226):10, 235--320, 1994.
\newblock $K$-theory (Strasbourg, 1992).

\bibitem{LM}
M.~Levine and F.~Morel.
\newblock {\em Algebraic cobordism}.
\newblock Springer Monographs in Mathematics. Springer, Berlin, 2007.

\bibitem{may-idemp-landweber}
J.~P. May.
\newblock Idempotents and {L}andweber exactness in brave new algebra.
\newblock {\em Homology Homotopy Appl.}, 3(2):355--359 (electronic), 2001.
\newblock Equivariant stable homotopy theory and related areas (Stanford, CA,
  2000).

\bibitem{mcclearly}
J.~McCleary.
\newblock {\em A user's guide to spectral sequences}, volume~58 of {\em
  Cambridge Studies in Advanced Mathematics}.
\newblock Cambridge University Press, Cambridge, second edition, 2001.

\bibitem{morelrationalsphere}
F.~Morel.
\newblock Rational stable splitting of grassmanians and the rational motivic
  sphere spectrum.
\newblock Preprint, http://www.mathematik.uni-muenchen.de/$\sim$morel/
  Splittinggrassman.pdf.

\bibitem{morel:basicproperties}
F.~Morel.
\newblock Some basic properties of the stable homotopy category of schemes.
\newblock Preprint,
  http://www.mathematik.uni-muenchen.de/$\sim$morel/Stable.pdf.

\bibitem{MV}
F.~Morel and V.~Voevodsky.
\newblock {${\bf A}\sp 1$}-homotopy theory of schemes.
\newblock {\em Inst. Hautes \'Etudes Sci. Publ. Math.}, (90):45--143 (2001),
  1999.

\bibitem{niko}
N.~Naumann.
\newblock The stack of formal groups in stable homotopy theory.
\newblock {\em Adv. Math.}, 215(2):569--600, 2007.

\bibitem{mn-brown}
N.~Naumann and M.~Spitzweck.
\newblock Brown representability in {$\bold A\sp 1$}-homotopy theory.
\newblock K-theory Preprint Archives, 942, 2009.

\bibitem{nmp-nonregular}
N.~Naumann, M.~Spitzweck, and P.~A. {\O}stv{\ae}r.
\newblock Chern classes, ${K}$-theory and landweber exactness over nonregular
  base schemes.
\newblock In {\em Motives and Algebraic Cycles: A celebration in Honour of
  Spencer J.~Bloch}, volume~56 of {\em Fields Institute Communications}, pages
  307--317. Amer. Math. Soc., Providence, RI, 2009.

\bibitem{neeman}
A.~Neeman.
\newblock The {G}rothendieck duality theorem via {B}ousfield's techniques and
  {B}rown representability.
\newblock {\em J. Amer. Math. Soc.}, 9(1):205--236, 1996.

\bibitem{PPR2}
I.~Panin, K.~Pimenov, and O.~R{\"o}ndigs.
\newblock A universality theorem for {V}oevodsky's algebraic cobordism
  spectrum.
\newblock {\em Homology, Homotopy Appl.}, 10(2):211--226, 2008.

\bibitem{PPR1}
I.~Panin, K.~Pimenov, and O.~R{\"o}ndigs.
\newblock On {V}oevodsky's algebraic {$K$}-theory spectrum.
\newblock In {\em Algebraic Topology}, The Abel Symposium 2007, pages 279--330.
  Springer, Berlin, 2009.

\bibitem{modulesovermotivic}
O.~R{\"o}ndigs and P.~A. {\O}stv{\ae}r.
\newblock Modules over motivic cohomology.
\newblock {\em Adv. Math.}, 219(2):689--727, 2008.

\bibitem{roendigsostvar}
O.~R{\"o}ndigs and P.~A. {\O}stv{\ae}r.
\newblock Rigidity in motivic homotopy theory.
\newblock {\em Math. Ann.}, 341(3):651--675, 2008.

\bibitem{schwede-shipley.alg}
S.~Schwede and B.~E. Shipley.
\newblock Algebras and modules in monoidal model categories.
\newblock {\em Proc. London Math. Soc. (3)}, 80(2):491--511, 2000.

\bibitem{spitzweck}
M.~Spitzweck.
\newblock Slices of motivic {L}andweber spectra.
\newblock Preprint, arXiv 0805.3350.

\bibitem{spitzweck-oestvaer}
M.~Spitzweck and P.~A. {\O}stv{\ae}r.
\newblock The {B}ott inverted infinite projective space is ${K}$-theory.
\newblock {\em Bull. London Math. Soc.}, 41(2):281--292, 2009.

\bibitem{switzer}
R.~M. Switzer.
\newblock {\em Algebraic topology---homotopy and homology}.
\newblock Classics in Mathematics. Springer-Verlag, Berlin, 2002.
\newblock Reprint of the 1975 original [Springer, New York; MR0385836 (52
  \#6695)].

\bibitem{ueno}
K.~Ueno.
\newblock {\em Algebraic geometry. 2}, volume 197 of {\em Translations of
  Mathematical Monographs}.
\newblock American Mathematical Society, Providence, RI, 2001.
\newblock Sheaves and cohomology, Translated from the 1997 Japanese original by
  Goro Kato, Iwanami Series in Modern Mathematics.

\bibitem{vezzosi}
G.~Vezzosi.
\newblock {B}rown-{P}eterson spectra in stable {$\Bbb A\sp 1$}-homotopy theory.
\newblock {\em Rend. Sem. Mat. Univ. Padova}, 106:47--64, 2001.

\bibitem{voevodsky-icm}
V.~Voevodsky.
\newblock {$\bold A\sp 1$}-homotopy theory.
\newblock In {\em Proceedings of the International Congress of Mathematicians,
  Vol. I (Berlin, 1998)}, number Extra Vol. I, pages 579--604 (electronic),
  1998.

\bibitem{voevodsky-allagree}
V.~Voevodsky.
\newblock Motivic cohomology groups are isomorphic to higher {C}how groups in
  any characteristic.
\newblock {\em Int. Math. Res. Not.}, (7):351--355, 2002.

\bibitem{voe-slice}
V.~Voevodsky.
\newblock Open problems in the motivic stable homotopy theory. {I}.
\newblock In {\em Motives, polylogarithms and Hodge theory, Part I (Irvine, CA,
  1998)}, volume~3 of {\em Int. Press Lect. Ser.}, pages 3--34. Int. Press,
  Somerville, MA, 2002.

\end{thebibliography}
\end{document}